\newcommand{\simto}{\overset\sim\to}
\renewcommand{\phi}{\varphi}
\renewcommand{\epsilon}{\varepsilon}
\renewcommand{\theta}{\vartheta}
\def\ZZ{{\mathbf Z}}
\def\CC{{\mathbf C}}
\def\AAA{{\mathbf A}}
\def\RR{{\mathbf R}}
\def\QQ{{\mathbf Q}}
\def\PP{{\mathbf P}}
\def\uy{\underline{y}}
\def\cI{\mathcal{I}}
\def\cJ{\mathcal{J}}
\def\cO{\mathcal{O}}
\def\cV{\mathcal{V}}
\def\Xan{X^\mathrm{an}}
\def\fra{\mathfrak{a}}
\def\frb{\mathfrak{b}}
\def\frc{\mathfrak{c}}
\def\frm{\mathfrak{m}}
\def\frp{\mathfrak{p}}
\def\frq{\mathfrak{q}}
\def\frr{\mathfrak{r}}
\DeclareMathOperator{\Arn}{Arn}
\DeclareMathOperator{\Bl}{Bl}
\DeclareMathOperator{\Exc}{Exc}
\DeclareMathOperator{\Fitt}{Fitt}
\DeclareMathOperator{\Int}{Int}
\DeclareMathOperator{\QM}{QM}
\DeclareMathOperator{\Spec}{Spec}
\DeclareMathOperator{\Supp}{Supp}
\DeclareMathOperator{\Val}{Val}
\DeclareMathOperator{\lct}{lct}
\DeclareMathOperator{\ord}{ord}
\DeclareMathOperator{\red}{red} 
\DeclareMathOperator{\trdeg}{trdeg} 
\DeclareMathOperator{\ratrk}{ratrk} 
\DeclareMathOperator{\val}{val}
\newcommand{\llbracket}{[\negthinspace[}
\newcommand{\rrbracket}{]\negthinspace]}
\newtheorem{lemma}{Lemma}[section]
\newtheorem{theorem}[lemma]{Theorem}
\newtheorem{corollary}[lemma]{Corollary}
\newtheorem{proposition}[lemma]{Proposition}
\newtheorem{conjecture}[lemma]{Conjecture}
\newtheorem*{conjB}{Conjecture B}
\newtheorem*{conjC}{Conjecture C}
\newtheorem*{thmA}{Theorem A}
\newtheorem*{thmD}{Theorem D}
\theoremstyle{definition}
\newtheorem{definition}[lemma]{Definition}
\newtheorem{remark}[lemma]{Remark}
\newtheorem{example}[lemma]{Example}
\theoremstyle{remark}
\newtheorem*{remark*}{Remark}
\newtheorem*{note*}{Note}
\numberwithin{equation}{section}
\begin{document}

\title{Valuations and asymptotic invariants for sequences of ideals}

\thanks{2000\,\emph{Mathematics Subject Classification}.
 Primary 14F18; Secondary 12J20, 14B05.
\newline 
The first author was partially supported by
NSF grants DMS-0449465 and DMS-1001740.
The second author was partially supported by
 NSF grant DMS-0758454 and
  a Packard Fellowship.}
\keywords{Graded sequence of ideals, multiplier ideals, log canonical threshold, valuation}

\date\today
\author[M.~Jonsson]{Mattias Jonsson}
\author[M.~Musta\c{t}\u{a}]{Mircea~Musta\c{t}\u{a}}
\address{Department of Mathematics, University of Michigan,
Ann Arbor, MI 48109, USA}
\email{mattiasj@umich.edu, mmustata@umich.edu}

\begin{abstract}
  We study asymptotic jumping numbers for graded sequences of ideals, 
  and show that every such invariant is computed by a suitable real
  valuation of the function field. We conjecture that every valuation
  that computes an asymptotic jumping number is necessarily 
  quasi-monomial. This conjecture holds in dimension two.
  In general, we reduce it to the case of affine space and
  to graded sequences of valuation ideals.
  Along the way, we study the structure of a suitable valuation space.
\end{abstract}

\maketitle
\setcounter{tocdepth}{1}
\tableofcontents

\markboth{M.~JONSSON AND M.~MUSTA\c{T}\u{A}}{VALUATIONS AND ASYMPTOTIC INVARIANTS FOR SEQUENCES OF IDEALS}

\section*{Introduction}
Given a nonzero ideal $\fra$ on a smooth complex variety $X$, the log canonical threshold 
$\lct(\fra)$ is a fundamental invariant in both singularity theory and birational geometry
(see, for example,~\cite{positivity},~\cite{EM} or~\cite{Kol}). Analytically, it can be described as follows: arguing locally, we may assume that $\fra$ is generated by $f_1,\dots,f_m\in
\cO(X)$, in which case
\begin{equation*}
  \lct(\fra)=\sup\{s>0\mid (\sum_i|f_i|^2)^{-s}\ \text{is locally integrable}\}.
\end{equation*}
Alternatively, the invariant admits the following description in terms of valuations:
\begin{equation}\label{eq1_introduction}
\lct(\fra)=\inf_E\frac{A(\ord_E)}{\ord_E(\fra)},
\end{equation}
where $E$ varies over the prime divisors over $X$, and where $A(\ord_E)-1$ is
the coefficient of the divisor $E$ on $Y$ in the relative canonical class $K_{Y/X}$. 
In fact, in the above formula one can take the infimum over all real valuations of $K(X)$ with center on $X$. A key fact for the study of the log canonical threshold is that if 
$\pi\colon Y\to X$ 
is a log resolution of the ideal $\fra$, that is, $\pi$ is proper and birational, $Y$ is smooth, and
$\fra\cdot\cO_Y$ is the ideal of a divisor $D$ such that $D+K_{Y/X}$ has simple normal crossings,
then there is a prime divisor $E$ on $Y$ that achieves the infimum in~\eqref{eq1_introduction}.  
These divisors play an important role in understanding the singularities of $\fra$.

In this paper we undertake the systematic study of similar invariants in the case of sequences of ideals. We focus on \emph{graded sequences of ideals} $\fra_{\bullet}$: these are sequences of 
ideals $(\fra_m)_{m\geq 1}$ on $X$ such that $\fra_p\cdot\fra_q\subseteq\fra_{p+q}$ for all $p$ and 
$q$. In order to simplify the statements, in this introduction we also assume that all $\fra_m$ are nonzero. The main geometric example of a graded sequence is given by the ideals defining the
base locus of $|L^m|$, where $L$ is an effective line bundle  on the smooth projective variety $X$. Note that the interesting behavior of this sequence takes place when the section 
$\CC$-algebra $\oplus_{m\geq 0}\Gamma(X,L^m)$ is not finitely generated. 

Given a graded sequence $\fra_{\bullet}$, 
one can define an \emph{asymptotic log canonical threshold} $\lct(\fra_{\bullet})$ as the limit 
\begin{equation*}
  \lct(\fra_{\bullet}):=\lim_{m\to\infty}m\cdot\lct(\fra_m)
  =\sup_mm\cdot\lct(\fra_m)\in\RR_{\geq 0}\cup
  \{\infty\}.
\end{equation*}
We show that as above, we have
\begin{equation}\label{eq2_introduction}
  \lct(\fra_{\bullet})=\inf_E\frac{A(\ord_E)}{\ord_E(\fra_{\bullet})},
\end{equation}
where 
$\ord_E(\fra_{\bullet})=\lim_{m\to\infty}\frac{\ord_E(\fra_m)}{m}=\inf_m\frac{\ord_E(\fra_m)}{m}$.
More generally, one can define $v(\fra_{\bullet})$ and $A(v)$ for every  valuation $v$ of 
$K(X)$ with center on $X$, and in~\eqref{eq2_introduction} we may take the infimum over all such valuations different from the trivial one. 
It is easy to see that in this setting there might be no divisor $E$ such that the infimum 
in~\eqref{eq2_introduction} is achieved by $\ord_E$ (we give such an example with $\fra_{\bullet}$
a graded sequence of monomial ideals in \S 8).  
The following is (a special case of) our first main result:

\begin{thmA}
  For every graded sequence of ideals $\fra_{\bullet}$, there is a
  real valuation $v$ of $K(X)$ with center on $X$ that computes 
  $\lct(\fra_{\bullet})$, that is, such that
  $\lct(\fra_{\bullet})=\frac{A(v)}{v(\fra_{\bullet})}$. 
\end{thmA}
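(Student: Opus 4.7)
My plan is to extract $v$ as a limit of a minimizing sequence of valuations for the infimum in~\eqref{eq2_introduction}, using compactness of a sublevel set of $A$ in a suitable valuation space together with the right semicontinuity properties.

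Assuming $\lct(\fra_\bullet)<\infty$ (the opposite case should follow by a separate, easier argument), pick real valuations $v_m$ of $K(X)$ with center on $X$ satisfying $A(v_m)/v_m(\fra_\bullet)\to\lct(\fra_\bullet)$; these exist by the definition of the infimum in~\eqref{eq2_introduction}, and can even be chosen divisorial, since the infimum over divisorial valuations already equals $\lct(\fra_\bullet)$. Rescaling, one may assume $v_m(\fra_\bullet)=1$, whence $A(v_m)\to\lct(\fra_\bullet)$.

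Now appeal to compactness of the sublevel set $\{w\mid A(w)\leq C\}$ of the valuation space (the structural result advertised in the abstract) for any $C>\lct(\fra_\bullet)$. For each fixed $k$, the map $w\mapsto w(\fra_k)=\min_i w(f_i^{(k)})$, with $f_i^{(k)}$ generators of $\fra_k$, is continuous; consequently $w\mapsto w(\fra_\bullet)=\inf_k w(\fra_k)/k$ is upper semicontinuous, while $A$ is lower semicontinuous by construction. Extract a convergent subsequence $v_{m_j}\to v$. Then
\begin{equation*}
A(v)\leq\liminf_j A(v_{m_j})=\lct(\fra_\bullet),\qquad v(\fra_\bullet)\geq\limsup_j v_{m_j}(\fra_\bullet)=1,
\end{equation*}
so $A(v)/v(\fra_\bullet)\leq\lct(\fra_\bullet)$. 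Combined with the reverse inequality from~\eqref{eq2_introduction}, this yields $A(v)/v(\fra_\bullet)=\lct(\fra_\bullet)$, as desired.

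The main obstacle is constructing and analyzing the valuation space so that $\{A\leq C\}$ is compact, $v\mapsto v(\fra_k)$ is continuous, and $A$ is lower semicontinuous. This is likely effected via a Berkovich-style analytification of $X$, or a retract thereof onto the dual complex of a log resolution, with $A$ defined as a supremum over a tower of log resolutions so that lower semicontinuity is automatic and compactness reduces to that of a finite-dimensional polytope of quasi-monomial valuations. Two subsidiary points to verify are that the limit $v$ still has center on $X$ (which should follow from $A(v)<\infty$) and is nontrivial (which follows from $v(\fra_\bullet)\geq 1>0$); once the valuation-theoretic framework is in place, the compactness extraction above is routine and in fact adapts verbatim to compute any of the other asymptotic jumping numbers introduced in the paper.
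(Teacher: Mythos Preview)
Your overall strategy—extracting a computing valuation via compactness plus semicontinuity—is precisely the one the paper uses, and your identification of lower semicontinuity of $A$ and upper semicontinuity of $v\mapsto v(\fra_\bullet)$ is correct (these are Lemma~\ref{lem_semicontinuity} and Lemma~\ref{C201}). The gap is in the compactness step.

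You normalize by $v(\fra_\bullet)=1$ and appeal to compactness of the sublevel set $\{A\leq C\}$. But this is not what the paper establishes, and it is unclear that it holds: the condition $v(\fra_\bullet)\geq 1$ only forces $c_X(v)\in V(\fra_1)$, which may be positive-dimensional, so the centers of your minimizing sequence $(v_m)$ can wander and there is no evident reason a limit exists in $\Val_X$. The only compactness result proved in the paper (Proposition~\ref{compact}) is for the set $V_M=\{v:c_X(v)=\xi,\ v(\frm_\xi)=1,\ A(v)\leq M\}$ with a \emph{fixed} center $\xi$ and normalization by the maximal ideal $\frm_\xi$; the lower bound $v(\fra)\geq v(\frm_\xi)=1$ for $\fra\subseteq\frm_\xi$ is what makes the Tychonoff argument go through.

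To reduce to this situation the paper inserts a step you are missing. One first chooses $\xi$ to be the generic point of an irreducible component of $V(\cJ(\fra_\bullet^\lambda))$, then \emph{enlarges} the graded sequence to $\frc_j=\sum_{i=0}^j\fra_i\cdot\frm_\xi^{p(j-i)}$ for $p\gg0$, verifying (Proposition~\ref{enlarge2}) that $\lct(\frc_\bullet)=\lct(\fra_\bullet)$ and that any valuation computing the former computes the latter. After passing to $\Spec\widehat{\cO_{X,\xi}}$ one has $\frm^p\subseteq\frc_1$, which forces every near-optimal valuation to be centered at the closed point and, once normalized by $v(\frm)=1$, to lie in $V_M$ for a uniform $M$. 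Only then does your semicontinuity argument apply and conclude the proof. This localization-plus-enlargement trick is the missing ingredient; once it is in place, the ``compactness extraction'' you describe is indeed routine.
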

We make the following conjecture.
\begin{conjB}
  Let $\fra_\bullet$ be a graded 
  sequence of ideals on $X$ such that $\lct(\fra_\bullet)<\infty$.
  \begin{itemize}
  \item \textbf{Weak version}:
    there exists a quasi-monomial valuation $v$
    that computes $\lct(\fra_{\bullet})$.
  \item \textbf{Strong version}: 
    any valuation $v$ that computes
    $\lct(\fra_{\bullet})$ must be quasi-monomial.\footnote{Given $v$, 
    the existence of $\fra_{\bullet}$ such that $v$ computes 
    $\lct(\fra_{\bullet})<\infty$ is equivalent
    to the following two properties: $A(v)<\infty$, and for every valuation 
    $w$ of $K(X)$ with center 
    on $X$ such that $w(\fra)\geq v(\fra)$ for every ideal $\fra$ on $X$, we have $A(w)
    \geq A(v)$; see Theorem~\ref{equivalent_description}.}
  \end{itemize}
\end{conjB}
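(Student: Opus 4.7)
The plan attacks the weak version by approximation and the strong version by a minimality/perturbation argument; the conjecture is stated as open in general, so what follows is a strategy rather than a complete proof.

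For the \textbf{weak version}, start from Theorem A: let $v$ be a real valuation computing $\lct(\fra_\bullet)$, normalized so that $v(\fra_\bullet)=1$ and $A(v)=\lct(\fra_\bullet)$. The natural approximation comes from log resolutions of each $\fra_m$. For $m\geq 1$, pick a log resolution $\pi_m\colon Y_m\to X$ of $\fra_m$ and a prime divisor $E_m$ on $Y_m$ achieving $\lct(\fra_m)=A(\ord_{E_m})/\ord_{E_m}(\fra_m)$. Rescale to $v_m:=(m/\ord_{E_m}(\fra_m))\ord_{E_m}$, so that $v_m(\fra_m)=m$ and $A(v_m)=m\cdot\lct(\fra_m)\to\lct(\fra_\bullet)$. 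Each $v_m$ is divisorial, in particular quasi-monomial. I would topologize the space of real valuations with center on $X$ as a subset of $\Xan$ so that $v\mapsto v(\fra)$ is continuous for each ideal $\fra$ and $A$ is lower semicontinuous, extract a convergent subsequence of $(v_m)$, and check that the limit $v_\infty$ still computes $\lct(\fra_\bullet)$.

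For the \textbf{strong version}, I would use the characterization in the footnote. If $v$ computes $\lct(\fra_\bullet)<\infty$, then $A(v)<\infty$ and $v$ is \emph{$A$-minimal}: whenever $w(\fra)\geq v(\fra)$ for every ideal $\fra$ on $X$, one has $A(w)\geq A(v)$. The strong version thus reduces to: every $A$-minimal valuation is quasi-monomial. I would argue the contrapositive. Suppose $v$ is not quasi-monomial, so $\ratrk(v)+\trdeg(v)<\dim X$. Choose a birational model where the center of $v$ is as small as possible, and a regular system of parameters $x_1,\dots,x_n$ there. Use the Abhyankar defect to construct a quasi-monomial valuation $w$, monomial in a suitable transform of these parameters, that dominates $v$ on every ideal while $A(w)=\sum a_i w(x_i)$ can be chosen strictly less than $A(v)$. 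This would contradict $A$-minimality. Concretely, the defect in the Abhyankar inequality gives a nonzero subspace along which $v$ can be replaced by a coarser quasi-monomial datum without losing dominance of $v$ on ideals.

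The hard part — and the reason the conjecture is open — is twofold. First, in the weak-version strategy, limits of divisorial valuations in $\Xan$ need not be quasi-monomial: they can drop in transcendence degree and rational rank and become infinitely singular. Second, in the strong-version strategy, exhibiting the dominating $w$ with strictly smaller $A$ requires a precise structure theory of the fibers of the center map on $\Xan$. Both obstructions disappear in dimension two thanks to the tree structure of $\Val(K(X))$, which is how the authors establish the conjecture in that case. In higher dimensions I would first carry out the reduction sketched in the abstract, to affine space and graded sequences of valuation ideals, where the combinatorial geometry of monomial valuations (via weight polytopes and Newton--Okounkov-type bodies) makes both the approximation of $v$ by quasi-monomial valuations and the perturbation $v\rightsquigarrow w$ more tractable, and then attempt to transport the result back along the reduction.
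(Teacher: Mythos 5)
This statement is Conjecture~B, which the paper does not prove in general: it proves Theorem~A (existence of some computing valuation), reduces the conjecture to Conjecture~C on affine space (Theorem~\ref{main_thm2}) and to graded sequences of valuation ideals (Theorem~\ref{main_thm3}, via Theorem~\ref{equivalent_description}), and settles only the monomial case and dimension two. Your sketch correctly mirrors those reductions and the footnote characterization, but both of its central steps are left unproven, so there is a genuine gap beyond the honest admission that the problem is open. For the weak version, your limit argument only reproduces the proof of Theorem~\ref{main_thm1}: the normalization $v_m(\fra_m)=m$ by itself yields no compactness (the paper must first fix a center $\xi$, pass to $\Spec\widehat{\cO_{X,\xi}}$, enlarge the data so that $\fra_1\supseteq\frm^p$ and $\frm^p\subseteq\frq$ via Propositions~\ref{enlarge2} and~\ref{enlarge3}, normalize $v(\frm)=1$ and bound $A(v)\le M$ before invoking Proposition~\ref{compact}), and, as you yourself note, the limit valuation need not be quasi-monomial -- which is exactly the content of the conjecture, so the approximation scheme gains nothing over Theorem~A.

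For the strong version, the pivotal claim -- that if $v$ is not Abhyankar one can use the ``Abhyankar defect'' to build a quasi-monomial $w$ with $w\ge v$ on all ideals and $A(w)<A(v)$ -- is asserted without any mechanism and is essentially a restatement of the negation of condition~(iii) in Theorem~\ref{equivalent_description}, i.e.\ of what has to be proved. Note that the comparison valuations the theory actually provides, the retractions $r_{Y,D}(v)$, go the other way: $r_{Y,D}(v)\le v$ and $A(r_{Y,D}(v))\le A(v)$ (Corollary~\ref{cor_compatibility}), so they cannot serve as your dominating $w$. Indeed, the paper's two-dimensional proof constructs no such $w$: it approximates a non-quasi-monomial $v_*$ \emph{from below} by quasi-monomial retractions $v_n=r_{Y_n,D_n}(v_*)$ attached to a sequence of infinitely near free points, and uses the Izumi estimate together with the intersection-theoretic bound on strict transforms in Lemma~\ref{L101} (the factor $b_n\ge 2^n$) to show that $\chi(v_n)>\chi(v_{n+1})$ for $n\gg0$ while $\chi(v_n)\to\chi(v_*)$, whence $\chi(v_*)<\Arn^{\frq}(\fra_\bullet)$. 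So even in the one nontrivial case where the conjecture is known, the perturbation runs in the opposite direction from the one you propose, and the existence of a dominating quasi-monomial valuation with strictly smaller log discrepancy is only an a posteriori consequence of the theorem there, not an available ingredient. Until you supply an actual construction for that step (or carry out the reduction to valuation ideals and then prove something new about them), the proposal remains a program rather than a proof.
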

Recall that a quasi-monomial valuation is a valuation $v$ of $K(X)$ with the following property:
there is a proper birational morphism $Y\to X$, with $Y$ smooth,  and coordinates $y_1,\dots,y_r$
at a point $\eta\in Y$, as well as $\alpha_1,\dots,\alpha_r\in\RR_{\geq 0}$ such that 
if $f$ can be written at $\eta$ as $f=\sum_{\beta\in\ZZ_{\geq 0}^r}c_\beta y^\beta$, then
\begin{equation*}
  v(f)=\min\{\sum_i\alpha_i\beta_i\mid 
  \beta=(\beta_1,\dots,\beta_r)\in\ZZ^r_{\geq 0}, c_\beta\neq 0\}.
\end{equation*}
Equivalently, such valuations are known as Abhyankar valuations (see \S 3.2). 
A positive answer to the above conjecture could be interpreted as 
a finiteness property of graded sequences, even those sequence 
that are not finitely generated, that is, for which the 
$\cO_X$-algebra $\bigoplus_{m\geq 0}\fra_m$ is not finitely generated. 

As a consequence of Theorem A, we show that in order to prove Conjecture B
it suffices to consider certain special graded sequences $\fra_{\bullet}$,
namely  those attached to an arbitrary real valuation 
$w$ of $K(X)$, by taking $\fra_m=\{f\mid w(f)\geq m\}$. 
See Theorem~\ref{main_thm3}.

Our second main result reduces Conjecture~B to the case 
of affine space over an algebraically closed field. Furthermore, in the strong version we may assume that $v$ is a valuation of transcendence degree zero. 
In order to get such a statement, we need to work in a 
slightly more general setting that we now explain.
To a nonzero ideal $\fra$, one associates its multiplier ideals 
$\cJ(\fra^t)$, where $t\in\RR_{\geq 0}$.
These are ideals on $X$ with $\cJ(\fra^{t_1})\subseteq\cJ(\fra^{t_2})$
if $t_1> t_2$, and $\cJ(\fra^t)=\cO_X$ for $0\leq t\ll 1$. One knows that there is an unbounded sequence of positive rational numbers
$0<t_1<t_2<\dots$ such that $\cJ(\fra^t)$ is constant for $t\in [t_{i-1},t_i)$ 
and $\cJ(\fra^{t_i})\neq\cJ(\fra^{t_{i-1}})$ for all $i\geq 1$
(with the convention that $t_{0}=0$). These $t_i$ are the \emph{jumping numbers} of $\fra$, introduced and studied in~\cite{ELSV}. From this point of view, the log canonical threshold $\lct(\fra)$
is simply the smallest jumping number $t_1$. 

We index the jumping numbers of $\fra$ as follows. Given a nonzero ideal $\frq$ on $X$,
let $\lct^{\frq}(\fra)$ be the smallest $t$ such that $\frq\not\subseteq\cJ(\fra^t)$. In particular,
we recover $\lct(\fra)$ as $\lct^{\cO_X}(\fra)$. 
The advantage of considering higher jumping numbers comes from the fact that it allows
replacing $X$ by any smooth $X'$, where $X'$ is proper and birational over $X$: in this case
$\lct^{\frq}(\fra)=\lct^{\frq'}(\fra')$, where $\fra'=\fra\cdot\cO_{X'}$ and 
$\frq'=\frq\cdot\cO_{X'}(-K_{X'/X})$. Many of the subtle properties of the log canonical threshold
are not shared by the higher jumping numbers. However, for our purposes, considering also 
$\lct^{\frq}(\fra)$ does not create any additional difficulties. 

In particular, given a graded sequence of ideals $\fra_{\bullet}$, one defines as above
\begin{equation*}
\lct^{\frq}(\fra_{\bullet}):=\lim_{m\to\infty}m\cdot\lct^{\frq}(\fra_m)=\sup_mm\cdot\lct^{\frq}(\fra_m)\in \RR_{\geq 0}\cup\{\infty\},\end{equation*}
and we have
\begin{equation}\label{eq3_introduction}
\lct^{\frq}(\fra_{\bullet})=\inf_v\frac{A(v)+v(\frq)}{v(\fra_{\bullet})},
\end{equation}
where the infimum is over all real valuations of $K(X)$ with center on $X$, and different from the trivial one. With this notation we have versions of Theorem~A and Conjecture~B for $\lct^{\frq}(\fra_{\bullet})$. Furthermore, we reduce the general version of Conjecture~B to the following conjecture about valuations.

\begin{conjC}
  Let $X=\AAA_k^n$, where 
  $k$ is an algebraically closed field of characteristic zero
  and where $n\ge 1$.
  Let $\fra_{\bullet}$ be a graded sequence of ideals on $X$ and 
  $\frq$ a nonzero ideal on $X$ such that 
  $\lct^\frq(\fra_\bullet)<\infty$ and such that 
  $\fra_1\supseteq\frm^p$, where $p\ge 1$ and 
  $\frm=\frm_\xi$ is the ideal defining a closed point $\xi\in X$.
 \begin{itemize}
  \item \textbf{Weak version}:
    there exists a quasi-monomial valuation $v$ 
    computing $\lct^\frq(\fra_\bullet)$ and 
    having center $\xi$ on $X$.
  \item \textbf{Strong version}: 
    any valuation of transcendence degree 0
    computing $\lct^\frq(\fra_\bullet)$ and having center
    $\xi$ on $X$, must be quasi-monomial.
  \end{itemize} 
\end{conjC}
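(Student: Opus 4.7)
The natural framework is the space of real valuations of $k(x_1,\dots,x_n)$ centered at $\xi$, normalized by $v(\frm)=1$. The hypothesis $\fra_1\supseteq\frm^p$ is what forces the problem to localize at $\xi$: for any $v$ with center on $X$ one has $v(\fra_\bullet)\le v(\fra_1)\le p\cdot v(\frm)$, so if $v$ computes $\lct^{\frq}(\fra_\bullet)<\infty$ then $v(\frm)>0$, and hence $v$ has center $\xi$. Since the quotient $(A(v)+v(\frq))/v(\fra_\bullet)$ is invariant under rescaling $v$, we may impose the normalization $v(\frm)=1$ without loss, placing the whole minimization problem on a compact subset of the Berkovich-style space of normalized valuations at $\xi$.

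For the weak version, the plan is to start from the computing valuation $v_*$ supplied by (the appropriate generalization of) Theorem~A and then retract it onto quasi-monomial strata. Given a log smooth model $\pi\colon Y\to X$ with reduced exceptional divisor $E=\sum E_i$ over $\xi$, there should be a retraction of the normalized valuation space onto the simplicial cone $\QM(Y,E)$, whose image on any $v$ depends only on the values $v(E_i)$. Choosing a cofinal system of log smooth models on which $\frq$ and the $\fra_m$ become monomial up to larger and larger index $m$, and exploiting the lower semicontinuity of $A$ together with the fact that $(r_{Y,E}v)(\fra)=v(\fra)$ whenever $\fra\cdot\cO_Y$ is monomial along $E$, one shows that the retracted valuations do at least as well as $v_*$ on the truncations of $\fra_\bullet$. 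A compactness and diagonal argument in the normalized valuation space should then extract a quasi-monomial valuation $v_\infty$ that computes $\lct^{\frq}(\fra_\bullet)$.

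For the strong version, I would argue by contradiction and perturbation. Suppose $v$ computes $\lct^{\frq}(\fra_\bullet)$, has center $\xi$ and transcendence degree zero, but is not Abhyankar; then its rational rank is strictly less than $n$. One now attempts to build a one-parameter family $\{v_t\}$ in the normalized valuation space, with $v_0=v$, along which $v_t(\fra_\bullet)$ increases strictly faster than $A(v_t)+v_t(\frq)$ to first order, contradicting minimality of $v$. Heuristically there are $n$ directions in which one can perturb at $\xi$, but only rational rank $<n$ worth of independent constraints come from preserving $v(\fra_\bullet)$ against $A(v)+v(\frq)$, so some direction of strict improvement should remain. Implementing this rigorously would proceed by choosing a log smooth model adapted to $v$ and deforming $v$ inside $\QM(Y,E)$ along a direction transverse to the stratum containing $v$.

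The main obstacle, and the reason Conjecture~C is genuinely hard, is the lack of a usable description of $A(v)$ and of the variation of $v\mapsto v(\fra_\bullet)$ away from the quasi-monomial locus. For $v\in\QM(Y,E)$ both functions are piecewise linear in the barycentric coordinates, but for an exotic $v$ of transcendence degree zero and rational rank $<n$, $A(v)$ is only defined as a supremum over divisorial valuations and $v(\fra_\bullet)$ is only lower semicontinuous under the natural retractions. In dimension two the valuative tree and its parametrization by skewness make the required estimates tractable, which is why the analogous statement is known there; in higher dimension a fine quantitative analysis of the Berkovich-style structure on the space of valuations centered at $\xi$ - in particular a sharp comparison of $A$ and $v(\fra_\bullet)$ transverse to the quasi-monomial strata - appears indispensable, and this is where genuinely new ideas will be required.
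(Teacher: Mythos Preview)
This statement is Conjecture~C, which the paper leaves open in dimension $\ge 3$; the paper only proves the strong version in dimension two (\S9) and in the monomial case (\S8). So there is no ``paper's own proof'' in general to compare against, and your proposal is not a proof either --- as you yourself recognize in the final paragraph.

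That said, your sketch for the \emph{weak version} contains a genuine gap that is worth making explicit. The retraction $r_{Y,D}$ does satisfy $A(r_{Y,D}(v_*))\le A(v_*)$, $r_{Y,D}(v_*)(\frq)\le v_*(\frq)$, and $r_{Y,D}(v_*)(\fra_m)=v_*(\fra_m)$ whenever $(Y,D)$ resolves $\fra_m$; so for each fixed $m$ the retracted valuation does at least as well on the truncation. The problem is with the diagonal argument: as $(Y,D)$ runs over a cofinal system of log resolutions, the retractions $r_{Y,D}(v_*)$ converge back to $v_*$ (this is exactly Theorem~4.9). More fundamentally, quasi-monomial valuations are dense in $\Val_X$ (Corollary~4.8), so compactness in the full normalized valuation space cannot force a limit to be quasi-monomial. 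To make this work you would need an a priori bound on the complexity of the quasi-monomial approximants --- e.g.\ that they all lie in $\QM(Y,D)$ for a \emph{fixed} pair --- and there is no mechanism in your sketch producing such a bound.

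For the \emph{strong version}, your perturbation heuristic and your diagnosis of the obstacle are both reasonable, but the paper's two-dimensional proof is organized differently and it is instructive to see how. Rather than perturbing a non-Abhyankar $v_*$ directly, the paper builds a canonical sequence of divisorial approximants $v_n=r_{Y_n,D_n}(v_*)$ via iterated toroidal blowups adapted to $v_*$, with $b_n:=\ord_{E_n}(\frm)\ge 2^n$. The crucial quantitative input is Lemma~9.2: for any curve $H$ through $0$, the strict transform satisfies $\ord_{p_n}(\widetilde{H}|_{E_n})\le b_n^{-1}\ord_0(H)$. This controls the slope of $t\mapsto v_{n,t}(\fra_\bullet)$ along the one-parameter family in $\QM(Y_n,D_n)$ joining $v_n$ to $v_{n+1}$, and forces $\chi(v_{n+1})<\chi(v_n)$ for $n\gg 0$ while $\chi(v_n)\to\chi(v_*)$. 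So the argument is a one-sided monotonicity statement along a specific path of retractions, driven by an intersection-theoretic estimate, rather than a local first-order analysis at $v_*$. Whether an analogue of Lemma~9.2 exists in higher dimension is exactly the missing ingredient you identify.
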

\begin{thmD}
  If Conjecture~C holds for all $n\leq d$, then 
  Conjecture~B holds for all $X$ with 
  $\dim(X)\leq d$.
\end{thmD}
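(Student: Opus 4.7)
The plan is to reduce an instance of Conjecture~B in dimension $\le d$ to an instance of Conjecture~C with $n\le d$, by a chain of operations each of which preserves the asymptotic jumping number, the computing valuation, and the quasi-monomiality property. I work throughout with the $\frq$-version of the invariants, since the arguments for both versions are parallel and the form of Conjecture~B stated in the introduction corresponds to the special case $\frq=\cO_X$. Given $\fra_\bullet$ and $\frq$ on smooth $X$ with $\lct^\frq(\fra_\bullet)<\infty$, I first invoke Theorem~A (or its $\frq$-analogue) to produce a computing valuation $v$ with center $\xi\in X$; in the strong version, $v$ is prescribed from the outset.

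Next I work locally at $\xi$, first localizing to $R=\cO_{X,\xi}$ and then passing to the formal completion $\widehat R$. Since $v(\frm_\xi)>0$, the valuation $v$ lifts to a valuation $\hat v$ on $\widehat R$ with $A(\hat v)=A(v)$, $\hat v(\frq)=v(\frq)$, and $\hat v(\fra_\bullet)=v(\fra_\bullet)$; these identifications rely on faithful flatness of $R\to\widehat R$ and on the compatibility of multiplier ideals with completion. By Cohen's structure theorem, $\widehat R\cong K[[x_1,\dots,x_r]]$ for $r=\dim R\le d$ and $K$ the residue field at $\xi$; an Artin-type approximation, applied to a finite list of generators of each $\fra_m$ and of $\frq$ sufficient to determine $\lct^\frq$ up to the next jumping number, replaces the formal data by algebraic data on $\AAA^r_K$ without altering invariants. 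Base-changing from $K$ to its algebraic closure $\bar K$ preserves everything relevant. Finally, to enforce the primariness condition of Conjecture~C, I replace $\fra_m$ by $\fra_m+\frm^{mc}$ for $c>\hat v(\fra_\bullet)/\hat v(\frm)$; then $\hat v(\fra_m+\frm^{mc})=\hat v(\fra_m)$ for every $m$, so $\hat v$ still computes, $\lct^\frq$ is unchanged, and now $\fra_1\supseteq\frm^c$.

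At this stage the hypotheses of Conjecture~C hold with $n=r\le d$. In the weak case, Conjecture~C furnishes a quasi-monomial computing valuation on $\AAA^r_{\bar K}$, which traces back through the reductions (each preserving quasi-monomiality) to a quasi-monomial computing valuation on the original $X$. In the strong case, the given $v$ may have positive transcendence degree $t:=\trdeg_k\kappa(v)$, so Conjecture~C does not apply to it directly; to reduce to the transcendence-degree-zero setting I pass to a smooth birational model $X'\to X$ on which $v$ factors through a smooth morphism $X'\to B$ with $\dim B=t$, restrict to the generic fiber, and base-change to $\overline{K(B)}$. The result is a datum on an affine variety of dimension $r-t\le d$ over an algebraically closed field, with $v$ now of transcendence degree zero, to which Conjecture~C applies; quasi-monomiality then descends to the original $v$. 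The main obstacle I anticipate is the uniform algebraization step: polynomial approximants to the formal generators of $\fra_m$ must reproduce $\hat v(\fra_m)$ for every $m$ simultaneously, not merely asymptotically, which likely requires either a valuation-theoretic finiteness controlling the denominators appearing in $v(\fra_m)/m$ or a truncation keyed to the level of the relevant jumping number.
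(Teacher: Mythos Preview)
Your overall architecture is right, but the algebraization step (4) is a real gap, and you correctly flag it yourself. Artin approximation cannot handle infinitely many ideals $\fra_m$ simultaneously, and there is no finiteness result of the kind you hope for. The paper avoids this entirely by reordering your steps: it performs the enlargement \emph{before} passing from $K\llbracket x_1,\dots,x_r\rrbracket$ to $\AAA^r_K$, not after. Once $\frm^p\subseteq\fra_1$ (and $\frm^N\subseteq\frq$), every $\fra_m\supseteq\frm^{pm}$ is $\frm$-primary, hence is already the extension of an ideal of $K[x_1,\dots,x_r]$; the descent from the formal to the polynomial ring is then tautological, and no approximation argument is needed. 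Two further points on the enlargement itself: first, your $\fra_m+\frm^{mc}$ is not obviously a graded sequence (check $\fra_m\cdot\frm^{nc}$); the paper uses $\frc_j=\sum_{i\le j}\fra_i\,\frm^{p(j-i)}$, which is. Second, the assertion that $\lct^\frq(\frc_\bullet)=\lct^\frq(\fra_\bullet)$ for $p\gg0$ is not automatic from $v(\frc_\bullet)=v(\fra_\bullet)$ for your chosen $v$; one must show that \emph{no other} valuation gains, and this uses that $\xi$ is a generic point of $V(\cJ(\fra_\bullet^\lambda):\frq)$ together with a separate estimate (Proposition~\ref{enlarge2} in the paper).

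For the strong version, your fibration picture is more elaborate than necessary. The paper simply replaces $X$ by a higher birational model $X'$ on which the center $c_{X'}(v)$ is as large as possible; then $\trdeg_{X'}(v)=0$ automatically, and by Lemma~\ref{compute_open_subset} the same $v$ still computes $\Arn^{\frq'}(\fra'_\bullet)$ on $X'$. After that, one uses Theorem~\ref{equivalent_description} to replace $\fra_\bullet$ by the valuation ideals of $v$ (which already satisfy $\frm^p\subseteq\fra_1$), and then the same localize--complete--descend--extend-scalars chain as in the weak case applies verbatim.
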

We give a proof of the strong version of 
Conjecture~C in dimension $\leq 2$. 
The argument is similar to the one used in~\cite{FJ3}, where 
a version of Conjecture~B is proved. 
However, as opposed to~\cite{FJ3},
the proof given here does not use the detailed tree 
structure of the valuation space at a point.

As is always the case when dealing with graded sequences of ideals (see~\cite{positivity},
\cite{ELMNP},~\cite{Mustata} and~\cite{FJ3}), a key tool is provided by the corresponding system of asymptotic multiplier ideals $\frb_{\bullet}=(\frb_t)_{t\in\RR_{>0}}$. These are defined by 
$\frb_t=\cJ(\fra_m^{t/m})$ for $m$ divisible enough. The invariant $\lct^{\frq}(\fra_{\bullet})$ can be recovered 
as the smallest $\lambda$ such that $\frq\not\subseteq\frb_{\lambda}$. 
A fundamental property 
of $\frb_{\bullet}$ is provided by the Subadditivity Theorem~\cite{DEL}, which
says that $\frb_{s+t}\subseteq\frb_s\frb_t$ for every $s,t\ge0$.
Given a general such \emph{subadditive} systems of ideals $\frb_{\bullet}$ 
(not necessarily associated to a graded system) we 
introduce and study asymptotic invariants. 
A key property for us is that a graded sequence
$\fra_{\bullet}$ has, roughly speaking, the same 
asymptotic invariants as its system $\frb_\bullet$
of multiplier ideals.

We now describe the key idea in the proof of Theorems A and D. Given a graded sequence
$\fra_{\bullet}$ and a nonzero ideal $\frq$ with $\lambda=\lct^{\frq}(\fra_{\bullet})<\infty$, let $\xi$ be the generic point of an irreducible component of the subscheme defined by $(\frb_\lambda\colon\frq)$. After localizing and completing at $\xi$, we may assume that 
$X=\Spec\,k\llbracket x_1,\dots,x_n\rrbracket$ for a characteristic zero field $k$, and that $\xi$ is the closed point.
We show that if $\frm$ is the ideal defining $\xi$, and 
$p\gg 0$, then $\lct^{\frq}(\fra_{\bullet})=\lct^{\frq}(\frc_{\bullet})$, where $\frc_{\ell}=\sum_{i=0}^{\ell}
\fra_i\cdot\frm^{p(\ell-i)}$. 
Using a compactness argument for the space of normalized valuations with center at $\xi$, we construct a valuation $v$ with center at $\xi$, which computes $\lct^{\frq}(\frc_{\bullet})$. It is now easy to see that $v$ also computes $\lct^{\frq}(\fra_{\bullet})$. This proves  the general version of Theorem A. In order to prove Theorem D for the weak versions of the conjectures, we need two extra steps: we show that after replacing $X$ by a higher model, the valuation $v$ that we construct has transcendence degree zero, and then we show that we may replace $k$ by its algebraic closure $\overline{k}$, and 
$\Spec\,\overline{k}\llbracket x_1,\dots,x_n\rrbracket$ by $\AAA_{\overline{k}}^n$.
In this case, assuming Conjecture C, we can choose $v$ to be quasi-monomial.

A general principle in our work is to study a graded system 
$\fra_\bullet$ of ideals on $X$ through the induced function
$v\mapsto v(\fra_\bullet)$ on the space $\Val_X$ of
real-valued valuations on $K(X)$ admitting a center on $X$.
We show in Theorem~\ref{T301} 
that $\Val_X$ can be viewed as a projective limit of simplicial cone 
complexes equipped with an integral affine structure,
a description which leads us to extend the log discrepancy
from divisorial to arbitrary valuations. 
In fact, the precise understanding of the log discrepancy
plays a key role in the proof of Theorem~D. 

Spaces of valuations, such as Berkovich spaces~\cite{Ber1},
are fundamental objects in non-Archimedean geometry.
More surprisingly, they have recently seen a 
number of applications to problems over
the complex numbers~\cite{Ber2,KoSo,FJ2,FJ3,BFJ1,BdFF,Ked1,Ked2}.
The space $\Val_X$ is a dense subset of the Berkovich analytic 
space $\Xan$ and has the advantage of being birationally
invariant (as a set). It is also closely related to the valuation
space considered in~\cite{BFJ1}.
See~\S\ref{comparison} for more details.
Expecting the space $\Val_X$ to be useful for further studies,
we spend some time analyzing it in detail.
However, on a first reading, the reader may want to skim 
through~\S\S\ref{S304}-\ref{S306}.

We mention that part of our motivation comes from the Openness Conjecture of Demailly and 
Koll\'{a}r~\cite{DK} for plurisubharmonic (psh) functions. The connection between valuation theory and this conjecture has been highlighted by the two-dimensional result in~\cite{FJ3}, and by the higher-dimensional
framework in~\cite{BFJ1}. In the setting of psh functions,
one can define analogues of the invariant
$\lct(\fra_{\bullet})$, and one can formulate an analogue of Conjecture~B, which would imply in particular the Openness Conjecture.
 While in general there is no graded sequence associated to a psh function 
$\phi$, Demailly's approximation technique (see~\cite{DK}) 
allows one to get a subadditive system of ideals $\frb_{\bullet}$.
We expect that methods similar to the ones used in this paper should give analogues of Theorems~A and D for psh functions (in particular, this would reduce an analytic statement, the Openness Conjecture, to the valuation-theoretic Conjecture~C above). We hope to treat the
case of psh functions in future work.

As explained above, we make use of localization and completion.
Furthermore, when working in the analytic setting it is convenient to consider
schemes of finite type over rings of convergent power series over $\CC$. 
In order to cover all such cases, we work from the beginning with regular excellent schemes 
over $\QQ$, as in~\cite{dFM}. The basic results about log canonical thresholds and multiplier ideals carry over to this setting. Some of the more subtle results, whose proofs use vanishing theorems, are reduced to the familiar setting in the appendix.

The paper in structured as follows. In~\S\ref{S301} we set up some notation and definitions, 
and in~\S\ref{S302} we introduce  the asymptotic invariants for graded sequences and subadditive systems of ideals. We prove here their basic properties, and in particular, we relate the invariants of a graded
sequence and those of the corresponding subadditive system of asymptotic multiplier ideals.
In~\S\ref{S304} we introduce the quasi-monomial valuations and prove some general properties 
that will be needed later. Section~\ref{S305} contains some results concerning the structure of the valuation space,
while in~\S\ref{S306} we use this framework to extend the log discrepancy function to the whole valuation space. In~\S\ref{S305} and~\S\ref{S306} we follow the approach in~\cite{BFJ1}, 
with some modifications due to the fact that we do not restrict to valuations centered at a given point.
In~\S\ref{S307} we return to subadditive and graded sequences, and extend some results that we proved
for divisorial valuations to arbitrary valuations. 
Section~\ref{S308} is the central section of the paper, in which we prove our main results. In \S\ref{S101} we consider a special case,
that of graded sequences of monomial ideals. In this case the picture can be completely described, and in particular, we
see that Conjecture~B has a positive answer.
We give a proof of Conjecture~C in the two-dimensional case in~\S\ref{S310}.
The appendix shows how to extend some basic results about multiplier ideals, the Restriction and the Subadditivity Theorems, from the case of varieties over a field to our more general setting. 

\noindent\textbf{Acknowledgment.}
This work started as a joint project with Rob Lazarsfeld.  We remain
indebted to him for many inspiring discussions on this subject, and
also for sharing with us over the years his insights about multiplier
ideals and asymptotic invariants. The first author has also 
benefitted greatly from discussions with S\'ebastien Boucksom and Charles Favre.
Finally we are grateful to Michael Temkin for patiently 
answering our questions about resolution of singularities
and to the referee for a careful reading of the paper.

\section{Preliminaries}\label{S301}
Our main interest is in smooth algebraic varieties. However,
as we have already explained, it is more convenient to develop the whole 
theory in a general setting, when
 the ambient scheme $X$ is separated, regular, connected, 
 and excellent (we review the definition of excellent schemes in~\S1.1 below).
However, most of the time the reader will not lose much by 
assuming that we deal with separated, smooth algebraic varieties over
an algebraically closed field.

The main tool in our study is provided by multiplier ideals. For the theory of multiplier ideals
in the case of varieties over a field $k$ we refer to~\cite{positivity}. The definition and basic properties carry over easily  to our framework, see~\cite{dFM} and~\cite{dFEM}. 
For a small subtlety in computing the log discrepancy divisor in our setting, see 
\S 1.3 below.
The key fact that we have log resolutions in this setting follows from~\cite{Temkin}.
Certain care is only required when extending the results
that rely on vanishing theorems, since such results are not known in our framework. We explain in the appendix how the extension of some basic results, the Restriction and the Subadditivity Theorems,
can be carried out. From now on, without further discussion, we will not distinguish between the classical setting and ours when dealing with multiplier ideals. 

\subsection{Excellent schemes and regular morphisms}
Recall that a Noetherian scheme is \emph{regular} if all its local rings are regular. 
An effective divisor $D$ on a regular scheme $X$ has \emph{simple normal crossings} if 
at every point $\xi\in X$ there are algebraic coordinates $x_1,\ldots,x_r$ at $\xi$
(that is, a regular system of parameters of $\cO_{X,\xi}$) such that 
$D$ is defined at $\xi$ by $x_1^{a_1}\cdots x_r^{a_r}$, for some
$a_1,\ldots,a_r\in\ZZ_{\geq 0}$. 

A morphism $\mu\colon X'\to X$ between Noetherian schemes is \emph{regular}
if it is flat and all its fibers are geometrically regular 
(since all our schemes are schemes over $\QQ$, this simply means regular). 
An immediate consequence of the definition is that if $\mu$ is regular 
and $Y\to X$ is any morphism with $Y$ Noetherian, then $Y'\to Y$ is regular, where
$Y'=Y\times_XX'$.
In particular, if $Y$ is a regular scheme, then so is $Y'$;
similarly, if $D$ is a divisor on $Y$ having simple normal crossings, 
then so does its inverse image on $Y'$.

For an introduction to regular morphisms, see~\cite[Chapter 32]{Matsumura}. 
\begin{example}\label{E401}
  Let $K/k$ be an extension of fields of characteristic zero.
  Then the induced morphism $\varphi\colon\AAA^n_K\to\AAA^n_k$
  is regular and faithfully flat.
\end{example}

Recall that a Noetherian ring $A$ is \emph{excellent} if the
following hold:
\begin{enumerate}
\item[1)] For every prime ideal $\frp$ in $A$, the completion
morphism $A_{\frp}\to \widehat{A_{\frp}}$  corresponds to a regular
scheme morphism.
\item[2)] For every $A$-algebra of finite type $B$, the regular
locus of ${\rm Spec}(B)$ is open.
\item[3)] $A$ is universally catenary.
\end{enumerate}
A Noetherian scheme $X$ is
\emph{excellent} if it admits an open cover by spectra of excellent
rings. Note that by definition, if $X$ is an excellent scheme, then for every point $\xi\in X$
the canonical morphism $\Spec \widehat{\cO_{X,\xi}}\to X$ is regular.

For the basics on excellent rings we refer to~\cite[Chapter~32]{Matsumura},
and the references therein.
It is known that a localization of an algebra of finite type over an excellent ring
is excellent. Another important example of excellent rings is provided by local
complete Noetherian rings. In particular, formal power series rings 
over a field are excellent
(and the same holds for rings of convergent power series over $\CC$).

\subsection{Valuations}
From now on, we assume that $X$ is a separated, regular, connected, excellent scheme 
over $\QQ$.
We will consider the set 
$\Val_X$ of all real valuations of the function field $K(X)$ of $X$
that admit a center on $X$. The 
last condition means that if $\cO_v$ is the valuation ring of $v$, then there is a point $\xi=c_X(v)\in X$, the \emph{center} of $v$, such that we have
a local inclusion of local rings $\cO_{X,\xi}\hookrightarrow \cO_v$. 
Note that since $X$ is separated, the center is unique.
We sometimes call the closure
of $c_X(v)$ the center of $v$, too.
The \emph{trivial valuation} is the valuation with center 
at the generic point of $X$, or equivalently, whose restriction to $K(X)^*$
is identically zero.
We denote by $\Val_X^*\subseteq\Val_X$ the subset of nontrivial valuations.
Notice that if $X$ is a variety over a field $k$, 
then the restriction of any $v\in\Val_X$ to $k$ is the trivial valuation.

It is clear that for every $v$ as above, since the ring $\cO_{X,\xi}$ is Noetherian, there is no infinite decreasing sequence $v(f_1)>v(f_2)>\dots$, with all $f_i$ in $\cO_{X,\xi}$. Indeed,
the sequence of ideals $\fra_i=\{f\in\cO_{X,\xi}\mid v(f)\geq v(f_i)\}$ would be strictly increasing. In particular, we see that there is a minimal $v(f)$, where  $f$ varies over the maximal ideal of 
$\cO_{X,\xi}$.

Let $v\in\Val_X$, $\xi=c_X(v)$ and $\frm$ the maximal ideal of $\cO_{X,\xi}$.
By $\frm$-adic continuity, $v$ then extends uniquely as a 
\emph{semivaluation} on the completion $\widehat{\cO_{X,\xi}}$,
that is, a function $v:\widehat{\cO_{X,\xi}}\to\RR_{\ge0}\cup\{+\infty\}$
satisfying the usual valuation axioms.

If $v\in\Val_X$, and if $\fra$ is 
an ideal\footnote{By ``ideal on $X$'' we shall mean 
  ``coherent  ideal sheaf on $X$'' throughout the paper.}
on $X$, then we put 
$v(\fra):=\min_fv(f)$, where the minimum is over 
local sections of $\fra$ that are defined in a neighborhood of 
$c_X(v)$. 
If $Z$ is the subscheme defined by $\fra$, we also write this as $v(Z)$.
In fact, it turns out to be natural to instead
view a valuation as taking values on ideals rather than 
rational functions. 
Let $\cI$ be the set of nonzero ideals on $X$.
It has the structure of an ordered semiring, with the order given by inclusion, and the operations given by addition and multiplication.
The set $\RR_{\ge0}$ also has an ordered  semiring structure, with 
operations given by minimum and addition.
As above, a valuation $v\in\Val_X$ induces a function 
$v\colon \cI\to\RR_{\ge0}$
by $v(\fra):=\min\{v(f)\mid f\in\fra\cdot\cO_{X,\xi}\}$, 
where $\xi=c_X(v)$,
and this function is easily seen to be a homomorphism of semirings:
\begin{equation}\label{e210}
  v(\fra\cdot\frb)=v(\fra)+v(\frb)
  \quad\text{and}\quad
  v(\fra+\frb)=\min\{v(\fra),v(\frb)\}.
\end{equation}
Note that such a homomorphism is automatically order-preserving
in the sense $v(\fra)\geq v(\frb)$ if $\fra\subseteq\frb$. 
Indeed, $\fra\subseteq\frb$ implies $\fra+\frb=\frb$.
Moreover, the above homomorphism has $\xi$ as a center on $X$ in the sense
that $v(\fra)>0$ if and only if $\xi\in V(\fra)$.
Conversely, if $v\colon \cI\to\RR_{\ge0}$ is a semiring homomorphism
admitting $\xi\in X$ as a center, then $v$ induces a valuation
in $\Val_X$ centered at $\xi$. 
Indeed, if $f\in\cO_{X,\xi}$, then we define
$v(f):=v(\fra)$ for any ideal $\fra$ on $X$
such that $\fra\cdot\cO_{X,\xi}$ is principal, 
generated by $f$. 
One can check that this is well-defined, and it extends to a valuation of $K(X)$ having center 
at $\xi$. It is clear that these two maps between $\Val_X$ and
semiring homomorphisms $\cI\to\RR_{\geq 0}$ with center on $X$ are mutual inverses.

\subsection{Divisorial valuations and log discrepancy}
A distinguished role is played by the \emph{divisorial} valuations $\ord_E$, where $E$ is a 
\emph{divisor over} $X$, that is, a
prime divisor on a normal scheme $Y$, having a proper birational
morphism $\pi\colon Y\to X$. It follows from results on resolution of
singularities in this setting (see~\cite{Temkin}) that we may always
choose $Y$ regular, with $E$ a regular divisor. 

If $\pi\colon Y\to X$ is a proper, birational morphism between schemes as above (both of them regular),
we consider the $0^{\rm th}$ Fitting ideal $\Fitt_0(\Omega_{Y/X})$ of the relative sheaf
of differentials. 
Note
that $\Fitt_0(\Omega_{Y/X})$ defines the exceptional locus of $\pi$. 
As we will see in Corollary~\ref{K_divisor} below, this is 
a locally principal ideal, hence it defines
an effective divisor, the \emph{relative canonical divisor} $K_{Y/X}$. 
The \emph{log discrepancy} $A(\ord_E)$
is defined as 
\begin{equation*}
  A(\ord_E)
  =\ord_E(K_{Y/X})+1
  =\ord_E(\Fitt_0(\Omega_{Y/X}))+1.
\end{equation*}
The log discrepancy depends on the variety $X$; whenever there is some ambiguity, we denote it by $A_X(\ord_E)$.

There is some subtlety involved in the notion of 
log discrepancy in our setting, so we discuss
this briefly, using some notions and results from~\cite{dFEM}. 
The difficulty comes from the fact that our schemes
are not
of finite type over a field. 
In order to deal with this issue, we first consider the case when the schemes are of finite type
over a formal power series ring $R=k\llbracket t_1,\dots,t_n\rrbracket$, with $k$ a field. For each such scheme $X$ one introduced in~\cite{dFEM} a coherent \emph{sheaf of special differentials}
$\Omega'_{X/k}$, together with a (special) $k$-derivation $d'\colon\cO_X\to\Omega'_{X/k}$. 
If $\xi\in X$ is a regular point, then $\Omega'_{X/k,\xi}$
is a free $\cO_{X,\xi}$-module of rank $\dim(\cO_{X,\xi})+\dim_{k(\xi)}\Omega'_{k(\xi)/k}$, where
$k(\xi)$ is the residue field of $\xi$. 
Furthermore, if $x_1,\dots,x_r$ form a regular system of parameters at $\xi$, then $d'(x_1)_{\xi},\dots,d'(x_r)_{\xi}$ 
are part of a basis of $\Omega'_{X/k,\xi}$. 

Suppose now that $\pi\colon Y\to X$ is a proper birational morphism between schemes
as above. 
If $\eta\in Y$ and $\xi=\pi(\eta)\in X$, then the Dimension Formula (see 
\cite[Theorem~15.6]{Matsumura}) gives $\dim(\cO_{Y,\eta})=\dim(\cO_{X,\xi})-
\trdeg(k(\eta)/k(\xi))$. On the other hand, there are exact sequences
\begin{equation}\label{exact1}
\Omega'_{X/k,\xi}\otimes_{\cO_{X,\xi}}\cO_{Y,\eta}
\overset{T}\to\Omega'_{Y/k,\eta}\to\Omega_{Y/X,\eta}\to 0,
\end{equation}
\begin{equation}\label{exact2}
0\to \Omega'_{k(\xi)/k}\otimes_{k(\xi)}k(\eta)\to\Omega_{k(\eta)/k}\to\Omega_{k(\eta)/k(\xi)}\to 0.
\end{equation}
Note also that by definition $T(d'(x)\otimes 1)=d'(\pi^*(x))$. 
We see that $T$ is a morphism between free $\cO_{Y,\eta}$-modules of the same rank, hence
$\Fitt_0(\Omega_{Y/X})$ is generated at $\eta$ by ${\rm det}(T)$. In particular,
$\Fitt_0(\Omega_{Y/X})$ is a locally principal ideal.

The next lemma will allow us to reduce the case of arbitrary excellent schemes
to that of schemes of finite type over a formal power series ring over a field.

\begin{lemma}\label{lem_regular_morphism}
If $X'$, $X$, and $Y$ are regular, connected, excellent schemes, and
$\pi\colon Y\to X$ are $\mu\colon X'\to X$ are morphisms,
with $\pi$ proper and birational, and $\mu$ regular, then the following hold:
\begin{enumerate}
\item[(i)] $Y':=Y\times_XX'$ is regular and connected, and the canonical projection
$\pi'\colon Y'\to X'$ is proper and birational.
\item[(ii)] We have $\Fitt_0(\Omega_{Y'/X'})=\Fitt_0(\Omega_{Y/X})\cdot\cO_{Y'}$.
\end{enumerate}
\end{lemma}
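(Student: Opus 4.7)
The plan is to exploit flatness and regularity of $\mu$ so that base change by $\mu$ preserves the relevant properties of $\pi$. Throughout I use that in our Noetherian setting, regular and connected implies integral, so $X$, $X'$, $Y$ are all integral.

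For part (i), I would argue in three steps. First, $Y'$ is regular because $Y'\to Y$ is regular (as a base change of the regular morphism $\mu$) and $Y$ is regular. Second, $\pi'$ is proper as a base change of $\pi$, and surjective since $\pi$ is (a proper birational map onto an integral scheme is surjective). Third, for connectedness of $Y'$ I would invoke Zariski's connectedness theorem: $X$ and $Y$ are normal (regular $\Rightarrow$ normal) and $\pi$ is proper birational, so $\pi_*\cO_Y = \cO_X$ and the fibers of $\pi$ are geometrically connected. The fibers of $\pi'$ are base changes of these, hence still connected, and a proper surjective map with connected fibers onto a connected base has connected source; so $Y'$ is connected, and combined with regularity this forces $Y'$ to be integral. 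For birationality of $\pi'$, let $U\subseteq X$ be the dense open where $\pi$ is an isomorphism. Since $\mu$ is flat into the integral scheme $X$, it is dominant, so $\mu^{-1}(U)$ contains $\eta_{X'}$ and is dense in $X'$; as $\pi'$ restricts to an isomorphism over $\mu^{-1}(U)$, it is birational.

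For part (ii), let $q\colon Y'\to Y$ be the second projection in the fiber square. Since $\pi'$ is the base change of $\pi$ along $\mu$, K\"ahler differentials commute with base change, giving a canonical isomorphism $\Omega_{Y'/X'}\cong q^*\Omega_{Y/X}$. The zeroth Fitting ideal commutes with pullback on finitely presented modules: if $F_1\to F_0\to M\to 0$ is a presentation of $M$ by finite free $\cO_Y$-modules, then applying $q^*$ gives a presentation of $q^*M$ with the same entries, so $\Fitt_0(q^*M)=\Fitt_0(M)\cdot\cO_{Y'}$. Applying this with $M=\Omega_{Y/X}$ yields the stated identity.

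The main obstacle I foresee is the connectedness of $Y'$ in (i), since regular base change preserves regularity but not a priori irreducibility of the source. The Zariski connectedness approach above is the cleanest, and is available in the excellent setting. A more hands-on alternative avoids Zariski connectedness entirely: by flatness, every generic point of $Y'$ maps under $q$ to $\eta_Y$, and using birationality of $\pi$ (so $K(Y)=K(X)$) one identifies $q^{-1}(\eta_Y)$ with $\mu^{-1}(\eta_X)$, which is a localization of the integral scheme $X'$ and hence irreducible; therefore $Y'$ has a unique generic point and is irreducible. Part (ii) is then essentially a formal consequence of base change compatibility of differentials and of Fitting ideals, so I do not expect difficulty there.
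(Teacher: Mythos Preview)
Your proof is correct and follows essentially the same approach as the paper's. The paper's argument is extremely terse: it notes that the base change $Y'\to Y$ of $\mu$ is regular (hence $Y'$ is regular), that $\pi'$ is proper with connected fibers and an isomorphism over an open subset of $X'$ (hence $Y'$ is connected and $\pi'$ is birational), and that~(ii) follows from $\Omega_{Y'/X'}=\nu^*\Omega_{Y/X}$ together with compatibility of Fitting ideals with pullback. Your write-up supplies the details the paper omits---the Zariski connectedness input, the surjectivity of $\pi$, and the density of $\mu^{-1}(U)$ via flatness---and your alternative irreducibility argument via the generic fiber is a nice bonus, but the skeleton is the same.
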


\begin{proof}
  Since $\mu$ is a regular morphism, 
  its base-change $\nu\colon Y'\to Y$ is regular, too. 
  We deduce that $Y'$
  is a regular scheme, since $Y$ has this property. It is clear that $\pi'$ is proper,
  has connected fibers, and is
  an isomorphism over an open subset of $X'$. Therefore $Y'$ is connected and
 $\pi'$ is birational.
 The assertion in  (ii) follows from the fact that $\Omega_{Y'/X'}={\nu}^*(\Omega_{Y/X})$,
 while taking Fitting ideals commutes with pull-back. 
\end{proof}

\begin{corollary}\label{cor_regular_morphism}
  With the notation in Lemma~\ref{lem_regular_morphism}, 
  suppose $\eta\in Y$, $\xi=\pi(\eta)$,
  and $\mu\colon X'=\Spec \widehat{\cO_{X,\xi}}\to X$ 
  is the canonical morphism.
  If $\eta'\in Y'=Y\times_XX'$ lies over the closed point 
  $\xi'\in X'$ and over $\eta\in Y$, then the following hold:
  \begin{enumerate}
  \item[(i)] 
    $\cO_{Y',\eta'}\otimes_{\cO_{Y,\eta}}k(\eta)=k(\eta)$;
  \item[(ii)] 
    if $\Fitt_0(\Omega_{Y'/X'})$ is locally principal, 
    then $\Fitt_0(\Omega_{Y/X})$
    is principal at $\eta$;
  \item[(iii)] 
    $\dim(\cO_{Y',\eta'})=\dim(\cO_{Y,\eta})$
    and if  $y_1,\dots,y_s$ give algebraic coordinates at $\eta$, then
    so do ${\nu}^*(y_1),\ldots,{\nu}^*(y_s)$ at $\eta'$,
    where $\nu\colon Y'\to Y$ is the base change.
  \end{enumerate}
\end{corollary}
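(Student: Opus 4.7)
The plan is to first establish (i) by a direct fiber computation, and then deduce (iii) and (ii) as formal consequences of standard properties of flat local homomorphisms, combined with part (ii) of Lemma~\ref{lem_regular_morphism}.

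For (i), I would compute the fiber of $\mu\colon X' = \Spec\widehat{\cO_{X,\xi}}\to X$ over $\xi$: it equals $\Spec(\widehat{\cO_{X,\xi}}/\frm_\xi\widehat{\cO_{X,\xi}}) = \Spec k(\xi)$, a single reduced point, namely the closed point $\xi'$. Since $\pi(\eta)=\xi$, the morphism $\Spec k(\eta)\to X$ factors through $\Spec\cO_{X,\xi}$, so the fiber of $\nu\colon Y'\to Y$ over $\eta$ is
\[
\Spec(\widehat{\cO_{X,\xi}}\otimes_{\cO_{X,\xi}}k(\eta)) = \Spec(k(\xi)\otimes_{k(\xi)}k(\eta)) = \Spec k(\eta).
\]
Thus the fiber over $\eta$ consists of a unique reduced point, which must be $\eta'$, and the stalk of the structure sheaf of this fiber at $\eta'$ -- namely $\cO_{Y',\eta'}\otimes_{\cO_{Y,\eta}}k(\eta)$ -- is $k(\eta)$. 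This is (i), and as immediate byproducts it yields $\frm_{\eta'}=\frm_\eta\cO_{Y',\eta'}$ and $k(\eta')=k(\eta)$.

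For (iii), the morphism $\nu\colon Y'\to Y$ is the base-change of the regular morphism $\mu$, so it is regular and in particular flat. Writing $A:=\cO_{Y,\eta}$ and $B:=\cO_{Y',\eta'}$, the induced local homomorphism $A\to B$ is therefore flat, and combined with (i) it satisfies $B/\frm_A B=k(\eta)$, a ring of dimension zero. The standard dimension formula for flat local homomorphisms of Noetherian local rings gives
\[
\dim B = \dim A + \dim(B/\frm_A B) = \dim A.
\]
If $y_1,\dots,y_s$ is a regular system of parameters at $\eta$, the equality $\frm_{\eta'}=\frm_\eta B$ shows that $\nu^*y_1,\dots,\nu^*y_s$ generate $\frm_{\eta'}$; since $s=\dim A=\dim B$, they form a regular system of parameters at $\eta'$.

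For (ii), Lemma~\ref{lem_regular_morphism}(ii) gives $\Fitt_0(\Omega_{Y'/X'})=\Fitt_0(\Omega_{Y/X})\cdot\cO_{Y'}$, so setting $I:=\Fitt_0(\Omega_{Y/X})_\eta\subseteq A$, the hypothesis becomes that $IB$ is principal. By Nakayama, the minimal number of generators of a finitely generated ideal in a local ring is computed by reducing modulo the maximal ideal. Flatness of $A\to B$ gives the identification $I\otimes_A B\simeq IB$, and using $\frm_B=\frm_A B$ together with $k(\eta')=k(\eta)$ yields
\[
IB/\frm_B IB = (I/\frm_A I)\otimes_{k(\eta)}k(\eta') = I/\frm_A I.
\]
Hence $I$ and $IB$ have the same minimal number of generators, so $I$ is principal (or zero), proving (ii). The only step requiring care is the fiber computation in (i); once that is in place, (iii) and (ii) are routine consequences of flatness of the local homomorphism $A\to B$.
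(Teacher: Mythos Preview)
Your proof is correct and follows essentially the same approach as the paper's: both derive (i) from the fact that the fiber of $\mu$ over $\xi$ is $\Spec k(\xi)$, then use (i) together with flatness of $\nu$ to get the dimension equality in (iii), and compare minimal numbers of generators via Nakayama for (ii). The only cosmetic difference is that you spell out the fiber computation and the Nakayama argument in more detail, and you treat (iii) before (ii).
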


\begin{proof}
  Note first that since $X$ is excellent, 
  the morphism $\mu$ is indeed regular,
  hence so is the base change 
  $\nu\colon Y'\to Y$.
  The assertion in (i) follows from the fact that 
  $\nu(\eta')=\eta$ and $k(\xi')=k(\xi)$. 
  We deduce from (i) that $k(\eta')=k(\eta)$, and using
  Lemma~\ref{lem_regular_morphism}, that
  \begin{equation*}
    \dim_{k(\eta')}(\Fitt_0(\Omega_{Y'/X'})\otimes k(\eta'))
    =\dim_{k(\eta)}(\Fitt_0(\Omega_{Y/X})\otimes k(\eta)),
  \end{equation*}
  hence the minimal number of generators
  of $\Fitt_0(\Omega_{Y'/X'})$ at $\eta'$ 
  and that of $\Fitt_0(\Omega_{Y/X})$
  at $\eta$ are equal. This gives (ii). 
  It also follows from (i) that the extension of the 
  maximal ideal in $\cO_{Y,\eta}$ to
  $\cO_{Y',\eta'}$ is equal to the maximal ideal. 
  Since $\nu$ is flat, this implies the equality of dimensions in (iii),
  and the last assertion is clear, too.
\end{proof}

\begin{corollary}\label{K_divisor}
For every proper birational morphism $\pi\colon Y\to X$ between regular, connected, excellent schemes as above, the ideal $\Fitt_0(\Omega_{Y/X})$ is locally principal.
\end{corollary}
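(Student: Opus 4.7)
The plan is to reduce to the case already handled in the paragraph preceding Lemma~\ref{lem_regular_morphism}, namely that of schemes of finite type over a formal power series ring over a field. The key point is that by completing at a point of $X$, we land in precisely such a setting, and the Fitting ideal behaves well under flat base change.

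Concretely, fix $\eta\in Y$ and let $\xi=\pi(\eta)\in X$; it suffices to prove that $\Fitt_0(\Omega_{Y/X})$ is principal at $\eta$. Set $X'=\Spec\widehat{\cO_{X,\xi}}$, with the canonical morphism $\mu\colon X'\to X$, which is regular because $X$ is excellent. Since $X$ is regular and $\QQ\subseteq\cO_{X,\xi}$, the ring $\widehat{\cO_{X,\xi}}$ is a complete regular local Noetherian ring of equal characteristic zero, so by the Cohen structure theorem it is isomorphic to $k\llbracket t_1,\dots,t_n\rrbracket$ for some field $k$ of characteristic zero. Form the base change $Y'=Y\times_XX'$ with projection $\pi'\colon Y'\to X'$. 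By Lemma~\ref{lem_regular_morphism}, $Y'$ is regular and connected, $\pi'$ is proper and birational, and $\Fitt_0(\Omega_{Y'/X'})=\Fitt_0(\Omega_{Y/X})\cdot\cO_{Y'}$.

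Now $X'$ (being equal to $\Spec k\llbracket t_1,\dots,t_n\rrbracket$) and $Y'$ (being proper, hence of finite type, over $X'$) are regular schemes of finite type over $k\llbracket t_1,\dots,t_n\rrbracket$. Thus the argument in the paragraph preceding Lemma~\ref{lem_regular_morphism} applies verbatim to $\pi'$: at any point $\eta'\in Y'$, with $\xi'=\pi'(\eta')$, the Dimension Formula together with the exact sequences~\eqref{exact1} and~\eqref{exact2} for the sheaves of special differentials identifies $\Fitt_0(\Omega_{Y'/X'})_{\eta'}$ with the ideal generated by $\det(T)$ for a morphism $T$ between free $\cO_{Y',\eta'}$-modules of equal rank. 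Consequently $\Fitt_0(\Omega_{Y'/X'})$ is locally principal.

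To conclude, choose $\eta'\in Y'$ lying over $\eta\in Y$ and over the closed point $\xi'\in X'$; such an $\eta'$ exists because $k(\xi')=k(\xi)$ forces $k(\eta)\otimes_{k(\xi)}k(\xi')=k(\eta)\neq 0$, as recorded in Corollary~\ref{cor_regular_morphism}(i). Applying Corollary~\ref{cor_regular_morphism}(ii) at this $\eta'$ now yields that $\Fitt_0(\Omega_{Y/X})$ is principal at $\eta$. Since $\eta\in Y$ was arbitrary, $\Fitt_0(\Omega_{Y/X})$ is locally principal, as desired. The main obstacle I would anticipate is bookkeeping rather than conceptual: one must verify that the argument for schemes of finite type over a power series ring---in particular the use of the Dimension Formula and the exact sequences for special differentials---truly applies to the base-changed morphism $\pi'\colon Y'\to X'$, but this is immediate once one observes that $Y'$ is proper over $X'=\Spec k\llbracket t_1,\dots,t_n\rrbracket$ and hence falls within the framework of~\cite{dFEM}.
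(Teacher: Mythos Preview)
Your proof is correct and follows essentially the same approach as the paper's: complete at $\xi=\pi(\eta)$, use Cohen's structure theorem to land in the setting of schemes of finite type over a formal power series ring, where the Fitting ideal is already known to be locally principal, and then descend via Corollary~\ref{cor_regular_morphism}(ii). The only minor quibble is that Corollary~\ref{cor_regular_morphism}(i) as stated presupposes the existence of $\eta'$ rather than establishing it; your actual argument for existence (that $k(\xi')=k(\xi)$ makes the relevant fiber $\Spec(k(\eta)\otimes_{k(\xi)}k(\xi'))=\Spec k(\eta)$ nonempty) is correct, so this is just a matter of how you phrase the citation.
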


\begin{proof}
Let us show that $\Fitt_0(\Omega_{Y/X})$ is principal at 
any given point $\eta\in Y$. Let $\xi=\pi(\eta)$, and 
consider the regular morphism 
$\mu\colon X'=\Spec \widehat{\cO_{X,\xi}}\to X$. 
We keep the notation in Corollary~\ref{cor_regular_morphism}.
By Cohen's structure theorem, $\widehat{\cO_{X,\xi}}$ is isomorphic to 
a formal power series ring over $k(\xi)$, hence as we have seen, $\Fitt_0(\Omega_{Y'/X'})$
is locally principal. Therefore $\Fitt_0(\Omega_{Y/X})$ is principal at $\eta$ by
Corollary~\ref{cor_regular_morphism}. 
\end{proof}

\begin{lemma}\label{lem3_val}
  Let $\phi\colon Y'\to Y$ be a proper birational morphism 
  between regular, connected, excellent schemes.
  Consider $\eta'\in Y'$ and $\eta=\phi(\eta')\in Y$, and let us choose
  regular systems of parameters $\uy=(y_1,\dots,y_r)$ and
  $\underline{y'}=(y'_1,\dots,y'_s)$ at $\eta$ and $\eta'$, respectively.
  Suppose that
  \begin{equation*}
    \phi^*(y_i)=u_i\cdot\prod_{j=1}^s(y'_j)^{b_{i,j}},
  \end{equation*}
  for every $1\leq i\leq r$, and suitable $u_i\in\cO_{Y',\eta'}$. 
  If $D'_j$ denotes the closure of $V(y'_j)$, then
  \begin{enumerate}
  \item[(i)] 
    we have $A_Y(\ord_{D'_j})\geq\sum_{i=1}^rb_{i,j}$;
  \item[(ii)] 
    if $r=s$ and if the image of each $u_i$ in $k(\eta')$ 
    is nonzero, then we have equality 
    in~$\mathrm{(i)}$ if and only if  $\det(b_{i,j})\neq 0$. 
  \end{enumerate}
\end{lemma}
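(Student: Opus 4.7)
The plan is to compute $\Fitt_0(\Omega_{Y'/Y})_{\eta'}$ as a principal ideal generated by a determinant, and then to extract its $(y'_j)$-adic order.

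First, I would reduce to a setting where the special differentials $\Omega'$ of~\cite{dFEM} are available. By Cohen's structure theorem, $\widehat{\cO_{Y,\eta}}$ is a formal power series ring over $k:=k(\eta)$. Pulling back along the (regular, by Corollary~\ref{cor_regular_morphism}) completion morphism, and using Lemma~\ref{lem_regular_morphism}, I may assume that $Y$ is of finite type over $k\llbracket t_1,\dots,t_n\rrbracket$ without changing the Fitting ideal or the coordinates $y_i$, $y'_j$. In this setting, after picking liftings $\omega_l\in\Omega'_{Y/k,\eta}$ of a basis of $\Omega'_{k(\eta)/k}$ (and similarly $\omega'_m$ at $\eta'$), the differentials $d'(y_1),\dots,d'(y_r),\omega_1,\dots,\omega_a$ form a free basis of $\Omega'_{Y/k,\eta}$ and analogously at $\eta'$. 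The Dimension Formula and birationality of $\phi$ give $r+a=s+b$, so the natural map $T\colon\Omega'_{Y/k,\eta}\otimes\cO_{Y',\eta'}\to\Omega'_{Y'/k,\eta'}$ is represented by a square matrix $M$, and $\Fitt_0(\Omega_{Y'/Y})_{\eta'}=(\det M)$.

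For part~(i), I apply the Leibniz rule to $\phi^*(y_i)=u_i\prod_k(y'_k)^{b_{i,k}}$ to obtain
\[
  d'(\phi^*y_i)
  =\prod_k(y'_k)^{b_{i,k}}\,d'(u_i)
  +u_i\sum_{j'}b_{i,j'}(y'_{j'})^{b_{i,j'}-1}\prod_{l\ne j'}(y'_l)^{b_{i,l}}\,d'(y'_{j'}),
\]
and split $d'(\phi^*y_i)=L_i+R_i$, where $L_i$ gathers the $d'(y'_j)$-component (with $(y'_j)$-order at least $b_{i,j}-1$) and $R_i$ contains no $d'(y'_j)$-term (and has $(y'_j)$-order at least $b_{i,j}$). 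In the expansion of $\bigwedge_i d'(\phi^*y_i)$, any summand selecting two or more $L_i$-factors contains $d'(y'_j)\wedge d'(y'_j)=0$ and vanishes; each surviving summand has $(y'_j)$-order at least $(b_{i_0,j}-1)+\sum_{i\ne i_0}b_{i,j}=B_j-1$, where $B_j:=\sum_i b_{i,j}$. Wedging with $\bigwedge_l\phi^*(\omega_l)$, which carries no pole along $D'_j$, preserves this bound, so $\ord_{D'_j}(\det M)\ge B_j-1$, i.e., $A_Y(\ord_{D'_j})\ge B_j$.

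For part~(ii), the hypotheses $r=s$ and $u_i\in\cO_{Y',\eta'}^\times$ force $a=b$ and permit the logarithmic decomposition
\[
  d'(\phi^*y_i)=\phi^*(y_i)\Bigl(\frac{d'u_i}{u_i}+\sum_j b_{i,j}\frac{d'y'_j}{y'_j}\Bigr).
\]
Expanding $\bigwedge_i d'(\phi^*y_i)$, the unique summand selecting $\sum_j b_{i,j}\,d'y'_j/y'_j$ from every factor equals $\det(b_{i,j})\prod_j d'y'_j/y'_j$; multiplied by $\prod_i\phi^*y_i=(\prod_iu_i)\prod_k(y'_k)^{B_k}$ and wedged with $\bigwedge_l\phi^*(\omega_l)$, it produces the coefficient of $(y'_j)^{B_j-1}$ in $\det M$, which modulo $(y'_j)$ is a unit of $\overline\cO:=\cO_{Y',\eta'}/(y'_j)$ times $\det(b_{i,j})$. (The unit absorbs $\prod_iu_i$, the determinant of the residue-field block on the $\omega'_m$'s, and a monomial in the surviving $y'_l$.) Equality in~(i) therefore holds precisely when $\det(b_{i,j})\ne 0$.

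The main obstacle is the verification in step~(ii) that the cross-terms arising from the $d'u_i/u_i$-factors in the dlog expansion contribute only at strictly higher $(y'_j)$-order, so that the leading coefficient really does reduce modulo $(y'_j)$ to $\det(b_{i,j})$ times a unit. This amounts to a careful bookkeeping of the wedge expansion, and crucially uses the hypothesis that each $u_i$ is a unit.
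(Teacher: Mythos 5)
Your overall strategy is the same as the paper's: reduce by completion and Cohen's structure theorem to schemes of finite type over a power series ring, identify $\Fitt_0(\Omega_{Y'/Y})_{\eta'}$ with the principal ideal generated by the determinant of the map $T$ on special differentials, and bound its order along $D'_j$ by expanding the wedge of the $d'(\phi^*y_i)$. Your argument for part~(i) is correct and agrees in substance with the paper's.

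Part~(ii), however, has a genuine gap, located exactly at the point you flag as the ``main obstacle'': the claim that the cross-terms coming from the $d'u_i/u_i$ factors contribute at strictly higher $(y'_j)$-order, so that the coefficient of $(y'_j)^{B_j-1}$ in $\det M$ (where $B_j:=\sum_ib_{i,j}$) reduces modulo $(y'_j)$ to a unit times $\det(b_{i,j})$, is false. A cross-term that selects $d'u_i/u_i$ for the indices $i$ in a nonempty set $S$ acquires simple poles $1/y'_{j(i)}$ only for $i\notin S$, with distinct $j(i)$; after multiplication by $\prod_k(y'_k)^{B_k}$ it becomes divisible by $y'_k$ for every $k$ outside $\{j(i)\}$, but when $j\in\{j(i)\}$ its $(y'_j)$-order is exactly $B_j-1$ and its class modulo $(y'_j)$ need not vanish. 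Concretely, let $Y=\AAA^2_k$ with coordinates $(y_1,y_2)$, let $\eta$ be the origin, $\phi$ the blowup of $\eta$, and $\eta'$ the point of the exceptional divisor $E$ where $y_2/y_1=c$ with $c\in k^*$; with $y'_1=\phi^*(y_1)$ and $y'_2=\phi^*(y_2/y_1)-c$ one has $\phi^*(y_1)=y'_1$ and $\phi^*(y_2)=(y'_2+c)\,y'_1$, so $\det(b_{i,j})=0$, yet $\det M=y'_1$ and $A_Y(\ord_{D'_1})=2=b_{1,1}+b_{2,1}$: equality holds for $j=1$, whereas your criterion would predict strict inequality. This also shows that assertion~(ii) must be read as equality holding for all $j$ simultaneously, not for a fixed $j$. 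The correct bookkeeping, which is what the paper does, is to write $\det T=\prod_k(y'_k)^{B_k-1}\cdot g$ and compute the residue of $g$ in the residue field $k(\eta')$: there every cross-term dies, since each one is divisible by some coordinate $y'_k$ (with $k$ depending on the term), and one finds that the image of $g$ equals $\det(b_{i,j})\cdot\prod_i\overline{u_i}$. Note also that your parenthetical remark that the unit ``absorbs a monomial in the surviving $y'_l$'' is unavailable modulo $(y'_j)$ at $\eta'$, because those $y'_l$ are nonunits there; this is another symptom of the same confusion between reduction modulo $(y'_j)$ and reduction modulo $\frm_{\eta'}$. Once the residue computation in $k(\eta')$ is carried out, $\det(b_{i,j})\neq 0$ makes $g$ a unit and yields equality for every $j$, which is the implication actually used later, in the proof of Proposition~\ref{log_discrep}.
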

\begin{proof}
  Note first that we may assume that $Y$ and $Y'$ are schemes of finite type 
  over a formal power series ring over a field. 
  Indeed, let $\mu\colon Z=\Spec \widehat{\cO_{Y,\eta}}\to Y$ 
  be the canonical morphism; this is regular since $Y$ is excellent. 
  Set $Z':=Y'\times_YZ$ and denote the two projections by
  $\mu'\colon Z'\to Y'$ and $\phi'\colon Z'\to Z$.
  Let $\zeta\in Z$ denote the closed point, and let 
  $\zeta'\in Z'$ be a point such that $\mu'(\zeta')=\eta'$
  and $\phi'(\zeta')=\zeta$.
  It follows from Corollary~\ref{cor_regular_morphism} that
  $({\mu'}^*(y'_j))_j$ gives a regular system of parameters at $\zeta'$, 
  and it is clear that $(\mu^*(y_i))_i$ is a regular system of parameters at $\zeta$. 
  Using Lemma~\ref{lem_regular_morphism}, we see that it is enough to prove the statement for
  $\phi'$. By Cohen's structure theorem, $\widehat{\cO_{Y,\eta}}$ is isomorphic to
  $k(\eta)\llbracket t_1,\ldots,t_r\rrbracket$, hence we may assume 
  that $Y$ and $Y'$ are of finite type over a formal power series ring over a field.
  
  With notation analogous to~\eqref{exact1}, we see that 
  \begin{equation*}
    T(d'(y_{\ell}))\in B\cdot\Omega'_{Y'/k,\eta'}
    +\sum_{j}\frac{B}{y'_j}d'(y'_j),
  \end{equation*}
  where $B=\prod_{j=1}^s(y'_j)^{b_{i,j}}$, and the sum is over those $j$ with
  $b_{i,j}>0$.  
  The assertion in~(i) follows from this and from our description
  of $\Omega_{Y/k,\eta}$ and $\Omega_{Y'/k,\eta'}$. 
  Furthermore, an easy (and well-known) computation shows that if $r=s$, 
  and if we write $\det(T)=\prod_{j=1}^s(y'_j)^{b_j}\cdot g$, where
  $b_j+1=\sum_{i=1}^rb_{i,j}$ for every $j$, then the image of $g$ in  $k(\eta')$
  is equal to $\det(b_{i,j})\cdot \prod_{i=1}^s\overline{u_i}$, 
  where $\overline{u_i}$ denotes the image of $u_i$, which is nonzero. 
  This gives the assertion in~(ii).
\end{proof}
\begin{remark}
  The estimate in Lemma~\ref{lem3_val} (i)
  was claimed without any details in the proof of~\cite[Proposition~2.2]{dFM}. 
  The sheaves of special differentials were introduced 
  in~\cite{dFEM} partly to explain this estimate.
  As we have seen above, when working with regular excellent schemes, 
  one can always reduce to the case of schemes of finite type over a 
  formal power series ring over a field. However, when 
  considering also singular schemes, as in~\cite{dFEM}, 
  the situation is more complicated, since
  the sheaves of special differentials also appear in the 
  definition of the relative canonical divisor. 
\end{remark}

An important result of~\cite{Temkin}, generalizing Hironaka's theorem for varieties over a field, guarantees the
existence of log resolutions in our setting: given an ideal $\fra$ on $X$, there is a projective
birational morphism $\pi\colon Y\to X$ such that $Y$ is regular, $\fra\cdot \cO_Y$
is the ideal of a divisor $D$, and $D+K_{Y/X}$ is a divisor with simple normal crossings.
This is what allows one to develop the theory of multiplier ideals in this setting.

Recall that given a nonzero ideal $\fra$ and $\lambda\in\RR_{\geq 0}$, the multiplier ideal $\cJ(\fra^{\lambda})$ is the ideal on $X$
consisting of those local sections $f$ of $\cO_X$ such that
\begin{equation*}\ord_E(f)+A(\ord_E)>\lambda\cdot\ord_E(\fra)\end{equation*}
for all divisors $E$ over $X$ such that $f$ is defined at $c_X(\ord_E)$. In fact, it is enough to 
only consider
those divisors $E$ that appear on any given log resolution of $\fra$. This follows as in the case of 
schemes of finite type over a field once we have the inequality in Lemma~\ref{lem3_val}~(i).
We make the convention that if $\fra=(0)$, then $\cJ(\fra^{\lambda})=\cO_X$ if $\lambda=0$,
and it is the zero ideal, otherwise.

\subsection{Jumping numbers}
For every  ideal $\fra$ on $X$, we index the jumping numbers of $\fra$, as follows.
Given a nonzero ideal $\frq$ on $X$, we consider the \emph{log canonical threshold of $\fra$
with respect to $\frq$}
\begin{equation*}\lct^{\frq}(\fra):=\min\{\lambda\geq 0\mid \frq\not\subseteq \cJ(\fra^{\lambda})\}\end{equation*}
(with the convention $\lct^{\frq}(\fra)=\infty$ if $\fra=\cO_X$).
Note that when $\frq=\cO_X$, this is simply the \emph{log canonical threshold} 
$\lct(\fra)$ and as we vary $\frq$, we recover in this way all the jumping numbers of
$\fra$, in the sense of~\cite{ELSV}. 
It is convenient to also consider the reciprocals of these numbers.
We define the \emph{Arnold multiplicity} of $\fra$ with respect to $\frq$ to be
$\Arn^{\frq}(\fra):=\lct^{\frq}(\fra)^{-1}$ (if $\frq=\cO_X$, we simply write $\Arn(\fra)$). 
If $Z$ is the subscheme defined by $\fra$ we sometimes write $\Arn^{\frq}(Z)$ for $\Arn^{\frq}(\fra)$.
Note that
$\Arn^{\frq}(\fra)=0$ if and only if $\fra=\cO_X$, and $\Arn^{\frq}(\fra)=\infty$
if and only if $\fra=(0)$. 

\begin{lemma}\label{lem1}
  If $\pi\colon Y\to X$ is a log resolution of the 
  nonzero ideal $\fra$, and if $\fra\cdot \cO_Y=\cO_Y(-\sum_i\alpha_iE_i)$
  and $K_{Y/X}=\sum_i\kappa_iE_i$, then for every nonzero ideal $\frq$ 
  \begin{equation}\label{eq_lem1}
    \Arn^{\frq}(\fra)
    =\max_i\frac{\alpha_i}{\kappa_i+1+\ord_{E_i}(\frq)}
    =\max_i\frac{\ord_{E_i}(\fra)}{A(\ord_{E_i})+\ord_{E_i}(\frq)}.
  \end{equation}
  Moreover, we have
  \begin{enumerate}
  \item[(i)] 
    If $\fra\subseteq\frb$, then $\Arn^{\frq}(\fra)\geq\Arn^{\frq}(\frb)$;
  \item[(ii)] 
    $\Arn^{\frq}(\fra^m)=m\cdot\Arn^{\frq}(\fra)$ for every $m\geq 1$;
  \item[(iii)] 
    $\Arn^{\frq_1+\frq_2}(\fra)=\max_{i=1,2}\Arn^{\frq_i}(\fra)$;
  \item[(iv)]
    $\Arn^{\frq}(\fra\cdot\frb)\leq\Arn^{\frq}(\fra)+\Arn^{\frq}(\frb)$
    for every ideals $\fra$ and $\frb$.
  \end{enumerate}
\end{lemma}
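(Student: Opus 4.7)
The plan is to prove the display first and then deduce each of (i)--(iv) from it.

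For the display, fix a log resolution $\pi\colon Y\to X$ with $\fra\cdot\cO_Y=\cO_Y(-\sum_i\alpha_iE_i)$ and $K_{Y/X}=\sum_i\kappa_iE_i$, so that $\ord_{E_i}(\fra)=\alpha_i$ and $A(\ord_{E_i})=\kappa_i+1$. By definition, $\frq\subseteq\cJ(\fra^\lambda)$ iff every local section $f$ of $\frq$ (defined near $c_X(\ord_E)$) satisfies $\ord_E(f)+A(\ord_E)>\lambda\,\ord_E(\fra)$ for every divisor $E$ over $X$. Minimizing over $f\in\frq$, this becomes $\ord_E(\frq)+A(\ord_E)>\lambda\,\ord_E(\fra)$. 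As noted in the paragraph preceding the lemma, it is enough to test this condition on the components $E_i$ of the log resolution, the reduction to this case using precisely Lemma~\ref{lem3_val}(i) (and the snc hypothesis on $D+K_{Y/X}$). Solving for $\lambda$ gives
\begin{equation*}
\lct^\frq(\fra)=\min_i\frac{\kappa_i+1+\ord_{E_i}(\frq)}{\alpha_i},
\end{equation*}
where the minimum is taken over those $i$ with $\alpha_i>0$, and taking reciprocals yields the two expressions in~\eqref{eq_lem1}.

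For (i), pick a common log resolution $\pi\colon Y\to X$ of both $\fra$ and $\frb$, and write $\fra\cdot\cO_Y=\cO_Y(-\sum\alpha_iE_i)$, $\frb\cdot\cO_Y=\cO_Y(-\sum\beta_iE_i)$. The inclusion $\fra\subseteq\frb$ forces $\alpha_i\ge\beta_i$ for every $i$, so each term in the maximum defining $\Arn^\frq(\fra)$ dominates the corresponding term for $\Arn^\frq(\frb)$, and (i) follows. Property~(ii) is immediate from~\eqref{eq_lem1} applied to a common log resolution of $\fra$ and $\fra^m$, since $\ord_{E_i}(\fra^m)=m\alpha_i$. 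For~(iii), observe that $\ord_{E_i}(\frq_1+\frq_2)=\min(\ord_{E_i}(\frq_1),\ord_{E_i}(\frq_2))$, so
\begin{equation*}
\frac{\alpha_i}{\kappa_i+1+\ord_{E_i}(\frq_1+\frq_2)}=\max_{j=1,2}\frac{\alpha_i}{\kappa_i+1+\ord_{E_i}(\frq_j)};
\end{equation*}
taking the maximum over $i$ and exchanging the two suprema yields the claim.

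For~(iv), choose a simultaneous log resolution of $\fra$, $\frb$, $\fra\cdot\frb$ and $\frq$ (which exists by the result of Temkin recalled in the text), and write $\fra\cdot\cO_Y=\cO_Y(-\sum\alpha_iE_i)$ and $\frb\cdot\cO_Y=\cO_Y(-\sum\beta_iE_i)$, so that $\ord_{E_i}(\fra\frb)=\alpha_i+\beta_i$. The elementary inequality $(a+b)/c\le a/c+b/c$ with $c=\kappa_i+1+\ord_{E_i}(\frq)$ gives, for each $i$,
\begin{equation*}
\frac{\alpha_i+\beta_i}{\kappa_i+1+\ord_{E_i}(\frq)}\le\frac{\alpha_i}{\kappa_i+1+\ord_{E_i}(\frq)}+\frac{\beta_i}{\kappa_i+1+\ord_{E_i}(\frq)}\le\Arn^\frq(\fra)+\Arn^\frq(\frb);
\end{equation*}
taking the maximum over $i$ and applying~\eqref{eq_lem1} to $\fra\cdot\frb$ gives~(iv).

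The only step that is not pure bookkeeping is the reduction, in the derivation of~\eqref{eq_lem1}, from all divisors over $X$ to those on a fixed log resolution; this is the main obstacle and is precisely where the excellent-scheme setting requires Lemma~\ref{lem3_val}(i) rather than a direct appeal to the finite-type case.
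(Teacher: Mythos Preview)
Your proof is correct and follows essentially the same approach as the paper's, which is quite terse: the paper simply says that~\eqref{eq_lem1} is a consequence of the description of multiplier ideals via a log resolution, that (i)--(iii) follow from the definition, and that (iv) follows from~\eqref{eq_lem1}. The only minor difference is that you derive (i)--(iii) from the display rather than directly from the definition of $\lct^{\frq}$ (e.g., (iii) is immediate from the fact that $\frq_1+\frq_2\subseteq\cJ(\fra^\lambda)$ iff both $\frq_j\subseteq\cJ(\fra^\lambda)$), but both routes are equally short and your version is perfectly fine.
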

\begin{proof}
  Equation~\eqref{eq_lem1} is a consequence of the
  description of multiplier ideals in terms of a log resolution.
  Properties~(i)--(iii) follow from the definition whereas~(iv)
  is a consequence of~\eqref{eq_lem1}.
\end{proof}

If the maximum in~\eqref{eq_lem1} is achieved for $E_i$, we say that $\ord_{E_i}$
\emph{computes} $\lct^{\frq}(\fra)$ (or $\Arn^{\frq}(\fra)$).
It is natural to consider the invariants $\Arn^{\frq}(\fra)$ also for
$\frq\neq\cO_X$, 
since this case 
naturally appears when considering pull-backs by birational morphisms, as in 
\begin{corollary}\label{cor0}
  If $\phi\colon X'\to X$ is a proper birational morphism, with both $X$ and $X'$ regular,
  then for all ideals $\fra$, $\frq$ on $X$, with $\frq$ nonzero, we have
  \begin{equation*}
    \Arn^{\frq}(\fra)=\Arn^{\frq'}(\fra'),
  \end{equation*}
  where $\fra'=\fra\cdot\cO_{X'}$ and $\frq'=\frq\cdot\cO_{X'}(-K_{X'/X})$. 
\end{corollary}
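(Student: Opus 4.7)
The plan is to compute both sides of the asserted equality on a common log resolution. By Temkin's theorem applied to the ideal $\fra\cdot\frq\cdot\Fitt_0(\Omega_{X'/X})$ on $X$, pulled back to $X'$, I can find a regular scheme $Y$ and a proper birational morphism $\pi\colon Y\to X$ factoring as $Y\xrightarrow{\pi_2}X'\xrightarrow{\phi}X$, such that $\pi$ is simultaneously a log resolution of $\fra\cdot\frq$ on $X$ and of $\fra'\cdot\frq'$ on $X'$. Applying Lemma~\ref{lem1} on both sides, it suffices to show that for every prime divisor $E$ on $Y$,
\begin{equation*}
  \frac{\ord_E(\fra)}{A_X(\ord_E)+\ord_E(\frq)}
  =\frac{\ord_E(\fra')}{A_{X'}(\ord_E)+\ord_E(\frq')}.
\end{equation*}

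The numerators agree trivially, since $\fra\cdot\cO_Y=\fra'\cdot\cO_Y$. For the denominators, note that $\cO_{X'}(-K_{X'/X})=\Fitt_0(\Omega_{X'/X})$ is a locally principal ideal sheaf by Corollary~\ref{K_divisor}, so $\frq'=\frq\cdot\Fitt_0(\Omega_{X'/X})$ and consequently
\begin{equation*}
  \ord_E(\frq')=\ord_E(\frq)+\ord_E(\pi_2^*K_{X'/X}).
\end{equation*}
Hence the equality of denominators reduces to the \emph{chain rule}
\begin{equation*}
  A_X(\ord_E)=A_{X'}(\ord_E)+\ord_E(\pi_2^*K_{X'/X}),
\end{equation*}
or equivalently the identity of effective divisors $K_{Y/X}=K_{Y/X'}+\pi_2^*K_{X'/X}$ on $Y$.

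The main obstacle is verifying this chain rule for the Fitting-ideal definition of the relative canonical divisor in the excellent scheme setting. From the right-exact sequence of K\"ahler differentials $\pi_2^*\Omega_{X'/X}\to\Omega_{Y/X}\to\Omega_{Y/X'}\to 0$ and multiplicativity of the zeroth Fitting ideal on short exact sequences, one obtains only the inclusion $\Fitt_0(\Omega_{Y/X})\supseteq\pi_2^*\Fitt_0(\Omega_{X'/X})\cdot\Fitt_0(\Omega_{Y/X'})$, i.e.\ $K_{Y/X}\le K_{Y/X'}+\pi_2^*K_{X'/X}$. To upgrade this to equality at a prime divisor $E\subseteq Y$, I work at the generic point $\eta$ of $E$ and its images $\eta'=\pi_2(\eta)\in X'$ and $\xi=\phi(\eta')\in X$. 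Using Corollary~\ref{cor_regular_morphism} to pass to completions, I may assume everything is of finite type over a formal power series ring over a field, where the sheaves of special differentials of~\cite{dFEM} give explicit local bases. Choosing regular systems of parameters at $\eta$, $\eta'$, $\xi$ and writing the pullback maps as matrices $B$ (for $\pi_2^*$) and $C$ (for $\phi^*$), one computes $\Fitt_0$ of each differential module at the relevant point as the principal ideal generated by $\det(B)$, $\det(C)$ and $\det(BC)=\det(B)\det(C)$, respectively; this is exactly the kind of determinant calculation carried out in the proof of Lemma~\ref{lem3_val}. The chain rule follows.

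Combining (a) and (b), the two maxima over prime divisors in Lemma~\ref{lem1} are term-by-term equal, giving $\Arn^{\frq}(\fra)=\Arn^{\frq'}(\fra')$ as desired.
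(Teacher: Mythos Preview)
Your proposal is correct and follows essentially the same approach as the paper: compute both Arnold multiplicities on a common log resolution and invoke the chain rule $K_{Y/X}=K_{Y/X'}+\pi_2^*K_{X'/X}$ to match the denominators term by term. The paper's proof simply asserts this chain rule as a fact (it follows immediately from composing the maps $T$ on special differentials, so that $\det(T_{Y/X})=\det(T_{Y/X'})\cdot\pi_2^*\det(T_{X'/X})$), whereas you supply a proof sketch; aside from the minor slip of calling $\Fitt_0(\Omega_{X'/X})$ an ideal ``on $X$'' and the dangling references to ``(a) and (b)'', your argument is sound.
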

\begin{proof}
  If $\fra=(0)$, then the assertion is clear. If this is not the case, 
  let $\phi'\colon X''\to X'$
  be a log resolution of $\fra'\cdot\cO_{X'}(-K_{X'/X})$;
  in particular $\phi\circ\phi'$ is a log resolution of $\fra$.
  The assertion in the corollary follows from (\ref{eq_lem1}), using the fact that
  for every divisor $E$ on $X''$, we have
  $\ord_E(K_{X''/X})=\ord_E(K_{X''/X'})+\ord_E((\phi')^*(K_{X'/X}))$.
\end{proof}
\begin{proposition}\label{P302}
  Let $\fra$ and $\frq$ be nonzero ideals on $X$.
  Let $\phi\colon X'\to X$ be a regular morphism
  and write $\fra':=\fra\cdot\cO_{X'}$, $\frq':=\frq\cdot\cO_{X'}$.
  Then $\cJ(\fra'^t)=\cJ(\fra^t)\cdot\cO_{X'}$ 
  for every $t\geq 0$. 
  In particular,  $\lct^{\frq'}(\fra')\geq\lct^{\frq}(\fra)=:\lambda$
  with equality  if 
  $V(\cJ(\fra^{\lambda})\colon\frq)\cap\phi(X')\neq\emptyset$.
  Further, the latter condition holds if $\phi$ is faithfully flat.
\end{proposition}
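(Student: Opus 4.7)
The plan is to reduce the computation of both $\cJ(\fra^t)$ and $\cJ(\fra'^t)$ to a common geometric setup by pulling back a log resolution of $\fra$ under $\phi$, and then apply flat base change for the direct image. The case $\fra=(0)$ is immediate from the convention on multiplier ideals, so I assume $\fra\neq(0)$.

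First I would take a log resolution $\pi\colon Y\to X$ of $\fra$, write $\fra\cdot\cO_Y=\cO_Y(-D)$, and form the fiber product $Y':=Y\times_X X'$ with projections $\nu\colon Y'\to Y$ and $\pi'\colon Y'\to X'$. By Lemma~\ref{lem_regular_morphism}, $Y'$ is regular and connected, $\pi'$ is proper and birational, and $\Fitt_0(\Omega_{Y'/X'})=\Fitt_0(\Omega_{Y/X})\cdot\cO_{Y'}$, so $K_{Y'/X'}=\nu^*K_{Y/X}$. Regular base change also preserves simple normal crossings (as noted after Example~\ref{E401}), so $\pi'$ is a log resolution of $\fra'$. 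Because the components of $D$ are regular and the regular morphism $\nu$ pulls back regular divisors to reduced divisors, the multiplicities are preserved, giving $\lfloor tD'\rfloor=\nu^*\lfloor tD\rfloor$ for $D':=\nu^*D$ and every $t\geq 0$.

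Next I would invoke the standard log-resolution formula $\cJ(\fra^t)=\pi_*\cO_Y(K_{Y/X}-\lfloor tD\rfloor)$ together with flat base change: since $\phi$ is flat and $\pi$ is proper, for any coherent sheaf $\cF$ on $Y$ one has $\phi^*\pi_*\cF\cong\pi'_*\nu^*\cF$. Applying this with $\cF=\cO_Y(K_{Y/X}-\lfloor tD\rfloor)$ and using the identities above yields
\begin{equation*}
\cJ(\fra^t)\cdot\cO_{X'}=\phi^*\cJ(\fra^t)=\pi'_*\cO_{Y'}(K_{Y'/X'}-\lfloor tD'\rfloor)=\cJ(\fra'^t).
\end{equation*}

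From the identity $\cJ(\fra'^t)=\cJ(\fra^t)\cdot\cO_{X'}$ the comparison $\lct^{\frq'}(\fra')\geq\lambda$ is immediate: for $t<\lambda$, $\frq\subseteq\cJ(\fra^t)$ implies $\frq'\subseteq\cJ(\fra'^t)$. For the equality statement, assume there exists $\xi\in V(\cJ(\fra^\lambda):\frq)\cap\phi(X')$, so $\frq_\xi\not\subseteq\cJ(\fra^\lambda)_\xi$, and choose $\xi'\in\phi^{-1}(\xi)$. The key step is that the local homomorphism $\cO_{X,\xi}\to\cO_{X',\xi'}$ is flat, hence automatically faithfully flat; so the nonzero module $M:=\frq_\xi/(\frq_\xi\cap\cJ(\fra^\lambda)_\xi)$ remains nonzero after tensoring with $\cO_{X',\xi'}$. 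By flatness this tensor product identifies with $\frq'_{\xi'}/(\frq'_{\xi'}\cap\cJ(\fra'^\lambda)_{\xi'})$, showing $\lct^{\frq'}(\fra')\leq\lambda$. Finally, if $\phi$ is faithfully flat then $\phi(X')=X$, and $V(\cJ(\fra^\lambda):\frq)$ is nonempty because $\frq\not\subseteq\cJ(\fra^\lambda)$ by minimality of $\lambda$.

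The main obstacles are bookkeeping rather than conceptual: verifying that $\nu^*\lfloor tD\rfloor=\lfloor tD'\rfloor$ (which relies on regularity of $\phi$ to prevent the appearance of ramification multiplicities along the components of $D$) and extracting the equality case from the faithful flatness of local flat extensions. Everything else follows mechanically from Lemma~\ref{lem_regular_morphism} and flat base change for proper direct images.
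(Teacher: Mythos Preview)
Your proof is correct and follows essentially the same approach as the paper: pull back a log resolution via the regular morphism (using Lemma~\ref{lem_regular_morphism}), then apply flat base change for the direct image to obtain $\cJ(\fra'^t)=\cJ(\fra^t)\cdot\cO_{X'}$. The only cosmetic difference is in the equality step: the paper argues directly that colon ideals commute with flat extension, i.e.\ $(\cJ(\fra'^{\lambda})\colon\frq')=(\cJ(\fra^{\lambda})\colon\frq)\cdot\cO_{X'}$, whereas you localize and use faithful flatness of a flat local homomorphism; these are equivalent formulations of the same fact.
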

\begin{proof}
  Let $\pi\colon Y\to X$ be a log resolution
  of $\fra$, with $\fra\cdot\cO_Y=\cO_Y(-D)$.
  It follows from Lemma~\ref{lem_regular_morphism} that $Y'=Y\times_XX'$
  is regular and connected, and 
  $\pi'\colon Y'\to X'$ is birational and proper (in fact projective, 
  since $\pi$ is projective). 
  In addition, 
  if $\psi\colon Y'\to Y$ is the projection,
  then $\fra'\cdot\cO_{Y'}=\cO_{Y'}(-\psi^*(D))$, 
  and $\psi^*(D)+K_{Y'/X'}=\psi^*(D+K_{Y/X})$ 
  has simple normal crossings. 
  It now follows from 
  base-change with respect to flat morphisms that
  $\cJ(\fra'^t)=\cJ(\fra^t)\cdot\cO_{X'}$ for every $t\in\RR_{\geq 0}$. 

  If $0\le t<\lambda$, then  $\frq\subseteq\cJ(\fra_{\bullet}^t)$, hence
  $\frq'\subseteq\cJ(\fra^t)\cdot\cO_{X'}$. 
  Therefore,  $\lct^{\frq'}(\fra'_{\bullet})\geq\lct^{\frq}(\fra_{\bullet})$.
  Now suppose 
  $\phi^{-1}(V(\cJ(\fra_{\bullet}^{\lambda})\colon\frq))\neq\emptyset$. 
  In this case, since $\phi$ is flat we have
  $(\cJ(\fra'^t)\colon\frq')=(\cJ(\fra^t)\colon\frq)\cdot\cO_{X'}\neq\cO_{X'}$,
  hence $\frq'\not\subseteq\cJ(\fra'^t)$ and so
  $\lct^{\frq'}(\fra')=\lambda$. 
  Finally, note that since 
  $(\cJ(\fra_{\bullet}^{\lambda})\colon\frq)\neq\cO_X$, 
  if $\phi$ is faithfully flat, then it is surjective, hence 
  $\phi^{-1}(V(\cJ(\fra_{\bullet}^{\lambda})\colon\frq))$ is clearly nonempty.
\end{proof}

\section{Graded and subadditive systems of ideals}\label{S302}
We now introduce the main objects that we wish to study.
\subsection{Graded sequences}
A \emph{graded sequence} of ideals $\fra_{\bullet}=(\fra_m)_{m\in\ZZ_{>0}}$ 
is a sequence of ideals on $X$ that satisfies
$\fra_p\cdot\fra_q\subseteq\fra_{p+q}$ for every $p$, $q\geq 1$. 
We always assume that such a sequence is \emph{nonzero}, 
in the sense that $\fra_m\neq (0)$
for some $m$. 
Then $S=S(\fra_{\bullet}):=\{m\in\ZZ_{>0}\mid\fra_m\neq (0)\}$ 
is a subsemigroup of the positive integers (with respect to
addition). By convention we put $\fra_0=\cO_X$.

\begin{example}\label{E403}
 The most interesting geometric examples arise as follows: 
  suppose that $L$ is a line bundle
  of nonnegative Kodaira dimension 
  on a smooth projective variety $X$. If $\fra_m$ 
  is the ideal defining the base locus of 
  $L^m$, then $(\fra_m)_{m\geq 1}$ is a graded sequence of ideals. 
\end{example}
\begin{example}\label{E404}
  To any valuation $v\in\Val^*_X$, we can associate 
  a graded sequence $\fra_\bullet=\fra_\bullet(v)$ 
  of \emph{valuation ideals} given by 
  $\fra_m(v)=\{v\ge m\}$.
  More precisely, 
  for an affine open subset $U$ of $X$ we have 
  $\Gamma(U,\fra_m)=\{f\in\cO_X(U)\mid v(f)\geq m\}$ 
  if $c_X(v)\in U$, and $\Gamma(U,\fra_m)=\cO_X(U)$, otherwise. 
  Note that $\fra_m$ is nonzero since $v$ is nontrivial.
\end{example}
Following~\cite{ELMNP}, one can attach asymptotic invariants to graded sequences
of ideals, via the following well-known result. We include a proof, for the reader's convenience.

\begin{lemma}\label{lem2_1}
  Let $(\alpha_m)_{m\geq 1}$ be a sequence of elements in 
  $\RR_{\geq 0}\cup\{\infty\}$, that satisfies
  $\alpha_{p+q}\leq\alpha_p+\alpha_q$ for all $p$ and $q$. If the set
  $S:=\{m\mid \alpha_m<\infty\}$ is nonempty, then $S$ is a 
  subsemigroup of $\ZZ_{>0}$, and
  \begin{equation*}
    \lim_{m\to\infty, m\in S}\frac{\alpha_m}{m}
    =\inf_{m\geq1}\frac{\alpha_m}{m}.
  \end{equation*}
\end{lemma}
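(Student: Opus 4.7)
This is Fekete's subadditive lemma, with the twist that $\alpha_m$ is allowed to take the value $+\infty$. I would first dispose of the semigroup statement, which is immediate from subadditivity: if $p,q\in S$, then $\alpha_{p+q}\le\alpha_p+\alpha_q<\infty$, so $p+q\in S$. Set $L:=\inf_{m\ge1}\alpha_m/m\in\RR_{\ge0}\cup\{\infty\}$; since $\alpha_m/m=+\infty$ for $m\notin S$, the infimum is really taken over $S$, and the bound $\liminf_{m\in S,\,m\to\infty}\alpha_m/m\ge L$ is automatic. The content of the lemma is the reverse inequality for $\limsup$.

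Fix $\epsilon>0$ and pick $k\in S$ with $\alpha_k/k<L+\epsilon$. In the classical Fekete argument one writes $m=qk+r$ with $0\le r<k$ and applies $\alpha_m\le q\alpha_k+\alpha_r$, finishing by noting that $\alpha_r$ is bounded over the finite range $0\le r<k$. \textbf{This is the main obstacle here:} $\alpha_r$ may equal $+\infty$, so the naive remainder bound is useless.

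I would get around this by a pigeonhole argument on residues modulo $k$. Note first that $k,2k,\dots$ all belong to $S$ by the semigroup property. Partition $S$ into its (at most $k$) nonempty residue classes modulo $k$, and in each such class fix a minimal representative $m_0\in S$. Any larger $m\in S$ in the same class satisfies $m-m_0=jk$ for some integer $j\ge1$, so applying subadditivity to the decomposition $m=m_0+k+\cdots+k$ ($j$ copies of $k$) gives $\alpha_m\le\alpha_{m_0}+j\alpha_k$. Dividing by $m$ and letting $m\to\infty$ within the class makes $\alpha_{m_0}/m\to0$ and $j/m\to1/k$, so $\limsup\alpha_m/m\le\alpha_k/k$ along that class; since only finitely many classes are involved, the same bound holds along all of $S$.

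Combining, $\limsup_{m\in S,\,m\to\infty}\alpha_m/m\le\alpha_k/k<L+\epsilon$. Letting $\epsilon\to0$ and pairing this with the trivial $\liminf$ bound shows that the limit exists and equals $L=\inf_{m\ge1}\alpha_m/m$, as required.
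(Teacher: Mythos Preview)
Your proof is correct and follows essentially the same approach as the paper: fix a good index $k$ with $\alpha_k/k$ close to the infimum, split $S$ into residue classes modulo $k$, choose an anchor $m_0\in S$ in each class, and bound $\alpha_m$ via $\alpha_m\le\alpha_{m_0}+j\alpha_k$ for $m=m_0+jk$. The paper's write-up differs only cosmetically (it does not insist on the \emph{minimal} representative in each class, and organizes the $\epsilon$--$\tau$ bookkeeping slightly differently).
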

\begin{proof}
  Let $T:=\inf_{m\geq 1}\alpha_m/m$. 
  We need to show that for every $\tau>T$, we have
  $\alpha_p/p<\tau$ if $p\gg 0$, with $p\in S$. 
  Let $m$ be such that $\alpha_m/m<\tau$.
  It is enough to show that for every integer $q$ with $0\leq q <m$, 
  if $p=m\ell+q\in S$ with $\ell\gg 0$, then $\alpha_p/p<\tau$.

  If there is no $\ell$ such that $m\ell+q\in S$, then there is
  nothing to prove. 
  Otherwise, let us
  choose $\ell_0$ with $m\ell_0+q\in S$. For $\ell\geq\ell_0$ we have
  \begin{equation*}
    \frac{\alpha_{m\ell+q}}{m\ell+q}
    \leq\frac{\alpha_{m\ell_0+q}+(\ell-\ell_0)\alpha_m}{m\ell+q}.
  \end{equation*}
  Since the right-hand side converges to $\alpha_m/m<\tau$ 
  for $\ell\to\infty$, it follows that
  \begin{equation*}
    \frac{\alpha_{m\ell+q}}{m\ell+q}<\tau\ \text{for}\ \ell\gg 0,
  \end{equation*}
  which completes the proof.
\end{proof}

Suppose now that $\fra_{\bullet}$ is a graded sequence of ideals, 
and $v\in\Val_X$. By taking $\alpha_m=v(\fra_m)$, we define as in 
\cite{ELMNP}
\begin{equation*}v(\fra_{\bullet}):=\inf_{m\geq 1}\frac{v(\fra_m)}{m}=\lim_{m\to\infty,m\in S(\fra_{\bullet})}
\frac{v(\fra_m)}{m}.\end{equation*}
By the definition of a graded sequence we have $v(\fra_{p+q})\leq
v(\fra_p\cdot\fra_q)=v(\fra_p)+v(\fra_q)$.

\begin{lemma}\label{L301}
 Let $\fra_\bullet(v)$ be the graded sequence of 
  valuation ideals associated to a nontrivial valuation
  $v\in\Val_X^*$.
  Then, for any $w\in\Val_X$ we have
 \begin{equation*}
   w(\fra_{\bullet}(v))=\inf\frac{w(\frb)}{v(\frb)},
 \end{equation*}
 where $\frb$ ranges over ideals on $X$ for which $v(\frb)>0$.
 In particular, $v(\fra_{\bullet}(v))=1$.
\end{lemma}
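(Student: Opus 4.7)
The plan is to establish the claimed formula by proving the two inequalities $w(\fra_\bullet(v))\ge\alpha$ and $w(\fra_\bullet(v))\le\alpha$ separately, where $\alpha:=\inf_\frb w(\frb)/v(\frb)$ with $\frb$ ranging over ideals with $v(\frb)>0$, and then to deduce the last assertion by specializing to $w=v$. Write $\fra_m:=\fra_m(v)$ throughout. Since $v$ is nontrivial and has a center on $X$, the maximal ideal at $c_X(v)$ has strictly positive $v$-value, and so every $\fra_m$ is a nonzero ideal, making the limit defining $w(\fra_\bullet(v))$ sensible.

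For the lower bound $w(\fra_\bullet(v))\ge\alpha$, I would use the ideals $\fra_m$ themselves as admissible test objects in the infimum: by construction $v(\fra_m)\ge m>0$, so $\fra_m$ is allowed, giving $w(\fra_m)/v(\fra_m)\ge\alpha$. Combined with $v(\fra_m)\ge m$, this yields $w(\fra_m)/m\ge w(\fra_m)/v(\fra_m)\ge\alpha$ for every $m$, and passing to the infimum over $m$ gives the desired inequality. This direction is essentially immediate.

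The reverse inequality is where the main (small) obstacle lies, since $v(\frb)$ is a real — not necessarily integer — number, and so one cannot directly compare $\frb$ with $\fra_{v(\frb)}$. The trick I would use is to pass to powers. Given $\frb$ with $v(\frb)>0$, set $m_N:=\lfloor Nv(\frb)\rfloor$, which is positive for $N$ large. Every local section of $\frb^N$ near $c_X(v)$ has $v$-value at least $v(\frb^N)=Nv(\frb)\ge m_N$, so $\frb^N\subseteq\fra_{m_N}$. Therefore $Nw(\frb)=w(\frb^N)\ge w(\fra_{m_N})\ge m_N\cdot w(\fra_\bullet(v))$, where the last inequality is the defining property of $w(\fra_\bullet(v))$ as an infimum. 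Dividing by $Nv(\frb)$, the factor $m_N/(Nv(\frb))$ tends to $1$ as $N\to\infty$, so in the limit $w(\frb)/v(\frb)\ge w(\fra_\bullet(v))$. Taking the infimum over admissible $\frb$ gives $\alpha\ge w(\fra_\bullet(v))$.

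The final assertion $v(\fra_\bullet(v))=1$ is then immediate by setting $w=v$, since every quotient $v(\frb)/v(\frb)$ in the resulting infimum equals $1$. The only nonroutine point in the whole argument is the passage to powers in the upper bound, which is exactly the device needed to reconcile the integer indexing of the graded sequence with the real-valued nature of $v$.
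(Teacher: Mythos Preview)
Your proof is correct and follows essentially the same approach as the paper: both directions use the same ideas (the lower bound via $v(\fra_m)\ge m$, the upper bound via $\frb^N\subseteq\fra_{\lfloor Nv(\frb)\rfloor}$ and a limit in $N$). The only cosmetic difference is that in the upper bound you invoke the infimum inequality $w(\fra_{m_N})\ge m_N\,w(\fra_\bullet(v))$ directly, whereas the paper phrases it as $w(\fra_{m_N})/m_N\to w(\fra_\bullet(v))$ along the subsequence; these are equivalent.
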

\begin{proof}
 Note first that if 
  $c:=\inf\{w(\frb)/v(\frb)\mid v(\frb)>0\}$, then by definition we have
  $w(\fra_m(v))\geq c\cdot v(\fra_m(v))\geq cm$. Dividing by $m$ and
  letting $m\to\infty$ gives
  $w(\fra_{\bullet}(v))\geq c$. 
  For the reverse inequality, it is enough to show that 
  for every $\epsilon>0$, we have $w(\fra_{\bullet}(v))<c+\epsilon$. 
  By definition of $c$,
  there is an ideal $\frb$ on $X$ such that $v(\frb)>0$ 
  and $w(\frb)/v(\frb)<c+\epsilon$. 
  For every $m\ge 1$, 
  we have $\frb^m\subseteq\fra_{\lfloor m\cdot v(\frb)\rfloor}(v)$. 
  Therefore $w(\fra_{\lfloor m\cdot v(\frb)\rfloor}(v))\leq m\cdot w(\frb)$, 
  and so
  \begin{equation*}
    \frac{w(\fra_{\lfloor m\cdot v(\frb)\rfloor}(v))}
    {\lfloor m\cdot v(\frb)\rfloor}
    \leq
    \frac{m\cdot w(\frb)}{\lfloor m\cdot v(\frb)\rfloor}.
  \end{equation*}
  As $m\to\infty$ we get 
  $w(\fra_{\bullet}(v))\leq\frac{w(\frb)}{v(\frb)}<c+\epsilon$. 
  The last assertion about $v$ is clear.
\end{proof}

Similarly, by taking 
$\alpha_m=\Arn^{\frq}(\fra_m)$, where $\frq$ is a nonzero ideal, we get
\begin{equation*}\Arn^{\frq}(\fra_{\bullet}):=\inf_{m\geq 1}\frac{\Arn^{\frq}(\fra_m)}{m}=\lim_{m\to\infty,
m\in S(\fra_{\bullet})}\frac{\Arn^{\frq}(\fra_m)}{m}.\end{equation*}
The fact that the conditions in the lemma are satisfied follows from
the defining property of a graded sequence, together with
Lemma~\ref{lem1}~(iv).

We also write $\lct^{\frq}(\fra_{\bullet})=1/\Arn^{\frq}(\fra_{\bullet})$ 
(with the convention 
$\lct^{\frq}(\fra_{\bullet})=\infty$ if $\Arn^{\frq}(\fra_{\bullet})=0$). 
Note that both $v(\fra_{\bullet})$ and $\Arn^{\frq}(\fra_{\bullet})$ are finite.

\begin{proposition}\label{prop_birational}
  If $X'\to X$ is a proper birational morphism, 
  with both $X$ and $X'$ regular,
  then for every graded sequence of ideals 
  $\fra_{\bullet}$ on $X$, and every nonzero ideal  $\frq$ on $X$, we have
  \begin{equation*}
    \Arn^{\frq}(\fra_{\bullet})=\Arn^{\frq'}(\fra'_{\bullet}),
  \end{equation*}
  where $\fra_m'=\fra_m\cdot\cO_{X'}$ and $\frq'=\frq\cdot\cO_{X'}(-K_{X'/X})$. 
\end{proposition}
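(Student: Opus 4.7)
The strategy is to reduce the statement from graded sequences to single ideals, where the analogous identity has already been proved as Corollary~\ref{cor0}. The proposition is really just that invariant plus a passage to the limit.

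More concretely, the plan is as follows. For each fixed $m\ge 1$, Corollary~\ref{cor0} applied to the ideal $\fra_m$ (with $\frq$) gives
\begin{equation*}
  \Arn^{\frq}(\fra_m)=\Arn^{\frq'}(\fra_m\cdot\cO_{X'})=\Arn^{\frq'}(\fra'_m),
\end{equation*}
since $\frq'=\frq\cdot\cO_{X'}(-K_{X'/X})$ is precisely the ideal prescribed by the corollary. Dividing by $m$ and taking the infimum (equivalently, the limit along $S(\fra_\bullet)$ as in Lemma~\ref{lem2_1}), the definition of the asymptotic Arnold multiplicity gives
\begin{equation*}
  \Arn^{\frq}(\fra_\bullet)=\inf_{m\ge1}\frac{\Arn^{\frq}(\fra_m)}{m}
  =\inf_{m\ge1}\frac{\Arn^{\frq'}(\fra'_m)}{m}=\Arn^{\frq'}(\fra'_\bullet).
\end{equation*}

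Two small checks keep the argument honest. First, I need $\fra'_\bullet=(\fra_m\cdot\cO_{X'})_{m\ge1}$ to be a graded sequence in the sense of Section~\ref{S302}, so that $\Arn^{\frq'}(\fra'_\bullet)$ is defined. This follows because the operation $\fra\mapsto\fra\cdot\cO_{X'}$ is a semigroup homomorphism from ideals on $X$ to ideals on $X'$ (products pull back to products), so $\fra_p\cdot\fra_q\subseteq\fra_{p+q}$ implies $\fra'_p\cdot\fra'_q\subseteq\fra'_{p+q}$. Second, I need $S(\fra_\bullet)=S(\fra'_\bullet)$, so that the two limits along the respective support semigroups coincide; this holds because $X'$ is integral (being regular and connected) and $X'\to X$ is birational, hence $\fra_m\cdot\cO_{X'}=0$ if and only if $\fra_m=0$.

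There is no real obstacle here; the work has already been done in the single-ideal case via a log resolution that dominates $X'\to X$, and the asymptotic statement is a formal consequence by dividing by $m$ and passing to the limit. The only thing to be careful about is that the pullback rule $\frq\mapsto\frq\cdot\cO_{X'}(-K_{X'/X})$ used to define $\frq'$ matches precisely the one appearing in Corollary~\ref{cor0}, so that a single application of the corollary suffices for every $m$ (the choice of $\frq'$ does not depend on $m$).
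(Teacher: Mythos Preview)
Your proof is correct and follows exactly the paper's approach: apply Corollary~\ref{cor0} to each nonzero $\fra_m$, divide by $m$, and pass to the limit. The additional checks you include (that $\fra'_\bullet$ is a graded sequence and that the support semigroups agree) are correct and make the argument more explicit than the paper's one-line proof.
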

\begin{proof}
  The assertion follows by applying Corollary~\ref{cor0} 
  to each $\fra_m\ne0$, and then letting $m$ go to infinity.
\end{proof}

\subsection{Subadditive systems}
A \emph{subadditive system of ideals} $\frb_{\bullet}$
is a one-parameter family $(\frb_t)_{t\in\RR_{>0}}$  
of nonzero ideals satisfying $\frb_{s+t}\subseteq\frb_s\cdot\frb_t$ 
for every $s,t\in\RR_{>0}$. Note that this implies 
$\frb_s\subseteq\frb_t$ for $s\ge t$. 
By convention we put $\frb_0=\cO_X$.

To any such system of ideals, we can associate 
various invariants via the following lemma
(compare with Lemma~2.2 in~\cite{Mustata}).

\begin{lemma}\label{lem2}
  If $\phi\colon\RR_{>0}\to \RR_{\geq 0}$ is an increasing
  function such that  
  $\phi(mt)\geq m\phi(t)$ for every $t\in\RR_{>0}$ and $m\in\ZZ_{>0}$, then
  $\lim_{t\to\infty}\frac{\phi(t)}{t}$ exists in 
  $\RR_{\geq0}\cup\{\infty\}$, 
  and equals $\sup_{t>0}\frac{\phi(t)}{t}$.
\end{lemma}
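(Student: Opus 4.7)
The plan is to set $L:=\sup_{t>0}\phi(t)/t\in\RR_{\geq 0}\cup\{\infty\}$ and prove the two inequalities $\limsup_{t\to\infty}\phi(t)/t\leq L$ and $\liminf_{t\to\infty}\phi(t)/t\geq L$ separately. The first is immediate from the definition of $L$, so all the content is in the second inequality, which I would attack by a continuous version of Fekete's lemma.

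Concretely, I would fix an arbitrary $s>0$ and, for $t>s$, write $t=m s+r$ with $m:=\lfloor t/s\rfloor\in\ZZ_{>0}$ and $r\in[0,s)$. Combining the monotonicity of $\phi$ with the hypothesis $\phi(ms)\geq m\phi(s)$ yields
\begin{equation*}
\phi(t)\geq\phi(ms)\geq m\phi(s),
\end{equation*}
so that
\begin{equation*}
\frac{\phi(t)}{t}\geq\frac{m\phi(s)}{t}=\frac{\phi(s)}{s}\cdot\frac{ms}{t}\geq\frac{\phi(s)}{s}\left(1-\frac{s}{t}\right).
\end{equation*}
Letting $t\to\infty$ with $s$ fixed gives $\liminf_{t\to\infty}\phi(t)/t\geq\phi(s)/s$, and then taking the supremum over $s>0$ gives $\liminf_{t\to\infty}\phi(t)/t\geq L$, as required.

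This single argument handles uniformly both the case $L<\infty$ (where one compares to any $\phi(s)/s$ close to $L$) and the case $L=\infty$ (where $\phi(s)/s$ can be made arbitrarily large). There is no genuine obstacle: the only subtle point is to use monotonicity to pass from the integer-scaling hypothesis $\phi(ms)\geq m\phi(s)$ to a bound at the arbitrary real point $t$, which is handled cleanly by the floor-function decomposition above.
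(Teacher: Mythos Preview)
Your proof is correct and is essentially the same as the paper's: both fix $s>0$, set $m=\lfloor t/s\rfloor$, and combine monotonicity with the super-homogeneity hypothesis to obtain the key inequality $\phi(t)/t\geq(\phi(s)/s)(1-s/t)$. The only cosmetic difference is that the paper phrases the conclusion as ``for every $\tau<L$ we have $\phi(t)/t>\tau$ for $t\gg0$,'' whereas you use the equivalent $\liminf$ formulation.
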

\begin{proof}
  It is enough to show that for every $\tau<\sup_{t>0}\frac{\phi(t)}{t}$, we have
  $\frac{\phi(t)}{t}>\tau$ for $t\gg 0$. Choose $s>0$ such that $\frac{\phi(s)}{s}>\tau$.
  We claim that $\frac{\phi(t)}{t}>\tau$ as long as $1-\frac{s}{t}>\frac{\tau s}{\phi(s)}$. Indeed,
  in this case if we choose $m\in\ZZ_{>0}$ such that $ms\leq t<(m+1)s$, then
  \begin{equation*}\frac{\phi(t)}{t}\geq \frac{\phi(ms)}{ms}\cdot \frac{ms}{t}\geq \frac{\phi(s)}{s}\cdot \left(1-\frac{s}{t}\right)
  >\tau,\end{equation*}
  which completes the proof.
\end{proof}
The lemma applies to $\phi(t):=v(\frb_t)$ where $v\in\Val_X$ and 
$\frb_{\bullet}$ is a subadditive system of ideals.
Indeed, if $s\ge t$, then $\frb_s\subseteq\frb_t$ and so $v(\frb_s)\ge v(\frb_t)$.
Similarly, $\frb_{mt}\subseteq\frb_t^m$, hence 
$v(\frb_{mt})\ge v(\frb_t^m)=mv(\frb_t)$.
We put $v(\frb_{\bullet}):=\lim_{t\to\infty}\frac{v(b_t)}{t}$.

\begin{example}\label{toofast}
  We may have $v(\frb_{\bullet})=\infty$. For example, if $\fra$ is the ideal defining
  a closed point $\xi\in X$, then we have a subadditive system of ideals $\frb_{\bullet}$, where
  $\frb_t=\fra^{\lfloor t^2\rfloor}$ for all $t>0$. It is clear that for every $v\in\Val_X$ with
  $c_X(v)=\xi$, we have $v(\frb_{\bullet})=\infty$.
  For more interesting examples, see \S\ref{comparison}.
\end{example}

Similarly, if $\frq$ is a nonzero ideal, it follows from 
Lemma~\ref{lem1} that $\phi(t):=\Arn^{\frq}(\frb_t)$
satisfies the hypotheses in Lemma~\ref{lem2}. We 
define the \emph{asymptotic Arnold multiplicity} of $\frb_{\bullet}$ with respect to $\frq$ by
$\Arn^{\frq}(\frb_{\bullet}):=\lim_{t\to\infty}\frac{\Arn^{\frq}(\frb_t)}{t}$.
We also put $\lct^{\frq}(\frb_{\bullet})=1/\Arn^{\frq}(\frb_{\bullet})$.

One can give an alternative description of the asymptotic Arnold multiplicity:
\begin{proposition}\label{prop1}
If $\frb_{\bullet}$ is a subadditive system of ideals, and if $\frq$ is a nonzero ideal, then
\begin{equation}\label{formula_Arn}
\Arn^{\frq}(\frb_{\bullet})=\sup_E\frac{\ord_E(\frb_{\bullet})}{A(\ord_E)+\ord_E(\frq)},
\end{equation}
where the supremum is over all divisors $E$ over $X$.
\end{proposition}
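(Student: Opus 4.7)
The plan is to prove the two inequalities separately, using Lemma~\ref{lem1} to reduce each individual $\Arn^{\frq}(\frb_t)$ to a maximum over divisors on a log resolution of $\frb_t$, and then using the asymptotic formulas for $\ord_E(\frb_\bullet)$ and $\Arn^{\frq}(\frb_\bullet)$.

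For the inequality $\Arn^{\frq}(\frb_{\bullet})\geq\sup_E\frac{\ord_E(\frb_{\bullet})}{A(\ord_E)+\ord_E(\frq)}$, I would fix a divisor $E$ over $X$. By Lemma~\ref{lem1} applied to the (nonzero) ideal $\frb_t$, we have the pointwise bound
\begin{equation*}
\Arn^{\frq}(\frb_t)\ \ge\ \frac{\ord_E(\frb_t)}{A(\ord_E)+\ord_E(\frq)}.
\end{equation*}
Dividing by $t$ and letting $t\to\infty$, I invoke the definition $\ord_E(\frb_{\bullet})=\lim_{t\to\infty}\ord_E(\frb_t)/t$ and $\Arn^{\frq}(\frb_\bullet)=\lim_{t\to\infty}\Arn^{\frq}(\frb_t)/t$ from Lemma~\ref{lem2} (the hypotheses of Lemma~\ref{lem2} being satisfied by $t\mapsto v(\frb_t)$ and $t\mapsto\Arn^{\frq}(\frb_t)$, as noted in the text). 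Taking the supremum over $E$ then yields the desired lower bound, even in the case when either side is infinite.

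For the reverse inequality, fix $t>0$ with $\frb_t\ne\cO_X$ and choose a log resolution of $\frb_t$. By Lemma~\ref{lem1}, there is a prime divisor $E_t$ on this resolution such that
\begin{equation*}
\Arn^{\frq}(\frb_t)=\frac{\ord_{E_t}(\frb_t)}{A(\ord_{E_t})+\ord_{E_t}(\frq)}.
\end{equation*}
The crucial observation is that Lemma~\ref{lem2} gives not just a limit but an equality with a supremum: $\ord_E(\frb_{\bullet})=\sup_{s>0}\ord_E(\frb_s)/s$, and in particular $\ord_{E_t}(\frb_t)/t\le\ord_{E_t}(\frb_{\bullet})$. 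Dividing the above display by $t$ therefore gives
\begin{equation*}
\frac{\Arn^{\frq}(\frb_t)}{t}\ \le\ \frac{\ord_{E_t}(\frb_{\bullet})}{A(\ord_{E_t})+\ord_{E_t}(\frq)}\ \le\ \sup_E\frac{\ord_E(\frb_{\bullet})}{A(\ord_E)+\ord_E(\frq)}.
\end{equation*}
Taking $t\to\infty$ yields the upper bound for $\Arn^{\frq}(\frb_{\bullet})$. (In the degenerate case where $\frb_t=\cO_X$ for all small $t$, both sides of the inequality are handled trivially; since $\frb_{\bullet}$ consists of nonzero ideals and $\frb_{mt}\subseteq\frb_t^m$, the relevant $\frb_t$ are eventually proper or everything is zero.)

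The main point requiring care is simply matching the ``$\lim=\inf$'' convention for $\Arn^{\frq}(\fra_\bullet)$ from the graded case with the ``$\lim=\sup$'' behavior for a subadditive system (Lemma~\ref{lem2}); the former plays the role in the lower bound and the latter is exactly what allows one to replace $\ord_{E_t}(\frb_t)/t$ by $\ord_{E_t}(\frb_\bullet)$ in the upper bound. I do not anticipate any serious obstacle beyond book-keeping.
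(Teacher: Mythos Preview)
Your proof is correct and follows essentially the same approach as the paper's: both directions use Lemma~\ref{lem1} at each fixed $t$, and the reverse inequality hinges on the same observation that $\ord_{E_t}(\frb_t)/t\le\ord_{E_t}(\frb_\bullet)$ (since for subadditive systems the limit in Lemma~\ref{lem2} equals the supremum). The paper phrases the upper bound by fixing $\tau<\Arn^{\frq}(\frb_\bullet)$ and producing a single $(t,E)$, whereas you bound $\Arn^{\frq}(\frb_t)/t$ uniformly in $t$ and then pass to the limit---a cosmetic difference only.
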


\begin{proof}
  Given any divisor $E$ over $X$, we have by Lemma~\ref{lem1}
  \begin{equation*}
    \Arn^{\frq}(\frb_t)\geq\frac{\ord_E(\frb_t)}{A(\ord_E)+\ord_E(\frq)}.
  \end{equation*}
  Dividing by $t$, and letting $t$ go to infinity gives 
  ``$\geq$" in~\eqref{formula_Arn}.

  For the reverse inequality, fix $\tau<\Arn^{\frq}(\frb_{\bullet})$,
  pick $t$ such that $\tau<\Arn^{\frq}(\frb_t)/t$, and choose a divisor $E$ over $X$
  such that $\Arn^{\frq}(\frb_t)=\frac{\ord_E(\frb_t)}{A(\ord_E)+\ord_E(\frq)}$. 
  Then
  \begin{equation*}
    \tau<\frac{\Arn^{\frq}(\frb_t)}{t}
    =\frac{\ord_E(\frb_t)}{t(A(\ord_E)+\ord_E(\frq))}
    \leq\frac{\ord_E(\frb_{\bullet})}{A(\ord_E)+\ord_E(\frq)},
  \end{equation*}
  which proves ``$\leq$" in~\eqref{formula_Arn}.
\end{proof}

As Example~\ref{toofast} indicates, the ideals $\frb_t$ in a subadditive system 
$\frb_\bullet$ can ``grow'' very fast as $t\to\infty$. 
On the other hand, as we shall see in the next subsection, this does not happen for 
subadditive systems arising from graded sequences.
In order to formalize things, we introduce
\begin{definition}\label{growth}
  A subadditive system $\frb_\bullet$ has \emph{controlled growth}   if
  \begin{equation}\label{cond3}
    \frac{\ord_E(\frb_t)}{t}> \ord_E(\frb_{\bullet})-\frac{A(\ord_E)}{t}
  \end{equation}
  for every divisor $E$ over $X$ and every $t>0$. 
\end{definition}
In particular, if $\frb_\bullet$ has controlled growth, 
$\ord_E(\frb_{\bullet})$ is finite for all $E$.
\begin{lemma}\label{lem3}
  If $\frb_{\bullet}$ is a subadditive system of controlled growth, then for every nonzero ideal
  $\frq$ and every $t>0$, we have
  \begin{equation*}
    \frac{\Arn^{\frq}(\frb_t)}{t}\geq\Arn^{\frq}(\frb_{\bullet})-\frac{1}{t}.
  \end{equation*}
\end{lemma}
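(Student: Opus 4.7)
The plan is to combine three ingredients: the sup-formula for $\Arn^{\frq}(\frb_{\bullet})$ from Proposition~\ref{prop1}, the divisor-wise lower bound for $\Arn^{\frq}(\frb_t)$ coming from Lemma~\ref{lem1}, and the controlled growth estimate~\eqref{cond3} itself. Since all three statements are indexed by divisors $E$ over $X$, the natural approach is to fix such an $E$, derive the desired inequality with $\Arn^{\frq}(\frb_{\bullet})$ replaced by the single $E$-term $\ord_E(\frb_{\bullet})/(A(\ord_E)+\ord_E(\frq))$, and then take the supremum over $E$ at the end.

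More precisely, fix a divisor $E$ over $X$. By Lemma~\ref{lem1} we have
\begin{equation*}
\Arn^{\frq}(\frb_t)\ge\frac{\ord_E(\frb_t)}{A(\ord_E)+\ord_E(\frq)},
\end{equation*}
and by the controlled growth hypothesis $\ord_E(\frb_t)>t\cdot\ord_E(\frb_{\bullet})-A(\ord_E)$. Dividing the first inequality by $t$ and plugging in the second gives
\begin{equation*}
\frac{\Arn^{\frq}(\frb_t)}{t}
>\frac{\ord_E(\frb_{\bullet})}{A(\ord_E)+\ord_E(\frq)}
-\frac{A(\ord_E)}{t(A(\ord_E)+\ord_E(\frq))}.
\end{equation*}
The point is that the unwanted error term is easy to bound: since $\frq$ is nonzero we have $\ord_E(\frq)\ge 0$, so $A(\ord_E)\le A(\ord_E)+\ord_E(\frq)$, whence the error is at most $1/t$. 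This yields
\begin{equation*}
\frac{\Arn^{\frq}(\frb_t)}{t}
>\frac{\ord_E(\frb_{\bullet})}{A(\ord_E)+\ord_E(\frq)}-\frac{1}{t}
\end{equation*}
for every divisor $E$ over $X$.

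Taking the supremum over $E$ on the right and using Proposition~\ref{prop1} then gives the desired inequality $\Arn^{\frq}(\frb_t)/t\ge\Arn^{\frq}(\frb_{\bullet})-1/t$ (the strict inequality relaxes to non-strict upon passing to the sup, which is what the statement asks for). There is essentially no obstacle here; the only minor care needed is to ensure that the denominator $A(\ord_E)+\ord_E(\frq)$ is positive, which follows since $A(\ord_E)\ge 1$ for any divisor $E$ over a regular scheme $X$, and to check that controlled growth is strong enough to give an error term of the correct order $1/t$ uniformly in $E$, which is exactly the content of the computation above.
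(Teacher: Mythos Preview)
Your proof is correct and takes essentially the same approach as the paper: fix a divisor $E$, apply Lemma~\ref{lem1} together with the controlled growth bound~\eqref{cond3}, observe that the error term $\frac{A(\ord_E)}{A(\ord_E)+\ord_E(\frq)}$ is at most $1$, and pass to the supremum. The only cosmetic difference is that you take the sup over $E$ directly via Proposition~\ref{prop1}, whereas the paper takes a sup over an auxiliary parameter $s$ (choosing for each $s$ the divisor $E$ that computes $\Arn^{\frq}(\frb_s)$); the underlying computation is identical.
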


\begin{proof}
  By the definition of $\Arn^\frq(\frb_\bullet)$ it is enough to show that 
  \begin{equation*}
    \frac{\Arn^{\frq}(\frb_t)}{t}>\frac{\Arn^{\frq}(\frb_s)}{s}-\frac{1}{t}
  \end{equation*}
  for every $s>0$. Choose a divisor $E$ over $X$ such that 
  $\Arn^{\frq}(\frb_s)=\frac{\ord_E(\frb_s)}{A(\ord_E)+\ord_E(\frq)}$. 
  Using Lemma~\ref{lem1} and condition~\eqref{cond3}, we deduce
  \begin{multline*}
    \frac{\Arn^{\frq}(\frb_t)}{t}
    \geq \frac{\ord_E(\frb_t)}{t(A(\ord_E)+\ord_E(\frq))}
    >\left(\frac{\ord_E(\frb_{s})}{s}-\frac{A(\ord_E)}{t}\right)
    \cdot\frac{1}{A(\ord_E)+\ord_E(\frq)}\\
    =\frac{\Arn^{\frq}(\frb_s)}{s}-\frac{1}{t}\cdot\frac{A(\ord_E)}{A(\ord_E)+\ord_E(\frq)}
    \geq \frac{\Arn^{\frq}(\frb_s)}{s}-\frac{1}{t},
  \end{multline*}
  concluding the proof.
\end{proof}
\begin{corollary}
If $\frb_{\bullet}$ is a subadditive system of ideals of controlled growth, then 
$\Arn^{\frq}(\frb_{\bullet})$ is finite for every nonzero ideal $\frq$.
\end{corollary}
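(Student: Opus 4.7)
The plan is to combine the controlled growth inequality with the valuative description of $\Arn^{\frq}(\frb_\bullet)$ given by Proposition~\ref{prop1}. Fix any $t>0$; since $\frb_t$ is a nonzero ideal on $X$, standard properties of the log canonical threshold (or Lemma~\ref{lem1} applied to a log resolution of $\frb_t\cdot\frq$) guarantee that $\Arn^{\frq}(\frb_t)$ is finite. The idea is to bootstrap this single-ideal finiteness to the asymptotic invariant.

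More precisely, for every divisor $E$ over $X$, the controlled growth hypothesis~\eqref{cond3} gives
\begin{equation*}
  \ord_E(\frb_\bullet)<\frac{\ord_E(\frb_t)}{t}+\frac{A(\ord_E)}{t}.
\end{equation*}
Dividing both sides by $A(\ord_E)+\ord_E(\frq)>0$ and using $A(\ord_E)\le A(\ord_E)+\ord_E(\frq)$ in the second term yields
\begin{equation*}
  \frac{\ord_E(\frb_\bullet)}{A(\ord_E)+\ord_E(\frq)}
  <\frac{1}{t}\cdot\frac{\ord_E(\frb_t)}{A(\ord_E)+\ord_E(\frq)}+\frac{1}{t}
  \le\frac{\Arn^{\frq}(\frb_t)+1}{t},
\end{equation*}
where the last inequality uses Lemma~\ref{lem1} applied to the single ideal $\frb_t$. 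Taking the supremum over all divisors $E$ and invoking Proposition~\ref{prop1} gives
\begin{equation*}
  \Arn^{\frq}(\frb_\bullet)\le\frac{\Arn^{\frq}(\frb_t)+1}{t}<\infty,
\end{equation*}
which is the desired conclusion.

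No step looks genuinely hard: the controlled growth hypothesis is tailor-made to convert the asymptotic order $\ord_E(\frb_\bullet)$ into a quantity controlled by $\ord_E(\frb_t)+A(\ord_E)$, and Proposition~\ref{prop1} is what lets us pass from a uniform divisorial bound to a bound on $\Arn^{\frq}(\frb_\bullet)$. The only subtlety is remembering to normalize by $A(\ord_E)+\ord_E(\frq)$ and noticing that the $A(\ord_E)/t$ term contributes the harmless constant $1/t$ after this normalization; everything else is formal.
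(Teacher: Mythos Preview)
Your proof is correct and takes essentially the same approach as the paper. The paper derives this corollary immediately from Lemma~\ref{lem3}, whose proof establishes the same bound $\Arn^{\frq}(\frb_\bullet)\le\frac{\Arn^{\frq}(\frb_t)+1}{t}$ via the same manipulation of the controlled-growth inequality; you have simply inlined that argument, using Proposition~\ref{prop1} in place of the definition of $\Arn^\frq(\frb_\bullet)$ as a limit.
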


\subsection{Asymptotic multiplier ideals}
Recall that the \emph{asymptotic multiplier ideals} of 
a graded sequence $\fra_{\bullet}$ are defined by
$\frb_t:=\cJ(\fra_{\bullet}^t):=\cJ(\fra_m^{t/m})$, where $m$ is divisible enough 
(depending on $t>0$). We have $\fra_m\subseteq\frb_m$ for all $m$
and it follows from the Subadditivity Theorem
that $(\frb_t)_{t>0}$ is a subadditive system of ideals. 
As above, we set $\frb_0:=\cO_X$. 
For more about graded sequences
and their corresponding asymptotic multiplier ideals we refer to~\cite{positivity}. 

Next we show that the asymptotic invariants $\lct^{\frq}(\fra_{\bullet})$
can be described in terms of the jumps of the system $\frb_{\bullet}$:

\begin{proposition}\label{prop2_3}
  If $\fra_{\bullet}$ is a graded sequence of ideals, and $\frb_{\bullet}$ is the system 
  of asymptotic multiplier ideals of $\fra_{\bullet}$, then 
  \begin{equation*}\lct^{\frq}(\fra_{\bullet})=\min\{\lambda\geq 0\mid\frq\not\subseteq\frb_{\lambda}\}\end{equation*}
  for every nonzero ideal $\frq$. 
\end{proposition}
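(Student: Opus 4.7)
The plan is to unwind the condition $\frq\subseteq\frb_\lambda$ via the definition of the asymptotic multiplier ideal, reduce to the single-ideal case, and then recognize the resulting expression as the asymptotic log canonical threshold. No deep input beyond the formalism of asymptotic multiplier ideals already set up in this section is needed.

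Concretely, I would fix $\lambda\ge0$ and translate $\frq\subseteq\frb_\lambda$ as follows. By the construction of $\cJ(\fra_\bullet^\lambda)$, for each pair $m\mid m'$ in $S(\fra_\bullet)$ one has $\cJ(\fra_m^{\lambda/m})\subseteq\cJ(\fra_{m'}^{\lambda/m'})$, so the family $\{\cJ(\fra_m^{\lambda/m})\}_{m\in S(\fra_\bullet)}$ is directed by divisibility, and its maximum (which, by the Noetherian property, is attained for $m$ divisible enough) equals $\frb_\lambda$. Hence
\[
\frq\subseteq\frb_\lambda
\;\iff\;
\frq\subseteq\cJ(\fra_m^{\lambda/m})\text{ for some }m\in S(\fra_\bullet).
\]

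Next, the definition $\lct^\frq(\fra_m)=\min\{t\ge0\mid\frq\not\subseteq\cJ(\fra_m^t)\}$, together with the monotonicity of $\cJ(\fra_m^t)$ in $t$, gives the further equivalence
\[
\frq\subseteq\cJ(\fra_m^{\lambda/m})
\;\iff\;
\lambda<m\cdot\lct^\frq(\fra_m).
\]
Combining the two equivalences, $\frq\subseteq\frb_\lambda$ holds if and only if $\lambda<\sup_m m\cdot\lct^\frq(\fra_m)$. By definition of $\Arn^\frq(\fra_\bullet)=\inf_m\Arn^\frq(\fra_m)/m$ (Lemma~\ref{lem2_1} applied to $\alpha_m=\Arn^\frq(\fra_m)$) and the convention $\lct^\frq(\fra_\bullet)=1/\Arn^\frq(\fra_\bullet)$, this supremum is exactly $\lct^\frq(\fra_\bullet)$. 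Thus
\[
\{\lambda\ge0\mid\frq\not\subseteq\frb_\lambda\}=[\lct^\frq(\fra_\bullet),+\infty),
\]
which simultaneously identifies the infimum with $\lct^\frq(\fra_\bullet)$ and shows that it is attained (with the convention that the set is empty when $\lct^\frq(\fra_\bullet)=+\infty$).

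I do not anticipate any genuine obstacle: the argument is a direct unwinding of definitions. The only mild care required is to work along $m\in S(\fra_\bullet)$ so as to avoid trivialities from zero ideals, and to invoke the standard monotonicity property $\cJ(\fra_m^{\lambda/m})\subseteq\cJ(\fra_{m'}^{\lambda/m'})$ for $m\mid m'$ that underlies the existence of $\cJ(\fra_\bullet^\lambda)$ in the first place.
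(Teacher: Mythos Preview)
Your proposal is correct and follows essentially the same route as the paper's proof: both unwind the defining property $\frb_\lambda=\cJ(\fra_m^{\lambda/m})$ for $m$ divisible enough, translate $\frq\subseteq\cJ(\fra_m^{\lambda/m})$ into $\lambda<m\cdot\lct^\frq(\fra_m)$, and identify the supremum $\sup_m m\cdot\lct^\frq(\fra_m)$ with $\lct^\frq(\fra_\bullet)$. The paper's version is simply more compressed.
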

\begin{proof}
  By definition, $\lct^{\frq}(\fra_{\bullet})=\sup_{m\geq 1}m\cdot\lct^{\frq}(\fra_m)$.
  Hence $t\geq\lct^{\frq}(\fra_{\bullet})$ if and only if $t/m\geq\lct^{\frq}(\fra_m)$, 
  or, equivalently, $\frq\not\subseteq\cJ(\fra_m^{t/m})$ for all $m$. 
  On the other hand, we have $\cJ(\fra_m^{t/m})\subseteq\frb_t$, 
  with equality if $m$ is divisible enough. The result follows.
\end{proof}
We now compare the invariants defined for $\fra_{\bullet}$ and for the corresponding 
system of asymptotic multiplier ideals $\frb_{\bullet}$. In the process we will see that
$\frb_{\bullet}$ has controlled growth.

\begin{proposition}\label{prop2_1}
  If $\fra_{\bullet}$ is a graded sequence of ideals, and $\frb_{\bullet}$ is the corresponding
  subadditive system given by the asymptotic multiplier ideals of $\fra_{\bullet}$, then:
  \begin{enumerate}
  \item[(i)] the system $\frb_{\bullet}$ has controlled growth;
  \item[(ii)] we have $\ord_E(\fra_{\bullet})=\ord_E(\frb_{\bullet})$ 
    for every divisor $E$ over $X$. 
  \end{enumerate}
\end{proposition}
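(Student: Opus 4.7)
The plan is to prove the two statements together, with (ii) as essentially a corollary of the key estimate used for (i). The one essential input is the basic valuative description of the multiplier ideal: for any nonzero ideal $\fra$, any $t\ge0$, and any divisor $E$ over $X$, the definition of $\cJ(\fra^t)$ gives the strict inequality $\ord_E(\cJ(\fra^t)) > t\,\ord_E(\fra) - A(\ord_E)$.

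First, I would fix a divisor $E$ over $X$ and a real number $t>0$. By the definition of asymptotic multiplier ideals, $\frb_t=\cJ(\fra_m^{t/m})$ for every sufficiently divisible $m\in S(\fra_\bullet)$. Applying the valuative inequality above to $\fra_m$, I obtain
\begin{equation*}
  \ord_E(\frb_t) > \frac{t}{m}\,\ord_E(\fra_m) - A(\ord_E).
\end{equation*}
Since $\ord_E(\fra_\bullet)=\inf_{m\ge1}\ord_E(\fra_m)/m$, the right-hand side is at least $t\,\ord_E(\fra_\bullet)-A(\ord_E)$. Thus, for every $t>0$,
\begin{equation*}
  \ord_E(\frb_t) > t\,\ord_E(\fra_\bullet) - A(\ord_E). \tag{$\ast$}
\end{equation*}

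Next I would establish (ii). The containment $\fra_m\subseteq\frb_m$ (which is immediate from the definition of the asymptotic multiplier ideal) implies $\ord_E(\fra_m)\ge\ord_E(\frb_m)$, so dividing by $m$ and letting $m\to\infty$ along $S(\fra_\bullet)$ gives $\ord_E(\fra_\bullet)\ge\ord_E(\frb_\bullet)$. For the reverse inequality, divide $(\ast)$ by $t$ and let $t\to\infty$: the left-hand side tends to $\ord_E(\frb_\bullet)$ (via Lemma~\ref{lem2} applied to $\phi(t)=\ord_E(\frb_t)$, which was already observed to satisfy its hypotheses) and the right-hand side tends to $\ord_E(\fra_\bullet)$. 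This yields $\ord_E(\frb_\bullet)\ge\ord_E(\fra_\bullet)$, completing the proof of (ii).

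Substituting the equality $\ord_E(\fra_\bullet)=\ord_E(\frb_\bullet)$ into $(\ast)$ and dividing by $t$ gives exactly the controlled growth condition of Definition~\ref{growth}, proving (i). The only possible obstacle is a bookkeeping one: one must take $m$ simultaneously large enough that $\fra_m\ne 0$, divisible enough that $\cJ(\fra_m^{t/m})=\frb_t$, and large enough that $\ord_E(\fra_m)/m$ is close to its infimum $\ord_E(\fra_\bullet)$. All three conditions are compatible since $S(\fra_\bullet)$ is a nonempty subsemigroup of $\ZZ_{>0}$ and the divisibility requirement is cofinal within $S(\fra_\bullet)$; in fact, the argument above does not even require the third approximation because we used the infimum bound directly.
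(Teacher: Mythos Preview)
Your proof is correct and follows essentially the same approach as the paper: both derive the key estimate $\ord_E(\frb_t)>t\,\ord_E(\fra_\bullet)-A(\ord_E)$ from the valuative description of the multiplier ideal applied to $\fra_m$ with $\frb_t=\cJ(\fra_m^{t/m})$, then use $\fra_m\subseteq\frb_m$ for one inequality in~(ii), the limit $t\to\infty$ of the key estimate for the other, and finally substitute~(ii) back into the estimate to obtain controlled growth.
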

We shall later extend~(ii) and show that $v(\fra_\bullet)=v(\frb_\bullet)$
for many non-divisorial valuations $v\in\Val_X$. See
Proposition~\ref{prop2_arbitrary_val}.
\begin{proof}
Given $t>0$, consider $m$ such that $\frb_t=\cJ(\fra_m^{t/m})$. By the definition of multiplier ideals,
we have
$\ord_E(\cJ(\fra_m^{t/m}))>t\cdot\frac{\ord_E(\fra_m)}{m}-A(\ord_E)$,
hence 
\begin{equation}\label{eq_prop2_1}
\frac{\ord_E(\frb_t)}{t}>\frac{\ord_E(\fra_m)}{m}-\frac{A(\ord_E)}{t}\geq
\ord_E(\fra_{\bullet})-\frac{A(\ord_E)}{t}.
\end{equation}

By letting $t$ go to infinity in~\eqref{eq_prop2_1} we get
$\ord_E(\frb_{\bullet})\geq\ord_E(\fra_{\bullet})$.
On the other hand, since $\fra_m\subseteq\frb_m$ for every $m$, we deduce
$\ord_E(\frb_m)\leq\ord_E(\fra_m)$. Dividing by $m$ and letting $m$ go to infinity
gives $\ord_E(\frb_{\bullet})\leq\ord_E(\fra_{\bullet})$. Therefore we have~(ii), and now
the assertion in~(i) follows from~\eqref{eq_prop2_1}.
\end{proof}

\begin{proposition}\label{prop2_2}
If $\fra_{\bullet}$ is a  graded sequence of ideals, and $\frq$ is a nonzero ideal,
then $\Arn^{\frq}(\fra_{\bullet})=\Arn^{\frq}(\frb_{\bullet})$, where $\frb_{\bullet}$ is the
subadditive system given by the asymptotic multiplier ideals of $\fra_{\bullet}$.
\end{proposition}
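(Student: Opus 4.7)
The plan is to prove $\Arn^{\frq}(\fra_\bullet)=\Arn^{\frq}(\frb_\bullet)$ as two opposite inequalities. For ``$\ge$'', the inclusion $\fra_m\subseteq\frb_m$ combined with Lemma~\ref{lem1}(i) gives $\Arn^{\frq}(\fra_m)\ge\Arn^{\frq}(\frb_m)$; dividing by $m$ and letting $m\to\infty$---both limits exist by Lemma~\ref{lem2_1} and Lemma~\ref{lem2} respectively---yields $\Arn^{\frq}(\fra_\bullet)\ge\Arn^{\frq}(\frb_\bullet)$.

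The reverse inequality is the substantive direction, and I would obtain it by mimicking the argument used in the proof of Proposition~\ref{prop2_1}. Fix $t>0$ and choose $m$ divisible enough that $\frb_t=\cJ(\fra_m^{t/m})$. For every divisor $E$ over $X$, the defining inequality for multiplier ideals gives $\ord_E(\frb_t)>(t/m)\,\ord_E(\fra_m)-A(\ord_E)$, which rearranges to
\begin{equation*}
\frac{\ord_E(\fra_m)}{m}<\frac{\ord_E(\frb_t)}{t}+\frac{A(\ord_E)}{t}\le\ord_E(\fra_\bullet)+\frac{A(\ord_E)}{t}.
\end{equation*}
Here the last estimate uses $\ord_E(\frb_t)/t\le\ord_E(\frb_\bullet)$ (from Lemma~\ref{lem2} applied to $\phi(s):=\ord_E(\frb_s)$), combined with the identity $\ord_E(\frb_\bullet)=\ord_E(\fra_\bullet)$ from Proposition~\ref{prop2_1}(ii). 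Dividing by $A(\ord_E)+\ord_E(\frq)$ and absorbing the error with the trivial bound $A(\ord_E)/(A(\ord_E)+\ord_E(\frq))\le 1$,
\begin{equation*}
\frac{\ord_E(\fra_m)/m}{A(\ord_E)+\ord_E(\frq)}<\frac{\ord_E(\fra_\bullet)}{A(\ord_E)+\ord_E(\frq)}+\frac{1}{t}\le\Arn^{\frq}(\frb_\bullet)+\frac{1}{t},
\end{equation*}
where the final inequality is Proposition~\ref{prop1} applied to $\frb_\bullet$ (together with Proposition~\ref{prop2_1}(ii)). Taking the supremum over all divisors $E$ over $X$---which equals $\Arn^{\frq}(\fra_m)$ by Lemma~\ref{lem1}---produces $\Arn^{\frq}(\fra_m)/m\le\Arn^{\frq}(\frb_\bullet)+1/t$, so that $\Arn^{\frq}(\fra_\bullet)\le\Arn^{\frq}(\frb_\bullet)+1/t$. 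Letting $t\to\infty$ completes the proof.

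The main subtlety will be keeping the error term uniform in $E$. A priori $A(\ord_E)/t$ grows with $A(\ord_E)$ and could be arbitrarily large, but the denominator $A(\ord_E)+\ord_E(\frq)\ge A(\ord_E)$ exactly absorbs it, leaving a uniform bound of $1/t$. Without this cancellation one could not pass from the pointwise-in-$E$ estimate to the supremum defining $\Arn^{\frq}(\fra_m)$.
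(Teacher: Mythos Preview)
Your proof is correct and uses essentially the same idea as the paper: the multiplier-ideal inequality $\ord_E(\frb_t)>(t/m)\ord_E(\fra_m)-A(\ord_E)$, divided by $A(\ord_E)+\ord_E(\frq)$ so that the error term becomes a uniform $1/t$. The paper organizes this slightly more cleanly by first isolating the single-ideal estimate $\Arn^{\frq}(\cJ(\fra^\lambda))\ge\lambda\Arn^{\frq}(\fra)-1$ as a lemma (Lemma~\ref{lem2_3}) and applying it directly to get $\Arn^{\frq}(\frb_t)\ge t\,\Arn^{\frq}(\fra_\bullet)-1$; your detour through $\ord_E(\frb_\bullet)=\ord_E(\fra_\bullet)$ and Proposition~\ref{prop1} is unnecessary, since you could bound $\frac{\ord_E(\frb_t)/t}{A(\ord_E)+\ord_E(\frq)}\le\Arn^{\frq}(\frb_t)/t\le\Arn^{\frq}(\frb_\bullet)$ directly.
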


The case $\frq=\cO_X$ is Theorem~3.6 in~\cite{Mustata}. We include the proof
of the general case for the convenience of the reader. 
The key ingredient is the lemma below, which corresponds to
Lemma~3.7 in~\cite{Mustata}.

\begin{lemma}\label{lem2_3}
  If $\fra$ and $\frq$ are nonzero ideals on $X$, then
  $\Arn^{\frq}(\cJ(\fra^{\lambda}))\geq
  \lambda\cdot\Arn^{\frq}(\fra)-1$ for every $\lambda\in\RR_{\geq 0}$.
\end{lemma}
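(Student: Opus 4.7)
The plan is to exploit the formula from Lemma~\ref{lem1} to compare the two Arnold multiplicities through a single, cleverly chosen divisor $E$ over $X$, and then invoke the defining property of the multiplier ideal at this divisor.

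More precisely, first I would fix a log resolution $\pi\colon Y\to X$ of the product $\fra\cdot\cJ(\fra^{\lambda})\cdot\frq$, so that Lemma~\ref{lem1} applies simultaneously to $\fra$ and to $\cJ(\fra^{\lambda})$ using the same set of prime divisors $(E_i)$ on $Y$. Let $E$ be an $E_i$ achieving the maximum in Lemma~\ref{lem1} for $\fra$, so that
\begin{equation*}
  \Arn^{\frq}(\fra)=\frac{\ord_E(\fra)}{A(\ord_E)+\ord_E(\frq)}.
\end{equation*}
Since the same lemma gives $\Arn^{\frq}(\cJ(\fra^{\lambda}))$ as a maximum over the same set of divisors, we automatically obtain the lower bound
\begin{equation*}
  \Arn^{\frq}(\cJ(\fra^{\lambda}))\geq\frac{\ord_E(\cJ(\fra^{\lambda}))}{A(\ord_E)+\ord_E(\frq)}.
\end{equation*}

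The heart of the argument is now the pointwise estimate on $\ord_E(\cJ(\fra^{\lambda}))$: by the very definition of the multiplier ideal recalled in \S1.3, every local section $f$ of $\cJ(\fra^{\lambda})$ defined near $c_X(\ord_E)$ satisfies $\ord_E(f)+A(\ord_E)>\lambda\cdot\ord_E(\fra)$. Taking the minimum over such $f$ yields
\begin{equation*}
  \ord_E(\cJ(\fra^{\lambda}))\geq\lambda\cdot\ord_E(\fra)-A(\ord_E).
\end{equation*}
Plugging this into the previous display and using $\ord_E(\frq)\geq0$, I would conclude by the algebraic manipulation
\begin{equation*}
  \Arn^{\frq}(\cJ(\fra^{\lambda}))\geq\frac{\lambda\cdot\ord_E(\fra)-A(\ord_E)}{A(\ord_E)+\ord_E(\frq)}
  =\lambda\cdot\Arn^{\frq}(\fra)-\frac{A(\ord_E)}{A(\ord_E)+\ord_E(\frq)}\geq\lambda\cdot\Arn^{\frq}(\fra)-1.
\end{equation*}

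The only subtle point is really the first step: one must make sure that the divisor $E$ which computes $\Arn^{\frq}(\fra)$ also appears on some log resolution of $\cJ(\fra^{\lambda})$ (equivalently, that we may invoke Lemma~\ref{lem1} simultaneously for both ideals). This is handled by taking a common log resolution from the outset, as above; everything else is the direct chain of estimates. Trivial boundary cases such as $\lambda=0$ or $\fra=\cO_X$ render the inequality obvious and need no separate treatment.
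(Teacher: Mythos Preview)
Your proof is correct and follows essentially the same approach as the paper's: pick a divisor $E$ computing $\Arn^{\frq}(\fra)$, use the defining inequality of the multiplier ideal to bound $\ord_E(\cJ(\fra^\lambda))$ from below, and plug into the formula from Lemma~\ref{lem1}. Your extra care in fixing a common log resolution of $\fra\cdot\cJ(\fra^\lambda)\cdot\frq$ is a harmless elaboration; the paper simply uses that $\Arn^{\frq}(\cJ(\fra^\lambda))\ge\frac{\ord_E(\cJ(\fra^\lambda))}{A(\ord_E)+\ord_E(\frq)}$ holds for \emph{any} divisor $E$ over $X$.
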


\begin{proof}
  Write $J=\cJ(\fra^{\lambda})$.
  It follows from the definition of the multiplier ideal that
  $\ord_E(J)>\lambda\cdot \ord_E(\fra)-A(\ord_E)$
  for any divisor $E$ above $X$.
  Since $\ord_E(\frq)\ge0$, this implies
  \begin{equation*}
    \Arn^{\frq}(J)\geq \frac{\ord_E(J)}{A(\ord_E)+\ord_E(\frq)}
    \ge\lambda\cdot \frac{\ord_E(\fra)}{A(\ord_E)+\ord_E(\frq)}
    -1.
  \end{equation*}
  We obtain the desired inequality by picking $E$
  that computes $\Arn^\frq(\fra)$.
\end{proof}

\begin{proof}[Proof of Proposition~\ref{prop2_2}]
  Given $t>0$, let us choose $m$ such that $\frb_t=\cJ(\fra_m^{t/m})$. 
We deduce from Lemma~\ref{lem2_3} that
\begin{equation*}\Arn^{\frq}(\frb_t)\geq t\cdot\frac{\Arn^{\frq}(\fra_m)}{m}-1\geq t\cdot\Arn^{\frq}(\fra_{\bullet})-1.\end{equation*}
Dividing by $t$ and letting $t$ go to infinity gives $\Arn^{\frq}(\frb_{\bullet})\geq
\Arn^{\frq}(\fra_{\bullet})$. The opposite inequality follows from the definition of asymptotic Arnold multiplicities and the inclusions $\fra_m\subseteq\frb_m$ for all $m$. 
\end{proof}

\begin{corollary}\label{cor_Arnold_graded}
If $\fra_{\bullet}$ is a graded sequence of ideals and $\frq$ is a nonzero ideal, then 
\begin{equation*}\Arn^{\frq}(\fra_{\bullet})=\sup_E\frac{\ord_E(\fra_{\bullet})}{A(\ord_E)+\ord_E(\frq)},\end{equation*}
where the supremum is over all divisors $E$ over $X$.
\end{corollary}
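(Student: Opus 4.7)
The plan is to chain together three results from the preceding subsection, so this is a one-line deduction rather than a new argument. First I would invoke Proposition~\ref{prop2_2}, which identifies the asymptotic Arnold multiplicity of the graded sequence $\fra_\bullet$ with that of its associated subadditive system $\frb_\bullet$ of asymptotic multiplier ideals: $\Arn^{\frq}(\fra_\bullet)=\Arn^{\frq}(\frb_\bullet)$.

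Next, since $\frb_\bullet$ is a subadditive system (this is the content of the Subadditivity Theorem, recalled at the start of \S2.3), Proposition~\ref{prop1} applies to give
\begin{equation*}
\Arn^{\frq}(\frb_\bullet)=\sup_E\frac{\ord_E(\frb_\bullet)}{A(\ord_E)+\ord_E(\frq)},
\end{equation*}
with $E$ ranging over divisors over $X$.

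Finally, I would use Proposition~\ref{prop2_1}(ii), which asserts the equality $\ord_E(\fra_\bullet)=\ord_E(\frb_\bullet)$ for every divisor $E$ over $X$, to rewrite the numerator. Substituting yields the claimed formula. There is no real obstacle here: all the work has already been done in Propositions~\ref{prop1}, \ref{prop2_1}, and~\ref{prop2_2}, and the corollary is simply the combination of the ``sup over divisors'' description of $\Arn^\frq$ for subadditive systems (which requires controlled growth, itself supplied by Proposition~\ref{prop2_1}(i)) with the divisorial identification of asymptotic orders between $\fra_\bullet$ and $\frb_\bullet$.
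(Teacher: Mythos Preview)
Your proposal is correct and matches the paper's own proof, which simply says the assertion follows by combining Propositions~\ref{prop1}, \ref{prop2_1}, and~\ref{prop2_2}. One small inaccuracy: Proposition~\ref{prop1} does \emph{not} require controlled growth (it holds for any subadditive system), so your parenthetical remark invoking Proposition~\ref{prop2_1}(i) is unnecessary here, though harmless.
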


\begin{proof}
The assertion follows by combining Propositions~\ref{prop1},~\ref {prop2_1} and~\ref{prop2_2}.
\end{proof}

\section{Quasi-monomial valuations}\label{S304}
We now want to extend the considerations in~\S\ref{S301}--\S\ref{S302}
from divisorial to general real valuations.
As an important intermediate step, we first 
study quasi-monomial valuations.

\subsection{Quasi-monomial valuations}
Let $X$ be a scheme as before. A \emph{quasi-monomial} valuation
in $\Val_X$ is a valuation that is monomial in some local coordinates
on some birational model of $X$. For further reference, we shall 
describe this concept in detail.

Suppose that $\pi\colon Y\to X$
is a proper birational morphism, with $Y$ regular and connected, and 
$\uy=(y_1,\dots,y_r)$ is a system of 
algebraic coordinates at a point $\eta\in Y$. 
We use the notation $y^{\beta}=\prod_{i=1}^ry_i^{\beta_i}$ 
when $\beta=(\beta_1,\dots,\beta_r)\in\ZZ_{\geq 0}^r$, 
and $\langle \alpha,\beta\rangle:=\sum_{i=1}^r\alpha_i\beta_i$
when $\alpha,\beta\in\RR^r$. 
We also write $\alpha\le\beta$ as a shorthand for $\alpha_i\le\beta_i$,
$1\le i\le r$.
\begin{proposition}\label{lem1_val}
  To every $\alpha=(\alpha_1,\dots,\alpha_r)\in\RR_{\geq 0}^r$ 
  one can associate a unique valuation 
  $\val_{\alpha}=\val_{\uy,\alpha}\in\Val_X$ 
  with the following property: 
  whenever $f\in\cO_{Y,\eta}$ is written in 
  $\widehat{\cO_{Y,\eta}}$
  as $f=\sum_{\beta\in\ZZ_{\geq 0}^r}c_{\beta}y^{\beta}$, 
  with each $c_{\beta}\in\widehat{\cO_{Y,\eta}}$ either zero or
  a unit, we have
  \begin{equation}\label{eq_lem1_val}
    \val_{\alpha}(f)=\min\{\langle\alpha,\beta\rangle\mid c_{\beta}\neq 0\}.
  \end{equation}
  The function $\RR_{\ge0}^r\ni\alpha\to\val_\alpha(f)$ is continuous
  for each $f\in K(X)$.
  Furthermore, suppose, after re-indexing, that 
  $\alpha_i>0$ for $1\le i\le r'$ and $\alpha_i=0$ for 
  $r'<i\le r$ and let $\eta'$ be the generic point 
  of $\bigcap_{1\le i\le r'}V(y_i)$. Then we have:
  \begin{itemize}
  \item[(i)]
    the center of $\val_\alpha$ on $Y$ is $\eta'$; in particular, 
    $\val_\alpha$ is the trivial valuation if and only if 
    $\alpha_i=0$ for $1\le i\le r$;
  \item[(ii)]
    if $\uy'=(y'_1,\dots,y'_{r'})$ is a system of algebraic coordinates
    at $\eta'$ such that $V(y'_i)=V(y_i)$ at $\eta'$,
    then $\val_\alpha=\val_{(y'_1,\dots,y'_{r'}),(\alpha_1,\dots,\alpha_{r'})}$;
  \item[(iii)]
    if $v\in\Val_X$ is a  valuation whose center $\zeta$ on $Y$ 
    is contained in the closure of $\eta'$
    and such that $v(y_i)=\alpha_i$ for $1\le i\le r'$, 
    then $v(f)\ge\val_\alpha(f)$ for all $f\in\cO_{Y,\zeta}$.
  \end{itemize}
\end{proposition}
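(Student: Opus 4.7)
The plan is to build $\val_\alpha$ as a valuation on the completion $R:=\widehat{\cO_{Y,\eta}}$ first, then restrict to $K(X)\hookrightarrow\mathrm{Frac}(R)$. Since $\cO_{Y,\eta}$ is a regular local ring containing $\QQ$, Cohen's structure theorem identifies $R$ with the formal power series ring $\kappa\llbracket y_1,\dots,y_r\rrbracket$, where $\kappa:=k(\eta)$. On this ring I would define
\begin{equation*}
v\Bigl(\sum_\mu c_\mu y^\mu\Bigr):=\min\{\langle\alpha,\mu\rangle\mid c_\mu\neq 0\},\qquad c_\mu\in\kappa,
\end{equation*}
and verify the valuation axioms. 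After re-indexing so that $\alpha_i>0$ for $i\le r'$ and $\alpha_i=0$ for $i>r'$, the value $\langle\alpha,\mu\rangle$ depends only on $(\mu_1,\dots,\mu_{r'})$, and only finitely many such tuples contribute to each value, so the minimum is attained on every nonzero element.

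Additivity is immediate, so the crux is multiplicativity $v(fg)=v(f)+v(g)$. For this I would use initial forms in the graded ring $A[y_1,\dots,y_{r'}]$ where $A:=\kappa\llbracket y_{r'+1},\dots,y_r\rrbracket$ and $\deg y_i=\alpha_i$ for $i\le r'$: this is a polynomial ring over a domain, hence a domain, so the initial forms of nonzero $f$ and $g$ multiply to a nonzero element, giving the desired equality. To upgrade the Cohen-representation formula to the general unit-coefficient formula in~\eqref{eq_lem1_val}, I would observe that any unit $c_\beta\in R$ has nonzero constant term in its Cohen expansion, so $v(c_\beta)=0$ and thus $v(c_\beta y^\beta)=\langle\alpha,\beta\rangle$. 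One inequality then follows from the ultrametric property applied to partial sums, and the reverse by comparing initial forms---distinct monomials $y^\beta$ contribute $A$-linearly independent leading terms, so cancellation is impossible.

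Uniqueness is then immediate, as the formula pins down $\val_\alpha$ on $\cO_{Y,\eta}$ and hence on $K(X)$. Continuity of $\alpha\mapsto\val_\alpha(f)$ reduces, by writing $f=g/h$ with $g,h\in\cO_{Y,\eta}$, to the case $f\in\cO_{Y,\eta}$, where $\val_\alpha(f)$ is locally a minimum of finitely many linear forms in $\alpha$. For~(i), the formula shows $\val_\alpha(f)>0$ for $f\in\cO_{Y,\eta}$ exactly when $f\in(y_1,\dots,y_{r'})\cO_{Y,\eta}$, identifying the center on $Y$ as $\eta'$; triviality of $\val_\alpha$ is equivalent to $\alpha=0$.

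For~(iii), given $v\in\Val_X$ with center $\zeta\in\overline{\{\eta'\}}$ satisfying $v(y_i)=\alpha_i$ for $i\le r'$, the $\frm_\eta$-adic continuity recalled in~\S1.2 extends $v$ to a semivaluation on $R$ with $v(c_\beta)\ge 0$ for every $c_\beta\in R$; combined with $v(y_i)\ge 0=\alpha_i$ for $i>r'$, applying $v$ termwise to an expansion of $f\in\cO_{Y,\eta}$ yields $v(f)\ge\val_\alpha(f)$, and for $f\in\cO_{Y,\zeta}\setminus\cO_{Y,\eta}$ one clears denominators. Finally,~(ii) follows by applying~(iii) symmetrically to $\val_\alpha$ and to $\val_{\uy',(\alpha_1,\dots,\alpha_{r'})}$: since $y'_i=u_iy_i$ with $u_i$ a unit in $\cO_{Y,\eta'}$ (and hence with valuation zero for either valuation, as both have center $\eta'$), the hypotheses of~(iii) are met in both directions, forcing the two valuations to agree. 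I expect the main technical obstacle to be the justification of the no-cancellation step when some $\alpha_i$ vanish, which forces initial forms to live in the mixed polynomial/power-series domain $A[y_1,\dots,y_{r'}]$ rather than in a naive polynomial ring over $\kappa$.
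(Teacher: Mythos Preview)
Your construction of $\val_\alpha$ via Cohen's theorem, the verification of the valuation axioms using initial forms in $A[y_1,\dots,y_{r'}]$, the well-definedness of the unit-coefficient formula, continuity, and part~(i) are all correct and essentially the paper's approach. Deriving the general case of~(ii) from~(iii) by symmetry is also how the paper finishes~(ii).

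The gap is in~(iii). You invoke \S1.2 to extend $v$ by $\frm_\eta$-adic continuity to a semivaluation on $R=\widehat{\cO_{Y,\eta}}$ with $v(c_\beta)\ge 0$, but \S1.2 only extends $v$ to the completion at its \emph{own} center $\zeta$. When $\zeta\ne\eta$ there is in general no such extension. Concretely, take $Y=\AAA_k^3$, let $\eta$ be the generic point of the $z$-axis with coordinates $(y_1,y_2)=(x,y)$, set $\alpha=(1,0)$ so that $\eta'$ is the generic point of $\{x=0\}$, and let $\zeta$ be the origin. Any $v$ centered at $\zeta$ has $v(z)>0$; since $x^N z^{-N^2}\in\frm_\eta^N$ with $v(x^N z^{-N^2})=N-N^2v(z)\to-\infty$, the $\frm_\eta$-adic topology on $\cO_{Y,\eta}$ is incompatible with $v$ and no extension to $\widehat{\cO_{Y,\eta}}$ exists. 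The auxiliary claim $v(y_i)\ge0$ for $i>r'$ can likewise fail (replace $y_2=y$ by $y/(z-1)$, still a coordinate at $\eta$, and take $\zeta=(0,0,1)$). Clearing denominators does not repair this, since the inequality $v\ge\val_\alpha$ does not pass to quotients. The paper handles~(iii) by a two-step reduction you are missing: first it proves directly (via a rational-independence and continuity argument) the special case of~(ii) in which one simply drops the coordinates with $\alpha_i=0$, thereby replacing $\eta$ by $\eta'$ and reducing to $r'=r$; then, if $\zeta\ne\eta=\eta'$, it enlarges $(y_1,\dots,y_r)$ to a coordinate system at $\zeta$ and compares with the monomial valuation in \emph{those} coordinates, reducing to the case $\zeta=\eta$ where your termwise argument is finally valid.
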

A valuation as above is called a quasi-monomial valuation.
Note that the trivial valuation on $X$ is considered quasi-monomial.
\begin{proof}
  Let us say that an expansion of 
  $f\in\widehat{\cO_{Y,\eta}}$ as $f=\sum_\beta c_\beta y^\beta$,
  with $c_\beta\in\widehat{\cO_{Y,\eta}}$, is \emph{admissible}
  if, for each $\beta$, $c_\beta$ is either zero or a unit.
  Any $f\in\widehat{\cO_{Y,\eta}}$ admits
  an admissible expansion. 
  Indeed, by Cohen's structure theorem there exists a
  (non-canonical) isomorphism 
  $\iota:\widehat{\cO_{Y,\eta}}\simto k(\eta)\llbracket y_1,\ldots,y_r\rrbracket$,
  where $k(\eta)$ is the residue field of $\cO_{Y,\eta}$.
  Fix such an isomorphism for the duration of the proof.
  If $\iota(f)=\sum_\beta a_\beta y^\beta$ with $a_\beta\in k(\eta)$, 
  then $f=\sum_\beta c_\beta y^\beta$ with 
  $c_\beta=\iota^{-1}(a_\beta)$ is an admissible expansion.

  In general, $f$ may admit many admissible expansions 
  but we claim that the quantity 
  $\min\{\langle\alpha,\beta\rangle\mid c_\beta\ne0\}$
  is the same for all of them.
  To see this, again write $\iota(f)=\sum_\beta a_\beta y^\beta$
  with $a_\beta\in k(\eta)$.
  It suffices to prove that 
  \begin{equation}\label{e301}
    \min\{\langle\alpha,\beta\rangle\mid c_\beta\ne0\}
    =\min\{\langle\alpha,\beta\rangle\mid a_\beta\ne0\}.
  \end{equation}
  Write 
  $\iota(c_\beta)=\sum_\gamma c_{\beta\gamma}y^\gamma$,
  so that $a_\beta=\sum_{\gamma\le\beta}c_{\gamma,\beta-\gamma}$.
  On the one hand, if $a_\beta\ne0$ then there exists 
  $\gamma\le\beta$ such that $c_\gamma\ne0$.
  On the other hand, if $a_\beta=0$ and $c_\gamma=0$
  for all $\gamma\le\beta$, $\gamma\ne\beta$, then 
  $c_{\beta0}=0$ and hence $c_\beta=0$. 
  These two remarks imply~\eqref{e301}.
  
  Now, it is a standard fact that 
  $\sum_\beta a_\beta y^\beta\to
  \min\{\langle\alpha,\beta\rangle\mid a_\beta\ne0\}$
  defines a (monomial) valuation on the formal power series ring
  $k(\eta)\llbracket y_1,\ldots,y_r\rrbracket$.
  We thus see that $\val_\alpha$ is a well defined valuation 
  on $\widehat{\cO_{Y,\eta}}$. The uniqueness statement of
  the lemma is clear since every $f\in\cO_{Y,\zeta}$
  admits an admissible expansion.

  It follows from~\eqref{eq_lem1_val} that
  $\alpha\mapsto\val_\alpha(f)$ is continuous for 
  $f\in\cO_{Y,\eta}$
  and hence also for all $f\in K(X)$.
  It remains to prove~(i)--(iii).

  The valuation $\val_\alpha$ is nonnegative on 
  the local ring $\cO_{Y,\eta'}$ and positive on the maximal ideal.
  By definition, the center of $\val_\alpha$ on $Y$
  is then equal to $\eta'$, proving~(i).
  
  To prove~(ii), we first consider the case when 
  $y'_i=y_i$ for $1\le i\le r'$.
  Write $v:=\val_{(y_1,\dots,y_{r'}),(\alpha_1,\dots,\alpha_{r'})}$.
  Then the valuations $\val_\alpha$ and $v$ in $\Val_X$
  both have center $\eta'$ on $Y$. We must prove that 
  $\val_\alpha(f)=v(f)$ for all $f\in\cO_{Y,\eta'}$.
  By continuity of $\alpha\to\val_\alpha(f)$, we may assume that
  the numbers $\alpha_1,\dots,\alpha_{r'}$ are rationally independent.
  By $\frm_{Y,\eta'}$-adic continuity, $\val_\alpha$ and $v$ 
  extend uniquely as (semi)valuations on $\widehat{\cO_{Y,\eta'}}$ and 
  give value zero to any element not in the maximal ideal of this ring. 
  Now consider an expansion   
  \begin{equation*}
    f=\sum_{\beta\in\ZZ_{\ge0}^{r'}} c_\beta y^\beta
  \end{equation*}
  with $c_\beta\in\widehat{\cO_{Y,\eta'}}$ either zero or a unit.  
  When $c_\beta\ne0$, we have
  $\val_\alpha(c_\beta y^\beta)=v(c_\beta y^\beta)=\langle\alpha,\beta\rangle$.
  Moreover, as $\beta$ varies, these values are all distinct and they tend to 
  infinity as $\sum_{i=1}^{r'}\beta_i\to\infty$. It then follows that 
  $\val_\alpha(f)=v(f)=\min_{c_\beta\ne0}\langle\alpha,\beta\rangle$.

  In order to prove~(iii) we may by the preceding step assume that
  $r'=r$, that is, $\alpha_i>0$ for $1\le i\le r$. 
  Let us further reduce~(iii) to the case $\zeta=\eta$. 
  If $\zeta\ne\eta$, then there exist $s>r$ and $y_{r+1},\dots,y_s\in\cO_{Y,\zeta}$ 
  such that $(y_1,\dots,y_s)$ is a system of algebraic coordinates at $\zeta$.
  Set $\alpha_i=v(y_i)>0$ for $r<i\le s$. By what precedes, 
  $\val_\alpha=\val_{(y_1,\dots,y_s),(\alpha_1,\dots,\alpha_r,0\dots,0)}$, so
  it follows from~\eqref{eq_lem1_val} that 
  $\val_{(y_1,\dots,y_s),(\alpha_1,\dots,\alpha_s)}\ge\val_\alpha$
  on $\cO_{Y,\zeta}$. 
  Hence it suffices to show that 
  $v\ge\val_{(y_1,\dots,y_s),(\alpha_1,\dots,\alpha_s)}$ on $\cO_{Y,\zeta}$.
  In other words, we may assume that $\zeta=\eta$.
  Now, $v\in\Val_X$ having center $\zeta=\eta$ on $Y$ implies that 
  $v$ extends uniquely as a semivaluation
  $v:\widehat{\cO_{Y,\eta}}\to\RR_{\ge0}\cup\{\infty\}$
  and if $f=\sum c_\beta y^\beta$ is an admissible expansion, 
  then $v(c_\beta y^\beta)=\langle\alpha,\beta\rangle$ for 
  all $\beta$ such that $c_\beta\ne0$.
  This easily implies $v(f)\ge\val_\alpha(f)$,
  proving~(iii).

  Let us finally prove~(ii) in general. By the special case proved above, 
  we may again assume that $r'=r$ and $\eta'=\eta$.
  Write $\val'_\alpha=\val_{y'_1,\dots,y'_r,\alpha_1,\dots,\alpha_r}$.
  We have $y'_i=u_iy_i$ with $u_i$ a unit in $\cO_{Y,\eta}$.
  Hence $\val'_\alpha(y_i)=\val'_\alpha(y'_i)=\alpha_i$.
  By~(iii), this implies 
  $\val'_\alpha\ge\val_\alpha$ on $\widehat{\cO_{Y,\eta}}$.
  By symmetry we then get $\val'_\alpha=\val_\alpha$,
  which completes the proof.
\end{proof}

In practice, instead of considering systems of coordinates, it is more convenient to
consider simple normal crossing divisors. 
Borrowing terminology from the Minimal Model Program, 
we introduce 
\begin{definition}
  A \emph{log-smooth pair over X} is a pair $(Y,D)$ 
  with $Y$ regular and $D$ a reduced effective 
  simple normal crossing divisor,
  together with a proper birational morphism 
  $\pi\colon Y\to X$ which is an isomorphism outside the
  support of $D$.
\end{definition}
The set of (isomorphism classes of) 
log-smooth pairs over $X$
admits a partial ordering: we say that 
$(Y',D')\succeq(Y,D)$ if there exists a birational morphism
$\phi\colon Y'\to Y$ over $X$ with 
$\Supp(D')\supseteq\Supp(\phi^*(D))$. Under this
ordering, any two log-smooth pairs can be dominated by a third,
and any log-smooth pair dominates $(X,\emptyset)$.

\begin{remark}\label{Nagata}
Suppose that we have a birational morphism $W\to X$, with $W$ regular, and $D_W$
a reduced, simple normal crossing divisor on $W$.
By Nagata's compactification theorem (see~\cite{Conrad}), there is a proper birational
morphism $\pi\colon Y\to X$ such that $W$ is isomorphic over $X$ to an open subset of $Y$. 
By~\cite{Temkin2}, we may resolve the singularities of $Y$ by a resolution that is an isomorphism over $W$, and therefore assume that $Y$ is regular. Given any such $Y$, we can find a reduced divisor $D$ on $Y$
whose restriction to $W$ is $D_W$, and whose support  contains the exceptional locus 
$\Exc(\pi)$ of $\pi$. Since $D_W$ has simple normal crossings, it follows 
from~\cite{Temkin2} that there is a 
  proper birational morphism $\phi\colon \widetilde{Y}\to Y$ 
  that is an isomorphism over $W\cup (Y\smallsetminus\Supp(D))$ such that 
  $\widetilde{Y}$ is regular and $\widetilde{D}:=\phi^*(D)_{\red}$ has simple normal crossings\footnote{Actually, the statement in~\cite{Temkin2} only asserts that
  $\phi^*(D)$ has normal crossings. However, resolving a normal crossing divisor is standard, so one can obtain the statement that we need.}. In this case $(\widetilde{Y},\widetilde{D})$ is a 
  log-smooth
pair over $X$, extending $(W,D_W)$.
\end{remark}

We denote by $\QM_{\eta}(Y,D)$ the set of all quasi-monomial 
valuations $v$ that can be described at the point $\eta\in Y$ 
with respect to coordinates $y_1,\dots,y_r$ such that each
$y_i$ defines at $\eta$ an irreducible component of 
$D$ (hence $\eta$ is the generic point of a connected 
component of the intersection of some of the $D_i$). 
We put $\QM(Y,D)=\bigcup_{\eta}\QM_{\eta}(Y,D)$.

\begin{remark}
  Every quasi-monomial valuation belongs to some
  $\QM(Y,D)$. Indeed, suppose the valuation
  $v$ is defined in coordinates $y_1,\dots,y_r$ 
  at $\eta$ and let $D_i$ be the closure
  of the divisor defined by $(y_i)$. Since $D=\sum_iD_i$ 
  has simple normal crossings in a neighborhood $W$ of $\eta$, 
  it follows from Remark~\ref{Nagata} 
  that there is a log-smooth pair $(\widetilde{Y},\widetilde{D})$
  over $X$ extending $(W,D\vert_W)$. 
  Then $v\in \QM(\widetilde{Y},\widetilde{D})$.
\end{remark}

\begin{definition}
  A log-smooth  pair $(Y,D)$ is \emph{adapted} to
  a quasi-monomial valuation $v$ if $v\in\QM(Y,D)$.
  It is a \emph{good} pair adapted to $v$ if
  the values $v(D_i)$ that are strictly positive
  are also rationally independent.
\end{definition} 
The following technical lemma ensures the existence
of good pairs.
\begin{lemma}\label{lem2_val} 
  Let $v\in\Val_X$ be quasi-monomial
  and consider a pair $(Y,D)$ adapted to $v$.
  Let $D_1,\dots,D_r$ be the 
  irreducible components
  of $D$ containing $\eta=c_Y(v)$.
  \begin{itemize}
  \item[(i)]
    If $(Y,D)$ is a good pair adapted to $v$ 
    and $(Y',D')\succeq(Y,D)$, then $(Y',D')$ is 
    also a good pair adapted to $v$. 
    Further, $\eta'=c_{Y'}(v)$ is the generic point of a connected component of
    the intersection of exactly $r$ 
    irreducible components $D'_j$, $1\le j\le r$ of $D'$, 
    and if $\phi\colon Y'\to Y$ is the corresponding morphism, then we can write
    \begin{equation}\label{eq_prop_val0}
      \phi^*(D_i)=\sum_{j=1}^rb_{ij}D'_j+E'_i,\quad i=1,\dots,r.
    \end{equation}
    Here $E'_i$ is an effective divisor on $Y'$ 
    whose support  does not contain $\eta'$
    and the $r\times r$ matrix $(b_{ij})$ 
    has nonnegative integer entries and nonzero determinant.
  \item[(ii)]
    There always exist a good pair $(Y',D')\succeq(Y,D)$ 
    adapted to $v$ and 
    irreducible components $D'_1,\dots,D'_r$ of $D'$
    such that the representation~\eqref{eq_prop_val0}  holds.
    More precisely, $v\in\QM_{\eta'}(Y',D')$, where $\eta'$, 
    lying over $\eta$, is the generic point
    of a connected component of $D'_1\cap\dots\cap D'_r$, 
    each $E'_i$ is an effective divisor 
    whose support  does not contain $\eta'$, 
    and the $r\times r$ matrix $(b_{ij})$ 
    has nonnegative integer entries and nonzero determinant.
    Further, there exists $s\le r$ such that 
    $c_{Y'}(v)$ is the generic point of a connected component
    of $D'_1\cap\dots\cap D'_s$. 
  \end{itemize}
\end{lemma}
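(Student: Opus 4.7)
For both parts I work at $\eta':=c_{Y'}(v)$, lying above $\eta=c_Y(v)$. Let $D'_1,\dots,D'_s$ be the components of $D'$ through $\eta'$, with local equations $y'_j$ at $\eta'$. Because $\Supp(D')\supseteq\Supp(\phi^*(D))$, near $\eta'$ one can write $\phi^*(D_i)=\sum_{j=1}^s b_{ij}D'_j+E'_i$ with $b_{ij}\in\ZZ_{\ge0}$ and $\Supp(E'_i)\not\ni\eta'$; equivalently $\phi^*(y_i)=u_i\prod_j(y'_j)^{b_{ij}}$ with $u_i$ units at $\eta'$. Setting $\beta_j:=v(y'_j)>0$ (positive because $\eta'$ is the center of $v$), applying $v$ gives the fundamental identity $\alpha_i=\sum_{j=1}^s b_{ij}\beta_j$, i.e.\ $\alpha=B\beta$ with $B=(b_{ij})$.

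\textbf{Part (i).} Rational independence of $\alpha_1,\dots,\alpha_r$ forces $\rank_\QQ B=r$, hence $s\ge r$. Conversely, $y'_1,\dots,y'_s$ belong to a regular system of parameters at $\eta'$, so $s\le\dim\cO_{Y',\eta'}$, and the Dimension Formula applied to $\phi\colon Y'\to Y$ gives $\dim\cO_{Y',\eta'}=r-\trdeg(k(\eta')/k(\eta))\le r$. Thus $s=r$, $\eta'$ has codimension $r$ and is the generic point of a connected component of $D'_1\cap\cdots\cap D'_r$, $B$ is square nonsingular, and $\beta=B^{-1}\alpha$ is rationally independent, so $(Y',D')$ is good. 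To show $v\in\QM_{\eta'}(Y',D')$, set $w:=\val_{(y'),\beta}$; Proposition~\ref{lem1_val}(iii) gives $v\ge w$ on $\cO_{Y',\eta'}$. For the reverse inequality, $v$ extends to a semivaluation on $\widehat\cO_{Y',\eta'}$ by $\frm$-adic continuity, which vanishes on units: if $u$ is a unit and $u_n\in\cO_{Y',\eta'}\setminus\frm_{\eta'}$ approximates $u$, then $v(u_n)=0$ and $v(u-u_n)\to\infty$. For any admissible expansion $f=\sum c_\gamma(y')^\gamma$ each nonzero term has $v$-value $\langle\beta,\gamma\rangle$, and rational independence of $\beta$ forces the minimum to be attained uniquely, giving $v(f)=\min_\gamma\langle\beta,\gamma\rangle=w(f)$.

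\textbf{Part (ii).} I build $(Y',D')\succeq(Y,D)$ by a sequence of smooth blow-ups of closed strata of $D$ realizing a regular simplicial refinement of the cone $\RR_{\ge0}^r$ (with lattice $\ZZ^r$) in which $\alpha$ is favorably placed. Let $s$ be the rational rank of the set $\{\alpha_1,\dots,\alpha_r\}$; choose primitive $\rho_1,\dots,\rho_s\in\ZZ_{\ge0}^r$ spanning a rational simplicial cone whose relative interior contains $\alpha$, extend to a $\ZZ$-basis $\rho_1,\dots,\rho_r$ of a top-dimensional smooth cone, and refine to a regular fan by iterated star subdivisions (standard toric combinatorics). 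Each such star subdivision corresponds to a smooth blow-up of a stratum of the current SNC divisor; globally these strata are closures of intersections of components of $D$, and the blow-ups extend to proper birational modifications of $Y$ preserving log smoothness. The new divisors $D'_1,\dots,D'_r$ corresponding to $\rho_1,\dots,\rho_r$ meet in codimension $r$ at a generic point $\eta'$ above $\eta$; writing $\alpha=\sum_{j=1}^s\beta_j\rho_j$, the coefficients $\beta_1,\dots,\beta_s>0$ are rationally independent (relative interior) while $\beta_{s+1}=\cdots=\beta_r=0$, so $v(D'_j)=\beta_j$, the matrix $(b_{ij})=((\rho_j)_i)$ is nonsingular, and by the argument of part~(i), $v=\val_{(y'),(\beta_1,\dots,\beta_s,0,\dots,0)}\in\QM_{\eta'}(Y',D')$. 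The chief technical point is globalizing the toric refinement at $\eta$ as a genuine sequence of blow-ups on $Y$ producing a log-smooth pair over $X$; this relies on the centers being globally defined regular closed subschemes (intersections of components of the evolving SNC divisor) along which blow-up preserves regularity and SNC.
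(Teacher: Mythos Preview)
Your argument for part~(i) is correct and follows the same outline as the paper: both use the relation $\alpha=B\beta$ to obtain $s\ge r$ and the Dimension Formula for $s\le r$. You supply one detail the paper leaves implicit, namely the explicit verification that $v\in\QM_{\eta'}(Y',D')$ via Proposition~\ref{lem1_val}(iii) together with the rational-independence trick.

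Part~(ii), however, has a genuine gap. You assert that the desired regular fan refinement can be realized by a sequence of smooth blow-ups of strata, because ``each such star subdivision corresponds to a smooth blow-up of a stratum of the current SNC divisor.'' This is not correct as stated: a star subdivision along a ray $\rho$ corresponds to the blow-up of a smooth codimension-$k$ stratum only when $\rho$ is the sum of the primitive generators of a $k$-dimensional face of the current (regular) fan. A general star subdivision is a \emph{weighted} blow-up and typically produces a singular variety. The standard procedure for refining a simplicial fan to a regular one uses non-barycentric star subdivisions, so the intermediate fans are not regular; and your preliminary step of inserting prescribed rays $\rho_1,\dots,\rho_s$ with $\alpha$ in their cone is likewise not barycentric in general. (Your side claim that $\rho_1,\dots,\rho_s$ extend to a $\ZZ$-basis of $\ZZ^r$ also requires them to span a saturated sublattice, which you have not arranged.) Whether every proper birational toric morphism between smooth toric varieties factors into smooth toric blow-ups is not a standard fact and is closely related to open factorization conjectures in dimension $\ge 3$.

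The paper avoids this entirely. It constructs the regular fan $\Delta$ abstractly---first a simplicial refinement $\Delta_1$ of the standard fan containing an $s$-dimensional cone $\sigma_1$ with $\alpha$ in its relative interior, then a barycentric desingularization---yielding a regular toric variety $Z(\Delta)\to\AAA_\QQ^r$. The key point is that the coordinate map $h\colon\Spec\cO_{Y,\eta}\to\AAA_\QQ^r$ is formally smooth, hence \emph{regular} by Andr\'e's theorem (the base being excellent). Base-changing $Z(\Delta)$ along $h$ therefore produces a regular scheme over $\Spec\cO_{Y,\eta}$ with the correct toroidal combinatorics, and Remark~\ref{Nagata} (Nagata compactification plus resolution of singularities) is then invoked to globalize to a log-smooth pair $(Y',D')$ over $X$. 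No factorization into blow-ups is needed anywhere. The paper does remark that Zariski's method of Perron transformations furnishes an alternative: Perron transformations \emph{are} compositions of smooth codimension-two blow-ups, so a blow-up approach along your lines can be made to work, but it requires that specific construction rather than the unqualified appeal to star subdivisions.
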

The construction of the morphism $\phi\colon Y'\to Y$ in~(ii)
is toric in nature. 
The number $s$ is the rational rank of $v$; see~\S\ref{S201}.
\begin{proof}
  In~(i), let $D_i$, $1\le i\le M$ and $D'_j$, $1\le j\le N$
  be all the irreducible components of $D$ and $D'$, respectively.
  We have $\varphi^*D_i=\sum_jb_{ij}D'_j$ for nonnegative
  integers $b_{ij}$. After re-indexing, we may suppose that 
  $v(D_i)>0$ and $v(D'_j)>0$ if and only if $i\le r$ and $j\le s$, 
  respectively. Note that $c_Y(v)$ is the generic point of 
  a component of $\bigcap_{i\le r}D_i$
  and $c_{Y'}(v)\in\bigcap_{j\le s}D'_j$.
  Since $\varphi(c_{Y'}(v))=c_Y(v)$, 
  we have $\dim(\cO_{Y,c_Y(v)})\geq\dim(\cO_{Y',c_{Y'}(v)})$ by the Dimension Formula 
  (see~\cite[Theorem~15.6]{Matsumura}),
  hence $s\le r$.
  But, by assumption, the values $v(D_i)=\sum_{j=1}^sb_{ij}v(D'_j)$, $i\le r$
  are rationally independent. This implies that $s=r$,
  that $c_{Y'}(v)$ is the generic point of a component of $\bigcap_{j\le r}D'_j$, 
  that the matrix $(b_{ij})_{i,j=1}^r$ has maximal rank $r$, 
  and that the values $v(D'_j)$, $1\le j\le r$ are 
  rationally independent. This completes the proof of~(i).
  
  We now turn to~(ii).
  Given a system of coordinates $\underline{y}=(y_1,\dots,y_r)$ at $\eta=c_Y(v)$
  such that $D_i=V(y_i)$, 
  we get a morphism $h\colon\Spec(\cO_{Y,\eta})\to\Spec(\cO_{\AAA_{\QQ}^r,0})$. 
  Note that $h$ is formally smooth, and since $\cO_{\AAA_{\QQ}^r,0}$ is excellent, 
  it follows by the main theorem in~\cite{Andre} that 
  $h$ is a regular morphism. 
   We call a proper birational morphism $\phi\colon Y'\to Y$ 
   \emph{toroidal} (with respect to $\uy$)\footnote{This is an ad-hoc definition, although 
   related to the usual notion of \emph{toroidal morphism}, see~\cite{KKMS}.}
   if there is a proper birational morphism of toric varieties 
   $\psi\colon Z=Z(\Delta)\to\AAA_{\QQ}^r$, with
   $Z$ regular, such that
   $\phi$ and $\psi$ induce isomorphic schemes over 
   $\Spec(\cO_{Y,\eta})$ via base-change. 
   The morphism $\psi$ is defined by a fan $\Delta$ 
   refining the standard cone defining $\AAA_{\QQ}^r$,
   and the fact that $Z$ is regular is equivalent 
   with $\Delta$ being regular, which means that each cone of $\Delta$ is generated
   by part of a basis for $\ZZ^r$
   (we refer to~\cite{Fulton} for basic facts on toric 
   varieties and toric morphisms\footnote{While in \cite{Fulton} one works with toric varieties over $\CC$, all basic constructions extend to arbitrary ground fields.}). Note that since $h$ is a regular morphism
   and $Z$ is a regular scheme, 
   $Y'$ is regular in a neighborhood of $\phi^{-1}(\eta)$.
   On $Y'$ we have finitely many distinguished 
   points lying over $\eta$ (corresponding to the torus-fixed 
   closed points on $Z$). At each of these points we have
   a system of toroidal coordinates $\uy'=(y'_1,\dots,y'_r)$ 
   induced by the toric coordinates at the corresponding point on $Z$ (we use again the fact that
   $h$ is regular). 
   These are uniquely determined up to reordering. 
   One can write $y_i=\prod_j(y'_j)^{b_{i,j}}$, with 
   $b_{i,j}\in\ZZ_{\geq 0}$, and $\det(b_{i,j})=\pm 1$. 
 
   Starting with a toric proper birational morphism $Z\to\AAA_{\QQ}^r$,
   with $Z$ regular, there exists a log-smooth pair $(Y',D')$ dominating 
   $(Y,D)$,  such that we have $Y'\times_Y\Spec\,\cO_{Y,\eta}\simeq Z\times_{\AAA_{\QQ}^r}
   \Spec\,\cO_{Y,\eta}$, and such that the toroidal coordinates on $Y'$ define irreducible 
   components of $D'$.
   This is a consequence of Remark~\ref{Nagata}.

   Given a toroidal morphism $\phi\colon Y'\to Y$ 
   corresponding to $Z=Z(\Delta)\to\AAA_{\QQ}^r$, we have an affine open cover 
   of $Y'_{\eta}=\Spec\,\cO_{Y,\eta}\times_Y Y'$ by subsets $U_i$, 
   induced by the toric affine open subsets on $Z$. 
   If $\eta'=c_{Y'}(v)$, then $\phi(\eta')=\eta$, hence there is $i$ such that $\eta'\in U_i$.
   We have toroidal coordinates $\uy'=(y'_1,\dots,y'_r)$ on $U_i$ such that
   $y_i=\prod_j(y'_j)^{b_{i,j}}$, with $b_{i,j}\in\ZZ_{\geq 0}$, and $\det(b_{i,j})=\pm 1$. 
   Since $\eta'\in U_i$, it follows that $\alpha'_i:=v(y'_i)\geq 0$, and we have
   $\alpha_i=\sum_jb_{i,j}\alpha'_i$. 
   Since the matrix $(b_{i,j})_{i,j=1}^r$ induces a bijection between
   the monomials in $\uy'$ and the monomials in 
   $\uy$, it is clear that in terms of the coordinates on $Y'$ we have
   $v=\val_{\uy',\alpha'}$. 
   In particular, if $(Y',D')$ is a log-smooth pair such that the closure of each                
   $V(y'_i)$ is a component of $D'$, then $(Y',D')$  is adapted to $v$.
   
   To complete the proof of~(ii) it therefore suffices to prove the following
   statement. Let $\alpha=(\alpha_1,\dots,\alpha_r)\in\RR_{>0}^r$ be
   any vector and set $s:=\dim_\QQ\sum_i\QQ\alpha_i$. Then there exists 
   a regular fan $\Delta$ in $\ZZ^r$ refining the standard fan 
   $\Delta_0$ defining $\AAA_\QQ^r$ such that 
   $\alpha$ belongs to the relative interior
   of a cone of dimension $s$.
   To construct $\Delta$, first pick a vector space $W_\QQ\subseteq\QQ^r$ of
   dimension $s$ such that $\alpha\in W:=W_\QQ\otimes_\QQ\RR$.
   Let $\sigma_1$ be any rational simplicial $s$-dimensional cone 
   $\sigma_1\subseteq\RR_{\ge0}^r\cap W$ containing $\alpha$ in its interior.
   Let $\Delta_1$ be any simplicial fan refining $\Delta_0$ and having 
   $\sigma_1$ as one of its cones. Now refine $\Delta_1$ to a regular
   fan $\Delta$ using barycentric subdivision as in~\cite[\S2.6]{Fulton}.
   Then $\Delta$ will contain a cone $\sigma\subseteq\sigma_1$ 
   containing $\alpha$ in its interior.

   Alternatively, the toric birational morphism $Z(\Delta)\to\AAA_\QQ^r$
   can be constructed explicitly using Perron transformations 
   as in~\cite[Theorem 1]{Zariski}.
  \end{proof}
  
It follows from Lemma~\ref{lem2_val} that given 
finitely many quasi-monomial valuations $v_1,\dots,v_m$ in $\Val_X$,
there exists a pair $(Y,D)$ which is good and adapted to all the $v_i$.
Furthermore, given finitely many ideals $\fra_1,\dots,\fra_p$ on $X$, we may assume that
$(Y,D)$ gives a log resolution of the product 
$\fra=\fra_1\cdot\ldots\cdot\fra_p$:
this means that
$Y\to X$ is a log resolution of $\fra$ with the inverse image
of $V(\fra)$ being contained in the support on $D$.

\subsection{Abhyankar valuations}\label{S201}
Next we recall how to recognize a quasi-monomial 
valuation algebraically, in terms of its numerical
invariants. This will be very useful in the sequel.
The \emph{rational rank} $\ratrk(v)$ of
a valuation $v\in\Val_X$ is equal to $\dim_{\QQ}(\Gamma_v\otimes_{\ZZ}\QQ)$,
where $\Gamma_v:=v(K(X)^*)$ is the value group of $v$.
If $k_v$ and $k(\xi)$ are the residue fields
of the valuation ring $\cO_v$ and of $\cO_{X,\xi}$, respectively, where $\xi=c_X(v)$, 
then the \emph{transcendence degree} of $v$ is
defined as $\trdeg_X(v)=\trdeg(k_v/k(\xi))$. 
Note that if $\pi\colon Y\to X$ is proper and birational,
with $Y$ regular, and $\eta=c_Y(v)$, then $\dim(\cO_{Y,\eta})=\dim(\cO_{X,\xi})-\trdeg(k(\eta)/k(\xi))$ (this follows from the Dimension Formula since $\pi$ is birational, see~\cite[Theorem~15.6]{Matsumura}). This formula can be used to deduce that 
$\trdeg_X(v)$
is the maximum of $\dim(\cO_{X,c_X(v)})-\dim(\cO_{Y,c_Y(v)})$, where the maximum is over all
morphisms $Y\to X$ as above. 

In this setting, the Abhyankar inequality holds (see~\cite{Vaquie}):
\begin{equation}\label{ineq_Abhyankar}
  \ratrk(v)+\trdeg_X(v)\leq\dim(\cO_{X,\xi}).
\end{equation}
A valuation for which equality is achieved is an \emph{Abhyankar valuation}.
Another application of the Dimension formula implies 
that if $\pi\colon Y\to X$ is proper and birational, with $Y$ regular, then $v$ is an Abhyankar valuation over $X$ if and only if it is an Abhyankar valuation over $Y$.

\begin{proposition}\label{Abhyankar}
  A valuation $v\in\Val_X$ is an Abhyankar valuation if and only if
  it is quasi-monomial. Moreover, in this case there exists
  a good log-smooth pair $(Y,D)$ adapted to $v$ such that 
  $\pi(D)\subseteq \overline{c_X(v)}$, where $\pi\colon Y\to X$ is the 
  associated birational morphism.
\end{proposition}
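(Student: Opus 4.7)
The plan is to prove each implication separately, with the Abhyankar $\Rightarrow$ quasi-monomial direction requiring most of the work.

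For the easy direction, suppose $v$ is quasi-monomial. By Lemma~\ref{lem2_val}(ii) applied to any adapted pair, together with Proposition~\ref{lem1_val}(i)--(ii), we may assume $v=\val_{\uy,\alpha}$ where $\uy=(y_1,\dots,y_s)$ is a regular system of parameters at $\eta:=c_Y(v)$ and $\alpha_1,\dots,\alpha_s>0$ are rationally independent. Working with admissible expansions in $\widehat{\cO_{Y,\eta}}$, one sees directly that the value group is $\sum_{i=1}^s\ZZ\alpha_i$, of rational rank $s$, and that the residue field $k_v$ equals $k(\eta)$: any unit $c_\beta$ in $\widehat{\cO_{Y,\eta}}$ has nonzero image in $k(\eta)\hookrightarrow k_v$, so $v(c_\beta)=0$, and by rational independence the minimum $\min\langle\alpha,\beta\rangle$ is uniquely achieved. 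Combining $\dim\cO_{Y,\eta}=s$ with the Dimension Formula yields $\ratrk(v)+\trdeg_X(v)=s+\trdeg(k(\eta)/k(\xi))=\dim\cO_{X,\xi}$.

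For the converse, set $r=\ratrk(v)$, $t=\trdeg_X(v)$, $\xi=c_X(v)$. Since $\Gamma_v$ is generated by $v(\frm_{X,\xi})$ and has rational rank $r$, pick $f_1,\dots,f_r\in\frm_{X,\xi}$ with $v(f_1),\dots,v(f_r)$ rationally independent. Pick also $g_i=h_i/p_i\in\cO_v$ with $h_i,p_i\in\cO_{X,\xi}$ whose residues $\bar g_i\in k_v$ form a transcendence basis of $k_v/k(\xi)$; note that $v(h_i)=v(p_i)$ since $v(g_i)=0$. Using Temkin's resolution theorem together with Remark~\ref{Nagata}, construct a proper birational morphism $\pi\colon Y\to X$ with $Y$ regular such that all the ideals $(f_1,\dots,f_r)\cO_Y$ and $(h_i,p_i)\cO_Y$ are invertible with joint monomial support in an SNC divisor. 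At $\eta:=c_Y(v)$, the equality $v(h_i)=v(p_i)$ forces the center to lie in the chart where $p_i$ generates, so $g_i\in\cO_{Y,\eta}$; hence $\bar g_i\in k(\eta)$, giving $\trdeg(k(\eta)/k(\xi))\geq t$. Combined with $k(\eta)\subseteq k_v$ and the Dimension Formula, this yields $\dim\cO_{Y,\eta}=r$. Choose a regular system of parameters $\uy=(y_1,\dots,y_r)$ at $\eta$ cutting out the SNC components through $\eta$; then $f_i=u_i\prod_j y_j^{b_{ij}}$ with $u_i$ a unit. From $v(f_i)=\sum_j b_{ij}\alpha_j$ where $\alpha_j:=v(y_j)>0$, the rational independence of the $v(f_i)$ forces the $r\times r$ matrix $(b_{ij})$ to be invertible, hence $\alpha_1,\dots,\alpha_r$ are rationally independent as well. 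The argument of the first paragraph (applied to $\cO_{Y,\eta}$ in place of the original adapted pair) then shows $v=\val_{\uy,\alpha}$: for any $g\in\cO_{Y,\eta}$ with admissible expansion $g=\sum c_\beta y^\beta$, each nonzero $c_\beta$ satisfies $v(c_\beta)=0$, and $\frm$-adic continuity together with the fact that the values $\langle\alpha,\beta\rangle$ are distinct and tend to infinity gives $v(g)=\min\langle\alpha,\beta\rangle$.

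For the final assertion, arrange that every blow-up above is performed only over a neighborhood of $\xi$ and extended via Remark~\ref{Nagata}, so $\pi$ is an isomorphism outside $\overline{\xi}$; then take $D$ to be the SNC divisor consisting of the $V(y_j)$ together with all other $\pi$-exceptional components, each of which maps into $\overline{\xi}$. The rational independence of the $\alpha_j$ makes $(Y,D)$ a good pair adapted to $v$. The principal obstacle throughout is the simultaneous monomialization step in the hard direction: constructing a single $Y$ on which all the $\bar g_i$ are realized in $k(c_Y(v))$ \emph{and} the ideal $(f_1,\dots,f_r)$ becomes monomial in SNC coordinates. This is a weak form of Zariski-style local uniformization for Abhyankar valuations, made accessible in our excellent characteristic-zero setting by the existence of log-resolutions (Temkin) combined with the Nagata-type extension procedure of Remark~\ref{Nagata}.
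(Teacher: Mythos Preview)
Your argument for the main equivalence (Abhyankar $\Leftrightarrow$ quasi-monomial) is correct and close in spirit to the paper's, with two minor variations: you only require the $v(f_i)$ to be $\QQ$-independent rather than a $\ZZ$-basis of $\Gamma_v$ (this suffices, since you only need the matrix $(b_{ij})$ to be invertible over $\QQ$), and you realize the transcendence-degree drop by principalizing the ideals $(h_i,p_i)$ rather than by the paper's explicit blow-up of $(g,h)+J^N$. Both routes lead to the same conclusion: at $\eta=c_Y(v)$ one has $\dim\cO_{Y,\eta}=r$ and a regular system of parameters $y_1,\dots,y_r$ with $v(y_j)$ rationally independent, whence $v=\val_{\uy,\alpha}$ by the admissible-expansion argument (which is indeed just Proposition~\ref{lem1_val} applied carefully).

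However, your treatment of the final assertion $\pi(D)\subseteq\overline{c_X(v)}$ has a genuine gap. The log resolution you invoke is that of the product $\prod_i f_i\cdot\prod_i h_ip_i$, whose zero locus is a union of divisors on $X$ through $\xi$ but \emph{not} contained in $\overline{\xi}$. Consequently the resulting $\pi\colon Y\to X$ is not an isomorphism outside $\overline{\xi}$, and some of the components $V(y_j)$ of $D$ through $\eta$ can be strict transforms of the hypersurfaces $V(f_i)\subseteq X$, which certainly do not map into $\overline{\xi}$. (For instance, take $X=\AAA_k^2$, $\xi=0$, $v$ monomial with $v(x)=1$, $v(y)=\sqrt{2}$, and $f_1=x$, $f_2=y$: no modification is needed, but $D=V(x)+V(y)$ maps onto the coordinate axes, not into $\{0\}$.) The paper's remedy is to replace each $(f_i)$ by $(f_i)+J^N$ and each $(g,h)$ by $(g,h)+J^N$, where $J$ is the ideal of $\overline{\xi}$ and $N$ is chosen large enough that $v$ does not see the difference. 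These modified ideals are supported in $\overline{\xi}$, so their log resolution is an isomorphism over $X\smallsetminus\overline{\xi}$ and every component of the resulting SNC divisor is $\pi$-exceptional with image in $\overline{\xi}$. This $+J^N$ trick is precisely the ingredient missing from your final paragraph.
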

\begin{proof}
  Let $\xi=c_X(v)$.
  First suppose $v$ is quasi-monomial and pick a 
  good pair $(Y,D)$ adapted to $v$. Let $D_1,\dots,D_r$ be the irreducible
  components of $D$ containing the center $\eta=c_Y(v)$.
  Then $\dim\cO_{Y,\eta}=r$. 
  By assumption, the values $v(D_i)$, $1\le i\le r$, are rationally independent,
  so $\ratrk(v)=r$. On the other hand,
  $\trdeg_Y(v)=0$. Thus $v$ is an Abhyankar valuation.
  
  Conversely, it was shown in~\cite{ELS} that 
  every Abhyankar valuation $v$ is 
  quasi-monomial.\footnote{While in~\cite{ELS} one considers 
    an algebraic variety over a field, the proof therein
    also works in our more general framework.} 
  We sketch the main idea in the proof, slightly modified in order to guarantee
  $\pi(D)\subseteq\overline{\xi}$. 
  Note that we may  blow-up any 
  closed subset of
  $\overline{\xi}$: the resulting $W$ over $X$ might be singular, but 
  we may replace $W$ by
  $W'\to W$ that is an isomorphism over
  $X\smallsetminus \overline{\xi}$, with $W'$ regular.
  
  Let $J$ denote the ideal defining $\overline{\xi}$.
  One knows that if $v$ is an Abhyankar valuation of $K(X)$, 
  then the value group $\Gamma_v$
  is a finitely generated free abelian group. 
  We first find a 
  proper birational morphism $Y\to X$ that is an isomorphism over 
  $X\smallsetminus\overline{\xi}$, with $Y$ 
  regular, 
  such that $\dim\cO_{X,\xi}-\dim\cO_{Y,\eta}=\trdeg_X(v)$, where $\eta=c_Y(v)$,
  and there are $f_1,\dots,f_r\in\cO_{Y,\eta}$
  such that $v(f_1),\dots,v(f_r)$ give a basis of $\Gamma_v$. 
  Indeed, in order to obtain both conditions, 
  it is enough to perform finitely many times the following
  operation: given $g,h\in\cO_{X,\xi}$, we blow-up a 
  closed subset in $\overline{\xi}$
  to get $W\to X$ such that there is 
  $Q\in\cO_{W,c_W(v)}$ with $v(Q-\frac{g}{h})>0$ 
  or $v(Q-\frac{h}{g})>0$. For this it is enough to
  blow-up the subscheme defined by $(g,h)+J^N$, 
  where $N\cdot v(J)>\max\{v(g), v(h)\}$. 
    
  Suppose now that $Y$ is as above, and the $f_i$ are defined
  in a neighborhood $U$ of $\xi$, and consider any 
  regular $Y'$ with $\phi\colon Y'\to Y$
  proper and birational such that $\phi^{-1}(U)\to U$ 
  is a log resolution of $\prod_{i=1}^r((f_i)+J^N)$, 
  where $N\cdot v(J)>\max_i\{v(f_i)\}$.
  One can easily see that if $\eta'=c_{Y'}(v)$, 
  then we have coordinates
  $y'_1,\dots,y'_r$ at $\eta'$ such that 
  \begin{equation}
    \left((f_i)+J^N\right)\cdot\cO_{Y',\eta'}
    =\left(\prod_{j=1}^r(y'_j)^{b_{ij}}\right),
    \ \text{with $b_{ij}\in\ZZ_{\geq 0}$ and $\det(b_{ij})=\pm 1$},
  \end{equation}
  and $v(y'_1),\dots,v(y'_s)$ are linearly independent over $\QQ$. 
  It is then clear that
  $v$ is equal to the quasi-monomial valuation 
  attached to $(v(y'_1),\dots,v(y'_r))$ 
  in this system of coordinates. 
  One more application of Remark~\ref{Nagata} 
  gives the conclusion of the proposition. 
\end{proof}
\begin{remark}
  The trivial valuation is quasi-monomial with rational rank zero.
\end{remark}
\begin{remark}\label{R401}
  A valuation is \emph{divisorial},
  that is, a positive multiple of a valuation $\ord_E$,
  if and only if it is quasi-monomial with rational rank one. 
  In particular, a nontrivial  valuation 
  $v\in\QM(Y,D)$ is divisorial 
  if and only if there exists $t\in\RR_{>0}$ such that 
  $v(D_i)\in t\QQ$ for all~$i$.
  Thus the divisorial valuations are dense in $\QM(Y,D)$.
\end{remark}

\subsection{Completion and field extension}\label{S311}
Using the numerical invariants, we now show that the 
set of quasi-monomial valuations is preserved under two
important operations: localization followed by completion; 
and algebraic field extensions.
\begin{lemma}\label{lemma_valuations}
  Let $\xi$ be a point on $X$, and consider the canonical morphism
  $\phi\colon X'= \Spec\,R\to X$, where $R=\widehat{\cO_{X,\xi}}$. 
  If $v'\in \Val_{X'}$ has center the closed point, 
  and if $v\in\Val_X$ is induced 
  from $v'$ by restriction, then $\trdeg_{X'}(v')=\trdeg_X(v)$ and 
  $\ratrk(v')=\ratrk(v)$. In particular, $v$ is quasi-monomial 
  if and only if $v'$ is quasi-monomial. 
\end{lemma}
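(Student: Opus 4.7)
The plan is to deduce both numerical equalities by approximation, then invoke Proposition~\ref{Abhyankar}. First I would record the structural facts that make everything comparable: since $X$ is regular at $\xi$, the ring $R=\widehat{\cO_{X,\xi}}$ is regular local with $\dim R=\dim\cO_{X,\xi}$ and residue field $k(\xi')=k(\xi)$, so Abhyankar's inequality~\eqref{ineq_Abhyankar} has the same right-hand side for $v$ and $v'$. The inclusion $\cO_{X,\xi}\hookrightarrow R\hookrightarrow\cO_{v'}$ is local because $v'(\frm_\xi R)>0$, so $c_X(v)=\xi$, and the two inclusions $\Gamma_v\subseteq\Gamma_{v'}$ and $k_v\hookrightarrow k_{v'}$ already give the easy inequalities $\ratrk(v)\le\ratrk(v')$ and $\trdeg_X(v)\le\trdeg_{X'}(v')$, the latter via the tower formula $\trdeg(k_{v'}/k(\xi))=\trdeg(k_{v'}/k_v)+\trdeg(k_v/k(\xi))$.

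For the reverse inequality on rational rank, I would use that $\cO_{X,\xi}$ is dense in $R$ in the $\frm_\xi$-adic topology. Given $g_1,\ldots,g_r\in R$ with $v'(g_1),\ldots,v'(g_r)$ linearly independent over $\QQ$, for $N$ sufficiently large any $f_i\in\cO_{X,\xi}$ with $g_i-f_i\in\frm_\xi^N R$ satisfies $v'(g_i-f_i)>v'(g_i)$, hence $v(f_i)=v'(f_i)=v'(g_i)$. The values $v(f_i)$ are therefore rationally independent and $\ratrk(v)\ge\ratrk(v')$.

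The same idea handles the transcendence degree, but applied to numerator and denominator simultaneously. Given $\bar a_1,\ldots,\bar a_t\in k_{v'}$ algebraically independent over $k(\xi')=k(\xi)$, write each as the residue of $g_i/h_i$ with $g_i,h_i\in R$ and $v'(g_i)=v'(h_i)$. Picking approximations $g'_i,h'_i\in\cO_{X,\xi}$ with $v'(g_i-g'_i)>v'(g_i)$ and $v'(h_i-h'_i)>v'(h_i)$, the fractions $a'_i:=g'_i/h'_i$ lie in $K(X)$, satisfy $v(a'_i)=v'(a'_i)=0$, and have the same residue $\bar a_i$ in $k_{v'}$ because $g'_i/g_i$ and $h'_i/h_i$ are $v'$-units whose residue is $1$. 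Since $k_v\hookrightarrow k_{v'}$ is injective, the residues of the $a'_i$ in $k_v$ remain algebraically independent over $k(\xi)$, giving $\trdeg_X(v)\ge\trdeg_{X'}(v')$.

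With $\ratrk(v)=\ratrk(v')$ and $\trdeg_X(v)=\trdeg_{X'}(v')$ in hand, the final assertion follows at once from Proposition~\ref{Abhyankar}: $v$ is quasi-monomial iff the Abhyankar inequality for $v$ over $X$ is an equality, and similarly for $v'$ over $X'$; since both sums coincide and the ambient dimensions are equal, one equality holds iff the other does. The main subtlety will be the approximation argument for transcendence degree, where one must ensure the replacement simultaneously preserves the $v'$-value (so that the quotient is genuinely in $K(X)$ with $v$-value zero) and the residue class in $k_{v'}$ (so that algebraic independence is inherited); this is exactly what the bounds $v'(g_i-g'_i)>v'(g_i)$ and $v'(h_i-h'_i)>v'(h_i)$ provide, as they force $g'_i/g_i,\,h'_i/h_i\in 1+\frm_{v'}$.
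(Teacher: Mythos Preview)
Your proof is correct and takes essentially the same approach as the paper: both use $\frm$-adic approximation to show that every value of $v'$ on $R$ (and every residue class in $k_{v'}$) is already realized by an element of $\cO_{X,\xi}$. The paper is slightly more economical, directly establishing the stronger equalities $\Gamma_v=\Gamma_{v'}$ and $k_v=k_{v'}$ rather than separating the easy and hard inequalities, but the underlying idea is identical.
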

\begin{proof}
  If $\frm$ is the maximal ideal in $R$, then $\alpha:=v'(\frm)>0$. Given $f\in R$, let $g\in
  \cO_{X,\xi}$ be such that $(f-g)\in\frm^n$, where $n\alpha>v'(f)$. In this case
  $v'(f-g)>v'(f)$, hence 
  $v'(f)=v'(g)$. This shows that $v'$ and $v$ have the same value groups. In particular,
  $\ratrk(v')=\ratrk(v)$. 
  
  Denote by $(\cO_{v'},\frm_{v'})$ and $(\cO_{v},\frm_{v})$ 
  the valuation rings corresponding to $v'$ and $v$, respectively. 
  The equality $\trdeg_{X'}(v')=\trdeg_X(v)$
  is equivalent to the field extension 
  $\cO_{v}/\frm_{v}\hookrightarrow\cO_{v'}/\frm_{v'}$ being algebraic. 
  In fact, we will show that $\cO_{v}/\frm_{v}=
  \cO_{v'}/\frm_{v'}$. Given a nonzero $u\in\cO_{v'}$, write $u=\frac{f}{f_1}$, with
  $f$, $f_1\in R$. As above, let us consider $g$, $g_1\in\cO_{X,\xi}$ with
  $v'(f-g)>v'(f)$ and $v'(f_1-g_1)>v'(f_1)$. In particular, we have $v'(f)=v'(g)$
  and $v'(f_1)=v'(g_1)$. Since $\frac{f}{f_1}-\frac{g}{g_1}=\frac{fg_1-f_1g}{f_1g_1}$
  and $v'(fg_1-f_1g)=v'((f-g)g_1+g(g_1-f_1))>v'(f_1g_1)$, it follows that
  the class of $\frac{f}{f_1}$ in $\cO_{v'}/\frm_{v'}$ lies in $\cO_{v}/\frm_{v}$.
  This completes the proof.
\end{proof}
\begin{lemma}\label{lemma_valuations2}
  Let $k\subset K$ be an algebraic field extension, and $\phi\colon \AAA_K^n\to\AAA_k^n$
  the corresponding morphism of affine spaces. Suppose that $v'$ is a valuation of
  $K(x_1,\dots,x_n)$ with center on $\AAA_K^n$, and let $v$ be its restriction
  to $k(x_1,\dots,x_n)$. Then $\trdeg_{\AAA_k^n}(v)=\trdeg_{\AAA_K^n}(v')$ and 
  $\ratrk(v)=\ratrk(v')$. In particular, $v$ is quasi-monomial if and only if
  $v'$ is quasi-monomial.
\end{lemma}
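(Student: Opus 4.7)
The plan is to exploit the fact that the field extension $K(x_1,\dots,x_n)/k(x_1,\dots,x_n)$ is algebraic, since $K/k$ is, and to apply standard results from valuation theory for algebraic extensions together with the Abhyankar characterization in Proposition~\ref{Abhyankar}. First I would verify the setup: if $\xi'=c_{\AAA_K^n}(v')$, then $\xi:=\phi(\xi')$ is the center of $v$ on $\AAA_k^n$, because the inclusion $\cO_{\AAA_k^n,\xi}\hookrightarrow\cO_{\AAA_K^n,\xi'}$ is local and $v'$ is non-negative on $\cO_{\AAA_K^n,\xi'}$ and positive on its maximal ideal. Moreover, since $K/k$ is algebraic, the fiber $\phi^{-1}(\xi)=\Spec(k(\xi)\otimes_kK)$ is zero-dimensional, which has two consequences: the residue field extension $k(\xi')/k(\xi)$ is algebraic, and by flatness of $\phi$ we have $\dim(\cO_{\AAA_K^n,\xi'})=\dim(\cO_{\AAA_k^n,\xi})$.

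Next I would invoke standard valuation theory (see Bourbaki, \emph{Commutative Algebra}, Ch.~VI): for an algebraic extension of fields, any extension of a valuation has value group commensurable with the original (so the quotient of value groups is torsion) and induces an algebraic residue field extension. Applied to $v'/v$ on $K(x_1,\dots,x_n)/k(x_1,\dots,x_n)$ this yields $\ratrk(v')=\ratrk(v)$, and algebraicity of the residue field extension $k_{v'}/k_v$. For the transcendence degree, I would compute $\trdeg(k_{v'}/k(\xi))$ in two ways using the towers $k(\xi)\subseteq k_v\subseteq k_{v'}$ and $k(\xi)\subseteq k(\xi')\subseteq k_{v'}$: since $k_{v'}/k_v$ and $k(\xi')/k(\xi)$ are both algebraic, this forces
\begin{equation*}
  \trdeg_{\AAA_k^n}(v)=\trdeg(k_v/k(\xi))=\trdeg(k_{v'}/k(\xi'))=\trdeg_{\AAA_K^n}(v').
\end{equation*}

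For the last assertion, I would combine these two equalities with $\dim(\cO_{\AAA_K^n,\xi'})=\dim(\cO_{\AAA_k^n,\xi})$ to see that the Abhyankar inequality~\eqref{ineq_Abhyankar} is an equality for $v$ if and only if it is for $v'$. Proposition~\ref{Abhyankar} then gives the quasi-monomial equivalence. The main obstacle, such as it is, is locating and citing the classical result that algebraic extensions of valued fields preserve rational rank and yield algebraic residue field extensions; everything else in the argument is a short dimension-counting exercise.
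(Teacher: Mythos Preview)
Your proposal is correct and follows essentially the same approach as the paper: both arguments hinge on the two classical facts that, for an algebraic extension of valued fields, the value-group quotient is torsion and the residue-field extension is algebraic, and both combine this with algebraicity of $k(\xi')/k(\xi)$ to match the transcendence degrees. The only difference is presentational: the paper reproves these two facts from scratch by a short elementary manipulation of an algebraic equation satisfied by $f\in\cO_{v'}$ (dividing by the coefficient of minimal value), whereas you cite Bourbaki; conversely, you make the dimension equality $\dim(\cO_{\AAA_K^n,\xi'})=\dim(\cO_{\AAA_k^n,\xi})$ explicit for the Abhyankar step, which the paper leaves implicit.
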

\begin{proof}
  Let $(\cO_{v},\frm_{v})$ and $(\cO_{v'},\frm_{v'})$ be the valuation rings of $v$ and $v'$,
  respectively. Note that we have a local homomorphism $\cO_{v}\hookrightarrow\cO_{v'}$.
  Since the extension $k[x_1,\dots,x_n]\hookrightarrow K[x_1,\dots,x_n]$
  is integral, in order to show that $\trdeg_{\AAA_k^n}(v)=\trdeg_{\AAA_K^n}(v')$ it is enough to show that
  the field extension $\cO_{v}/\frm_{v}\hookrightarrow\cO_{v'}/\frm_{v'}$ is algebraic. 
  Given $f\in \cO_{v'}$, there is an equation 
  \begin{equation}\label{eq_lemma_valuation2}
    \sum_{i=0}^mc_if^i=0,
  \end{equation}
  with $c_i\in k(x_1,\dots,x_n)$ not all zero. If $v(c_j)=\min_iv(c_i)$, then 
  $c_i/c_j\in\cO_{v}$ for all $i$. Dividing by $c_j$ in~\eqref{eq_lemma_valuation2}, 
  we see that $\overline{f}\in  \cO_{v'}/\frm_{v'}$
  is algebraic over $\cO_{v}/\frm_{v}$.
  
  Since $k(x_1,\dots,x_n)\subseteq K(x_1,\dots,x_n)$, in order to show that 
  $\ratrk(v)=\ratrk(v')$ it is enough to show that for every $f\in K[x_1,\dots,x_n]$,
  some integer multiple of $v'(f)$ lies in the value group of $v$. 
  Consider an equation~\eqref{eq_lemma_valuation2} satisfied by $f$. 
  We can find $i\neq j$ such that $v'(c_if^i)=v'(c_jf^j)$. 
  Hence $(j-i)v'(f)=v(c_i)-v(c_j)$ lies in the value group of $v$.
\end{proof}

\section{Structure of valuation space}\label{S305}
Next we investigate the structure of the valuation space $\Val_X$. 
We show that it is a projective limit of simplicial cone complexes
and endowed with a natural integral affine structure.
This gives a way of approximating a valuation by 
quasi-monomial valuations.
Our discussion largely follows~\cite{BFJ1}, with some details added
and some modifications made due to the fact that our setting here is 
slightly different. See also~\cite{KoSo,Thu2,Payne,BFJ2}.

\subsection{Topology and ordering}
Recall from~\S\ref{S301} that we can view the elements of 
$\Val_X$ either as real valuations of the function field of $X$
or as $\RR_{\ge0}$-valued homomorphisms of the semiring
of ideals on $X$. 
This leads to two natural topologies $\tau$ and $\sigma$ 
on $\Val_X$. 
Namely, $\sigma$ is the weakest topology for which the evaluation 
map $\Val_X\ni v\to\phi_f(v):=v(f)$
is continuous for all nonzero rational functions $f$ on $X$.
Similarly, $\tau$ is the weakest topology
for which the evaluation map $\Val_X\ni v\to\phi_\fra(v):=v(\fra)$
is continuous for all nonzero ideals $\fra$ on $X$.
\begin{lemma}\label{L204}
  The two topologies $\sigma$ and $\tau$ defined above coincide.
\end{lemma}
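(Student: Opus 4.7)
The two topologies are both defined as initial topologies on $\Val_X$ with respect to families of evaluation maps, so it suffices to show that each generator of $\sigma$ is $\tau$-continuous and vice versa. I fix $v_0 \in \Val_X$ with center $\xi_0 := c_X(v_0)$ and work locally, choosing an affine open neighborhood $U = \Spec A$ of $\xi_0$; since $A$ is Noetherian, all ideals of $A$ are finitely generated.

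For the inclusion $\tau \subseteq \sigma$, I want $\phi_\fra \colon v \mapsto v(\fra)$ to be $\sigma$-continuous at $v_0$ for each nonzero ideal $\fra$ on $X$. Write $\fra(U) = (f_1,\dots,f_m)$ with $f_i \in A \subseteq K(X)^*$. For any $v \in \Val_X$ with $c_X(v) \in U$, the definition of $v(\fra)$ applied to these generators gives $v(\fra) = \min_i v(f_i)$, so $\phi_\fra = \min_i \phi_{f_i}$ on the set $\Val_X^U := \{v : c_X(v) \in U\}$, expressing $\phi_\fra$ as a minimum of finitely many $\sigma$-continuous functions there.

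For the inclusion $\sigma \subseteq \tau$, I show $\phi_f \colon v \mapsto v(f)$ is $\tau$-continuous at $v_0$ for each $f \in K(X)^*$. Write $f = g/h$ with $g,h \in \cO_{X,\xi_0}$; after shrinking $U$ we have $g,h \in A$. Extend $(g),(h) \subseteq A$ to ideals $\fra_g,\fra_h$ on $X$ (for instance as the ideal sheaves of the scheme-theoretic closures of $V(g),V(h) \subseteq U$ in $X$, so that $\fra_g|_U = (g)$ and $\fra_h|_U = (h)$). For $v \in \Val_X^U$ one then has $v(\fra_g) = v(g)$ and $v(\fra_h) = v(h)$, whence $\phi_f = \phi_{\fra_g} - \phi_{\fra_h}$ on $\Val_X^U$, exhibiting $\phi_f$ as a difference of $\tau$-continuous functions there.

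The main obstacle I expect in both directions is that $\Val_X^U$ is typically neither $\sigma$- nor $\tau$-open: for example, with $X = \AAA^1_k$ and $U = X \smallsetminus \{0\}$, the sequence $v_n = \tfrac{1}{n}\ord_0$, whose centers all lie outside $U$, converges in either topology to the trivial valuation, whose center is the generic point of $X$. Consequently the identities $\phi_\fra = \min_i \phi_{f_i}$ and $\phi_f = \phi_{\fra_g} - \phi_{\fra_h}$ established on $\Val_X^U$ must be promoted to continuity at $v_0$ by a local argument that does not rely on $\Val_X^U$ being open. The natural way to do this is to cover $X$ by finitely many affines $U_1,\dots,U_N$ and, for each $v \in \Val_X$, use the charts containing $c_X(v)$; since continuity at $v_0$ is a purely local property, it should suffice that for every neighborhood-basis element at $v_0$ in one topology we can exhibit a containing basis element in the other, with the auxiliary ideals chosen uniformly on some chart $U_j \ni \xi_0$. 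The subtle point—and the one I expect to be most delicate—is controlling the behavior of the extensions $\fra_g,\fra_h$ (or of the local generators $f_i$) at valuations whose centers lie just outside $U_j$, so that the above identifications propagate to a genuine open neighborhood of $v_0$ in the opposite topology.
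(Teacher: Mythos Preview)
Your treatment of the affine case is correct and matches the paper: on an affine $U=\Spec A$ one has $\phi_\fra=\min_i\phi_{f_i}$ and $\phi_{g/h}=\phi_{(g)}-\phi_{(h)}$, so $\sigma=\tau$ on $\Val_U$. The gap is in the globalization. You correctly observe that $\Val_X^U$ need not be open in either topology, and then propose to salvage the argument by a vague ``local'' procedure controlling behavior near the boundary of $U$; as written this is not a proof, and the difficulty you describe (extensions $\fra_g,\fra_h$ behaving badly at centers just outside $U$) is real and will not go away by choosing generators more cleverly.

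The missing idea is that $\Val_U$ is \emph{closed} in $\Val_X$ in both topologies. For $\tau$ this is immediate: if $J$ is the reduced ideal of $X\smallsetminus U$, then $\Val_U=\{v\mid v(J)=0\}$. For $\sigma$ one has $\Val_U=\bigcap_{h\in\cO(U)}\{v\mid v(h)\ge 0\}$, using uniqueness of the center on the separated scheme $X$. One also checks that the subspace topologies on $\Val_U$ inherited from $(\Val_X,\sigma)$ and $(\Val_X,\tau)$ coincide with the intrinsic $\sigma$ and $\tau$ on $\Val_U$ (every ideal on $U$ extends to an ideal on $X$). Now cover $X$ by finitely many affine opens $U_1,\dots,U_N$; then $\Val_X=\bigcup_i\Val_{U_i}$ is a finite closed cover in both topologies, and since the two topologies agree on each piece they agree on $\Val_X$. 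Your example $v_n=\tfrac1n\ord_0$ is consistent with this: the $v_n$ lie in the closed set $\Val_X\smallsetminus\Val_U$ only because their limit (the trivial valuation) lies in $\Val_U$, illustrating non-openness but not non-closedness.
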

\begin{proof}
  First suppose that $X$ is affine. 
  Since $v(f/g)=v(f)-v(g)$, we see that $\sigma$ 
  is the weakest topology that makes all maps 
  $\phi_f$, with $f\in\cO(X)$, continuous. 
  In particular, $\tau$ is finer than $\sigma$. 
  On the other hand, if an ideal $\fra$ is generated by 
  $f_1,\dots,f_r$, then $\phi_{\fra}=\min_i\phi_{f_i}$. 
  Therefore $\sigma$ is finer than $\tau$, which completes the 
  proof in the affine case. 
  
  Next, note that if $U$ is an open subset of $X$, then the 
  two topologies on $\Val_U\subseteq\Val_X$ 
  are just the subspace topologies with respect to $\sigma$ 
  and $\tau$ on $\Val_X$. 
  For $\sigma$ this is clear, while for $\tau$ this follows 
  from the fact that every coherent ideal sheaf on $U$ is the 
  restriction of a coherent ideal sheaf on $X$. 

  Now, if $U\subseteq X$ is open and affine, $\Val_U\subseteq\Val_X$ 
  is closed in $\Val_X$ in both the $\sigma$ and $\tau$ topologies.
  Indeed, if $J$ is the ideal defining $X\setminus U$,
  with the reduced scheme structure,
  then $\Val_U=\{v\in\Val_X\mid v(J)=0\}$,
  hence is $\tau$-closed. On the other hand, we also have 
  $\Val_U=\bigcap_{h\in\cO(U)}\{v\in\Val_X\mid v(h)\geq 0\}$, 
  hence $\Val_U$ is also $\sigma$-closed. 
  If we cover $X$ by finitely many affine open subsets $U_i$, 
  we now deduce the assertion in the lemma  for $X$
  from the assertion for the $U_i$.
\end{proof}

\begin{remark}\label{rem_L204}
  It follows from the above proof that the map 
  \begin{equation*}
    \Val_X\ni v\overset{c_X}\to c_X(v)\in X
  \end{equation*}
  is ``anticontinuous'' in the sense that the inverse image of any open subset is closed.
\end{remark}

\begin{definition}\label{D201}
  If $v,w\in\Val_X$, then we say that $v\le w$ if $v(\fra)\le w(\fra)$
  for all (nonzero) ideals $\fra$ on $X$.
\end{definition}
This clearly defines a partial ordering under which the trivial valuation is
the unique minimal element. Note that this order relation depends 
on the model $X$. 
\begin{lemma}\label{L205}
  We have $v\le w$ if and only if $\eta:=c_X(w)\in\overline{c_X(v)}$ and 
  $w(f)\ge v(f)$ for any $f\in\cO_{X,\eta}$.
\end{lemma}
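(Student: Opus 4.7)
Let me write $\xi := c_X(v)$ and $\eta := c_X(w)$, and recall that throughout the discussion one has $v(\fra) > 0$ iff $\xi \in V(\fra)$, and likewise for $w$.

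For the forward direction, assume $v \le w$. I first treat the center statement: if $\xi \in V(\fra)$ then $v(\fra) > 0$, hence $w(\fra) \ge v(\fra) > 0$, so $\eta \in V(\fra)$. Thus every closed subscheme containing $\xi$ also contains $\eta$, which means $\eta \in \overline{\{\xi\}}$. Equivalently, $\xi$ is a generization of $\eta$, and (working in any affine open and comparing primes) this translates into the inclusion of local rings $\cO_{X,\eta} \subseteq \cO_{X,\xi}$ inside the function field $K(X)$. In particular, any $f \in \cO_{X,\eta}$ is automatically an element of $\cO_{X,\xi}$, so both $v(f)$ and $w(f)$ are well defined and nonnegative.

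To prove $w(f) \ge v(f)$ for $f \in \cO_{X,\eta}$, the plan is to exhibit a single coherent ideal $\fra$ on $X$ whose stalks at both $\xi$ and $\eta$ are principal, generated by $f$. Concretely, I shrink to an affine open $U = \Spec A \subseteq X$ with $\eta \in U$ (hence $\xi \in U$ by generization) and $f \in A$, and then extend the principal ideal $(f) \subseteq A$ to a coherent ideal $\fra$ on $X$ with $\fra|_U = \widetilde{(f)}$. This is exactly the type of extension used in \S1.2 to go from semiring homomorphisms back to valuations, so no new technology is needed. Then $\fra \cdot \cO_{X,\xi} = f \cdot \cO_{X,\xi}$ and $\fra \cdot \cO_{X,\eta} = f \cdot \cO_{X,\eta}$ are both principal, which forces $v(\fra) = v(f)$ and $w(\fra) = w(f)$. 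The hypothesis $v \le w$ then yields $v(f) = v(\fra) \le w(\fra) = w(f)$.

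For the reverse direction, assume $\eta \in \overline{\{\xi\}}$ (so $\cO_{X,\eta} \subseteq \cO_{X,\xi}$) and $w(f) \ge v(f)$ for every $f \in \cO_{X,\eta}$. Given any ideal $\fra$ on $X$, the Noetherianity of the relevant stalks together with the definition $w(\fra) = \min\{w(g) \mid g \in \fra \cdot \cO_{X,\eta}\}$ provides some $g \in \fra \cdot \cO_{X,\eta}$ realizing the minimum. Then $g \in \cO_{X,\eta}$ gives $w(g) \ge v(g)$ by hypothesis, and the inclusion $\fra \cdot \cO_{X,\eta} \subseteq \fra \cdot \cO_{X,\xi}$ gives $v(g) \ge v(\fra)$. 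Chaining these yields $w(\fra) = w(g) \ge v(g) \ge v(\fra)$, as desired.

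The only real subtlety is the existence of a coherent ideal $\fra$ on $X$ with prescribed principal localization at $\xi$; this is the main obstacle, but it is exactly the construction already invoked (without comment) in the proof that semiring homomorphisms with center on $X$ come from valuations, and it goes through because one can replace $X$ by a Noetherian (even affine) open neighborhood of $\eta$ when defining $\fra$. Everything else is bookkeeping: translating between the closure $\overline{\{\xi\}}$ and the inclusion $\cO_{X,\eta} \subseteq \cO_{X,\xi}$, and exploiting that principality of a stalk lets one convert ideal values into valuation values of elements.
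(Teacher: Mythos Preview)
Your proof is correct and follows essentially the same approach as the paper's: both directions proceed by the same mechanism (using the ideal defining $\overline{\xi}$ to compare centers, then choosing a coherent ideal with principal stalk to translate between $v(\fra)$ and $v(f)$, and chaining the obvious inequalities for the converse). You are simply more explicit than the paper about why the coherent ideal $\fra$ with $\fra\cdot\cO_{X,\eta}=(f)$ exists and why this forces $\fra\cdot\cO_{X,\xi}=(f)$ as well.
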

\begin{proof}
  Let $\xi:=c_X(v)$.
  First suppose $v\le w$. If $J$ is the ideal defining $\overline{\xi}$
  with the reduced scheme structure, then 
  $w(J)\ge v(J)>0$, so $\eta\in\overline\xi$.
  Pick $f\in\cO_{X,\eta}\subseteq\cO_{X,\xi}$
  and let $\fra$ be an ideal on $X$ for which 
  $\fra\cdot\cO_{X,\eta}$ is principal, generated by $f$.
  Then $v(f)=v(\fra)\le w(\fra)=w(f)$.

  Conversely, suppose $\eta\in\overline\xi$ and 
  that $v(f)\le w(f)$ for $f\in\cO_{X,\eta}$.
  For any ideal $\fra$ we then have
$v(\fra)=\min_{f\in\fra\cdot\cO_{X,\xi}}v(f)\le\min_{f\in\fra\cdot\cO_{X,\eta}}v(f)\le\min_{f\in\fra\cdot\cO_{X,\eta}}w(f)=w(\fra)$.
\end{proof}

\subsection{Simplicial cone complexes and integral affine structure}
Next we investigate the structure of the subset $\QM(Y,D)\subseteq\Val_X$
for a given log-smooth pair $(Y,D)$ over $X$.
\begin{lemma}\label{lem_QM}
  If $(Y,D)$ is a log-smooth pair over $X$, 
  and if $\eta$ is the generic point of a connected component 
  of the intersection of $r$ irreducible components $D_1,\ldots,D_r$ of $D$, then
  the map $\QM_{\eta}(Y,D)\to\RR^r$ defined by
  $v\to (v(D_1),\ldots,v(D_r))$ gives a homeomorphism 
  onto the cone $\RR_{\geq 0}^r$.
\end{lemma}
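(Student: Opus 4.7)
The plan is to exhibit an explicit two-sided inverse and then check continuity in both directions, using Proposition~\ref{lem1_val} as the main tool.

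First I would verify that the map $\Phi\colon v\mapsto(v(D_1),\dots,v(D_r))$ is well-defined with image in $\RR_{\ge0}^r$. By definition of $\QM_\eta(Y,D)$, every $v$ is of the form $\val_{\underline y,\alpha}$ for some coordinates $\underline y=(y_1,\dots,y_r)$ at $\eta$ with each $y_i$ cutting out an irreducible component of $D$ through $\eta$; after reindexing these components are precisely $D_1,\dots,D_r$. A short case analysis (splitting according to whether $\alpha_i=0$ or $\alpha_i>0$, and using Proposition~\ref{lem1_val}(i) to locate the center of $v$) shows that $v(D_i)=\alpha_i\ge0$ in both cases, since $y_i$ is a local equation for $D_i$ at the center of $v$ whenever $\alpha_i>0$, and the ideal of $D_i$ is the unit ideal at the center when $\alpha_i=0$.

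Next I would construct the inverse $\Psi\colon\RR_{\ge0}^r\to\QM_\eta(Y,D)$ by picking coordinates $\underline y$ as above and setting $\Psi(\alpha):=\val_{\underline y,\alpha}$, which lies in $\QM_\eta(Y,D)$ by definition. The identity $\Phi\circ\Psi=\mathrm{id}$ is the computation above. The main (though still minor) obstacle is showing $\Psi$ does not depend on the choice of $\underline y$: given a second choice $\underline y'$ with $V(y'_i)=V(y_i)$ at $\eta$, one applies Proposition~\ref{lem1_val}(ii) first at $\eta$ to replace the full system $\underline y$ by the subsystem indexed by $\{i:\alpha_i>0\}$ (which forms coordinates at the center $\eta'$ of $\val_\alpha$), and then applies (ii) again at $\eta'$ to the two such subsystems, both of which cut out the same irreducible components of $D$ at $\eta'$. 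The identity $\Psi\circ\Phi=\mathrm{id}$ then follows: given $v\in\QM_\eta(Y,D)$ with $v=\val_{\underline y,\beta}$, we have $\beta_i=v(D_i)=\Phi(v)_i$, so $v=\Psi(\Phi(v))$.

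Finally I would check continuity of $\Phi$ and $\Psi$ for the topology induced from $\Val_X$. For $\Phi$, each component $v\mapsto v(D_i)$ agrees on $\QM_\eta(Y,D)$ with $v\mapsto v(y_i)$, which is continuous in the $\sigma$-topology on $\Val_X$ since $y_i\in K(Y)=K(X)$; by Lemma~\ref{L204} this is the same as the $\tau$-topology. For $\Psi$, the continuity statement in Proposition~\ref{lem1_val} gives continuity of $\alpha\mapsto\val_\alpha(f)$ for every $f\in K(X)$, which again by Lemma~\ref{L204} is exactly continuity of $\Psi$ into $\Val_X$. Combining these steps we obtain a homeomorphism $\QM_\eta(Y,D)\xrightarrow{\sim}\RR_{\ge0}^r$.
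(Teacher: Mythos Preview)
Your proof is correct and follows essentially the same approach as the paper. The paper's proof is extremely terse (``It is clear that this map gives a bijection\dots''; continuity of $\Phi$ ``by definition of the topology''; continuity of $\Psi$ ``follows from Proposition~\ref{lem1_val}''), and what you have done is spell out exactly the details the paper leaves implicit---in particular the independence of the inverse from the choice of coordinates via Proposition~\ref{lem1_val}(ii), and the identification $v(D_i)=v(y_i)$ needed to invoke the $\sigma$-topology and Lemma~\ref{L204}.
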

\begin{proof}
  It is clear that this map gives a bijection of $\QM_{\eta}(Y,D)$ onto 
  $\RR^r_{\geq 0}$. The map is continuous since by definition of 
  the topology, $v\to v(D_i)$ is continuous for each $i$. 
  The continuity of the inverse map follows from
  Proposition~\ref{lem1_val}.
\end{proof}
Thus $\QM(Y,D)$ is the union of finitely many simplicial
cones $\QM_{\eta}(Y,D)$. Each of these cones is closed in 
$\QM(Y,D)$. Indeed, $\QM_{\eta}(Y,D)$ consists of those 
$v\in\QM(Y,D)$ such that $v(D_j)=0$ for $D_j\not\ni\eta$, 
and such that $c_X(v)$ does not lie on any of the 
connected components of $\bigcap_{D_j\ni\eta}D_j$ not containing $\eta$ 
(for the fact that these are closed conditions, see
Lemma~\ref{L204} and Remark~\ref{rem_L204}). 
This allows us to view $\QM(Y,D)$ 
as a \emph{simplicial cone complex}.

Following~\cite{KKMS} one can equip $\QM(Y,D)$ with an integral
affine structure. We shall not discuss this in detail here, 
but simply define an \emph{integral linear function} on $\QM(Y,D)$ 
to be a map $\QM(Y,D)\to\RR$ whose restriction to each $\QM_{\eta}(Y,D)$
is integral linear under the homeomorphism in Lemma~\ref{lem_QM}. 
We can similarly define \emph{integral linear maps} $\QM(Y',D')\to\QM(Y,D)$
(in this case we require that each $\QM_{\eta'}(Y',D')$ 
is mapped to some $\QM_{\eta}(Y,D)$).
Every such map is continuous. 

\subsection{Retraction}\label{S402}
Given a log-smooth pair $(Y,D)$ over $X$, we define a
\emph{retraction} map
\begin{equation*}
  r_{Y,D}\colon\Val_X\to\QM(Y,D). 
\end{equation*}
This maps a valuation $v$ to
the unique quasi-monomial valuation 
$w:=r_{Y,D}(v)\in\QM(Y,D)$ such that $w(D_i)=v(D_i)$ 
for every irreducible component $D_i$ of $D$.
Note that $c_Y(v)\in\overline{\{c_Y(w)\}}$. 
Clearly $r_{Y,D}$ is the identity on $\QM(Y,D)$ and 
it is not hard to see that $r_{Y,D}$ is continuous.
This justifies the terminology ``retraction''.
\begin{lemma}\label{L201}
  If $(Y',D')\succeq(Y,D)$ are log-smooth pairs, then 
  $r_{Y,D}\circ r_{Y',D'}=r_{Y,D}$.
  Furthermore, $r_{Y,D}\colon \QM(Y',D')\to\QM(Y,D)$ is integral linear.
\end{lemma}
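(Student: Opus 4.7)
The proof rests on a single linearity identity for how valuations evaluate pulled-back irreducible components of $D$. Since $(Y',D')\succeq(Y,D)$, there is a birational morphism $\phi\colon Y'\to Y$ with $\Supp(\phi^*(D))\subseteq\Supp(D')$, so each irreducible component $D_i$ of $D$ satisfies $\phi^*(D_i)=\sum_j b_{ij}D'_j$ for suitable $b_{ij}\in\ZZ_{\ge 0}$, where the sum runs over the components $D'_j$ of $D'$. Equivalently, $\cI(\phi^*D_i)=\prod_j\cI(D'_j)^{b_{ij}}$ as locally principal ideals. Combining the semiring homomorphism property~\eqref{e210} with the fact that any $v\in\Val_X$ takes the same value on an ideal $\fra$ on $Y$ as on its pullback to $Y'$ (both are computed on generators), one obtains
\begin{equation*}
v(D_i)=\sum_j b_{ij}\,v(D'_j)\qquad\text{for every component $D_i$ of $D$.}
\end{equation*}

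From this, the identity $r_{Y,D}\circ r_{Y',D'}=r_{Y,D}$ is immediate. Setting $v':=r_{Y',D'}(v)$, by construction $v'(D'_j)=v(D'_j)$ for all $j$, and applying the identity to both $v$ and $v'$ yields $v'(D_i)=v(D_i)$ for each $i$. The uniqueness in the definition of $r_{Y,D}$ then forces $r_{Y,D}(v')=r_{Y,D}(v)$.

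For the integral linearity assertion, I would fix a cone $\QM_{\eta'}(Y',D')$, let $D'_1,\dots,D'_{r'}$ be those components of $D'$ containing $\eta'$, and parameterize $v\in\QM_{\eta'}(Y',D')$ by $\alpha'_j=v(D'_j)\ge 0$ (the remaining components contribute zero). The identity above gives $r_{Y,D}(v)(D_i)=\sum_{j=1}^{r'} b_{ij}\alpha'_j$, which is integer linear in $(\alpha'_j)$. The remaining point, which I expect to be the main obstacle, is to check that the image of the entire cone $\QM_{\eta'}(Y',D')$ lies inside a \emph{single} cone $\QM_\eta(Y,D)$ of the target complex, as required by the definition of an integral linear map between simplicial cone complexes. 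Let $\eta$ be the generic point of the connected component of $\bigcap_{D_i\ni\phi(\eta')}D_i$ that contains $\phi(\eta')$; this is well-defined since $D$ has simple normal crossings. For any $D_i$ with $D_i\not\ni\eta$, equivalently $D_i\not\ni\phi(\eta')$, one has $b_{ij}=0$ whenever $D'_j\ni\eta'$ (otherwise $\phi(\eta')\in\phi(D'_j)\subseteq D_i$), so $r_{Y,D}(v)(D_i)=0$ uniformly for all $v\in\QM_{\eta'}(Y',D')$. This places the image inside $\QM_\eta(Y,D)$ and exhibits the induced map as integer linear between the corresponding simplicial cones.
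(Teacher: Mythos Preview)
Your proof is correct and follows essentially the same approach as the paper: both derive the key identity $v(D_i)=\sum_j b_{ij}\,v(D'_j)$ from $\phi^*(D_i)=\sum_j b_{ij}D'_j$, use it to deduce $r_{Y,D}\circ r_{Y',D'}=r_{Y,D}$, and then identify the target cone $\QM_\eta(Y,D)$ via the components of $D$ passing through $\phi(\eta')$, exhibiting the induced map as the integer matrix $(b_{ij})$. Your explicit verification that $b_{ij}=0$ whenever $D_i\not\ni\phi(\eta')$ and $D'_j\ni\eta'$ is in fact slightly more detailed than the paper, which simply asserts that $r_{Y,D}$ maps $\QM_{\eta'}(Y',D')$ into $\QM_\eta(Y,D)$.
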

\begin{proof}
  Let $D_1,\dots,D_M$ and $D'_1,\dots,D'_N$ be the 
  irreducible components of $D$ and $D'$, respectively.
  For the first assertion, it suffices to 
  show that $v$ and $v':=r_{Y',D'}(v)$ take
  the same values on $D_i$, $1\le i\le M$.
  By assumption we have a birational morphism 
  $\varphi\colon Y'\to Y$ over $X$ and 
  $\varphi^*(D_i)=\sum_{j=1}^Nb_{ij}D'_j$ for $1\le i\le M$,
  where $b_{ij}\ge 0$.
  Thus 
  \begin{equation*}
    v(D_i)
    =\sum_jb_{ij}v(D'_j)
    =\sum_jb_{ij}v'(D'_j)
    =v'(D_i).
  \end{equation*}
  For the second assertion, let $\eta'$ be the generic point 
  of a connected component
  of $s$ of the $D'_j$, say $D'_1,\ldots,D'_s$. 
  Suppose that $D_1,\ldots,D_r$ are the
  irreducible components of $D$ that contain $\phi(\eta')$, 
  and let $\eta$ be the generic point
  of the connected component of $D_1\cap\dots\cap D_r$ 
  that contains $\phi(\eta')$.
  In this case $r_{Y,D}$ induces a map $\QM_{\eta'}(Y',D')\to\QM_{\eta}(Y,D)$, 
  that under the identifications $\QM_{\eta'}(Y',D')\simeq\RR_{\geq 0}^s$ and 
  $\QM_{\eta}(Y,D)\simeq\RR_{\geq 0}^r$ 
  provided by Lemma~\ref{lem_QM} is given by the 
  matrix $(b_{ij})$, with $1\leq i\leq r$ and $1\leq j\leq s$.  
\end{proof}
As a consequence of Proposition~\ref{lem1_val}, the retraction map
is order-reversing: 
\begin{lemma}\label{L202}
  Let $(Y,D)$ be a log-smooth pair and $v\in\Val_X$.
  If $w:=r_{Y,D}(v)$ and $\eta=c_Y(v)$, 
  then $w(f)\le v(f)$ for any $f\in\cO_{Y,\eta}$.
  Equality holds if the support of $V(f)$ is locally contained in 
  the support of $D$ at $\eta$.
\end{lemma}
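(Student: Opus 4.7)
The plan is to identify $w := r_{Y,D}(v)$ explicitly as a quasi-monomial valuation written in coordinates at $\eta$, and then apply Proposition~\ref{lem1_val}(iii). Let $D_1,\dots,D_s$ be the irreducible components of $D$ containing $\eta=c_Y(v)$; these are precisely the components with $v(D_i)>0$, since $v(D_i)$ coincides with the $v$-value of a local equation of $D_i$ at $\eta$, which is positive if and only if that equation lies in $\frm_{Y,\eta}$. Set $\alpha_i:=v(D_i)>0$ for $1\le i\le s$. Using the simple normal crossings property of $D$ at $\eta$, I choose algebraic coordinates $y_1,\dots,y_d$ at $\eta$, with $d=\dim\cO_{Y,\eta}$, such that $y_i$ is a local equation of $D_i$ for $1\le i\le s$, and extend $\alpha$ to $\RR_{\ge0}^d$ by setting $\alpha_i=0$ for $i>s$.

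Now consider $\val_\alpha:=\val_{\underline{y},\alpha}$ as defined in Proposition~\ref{lem1_val}. By part~(i), the center of $\val_\alpha$ on $Y$ is the generic point $\eta'$ of the connected component of $\bigcap_{i\le s}V(y_i)$ containing $\eta$, which is a stratum of $D$; by part~(ii), $\val_\alpha$ can equivalently be described at $\eta'$ using the coordinates $y_1,\dots,y_s$ alone, so $\val_\alpha\in\QM(Y,D)$. Moreover $\val_\alpha(D_i)=\alpha_i=v(D_i)$ for $i\le s$, and both sides vanish for the remaining irreducible components of $D$. By the uniqueness in the definition of the retraction, $w=\val_\alpha$. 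The inequality $w(f)\le v(f)$ for $f\in\cO_{Y,\eta}$ is then immediate from Proposition~\ref{lem1_val}(iii), applied with $\zeta=\eta$: the center $\eta$ of $v$ is a specialization of $\eta'$ (this is recorded in the definition of the retraction), and $v(y_i)=v(D_i)=\alpha_i$ for $i\le s$.

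For the equality statement, suppose the support of $V(f)$ is locally contained in $\Supp D$ at $\eta$. After shrinking to a neighborhood of $\eta$ where $\Supp D=V(y_1\cdots y_s)$, every minimal prime of $(f)\subseteq\cO_{Y,\eta}$ lies among $(y_1),\dots,(y_s)$. Since $\cO_{Y,\eta}$ is a regular local ring, hence a UFD, and the $y_i$ are pairwise non-associate primes, I can write $f=u\prod_{i=1}^s y_i^{a_i}$ with $u\in\cO_{Y,\eta}^{\times}$ and $a_i\in\ZZ_{\ge0}$. The inclusion $\cO_{Y,\eta}\subseteq\cO_{Y,\eta'}$ (as $\eta$ specializes $\eta'$) places $\cO_{Y,\eta}$ inside both $\cO_v$ and $\cO_w$, and the same holds for the inverses of its units; therefore $v(u)=w(u)=0$, giving $v(f)=\sum_i a_i v(y_i)=\sum_i a_i\alpha_i=w(f)$.

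There is no real obstacle: the lemma is essentially a packaging of Proposition~\ref{lem1_val}(iii) in the language of retractions. The only subtle point is that I compute both $v(f)$ and $w(f)$ using coordinates at the center $\eta$ of $v$ rather than at the (possibly more generic) center $\eta'$ of $w$, which is precisely what Proposition~\ref{lem1_val}(ii) legitimizes.
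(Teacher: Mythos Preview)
Your proof is correct and follows exactly the approach the paper intends: the paper simply states that Lemma~\ref{L202} is ``a consequence of Proposition~\ref{lem1_val}'' without further details, and your argument is precisely the unpacking of that reference, identifying $w$ as $\val_{\underline{y},\alpha}$ in coordinates at $\eta$ and invoking part~(iii) for the inequality, with the UFD factorization handling the equality case.
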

\begin{corollary}\label{C202}
  For every $v\in\Val_X$, we have
  $r_{Y,D}(v)\le v$ in the sense of Definition~\ref{D201}.
  More precisely, for any ideal $\fra$ on $X$ we have
  $r_{Y,D}(v)(\fra)\le v(\fra)$, with equality if 
  $(Y,D)$ gives a log resolution of $\fra$.
\end{corollary}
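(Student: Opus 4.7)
Let me set $w:=r_{Y,D}(v)$ throughout and denote the birational morphism by $\pi\colon Y\to X$. By the very definition of the retraction, $w$ and $v$ agree on all irreducible components of $D$, and the center $c_Y(v)$ lies in the closure of $c_Y(w)$ (as recorded just after the definition of $r_{Y,D}$). Since $\pi$ is proper, pushing centers down yields $c_X(v)\in\overline{c_X(w)}$.

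The plan for the inequality is to reduce it to Lemma~\ref{L202} via Lemma~\ref{L205}. Given any nonzero $f\in\cO_{X,c_X(v)}$, we have $\pi^*f\in\cO_{Y,c_Y(v)}$ (since $\pi(c_Y(v))=c_X(v)$), and both $v$ and $w$, viewed as valuations on $K(X)=K(Y)$, satisfy $v(f)=v(\pi^*f)$ and $w(f)=w(\pi^*f)$. Applying Lemma~\ref{L202} to $\pi^*f$ at $\eta:=c_Y(v)$ gives $w(\pi^*f)\le v(\pi^*f)$, hence $w(f)\le v(f)$. Combined with $c_X(v)\in\overline{c_X(w)}$, the criterion in Lemma~\ref{L205} yields $w\le v$ in the sense of Definition~\ref{D201}, which in particular says $w(\fra)\le v(\fra)$ for every ideal $\fra$ on $X$.

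For the equality statement, suppose $(Y,D)$ is a log resolution of $\fra$. Then $\fra\cdot\cO_Y=\cO_Y(-E)$ for some effective divisor $E$ whose support is contained in $\Supp(D)$. Localizing at $\eta=c_Y(v)$, the ideal $\fra\cdot\cO_{Y,\eta}$ is therefore principal and generated by an element $f$ whose zero locus at $\eta$ is locally contained in the support of $D$. By the equality clause of Lemma~\ref{L202} we get $w(f)=v(f)$. To conclude, note that $v(\fra)$ equals the value of $v$ on any generator of $\fra\cdot\cO_{Y,\eta}$ (the minimum defining $v(\fra)$ is computed in $\cO_{X,c_X(v)}$, but pulling back to $\cO_{Y,\eta}$ and using that $f$ generates the principal ideal $\fra\cdot\cO_{Y,\eta}$ shows $v(\fra)=v(f)$); similarly $w(\fra)=w(f)$, using that $f$ remains a local generator of $\fra\cdot\cO_{Y,c_Y(w)}$ after shrinking (since $c_Y(v)\in\overline{c_Y(w)}$ and $\fra\cdot\cO_Y$ is locally principal in a neighborhood of $c_Y(w)$). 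Hence $v(\fra)=w(\fra)$.

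The argument is essentially bookkeeping around the two centers $c_Y(v)$ and $c_Y(w)$ and the pullback under $\pi$; the only mild subtlety is checking, in the equality statement, that a local generator $f$ of $\fra\cdot\cO_{Y,\eta}$ simultaneously computes $v(\fra)$ and $w(\fra)$, which follows from the fact that $\fra\cdot\cO_Y$ is locally principal (being the ideal of a divisor) and that $c_Y(w)$ specializes to $c_Y(v)$.
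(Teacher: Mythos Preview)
Your proof is correct and follows essentially the same approach the paper has in mind: the corollary is stated immediately after Lemma~\ref{L202} with no separate proof, so the intended argument is precisely to pull back sections along $\pi$, apply Lemma~\ref{L202}, and invoke Lemma~\ref{L205} for the inequality, while the equality case unwinds the log-resolution hypothesis to feed the equality clause of Lemma~\ref{L202}. Your write-up simply makes these steps explicit, including the minor bookkeeping that a local generator of $\fra\cdot\cO_{Y,c_Y(v)}$ remains one at the generization $c_Y(w)$.
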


\subsection{Structure theorem}
We are now in position to exhibit $\Val_X$ as a projective
limit of simplicial cone complexes.
\begin{theorem}\label{T301}
  The retraction maps induce a homeomorphism
  \begin{equation*}
    r\colon\Val_X\to\varprojlim_{(Y,D)}\QM(Y,D).
  \end{equation*}
\end{theorem}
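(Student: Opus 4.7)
The map $r\colon\Val_X\to\varprojlim\QM(Y,D)$ is well-defined by Lemma~\ref{L201} and continuous, since each component $r_{Y,D}$ is continuous and the projective limit carries the subspace topology from the product $\prod\QM(Y,D)$. The plan is therefore to prove that $r$ is (i) injective, (ii) surjective, and (iii) an open map onto the projective limit. The main work lies in (ii).

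For injectivity, suppose $v,w\in\Val_X$ satisfy $r(v)=r(w)$, and let $\fra$ be any nonzero ideal on $X$. Choose a log-smooth pair $(Y,D)$ that is a log resolution of $\fra$ (such pairs exist by~\cite{Temkin} and Remark~\ref{Nagata}). Applying Corollary~\ref{C202} twice,
\begin{equation*}
  v(\fra)=r_{Y,D}(v)(\fra)=r_{Y,D}(w)(\fra)=w(\fra).
\end{equation*}
Hence $v$ and $w$ agree as semiring homomorphisms $\cI\to\RR_{\ge0}$, so $v=w$ in $\Val_X$. Continuity of $r^{-1}$ from the image onto $\Val_X$ is entirely analogous: the topology on $\Val_X$ is generated by the evaluation maps $v\mapsto v(\fra)$, and by the same computation $v(\fra)=v_{Y,D}(\fra)$ for a log resolution $(Y,D)$ of $\fra$, so each evaluation factors through a continuous function on the projective limit.

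The heart of the proof is surjectivity. Given a compatible family $\bar v=(v_{Y,D})_{(Y,D)}$, define $v\colon\cI\to\RR_{\ge0}$ by $v(\fra):=v_{Y,D}(\fra)$ for any log resolution $(Y,D)$ of $\fra$. To check well-definedness, given two such log resolutions $(Y_1,D_1)$ and $(Y_2,D_2)$, dominate both by $(Y_3,D_3)$, which is also a log resolution of $\fra$; then Corollary~\ref{C202} applied to both $(Y_i,D_i)$ gives $v_{Y_i,D_i}(\fra)=r_{Y_i,D_i}(v_{Y_3,D_3})(\fra)=v_{Y_3,D_3}(\fra)$. To see that $v$ is a semiring homomorphism, given finitely many ideals choose a simultaneous log resolution and invoke the fact that each $v_{Y,D}$ is a semiring homomorphism. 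The delicate point is that $v$ must admit a \emph{center} on $X$. The argument I would use: for each pair $(Y,D)$ set $\xi_{Y,D}:=\pi_{Y}(c_Y(v_{Y,D}))\in X$; using the description of the center of $r_{Y,D}(v_{Y',D'})$ as the generic point of the stratum of $D$ containing $\phi(c_{Y'}(v_{Y',D'}))$, one checks that whenever $(Y',D')\succeq(Y,D)$ one has $\xi_{Y',D'}\in\overline{\xi_{Y,D}}$. Hence the closures $\overline{\xi_{Y,D}}$ form a filtered decreasing family of irreducible closed subsets of the Noetherian scheme $X$, so the family stabilizes at some irreducible closed set $Z$, whose generic point $\xi$ equals $\xi_{Y,D}$ for every sufficiently refined $(Y,D)$. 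One then checks: for any ideal $\fra$, choosing $(Y,D)$ to be simultaneously a log resolution of $\fra$ and refined enough that $\xi_{Y,D}=\xi$, we have $v(\fra)>0$ iff $c_Y(v_{Y,D})\in V(\fra\cdot\cO_Y)$ iff $\xi\in V(\fra)$, confirming that $\xi$ is the center of $v$. Once $v\in\Val_X$ is produced, the identity $r_{Y,D}(v)=v_{Y,D}$ follows: both are quasi-monomial in $\QM(Y,D)$, and for any ideal $\fra$ log-resolved by $(Y,D)$, Corollary~\ref{C202} and the definition of $v$ give $r_{Y,D}(v)(\fra)=v(\fra)=v_{Y,D}(\fra)$. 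This agreement on sufficiently many ideals $\fra$ determines an element of $\QM(Y,D)$ via the simplicial cone parametrization of Lemma~\ref{lem_QM} (one only needs enough such $\fra$ to pin down the values on the components of $D$, obtained by pushing forward high multiples of effective combinations of components). The main obstacle throughout is precisely the construction of the center of the limit valuation and the corresponding verification that $r$ is onto the projective limit, rather than some proper subset thereof.
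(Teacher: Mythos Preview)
Your argument is correct and follows the same strategy as the paper's: build the inverse of $r$ by sending a compatible family $(v_{Y,D})$ to the valuation $v(\fra):=v_{Y,D}(\fra)$ for any log resolution $(Y,D)$ of $\fra$ (equivalently $\sup_{(Y,D)}v_{Y,D}(\fra)$, as the paper writes it), then deduce continuity of $r^{-1}$ from Corollary~\ref{C202} and Lemma~\ref{L204}. Your construction of the center via the stabilizing chain of closures $\overline{\xi_{Y,D}}$ is more explicit than the paper's one-line claim, and your final verification that $r_{Y,D}(v)=v_{Y,D}$ can be streamlined by noting that the same construction carried out over $Y$ instead of $X$ (the pairs $\succeq(Y,D)$ being cofinal) yields the same $v$, so that $v(D_i)=v_{Y,D}(D_i)$ directly without having to manufacture ideals on $X$ whose pullbacks isolate individual components of $D$.
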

\begin{proof}
  The map $r$ is continuous since each $r_{Y,D}$ is.
  Let us construct its inverse. An element of the 
  projective limit is a compatible family of valuations
  $(v_{Y,D})$. To such a family we associate the function 
  $v$ that on an ideal $\fra$ on $X$ takes the value 
  $v(\fra):=\sup_{(Y,D)}v_{Y,D}(\fra)$. By Corollary~\ref{C202}
  the supremum is attained whenever $(Y,D)$
  defines a log resolution of $\fra$.
  It is easy to check that $v$ defines a valuation 
  in $\Val_X$ whose center on $X$ is the unique minimal element
  among the centers of all the $v_{Y,D}$.
  We see that $r$ is a continuous bijection. The 
  continuity of $r^{-1}$ follows from 
  Lemma~\ref{L204} and Corollary~\ref{C202}.
\end{proof}
\begin{corollary}\label{C401}
  The set of quasimonomial valuations is dense in $\Val_X$.
  Moreover, if $v\in\Val_X$, then given any 
  neighborhood  $U$ of $v$ in $\Val_X$ there exists
  a log-smooth pair $(Y,D)$ adapted to $v$ 
  such that $r_{Y,D}(v)\in U$ and
  such that $\pi(D)\subseteq \overline{c_X(v)}$,
  where $\pi\colon Y\to X$ is the induced morphism.
\end{corollary}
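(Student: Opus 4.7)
The plan is to establish the second (stronger) assertion; density of quasi-monomial valuations then follows immediately since the retract $r_{Y,D}(v)$ lies in $\QM(Y,D)$ by construction. The strategy combines the structure theorem (Theorem~\ref{T301}) with a trick of ``localizing'' the test ideals by adding high powers of the ideal defining $\overline{c_X(v)}$, so that the log-resolution we construct is forced to live over $\overline{c_X(v)}$.

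First I would reduce to a convenient form of the neighborhood $U$. The trivial valuation is itself quasi-monomial, so one can take $(Y,D)=(X,\emptyset)$ in that case. Assume henceforth $v$ is nontrivial, so $\xi:=c_X(v)$ is not the generic point of $X$; let $J$ denote the reduced ideal sheaf of $\overline{\xi}$. As observed in~\S\ref{S301}, $J\cdot\cO_{X,\xi}$ equals the maximal ideal of $\cO_{X,\xi}$, and hence $v(J)>0$. By Lemma~\ref{L204}, after shrinking $U$ I may assume
\begin{equation*}
  U=\{w\in\Val_X\mid|w(\fra_i)-v(\fra_i)|<\epsilon,\ i=1,\dots,N\}
\end{equation*}
for finitely many nonzero ideals $\fra_i$ on $X$ and some $\epsilon>0$. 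For each $i$, since $v(\fra_i)$ is finite I choose an integer $N_i$ with $N_i\cdot v(J)>v(\fra_i)$ and set $\fra'_i:=\fra_i+J^{N_i}$. Then $v(\fra'_i)=\min(v(\fra_i),N_i v(J))=v(\fra_i)$, while crucially $V(\fra'_i)\subseteq V(J)=\overline{\xi}$.

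Next I would apply Temkin's log resolution theorem to $\fra:=\prod_i\fra'_i$. Since $V(\fra)\subseteq\overline{\xi}$, the resulting $\pi\colon Y\to X$ can be taken to be an isomorphism over $X\setminus\overline{\xi}$; combining this with Remark~\ref{Nagata} to ensure simple normal crossings, I obtain a log-smooth pair $(Y,D)$ over $X$ whose divisor $D$ contains both $\Exc(\pi)$ and the zero divisor of $\fra\cdot\cO_Y$. In particular, $(Y,D)$ gives a log resolution of each $\fra'_i$, and $\Supp(D)\subseteq\pi^{-1}(\overline{\xi})$ yields $\pi(D)\subseteq\overline{\xi}$, as required.

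Finally, set $w:=r_{Y,D}(v)\in\QM(Y,D)$. By Corollary~\ref{C202} applied to $\fra'_i$, $w(\fra'_i)=v(\fra'_i)=v(\fra_i)$. The inclusion $\fra_i\subseteq\fra'_i$ then gives $w(\fra_i)\ge w(\fra'_i)=v(\fra_i)$, whereas Corollary~\ref{C202} also supplies the reverse inequality $w(\fra_i)\le v(\fra_i)$. Hence $w(\fra_i)=v(\fra_i)$ for every $i$, so $w\in U$, and $w$ is quasi-monomial by construction. The main obstacle is reconciling the topology on $\Val_X$ (defined via arbitrary ideals) with the geometric requirement $\pi(D)\subseteq\overline{\xi}$: the device of replacing $\fra_i$ by $\fra_i+J^{N_i}$ precisely resolves this tension, since the new ideals are supported in $\overline{\xi}$ without changing the value at $v$, while the inclusion $\fra_i\subseteq\fra'_i$ propagates the log-resolution equality back to the original ideals.
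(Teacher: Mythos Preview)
Your proposal is correct and follows essentially the same approach as the paper: the key device of replacing each test ideal $\fra_i$ by $\fra_i+J^{N_i}$ (with $J$ the ideal of $\overline{\xi}$ and $N_i$ large) so that the log resolution is supported over $\overline{\xi}$, followed by the squeeze via Corollary~\ref{C202}, is exactly the argument the paper gives. The only cosmetic difference is that the paper phrases it for a single ideal at a time, whereas you (more explicitly) handle the finitely many ideals defining a basic open set simultaneously by resolving their product.
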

\begin{proof}
  The result is an immediate consequence of 
  Theorem~\ref{T301}, except for the requirement
  that $\pi(D)\subseteq c_X(v)$.
  To have this last property, it suffices to show that for
  any ideal $\fra$ on $X$ there exists a log-smooth pair 
  $(Y,D)$ above $X$ such that $\pi(D)\subseteq c_X(v)$
  and $r_{Y,D}(v)(\fra)=v(\fra)$. Let $\frm$ be the ideal
  defining $\overline{c_X(v)}$ with the reduced structure, 
  and pick $n>v(\fra)/v(\frm)$.
  Then $v(\fra+\frm^n)=v(\fra)$. 
  Now $V(\fra+\frm^n)\subseteq\overline{c_X(v)}$, so
  there exists a log resolution $(Y,D)$ of $\fra+\frm^n$
  such that $\pi(D)\subseteq\overline{c_X(v)}$.
  Then 
  \begin{equation*}
    v(\fra)=v(\fra+\frm^n)=r_{Y,D}(v)(\fra+\frm^n)\leq r_{Y,D}(v)(\fra),
  \end{equation*}
  hence $v(\fra)=r_{Y,D}(\fra)$ by Corollary~\ref{C202}.
\end{proof}
\begin{remark}
  The set of divisorial valuations is also dense in $\Val_X$.
  Indeed, divisorial valuations are dense in every $\QM(Y,D)$ 
  as a consequence of Remark~\ref{R401}.
\end{remark}

\section{Log discrepancy}\label{S306}
Our next goal is to define the log discrepancy  
of quasi-monomial, and more general valuations. 

\subsection{Log discrepancy of quasi-monomial valuations}
\begin{proposition}\label{log_discrep}
  One can associate to every quasi-monomial valuation $v\in\Val_X$ 
  a nonnegative real number $A_X(v)$, its \emph{log discrepancy}, such that
  \begin{itemize}
  \item[(i)] $A_X$ coincides with our old definition for divisorial valuations;
  \item[(ii)] for any log-smooth pair $(Y,D)$ over $X$, $A_X$ is integral 
    linear on $\QM(Y,D)$;
  \item[(iii)] for any proper birational morphism $X'\to X$, 
    with $X'$ regular,
    and any quasi-monomial valuation $v\in\Val_X$, we have
    $A_X(v)=A_{X'}(v)+v(K_{X'/X})$. 
  \end{itemize}
\end{proposition}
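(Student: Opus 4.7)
The plan is to define, for each log-smooth pair $(Y,D)$ over $X$ with irreducible components $D_1,\dots,D_N$ of $D$, a function $A^{(Y,D)}_X\colon\QM(Y,D)\to\RR_{\ge 0}$ by
$$A^{(Y,D)}_X(v):=\sum_i v(D_i)\,A_X(\ord_{D_i}).$$
Only components containing $c_Y(v)$ actually contribute, and the formula is manifestly integral linear on each simplicial cone $\QM_\eta(Y,D)$ under the coordinates of Lemma~\ref{lem_QM}; this will give~(ii) once the definition is shown to be independent of the chosen pair. The well-definedness reduces, via common refinements (Remark~\ref{Nagata}), to checking that $A^{(Y,D)}_X(v)=A^{(Y',D')}_X(v)$ whenever $(Y',D')\succeq(Y,D)$ and $v\in\QM(Y,D)\cap\QM(Y',D')$.

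The technical heart is this compatibility, and it is where Lemmas~\ref{lem2_val} and~\ref{lem3_val} do the work. I would first use Lemma~\ref{lem2_val}(ii) to refine to a common good pair adapted to $v$, so that Lemma~\ref{lem2_val}(i) supplies components $D_1,\dots,D_r$ of $D$ through $\eta=c_Y(v)$ and $D'_1,\dots,D'_r$ of $D'$ through $\eta'=c_{Y'}(v)$, with $\phi^*(D_i)=\sum_j b_{ij}D'_j+E'_i$, $E'_i$ not containing $\eta'$, and $\det(b_{ij})\ne 0$. Combining $K_{Y'/X}=K_{Y'/Y}+\phi^*K_{Y/X}$ with the expansion $K_{Y/X}=\sum_i(A_X(\ord_{D_i})-1)D_i$ (valid because $\pi\colon Y\to X$ is an isomorphism outside $D$) and with $A_Y(\ord_{D'_j})=\sum_i b_{ij}$ supplied by Lemma~\ref{lem3_val}(ii), one obtains
$$A_X(\ord_{D'_j})=\sum_i b_{ij}\,A_X(\ord_{D_i})\quad\text{for }j=1,\dots,r.$$
Since $v(E'_i)=0$, one also has $v(D_i)=\sum_j b_{ij}v(D'_j)$, and substituting into the defining formula for $A^{(Y',D')}_X(v)$ yields $A^{(Y',D')}_X(v)=A^{(Y,D)}_X(v)$.

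Property~(i) is then immediate: for a divisorial valuation $v=\ord_E$, Lemma~\ref{lem2_val}(ii) produces a log-smooth pair in which $E$ is a component of $D$, and the formula collapses to the old definition $A_X(\ord_E)$. For~(iii), given a proper birational $\mu\colon X'\to X$ with $X'$ regular, I would enlarge $Y$ (again using Remark~\ref{Nagata}) so that $(Y,D)$ is a log-smooth pair over $X'$ as well, with the pullback of $K_{X'/X}$ supported on $D$. The divisorial chain rule $A_X(\ord_{D_i})=A_{X'}(\ord_{D_i})+\ord_{D_i}(K_{X'/X})$ (an immediate consequence of $K_{Y/X}=K_{Y/X'}+\pi_{X'}^*K_{X'/X}$) then gives, upon multiplying by $v(D_i)$ and summing,
$$A_X(v)-A_{X'}(v)=\sum_i v(D_i)\,\ord_{D_i}(K_{X'/X})=v(K_{X'/X}).$$
The main obstacle is the compatibility step: without the good-pair structure of Lemma~\ref{lem2_val}, Lemma~\ref{lem3_val} would only yield an inequality, and the log discrepancies computed on the two models would fail to match exactly.
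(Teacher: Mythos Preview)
Your proposal is correct and follows essentially the same route as the paper: define $A_X$ by the linear formula on each $\QM(Y,D)$, prove compatibility by passing to good pairs via Lemma~\ref{lem2_val} and invoking the equality case of Lemma~\ref{lem3_val}(ii), and deduce~(iii) from the divisorial chain rule together with integral linearity. The only point where your write-up is slightly looser than the paper's is the phrase ``refine to a common good pair, so that Lemma~\ref{lem2_val}(i) supplies\dots'': the paper makes the two-step logic explicit---first Lemma~\ref{lem2_val}(ii) compares an arbitrary pair with a specific good pair dominating it (the structure~\eqref{eq_prop_val0} with $\det(b_{ij})\neq0$ comes from~(ii) here, not~(i)), and only \emph{then} Lemma~\ref{lem2_val}(i) compares two good pairs through a common refinement---whereas your sentence blurs these; but the computation you give is exactly the right one, and your identification of Lemma~\ref{lem3_val}(ii) as the place where ``inequality becomes equality'' is precisely the point.
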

Conditions~(i) and (ii) together can be rephrased by saying that if
$v\in\QM(Y,D)$, then 
\begin{equation}\label{eq_log_discrep}
  A_X(v)=\sum_{i=1}^Nv(D_i)\cdot A_X(\ord_{D_i})=\sum_{i=1}^Nv(D_i)\cdot
  (1+\ord_{D_i}(K_{Y/X})),
\end{equation}
where $D_1,\ldots,D_N$ are the irreducible components of $D$.
Whenever $X$ is understood, we write $A(v)$ instead of $A_X(v)$.
\begin{proof}
  It is clear that the formula (\ref{eq_log_discrep}) 
  uniquely determines $A_X(v)$ for $v\in\QM(Y,D)$. 
  Let us temporarily denote the expression in (\ref{eq_log_discrep}) by
  $A_{X,Y,D}(v)$. We need to show that this is independent of the choice
  of pair $(Y,D)$. 
  
    Let us first reduce to the case when $(Y,D)$
  is a good log-smooth pair adapted to $v$. 
  To do so, we use Lemma~\ref{lem2_val}~(ii) to find 
  a good log-smooth pair $(Y',D')\succeq(Y,D)$ adapted to $v$.
  Furthermore, we may assume that $v\in\QM_{\eta'}(Y',D')$, 
  where $\eta'$ lies over $\eta=c_X(v)$, and that the components of $D$ (resp. $D'$)
  through $\eta$ (resp. $\eta'$) are $D_1,\ldots,D_r$ (resp. $D'_1,\ldots,D'_r$).
  We can also assume that we have the formulas (\ref{eq_prop_val0}), where 
  the $E'_i$ do not contain $\eta'$, and $\det(b_{i,j})\neq 0$. 
  
  We claim that $A_{X,Y,D}(v)=A_{X,Y',D'}(v)$.
  Note first that Lemma~\ref{lem3_val} (ii) gives $1+\ord_{D'_j}(K_{Y'/Y})=\sum_{i=1}^rb_{ij}$.
  On the other hand, we have 
  $v(E'_i)=0$ for every $i$, hence (\ref{eq_prop_val0}) implies
  $v(D_i)=\sum_{j=1}^rb_{ij}v(D'_j)$ 
  for $i=1,\dots,r$. We also have
  $\ord_{D'_j}(K_{Y/X})=\sum_{i=1}^rb_{ij}\ord_{D_i}(K_{Y/X})$
  for $j=1,\dots,r$.
  Putting these together, we get
  \begin{multline*}
    A_{X,Y,D}(v)=\sum_iv(D_i)(1+\ord_{D_i}(K_{Y/X}))
    =\sum_{i,j}b_{ij}v(D'_j)(1+\ord_{D_i}(K_{Y/X})\\
    =\sum_jv(D'_j)(1+\ord_{D'_j}(K_{Y'/Y})+\ord_{D'_j}(K_{Y/X}))\\
    =\sum_jv(D'_j)(1+\ord_{D'_j}(K_{Y'/X}))
    =A_{X,Y',D'}(v).
  \end{multline*}

   After this reduction, it suffices to show that 
   $A_{X,Y,D}(v)$ is independent of $(Y,D)$
   as long as $(Y,D)$ is good for $v$. 
   Since any two such pairs can be dominated by a third, 
   it suffices to prove that 
   $A_{X,Y,D}(v)=A_{X,Y',D'}(v)$ whenever 
   $(Y,D)$ is good for $v$ and $(Y',D')\succeq(Y,D)$.
   By Lemma~\ref{lem2_val}~(i), $(Y',D')$
   is automatically good for $v$. We can now
   proceed exactly as 
   above, using Lemma~\ref{lem2_val}~(i) and
   Lemma~\ref{lem3_val}~(ii), to show that 
   $A_{X,Y,D}(v)=A_{X,Y',D'}(v)$.
   
   It remains to prove assertion~(iii). 
   Pick any log-smooth pair $(Y,D)$ over $X'$.
   The function 
   $v\to A_{X'}(v)+v(K_{X'/X})-A_X(v)$ is
   integral linear on $\QM(Y,D)$ and vanishes
   when $v=\ord_{D_i}$ for any irreducible component
   $D_i$ of $D$. Thus this function vanishes identically,
   which proves~(iii) since $(Y,D)$ was arbitrary.
\end{proof}

\begin{remark}\label{open_immersion}
It is clear from definition that if $v\in\Val_X$ is a quasi-monomial valuation, and 
if $U\subseteq X$ is an open subset such that $c_X(v)\in U$, then
$v$ is quasi-monomial also as an element in $\Val_U$, and 
$A_U(v)=A_X(v)$.
\end{remark}

\begin{lemma}\label{lem_compatibility}
  Let $(Y',D')\succeq(Y,D)$ be log-smooth pairs over $X$
  with associated retractions $r=r_{Y,D}$ and $r'=r_{Y',D'}$,
  respectively. Then $A(r(v))\le A(r'(v))$
  for all $v\in\Val(X)$, with equality 
  if and only if $r'(v)\in\QM(Y,D)$.
\end{lemma}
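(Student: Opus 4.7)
My plan is to first reduce to comparing two quasi-monomial valuations, then carry out an explicit computation of $A(v)-A(r_{Y,D}(v))$, and finally invoke Lemma~\ref{lem3_val}.

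The first step will be to use Lemma~\ref{L201}, which gives $r=r_{Y,D}\circ r'$, to reduce to the case $v\in\QM(Y',D')$. Indeed, both $A(r(v))$ and $A(r'(v))$ depend only on $r'(v)$, so replacing $v$ by $r'(v)$ yields the equivalent claim: for $v\in\QM(Y',D')$ one has $A(r_{Y,D}(v))\le A(v)$, with equality iff $v\in\QM(Y,D)$.

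Next, I will compute the difference explicitly. Let $\phi\colon Y'\to Y$ denote the morphism coming from $(Y',D')\succeq(Y,D)$, write $\phi^*D_i=\sum_j b_{ij}D'_j$, and set $\alpha_j:=v(D'_j)$ and $w:=r_{Y,D}(v)$. By definition of the retraction, $w(D_i)=v(D_i)=\sum_j b_{ij}\alpha_j$. Applying the formula~\eqref{eq_log_discrep} to both $v$ and $w$, and using Proposition~\ref{log_discrep}(iii) together with $K_{Y'/X}=K_{Y'/Y}+\phi^*K_{Y/X}$ to expand $A(\ord_{D'_j})=A_Y(\ord_{D'_j})+\sum_i b_{ij}\ord_{D_i}(K_{Y/X})$, direct cancellation should yield
\[
A(v)-A(w)=\sum_{j\colon\alpha_j>0}\alpha_j\Bigl(A_Y(\ord_{D'_j})-\sum_i b_{ij}\Bigr).
\]
Lemma~\ref{lem3_val}(i), applied at $\eta':=c_{Y'}(v)$ with a regular system of parameters at $\eta:=\phi(\eta')$ whose first entries define the components of $D$ passing through $\eta$, makes each bracket nonnegative, proving the desired inequality.

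For the equality case, the identity above forces $A_Y(\ord_{D'_j})=\sum_i b_{ij}$ whenever $\alpha_j>0$. To conclude $v\in\QM(Y,D)$ from this, I would pass to a further log-smooth pair $(Y'',D'')\succeq(Y',D')$ that is good and adapted to $v$ and that also dominates $(Y,D)$, using Lemma~\ref{lem2_val}(ii); Lemma~\ref{lem2_val}(i) then guarantees that the pullback matrix from $(Y,D)$-coordinates to $(Y'',D'')$-coordinates at $c_{Y''}(v)$ is square with nonzero determinant, and the sharp criterion in Lemma~\ref{lem3_val}(ii) turns the numerical equality into the structural statement that $v$ is monomial with respect to components of $D$, so that $v\in\QM(Y,D)$ and hence $v=w$. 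The converse is trivial, since $r_{Y,D}$ is the identity on $\QM(Y,D)$. The main obstacle is precisely this equality analysis: the inequality itself is a clean application of Lemma~\ref{lem3_val}(i), whereas extracting the structural statement $v\in\QM(Y,D)$ from the numerical equality requires careful bookkeeping via the good-pair formalism of Lemma~\ref{lem2_val}.
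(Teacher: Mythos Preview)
Your reduction to $v\in\QM(Y',D')$ via Lemma~\ref{L201} and your explicit formula
\[
A(v)-A(w)=\sum_{j}\alpha_j\Bigl(A_Y(\ord_{D'_j})-\sum_i b_{ij}\Bigr)
\]
are correct, and together with Lemma~\ref{lem3_val}(i) (applied at the generic point of each $D'_j$, where the DVR structure lets you write every $\phi^*(y_i)$ as a unit times a power of $y'_1$) this proves the inequality. This is exactly the paper's argument, just unwound: the paper checks the inequality for $v=\ord_{D'_j}$ via Lemma~\ref{lem3_val}(i) and then invokes linearity of $A(v)-A(r_{Y,D}(v))$ on $\QM(Y',D')$.

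The equality analysis, however, has a genuine gap. You invoke Lemma~\ref{lem2_val}(i) to get a square nonsingular pullback matrix from $(Y,D)$ to a good pair $(Y'',D'')$ adapted to $v$, but the hypothesis of Lemma~\ref{lem2_val}(i) is that the \emph{lower} pair is good for $v$; here only the upper pair $(Y'',D'')$ is good, so the lemma does not apply in the direction you need. Without that square nonsingular matrix you cannot feed Lemma~\ref{lem3_val}(ii) (which also needs $r=s$ and the $u_i$ to be units in $k(\eta')$), and the bare numerical equalities $A_Y(\ord_{D'_j})=\sum_i b_{ij}$ do not by themselves force $v\in\QM(Y,D)$. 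For instance, working at the generic point of a single $D'_j$ gives $s=1$, so Lemma~\ref{lem3_val}(ii) would require $r=1$ as well, i.e.\ that $\phi(D'_j)$ is already a divisor --- information you do not yet have.

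The paper treats the equality case by a different, more geometric route: it reduces to $X=Y$ using Proposition~\ref{log_discrep}(iii), then argues by cases according to whether $c_Y(v)=c_Y(w)$. If the centers differ, one enlarges $D$ by a divisor $E$ through $c_Y(v)$ to force $A(r_{Y,D+E}(v))=A(w)+v(E)>A(w)$. If the centers coincide, a toroidal blowup good for $w$ (Lemma~\ref{lem2_val}(ii)) separates the centers upstairs, reducing to the previous case. This avoids the delicate bookkeeping you are attempting and sidesteps the need for Lemma~\ref{lem3_val}(ii) entirely.
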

\begin{proof}
  By Lemma~\ref{L201} we have $r(v)=r(r'(v))$, hence after replacing $v$ by $r'(v)$,
  we may assume that $v=r'(v)\in\QM(Y',D')$.
  Write $w:=r(v)$. 
  
  We first prove that $A(w)\le A(v)$. 
  This inequality follows from Lemma~\ref{lem3_val}~(i)
  when $v=\ord_{D'_j}$ for some irreducible
  component $D'_j$ of $D'$. It must then hold on
  all of $\QM(Y',D')$. Indeed, by 
  Lemma~\ref{L201} and by Proposition~\ref{log_discrep},
  the function $A(v)-A(r_{Y,D}(v))$
  is (integral) linear on $\QM(Y',D')$. 

  Now suppose that $v\not\in\QM(Y,D)$, that is, $v\ne w$.
  We will show that $A(v)>A(w)$. By condition (iii) in Proposition~\ref{log_discrep},
  it is enough to show that $A_Y(v)>A_Y(w)$, and therefore we may and will assume that $X=Y$.
  Furthermore, by Remark~\ref{open_immersion} we may replace $Y$ by an open neighborhood
  of $c_Y(v)$. If $c_Y(v)\neq c_Y(w)$, then the inequality $A(v)>A(w)$ follows from the first part: after replacing
  $Y$ by an open neighborhood of $c_Y(v)$, we can find a prime divisor $E$ containing $c_Y(v)$
  such that $(Y,D+E)$ is a log-smooth pair. Hence $A(v)\geq A(r_{Y,D+E}(v))>
  A(w)$.
  Therefore we may assume that $c_Y(v)=c_Y(w)$.
  Let $(Y',D')\succeq (Y,D)$ be induced by a 
  suitable toroidal blowup as in Lemma~\ref{lem2_val} (ii), centered at $c_Y(v)$,
  such that $(Y',D')$ is a good
 pair adapted to $w$. Note that $r_{Y',D'}(v)=w$. 
 Since $(X,D)$ is a good pair adapted to $w$, and $v\ne w$, we have
 $c_{Y'}(v)\neq c_{Y'}(w)$. As we have seen, this implies $A_{Y'}(v)>A_{Y'}(w)$,
 hence $A_Y(v)>A_Y(w)$.
\end{proof}

\subsection{Log discrepancy of general valuations}\label{S401}
We now extend the log discrepancy to arbitrary valuations in $\Val_X$.  
If $v$ is any valuation in $\Val_X$, then we set
\begin{equation}\label{e205}
  A(v)=A_X(v):=\sup_{(Y,D)}A(r_{Y,D}(v))\in\RR_{\ge0}\cup\{\infty\},
\end{equation}
where the supremum is over all log-smooth pairs $(Y,D)$ over $X$.
As a consequence of Lemma~\ref{L201} and 
Lemma~\ref{lem_compatibility}
we may, in the definition of $A(v)$,
take the supremum over sufficiently high pairs $(Y,D)$.
This in particular implies that for a
quasi-monomial valuation $v$, the new definition of $A(v)$ 
is equivalent to the old one.
Note that $A(v)>0$ when $v$ is nontrivial.
We also obtain
\begin{corollary}\label{cor_compatibility}
  For any log-smooth pair $(Y,D)$ over $X$
  and any valuation $v\in\Val_X$ we have 
  $A(r(v))\le A(v)$ with equality if and only 
  if $v\in\QM(Y,D)$.
\end{corollary}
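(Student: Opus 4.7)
The plan is to deduce the corollary from Lemma~\ref{lem_compatibility} together with the projective-limit description of $\Val_X$ in Theorem~\ref{T301}. The inequality $A(r(v))\le A(v)$ is essentially tautological: taking $(Y',D')=(Y,D)$ in the definition~\eqref{e205} shows that $A(r_{Y,D}(v))$ is one of the quantities in the supremum. So the content lies in the characterization of when equality holds.

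For the easy direction, suppose $v\in\QM(Y,D)$. Since $r_{Y,D}$ is the identity on $\QM(Y,D)$, we have $r(v)=v$. As $v$ is quasi-monomial, Proposition~\ref{log_discrep} applies and, as noted in the paragraph following~\eqref{e205}, the ``new'' definition of $A(v)$ via~\eqref{e205} agrees with the ``old'' one of Proposition~\ref{log_discrep}; hence $A(r(v))=A(v)$.

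For the converse I would argue by contrapositive. Assume $v\notin\QM(Y,D)$. First I claim there exists a log-smooth pair $(Y',D')\succeq(Y,D)$ such that $r_{Y',D'}(v)\notin\QM(Y,D)$. If no such $(Y',D')$ existed, then for every $(Y',D')\succeq(Y,D)$ we would have $r_{Y',D'}(v)\in\QM(Y,D)$; applying $r_{Y,D}$ and using Lemma~\ref{L201} (which gives $r_{Y,D}\circ r_{Y',D'}=r_{Y,D}$) together with the fact that $r_{Y,D}$ is the identity on $\QM(Y,D)$, we would obtain $r_{Y',D'}(v)=r_{Y,D}(v)=r(v)$ for all $(Y',D')\succeq(Y,D)$. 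By Theorem~\ref{T301}, the compatible family $\{r_{Y',D'}(v)\}$ determines $v$, so this common value would force $v=r(v)\in\QM(Y,D)$, a contradiction. Given such a $(Y',D')$, Lemma~\ref{lem_compatibility} applied to the pairs $(Y,D)\preceq(Y',D')$ and to the valuation $v$ (whose retraction to $(Y',D')$ is not in $\QM(Y,D)$) yields
\begin{equation*}
  A(r_{Y,D}(v)) < A(r_{Y',D'}(v)) \le A(v),
\end{equation*}
where the last inequality is by definition of $A(v)$ as a supremum. This gives the strict inequality and completes the proof.

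The only step that is not entirely formal is the claim that some retraction $r_{Y',D'}(v)$ above $(Y,D)$ must itself fail to lie in $\QM(Y,D)$; this is really the place where Theorem~\ref{T301} is used in an essential way, since it is exactly the statement that the compatible system of retractions recovers $v$. Everything else reduces to bookkeeping with $r_{Y,D}\circ r_{Y',D'}=r_{Y,D}$ and the strict monotonicity statement already contained in Lemma~\ref{lem_compatibility}.
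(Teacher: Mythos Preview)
Your proof is correct and follows the same route the paper has in mind: the inequality is immediate from the definition of $A(v)$ as a supremum, the easy direction of the equality case follows since $r_{Y,D}$ is the identity on $\QM(Y,D)$, and the hard direction uses Lemma~\ref{lem_compatibility} together with the fact that the retractions over the cofinal system of pairs dominating $(Y,D)$ recover $v$. The only remark is that your appeal to Theorem~\ref{T301} could equally well be replaced by the more elementary Corollary~\ref{C202}: if $v\ne r(v)$, pick an ideal $\fra$ with $v(\fra)>r(v)(\fra)$ and let $(Y',D')\succeq(Y,D)$ be a log resolution of $\fra$; then $r_{Y',D'}(v)(\fra)=v(\fra)\ne r(v)(\fra)$, so $r_{Y',D'}(v)\ne r(v)$, hence $r_{Y',D'}(v)\notin\QM(Y,D)$.
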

\begin{remark}\label{open_immersion2}
  Suppose that $v\in\Val_X$ is an arbitrary valuation, 
  and $U$ is an open subset of $X$
  containing $c_X(v)$. It follows from Remarks~\ref{Nagata} 
  and~\ref{open_immersion} that $A_X(v)=A_U(v)$.
\end{remark}
\begin{remark}\label{R301}
  Let $\mu\colon X'\to X$ be a proper birational morphism, with $X'$
  regular. 
  It follows from Proposition~\ref{log_discrep}~(iii) that
  $A_X(v)=A_{X'}(v)+v(K_{X'/X})$
  for any valuation $v\in\Val_X=\Val_{X'}$.
\end{remark}
\begin{lemma}\label{lem_semicontinuity}
  The log discrepancy function $A\colon\Val_X\to\RR_{\ge0}\cup\{\infty\}$ 
  is lower semicontinuous. 
\end{lemma}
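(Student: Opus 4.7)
The plan is to realize $A$ as a pointwise supremum of continuous functions on $\Val_X$, which immediately gives lower semicontinuity. By definition~\eqref{e205}, we have
\begin{equation*}
  A(v) = \sup_{(Y,D)} A(r_{Y,D}(v)),
\end{equation*}
where $(Y,D)$ ranges over log-smooth pairs over $X$. So the task reduces to showing that for each fixed $(Y,D)$, the composed map $v \mapsto A(r_{Y,D}(v))$ is continuous on $\Val_X$.

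For continuity of this composition, I would split it into two pieces. First, the retraction $r_{Y,D} \colon \Val_X \to \QM(Y,D)$ is continuous by the remark in \S\ref{S402}; indeed, using the embedding of $\QM(Y,D)$ into a product of copies of $\RR_{\ge 0}$ provided by Lemma~\ref{lem_QM}, continuity of $r_{Y,D}$ amounts to continuity of each evaluation map $v \mapsto v(D_i)$, which holds by definition of the $\tau$-topology on $\Val_X$. Second, by Proposition~\ref{log_discrep}(ii), $A$ is integral linear on each simplicial cone $\QM_\eta(Y,D)$, and in the coordinates of Lemma~\ref{lem_QM} it is just $v \mapsto \sum_i v(D_i)\,A(\ord_{D_i})$, which is continuous. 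Since $\QM(Y,D)$ is a finite union of closed cones each of which carries $A$ continuously, $A$ is continuous on $\QM(Y,D)$. Composing, $A \circ r_{Y,D}$ is continuous on $\Val_X$.

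Finally, a pointwise supremum of an arbitrary family of continuous $\RR \cup \{+\infty\}$-valued functions is lower semicontinuous: for each $c \in \RR$, the set $\{v : A(v) > c\}$ is the union over $(Y,D)$ of the open sets $\{v : A(r_{Y,D}(v)) > c\}$, hence open. This proves lower semicontinuity of $A$.

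There is no serious obstacle; all the work has already been done in the preceding subsections. The one point worth double-checking is the continuity of $r_{Y,D}$, which the paper only asserts; if one wished to be explicit, one could note that $\QM(Y,D)$ inherits its topology from $\RR_{\ge 0}^{N}$ via the tuple $(v(D_i))_i$, so continuity of $r_{Y,D}$ follows from the defining property $r_{Y,D}(v)(D_i) = v(D_i)$ together with the continuity of $v \mapsto v(D_i)$ on $\Val_X$.
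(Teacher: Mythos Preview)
Your proof is correct and follows essentially the same approach as the paper: realize $A$ as a pointwise supremum of the continuous functions $A\circ r_{Y,D}$ and conclude lower semicontinuity. You simply spell out in more detail why $r_{Y,D}$ is continuous and why $A$ is continuous on $\QM(Y,D)$, points the paper's proof asserts in a single sentence.
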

\begin{proof}
  Since $A$ is continuous on each $\QM(Y,D)$ and 
  $r_{Y,D}\colon \Val_X\to\QM(Y,D)$ is continuous,
  $A$ is a supremum of continuous functions, hence
  lower semicontinuous.
\end{proof}
\begin{corollary}\label{C402}
  Given $v\in\Val_X$ we have $A(v)=\sup_{Y,D}A(r_{Y,D}(v))$,
  where the supremum is taken over log-smooth pairs $(Y,D)$ 
  over $X$ such that $\pi(D)\subseteq \overline{c_X(v)}$, where
  $\pi\colon Y\to X$ is the associated morphism.
\end{corollary}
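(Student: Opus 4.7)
The inequality $A(v)\geq \sup_{(Y,D):\pi(D)\subseteq\overline{c_X(v)}} A(r_{Y,D}(v))$ is immediate from the definition~\eqref{e205}. To prove the reverse direction, the plan is to show that for every log-smooth pair $(Y_0,D_0)$ over $X$, one can produce a log-smooth pair $(Y_1,D_1)$ with $\pi_1(D_1)\subseteq\overline{c_X(v)}$ satisfying $A(r_{Y_1,D_1}(v))\geq A(r_{Y_0,D_0}(v))$; passing to the supremum on both sides then yields the stated equality.

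To carry this out, first set $w:=r_{Y_0,D_0}(v)\in\QM(Y_0,D_0)$. By Corollary~\ref{C202} we have $w\leq v$, hence by Lemma~\ref{L205}, $c_X(w)\in\overline{c_X(v)}$, so that $\overline{c_X(w)}\subseteq\overline{c_X(v)}$. Since $w$ is quasi-monomial, Proposition~\ref{Abhyankar} supplies a good log-smooth pair $(Y_1,D_1)$ adapted to $w$ such that $\pi_1(D_1)\subseteq\overline{c_X(w)}$, and in particular $\pi_1(D_1)\subseteq\overline{c_X(v)}$.

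The final step is to compare log discrepancies. As $w\in\QM(Y_1,D_1)$ the retraction fixes $w$, so $r_{Y_1,D_1}(w)=w$, and for every irreducible component $D_{1,i}$ of $D_1$,
\begin{equation*}
  r_{Y_1,D_1}(v)(D_{1,i})=v(D_{1,i})\geq w(D_{1,i})=r_{Y_1,D_1}(w)(D_{1,i}),
\end{equation*}
where the inequality uses $w\leq v$. Since $K_{Y_1/X}$ is effective (Corollary~\ref{K_divisor}), each coefficient $A(\ord_{D_{1,i}})$ in the formula~\eqref{eq_log_discrep} for $A$ on $\QM(Y_1,D_1)$ is nonnegative. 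The resulting monotonicity of $A$ on $\QM(Y_1,D_1)$ with respect to componentwise comparison then gives $A(r_{Y_1,D_1}(v))\geq A(w)=A(r_{Y_0,D_0}(v))$, completing the argument. The main (and only) substantive ingredient is the use of Proposition~\ref{Abhyankar} applied to the intermediate quasi-monomial valuation $w$: this is what allows us to ``re-center'' the pair over $\overline{c_X(v)}$ without losing log-discrepancy mass, circumventing the fact that merely dominating $(Y_0,D_0)$ would carry the image $\pi(D_0)$ along with it.
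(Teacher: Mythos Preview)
There is a genuine gap in your argument at the step where you invoke Lemma~\ref{L205}. From $w\le v$ (which is correct by Corollary~\ref{C202}), Lemma~\ref{L205} gives $c_X(v)\in\overline{c_X(w)}$, not $c_X(w)\in\overline{c_X(v)}$: the center of the retraction is a \emph{generization} of the center of $v$, not a specialization. Concretely, if $X=\AAA_k^2$, $v$ is centered at the origin, and $(Y_0,D_0)=(X,\{x=0\})$, then $w=r_{Y_0,D_0}(v)$ is a multiple of $\ord_{\{x=0\}}$, so $\overline{c_X(w)}=\{x=0\}$ strictly contains $\overline{c_X(v)}=\{0\}$. Thus Proposition~\ref{Abhyankar} applied to $w$ only yields $\pi_1(D_1)\subseteq\overline{c_X(w)}$, which need not be contained in $\overline{c_X(v)}$, and your ``re-centering'' step fails.

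A secondary issue is the monotonicity step at the end: the inequality $w\le v$ refers to ideals on $X$ (Definition~\ref{D201}), and the paper explicitly warns that this ordering depends on the model. You would need $w(D_{1,i})\le v(D_{1,i})$ for divisors $D_{1,i}$ living on $Y_1$, which does not follow directly. The paper's proof avoids both issues by using the lower semicontinuity of $A$ (Lemma~\ref{lem_semicontinuity}) together with Corollary~\ref{C401}, which already guarantees that retractions of $v$ over pairs with $\pi(D)\subseteq\overline{c_X(v)}$ are dense near $v$; this is how one produces the approximating pair \emph{directly} over $\overline{c_X(v)}$ rather than trying to transplant an arbitrary pair.
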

\begin{proof}
  The inequality $A(v)\ge\sup_{Y,D}A(r_{Y,D}(v))$  is definitional. 
  For the reverse inequality, fix $\epsilon>0$ and 
  first assume $A(v)<\infty$.
  By the lower semicontinuity of $A$, the subset 
  $\{A>A(v)-\epsilon\}\subseteq\Val_X$ is open and hence
  contains a valuation of the desired form $r_{Y,D}(v)$
  by Corollary~\ref{C401}. When $A(v)=\infty$, we instead look
  at the open set $\{A>\epsilon^{-1}\}$.
\end{proof}
We will later make use of the following compactness result. 
\begin{proposition}\label{compact}
  Let $\xi\in X$ be a point and $\frm$ the ideal defining 
  $\overline{\xi}$, with the reduced scheme structure.
  For every $M\in\RR_{\geq 0}$, the set
  \begin{equation*}
    V_M:=\{v\in\Val_X\mid c_X(v)=\xi, v(\frm)=1, A(v)\leq M\}
  \end{equation*}
  is a compact subspace of $\Val_X$.
\end{proposition}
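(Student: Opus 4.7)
The plan is to combine the projective limit structure from Theorem~\ref{T301} with the lower semicontinuity of the log discrepancy (Lemma~\ref{lem_semicontinuity}) and Tychonoff's theorem. First I will argue that $V_M$ is a closed subspace of $\Val_X$, and then I will embed it into a product of compact polytopes via the retractions $r_{Y,D}$.

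To see that $V_M$ is closed, note that $\{v\mid v(\frm)=1\}$ is closed by continuity of $v\mapsto v(\frm)$, while $\{v\mid A(v)\le M\}$ is closed by Lemma~\ref{lem_semicontinuity}. For the center condition, I will use the characterization that $c_X(v)=\xi$ is equivalent to $v(\frm)>0$ together with $v(\fra)=0$ for every ideal $\fra$ on $X$ with $\xi\notin V(\fra)$. The forward direction is immediate from the definition of the center, and for the converse, if $c_X(v)=\zeta\in\overline{\xi}$ were strictly more special than $\xi$, one picks $g\in\frp_\zeta\setminus\frp_\xi$ and takes $\fra=(g)$ to obtain a contradiction. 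Once $v(\frm)=1$ is imposed, the first clause is automatic, and the second is an intersection of closed conditions $\{v(\fra)\le 0\}$.

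Next, for each log-smooth pair $(Y,D)$ over $X$, I will show that $K_{Y,D}:=\{w\in\QM(Y,D)\mid A(w)\le M\}$ is compact. Since $\QM(Y,D)$ is a finite union of its closed simplicial subcones $\QM_\eta(Y,D)\cong\RR_{\ge0}^r$ via Lemma~\ref{lem_QM}, and on each such cone formula~\eqref{eq_log_discrep} reads $A(w)=\sum_i A(\ord_{D_i})w(D_i)$ with each $A(\ord_{D_i})\ge 1$ (as $K_{Y/X}$ is effective), the intersection $K_{Y,D}\cap\QM_\eta(Y,D)$ is a compact simplex, and $K_{Y,D}$ is a finite union of such, hence compact.

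Finally, by Theorem~\ref{T301} the diagonal map $v\mapsto(r_{Y,D}(v))$ embeds $\Val_X$ homeomorphically as a closed subspace of $\prod_{(Y,D)}\QM(Y,D)$. By Corollary~\ref{cor_compatibility}, for any $v\in V_M$ and any pair $(Y,D)$ one has $A(r_{Y,D}(v))\le A(v)\le M$, so the image of $V_M$ lies in $\prod_{(Y,D)}K_{Y,D}$, which is compact by Tychonoff. Since $V_M$ is closed in $\Val_X$, it is closed in this compact product, and hence compact. The main subtlety in the plan is the verification that the locus $\{c_X(v)=\xi\}$ is closed; once this is granted, the remainder is a routine combination of the projective limit structure with lower semicontinuity and Tychonoff.
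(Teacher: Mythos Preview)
Your proof is correct, and the overall architecture (show $V_M$ is closed, embed it in a compact product, apply Tychonoff) matches the paper's. The handling of the center condition is essentially identical to the paper's: both use that $c_X(v)=\xi$ amounts, once $v(\frm)=1$ is imposed, to the closed conditions $v(\fra)=0$ for all $\fra\not\subseteq\frm$.

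Where you diverge is in the embedding step. The paper embeds $V_M$ directly into the product of intervals $\prod_{\fra\subseteq\frm}[1,M\cdot\Arn(\fra)]$, using the Izumi-type bound $v(\fra)\le A(v)\cdot\Arn(\fra)\le M\cdot\Arn(\fra)$ (a special case of Lemma~\ref{lem1_arbitrary_val}) to confine each coordinate. You instead invoke the structure theorem (Theorem~\ref{T301}) to embed $\Val_X$ in $\prod_{(Y,D)}\QM(Y,D)$, and then use $A(r_{Y,D}(v))\le A(v)\le M$ together with the explicit formula~\eqref{eq_log_discrep} (and $A(\ord_{D_i})\ge 1$) to land in the compact polytopes $K_{Y,D}$. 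The paper's route is slightly more elementary in that it does not rely on Theorem~\ref{T301}; yours has the appeal of reusing the simplicial-cone-complex picture developed in~\S\ref{S305}--\S\ref{S306} and making the role of the log discrepancy on each $\QM(Y,D)$ very transparent.
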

\begin{proof}
  We consider an element of $\Val_X$ as a morphism of semirings
  $\cI\to\RR_{\geq 0}$, where $\cI$ is the semiring of 
  nonzero ideals on $X$ (see \S 1.1). 
  Recall that the condition $c_X(v)=\xi$ 
  simply says that $v(\fra)=0$ if $\fra\not\subseteq\frm$, 
  and $v(\fra)>0$, otherwise.
  Of course, in the presence of $v(\frm)=1$, 
  the second condition is automatically fulfilled.
  
  Note that if $A(v)\leq M$, then for every nonzero $\fra\in{\mathcal I}$ we have
  $v(\fra)\leq M\cdot \Arn(\fra)$. It follows from the definition of the topology 
  on $\Val_X$ that 
  \begin{equation*}
    W_M:=\{v\in\Val_X\mid c_X(v)=\xi, v(\frm)=1, v(\fra)\leq M\cdot \Arn(\fra)\ 
    \text{for all $\fra\in\cI$}\}
  \end{equation*}
  is a closed subset of
  $\prod_{\fra\in\cI,\fra\subseteq\frm}[1, M\cdot \Arn(\fra)]$, 
  hence a compact topological space 
  by Tychonoff's theorem. Moreover, $V_M$ is a closed subset of $W_M$, 
  since $A$ is lower semicontinuous on $\Val_X$ 
  by Proposition~\ref{lem_semicontinuity}. 
  Thus $V_M$ is compact.
\end{proof}

When $M=+\infty$, the space $V_M$ above is not compact but 
can be compactified as follows.
For simplicity assume that $\xi\in X$ is a closed point.
Let $\cV_{X,\xi}$ denote the set of all semivaluations
$v$ on $\cO_{X,\xi}$ for which $v(\frm)=1$. 
Note that we allow $v(f)=+\infty$ for nonzero $f$. 
This space $\cV_{X,\xi}$ is the valuation space considered in~\cite{BFJ1}, 
see~\S\ref{comparison} below. 
As with $\Val_X$, we equip it with the topology of 
pointwise convergence, turning it into a 
closed subset of $\prod_{f\in \frm\cdot\cO_{X\xi}}[1,\infty]$
and hence compact by Tychonoff's Theorem.
One can show that $\Val_X\cap\cV_{X,\xi}$ is dense in $\cV_{X,\xi}$,
see~\S\ref{comparison}.
We will make use of the space $\cV_{X,\xi}$ in the proof of Proposition~\ref{P303}.

\subsection{Izumi's inequality}
If $\xi$ is a point on $X$, we denote by $\frm_{\xi}$ the ideal defining the 
closure of $\xi$. Let 
$\ord_{\xi}$ be the valuation with center $\xi$, such that 
$\ord_{\xi}(f)=\sup\{r\mid f\in \frm^r\cdot\cO_{X,\xi}\}$ for every $f\in\cO_{X,\xi}$.
Note that this is a divisorial valuation: it is equal to $\ord_{E_{\xi}}$, where $E_{\xi}$ is the component
of the exceptional divisor on $\Bl_{\frm_{\xi}}(X)$ whose image contains $\xi$.
We shall later make use of the following 
Izumi-type estimate~\cite{izumi,ELS}:
\begin{proposition}\label{prop_Izumi}
  For any $v\in\Val_X$ we have 
  \begin{equation}\label{e209}
    v(\frm_\xi)\ord_\xi
    \le v
    \le A(v)\ord_\xi,
  \end{equation}
  in the sense of Definition~\ref{D201}, where $\xi=c_X(v)$.
\end{proposition}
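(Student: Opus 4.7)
The plan is to prove the two inequalities separately. The left one is elementary; the right one—Izumi's inequality—I would reduce through density and retraction arguments to the classical divisorial case of~\cite{izumi,ELS}.

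For the left inequality, let $\fra$ be any ideal on $X$. The definition of $\ord_\xi$ gives $\fra\cdot\cO_{X,\xi}\subseteq\frm_\xi^{\ord_\xi(\fra)}$. Since the map $\fra\mapsto v(\fra)$ is an order-preserving semiring homomorphism by~\eqref{e210}, we obtain $v(\fra)\ge v(\frm_\xi^{\ord_\xi(\fra)})=\ord_\xi(\fra)\cdot v(\frm_\xi)$. Combined with $c_X(v)=\xi=c_X(\ord_\xi)$, Lemma~\ref{L205} yields $v(\frm_\xi)\ord_\xi\le v$.

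For the right inequality I would proceed in three steps. First, for divisorial $v=\ord_E$ centered at any point $\zeta\in X$, the bound $\ord_E\le A(\ord_E)\ord_\zeta$ on $\cO_{X,\zeta}$ is the classical Izumi inequality~\cite{izumi,ELS}, whose proof can be organized around the decomposition $A_X(\ord_E)=A_{X'}(\ord_E)+(d_\zeta-1)\ord_E(\frm_\zeta)$ under the blowup $X'=\Bl_\zeta X$, with $d_\zeta=\dim\cO_{X,\zeta}$, iterated along the chain of blowups realizing $E$. Second, for a quasi-monomial $w\in\QM_{\eta'}(Y,D)$ with $\pi(\eta')=\zeta$, I would use that divisorial valuations are dense in $\QM_{\eta'}(Y,D)$ by Remark~\ref{R401}, together with continuity of $w\mapsto w(\fra)$ (Proposition~\ref{lem1_val}) and of $w\mapsto A(w)$ (Proposition~\ref{log_discrep}~(ii)), to conclude $w(\fra)\le A(w)\ord_\zeta(\fra)$. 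Third, for general $v\in\Val_X$ centered at $\xi$ and a fixed ideal $\fra$, I would pick a log-smooth pair $(Y,D)$ giving a log resolution of $\fra$ and set $w:=r_{Y,D}(v)$; then $w(\fra)=v(\fra)$ by Corollary~\ref{C202}, $A(w)\le A(v)$ by Corollary~\ref{cor_compatibility}, and the relation $w\le v$ combined with Lemma~\ref{L205} forces $\xi\in\overline{\xi'}$ where $\xi':=c_X(w)$, so $\frm_{\xi'}\subseteq\frm_\xi$ and hence $\ord_{\xi'}\le\ord_\xi$ on ideals. Applying Step~2 to $w$ centered at $\xi'$ gives
\begin{equation*}
  v(\fra)=w(\fra)\le A(w)\ord_{\xi'}(\fra)\le A(w)\ord_\xi(\fra)\le A(v)\ord_\xi(\fra),
\end{equation*}
completing the argument. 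The main technical input is Step~1—the classical divisorial Izumi inequality~\cite{izumi,ELS}—while Steps~2 and~3 rely on the valuation-space machinery of~\S\ref{S305}--\S\ref{S306}; the only subtle point is the identification of the direction $\ord_{\xi'}\le\ord_\xi$, which holds because the retraction $w=r_{Y,D}(v)$ is always a generization, never a specialization, of $v$ in the partial order of $\Val_X$.
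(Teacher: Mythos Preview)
Your overall strategy for the right inequality—reduce to the divisorial case via retraction—is the paper's first approach, but Step~3 contains a genuine gap.

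The deduction ``$\frm_{\xi'}\subseteq\frm_\xi$ and hence $\ord_{\xi'}\le\ord_\xi$ on ideals'' is not valid as written. Unwinding definitions, with $\frp\subseteq\cO_{X,\xi}$ the prime corresponding to $\xi'$, the inequality $\ord_{\xi'}(f)\le\ord_\xi(f)$ says that the $n$-th \emph{symbolic} power satisfies $\frp^{(n)}=\frp^n\cO_{X,\xi'}\cap\cO_{X,\xi}\subseteq\frm_\xi^n$. This does \emph{not} follow from the containment $\frp\subseteq\frm_\xi$: one has $\frp^n\subseteq\frm_\xi^n$, but in general $\frp^{(n)}\supsetneq\frp^n$, and the desired containment is a form of the Zariski--Nagata theorem on symbolic powers, not a triviality. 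Your final sentence about ``generization, never specialization'' correctly identifies $\xi\in\overline{\xi'}$ but does nothing to justify the comparison of the two order valuations.

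The cleanest fix is to arrange $\xi'=\xi$ so the issue disappears. In Step~3, instead of taking $(Y,D)$ to be an arbitrary log resolution of $\fra$, use the trick in the proof of Corollary~\ref{C401}: choose $N$ with $N\cdot v(\frm_\xi)>v(\fra)$, so that $v(\fra)=v(\fra+\frm_\xi^N)$, and take $(Y,D)$ to be a log resolution of $\fra+\frm_\xi^N$ with $\pi(D)\subseteq\overline{\xi}$. Then $w=r_{Y,D}(v)$ still satisfies $w(\fra)=v(\fra)$, but now $c_X(w)\in\pi(D)\subseteq\overline{\xi}$, which combined with $\xi\in\overline{c_X(w)}$ forces $c_X(w)=\xi$. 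Step~2 then applies directly at the center $\xi$ and the chain $v(\fra)=w(\fra)\le A(w)\ord_\xi(\fra)\le A(v)\ord_\xi(\fra)$ goes through. The paper's alternative route via $\frac{A(v)}{v(f)}\ge\lct(f)\ge\frac{1}{\ord_\xi(f)}$ avoids the retraction machinery altogether and is more direct.
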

\begin{proof}
  The first inequality follows from the definitions.
  By approximation, the second inequality can be reduced 
  to the case when
  $v$ is divisorial, and then it goes
  back at least to Tougeron~\cite[Lemma~1.3, p.178]{tougeron}.
  Alternatively, it comes from the fact that for 
  $f\in\cO_{X,\xi}$ with $\ord_\xi(f)=m$, after replacing $X$ by some open neighborhood
  of $\xi$
  we have $\frac{A(v)}{v(f)}\geq\lct(f)\geq\frac{1}{m}$. For the last inequality, 
  see~\cite[Lemma~8.10]{Kol} (this treats the case when $X$ is of finite type over a field, but the general case can be easily reduced to this one, arguing as in~\cite[Corollary~2.10]{dFM}).
\end{proof}
\begin{corollary}\label{C403}
  If $v\in\Val_X$ satisfies $A(v)<\infty$, then $v$ has a unique
  extension as a valuation $v'\in\Val_{X'}$, where
  $X'=\Spec\widehat{\cO_{X,\xi}}$, $\xi=c_X(v)$.
\end{corollary}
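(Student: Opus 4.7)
The plan is to show that the $\frm_\xi$-adic extension of $v$ to a semivaluation $\hat{v}$ on $\widehat{\cO_{X,\xi}}$ (which always exists, as noted in \S1.2) is in fact a valuation when $A(v)<\infty$, after which the extension to the fraction field $K(X')=\Frac(\widehat{\cO_{X,\xi}})$ is automatic. Note that since $X$ is regular, $\cO_{X,\xi}$ is a regular local ring and hence so is its completion (in fact a formal power series ring over a field by Cohen's theorem); in particular $\widehat{\cO_{X,\xi}}$ is a domain, so $K(X')$ is well defined and $X'$ is integral.

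For the finiteness, let $f\in\widehat{\cO_{X,\xi}}$ be nonzero. By Krull's intersection theorem, $m:=\ord_\xi(f)<\infty$. Choose approximations $f_n\in\cO_{X,\xi}$ with $f-f_n\in\frm_\xi^n\widehat{\cO_{X,\xi}}$; by construction of $\hat{v}$, the sequence $v(f_n)$ stabilizes at $\hat{v}(f)$ for $n$ large. For $n>m$ we have $\ord_\xi(f_n)=m$, so Izumi's inequality (Proposition~\ref{prop_Izumi}) applied to $v$ on $X$ gives
\begin{equation*}
\hat{v}(f)=v(f_n)\leq A(v)\cdot\ord_\xi(f_n)=A(v)\,m<\infty.
\end{equation*}
Thus $\hat{v}$ restricts to a valuation on $\widehat{\cO_{X,\xi}}\setminus\{0\}$, which extends in the usual way to a valuation $v'$ on $K(X')$ by $v'(a/b)=\hat{v}(a)-\hat{v}(b)$. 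Since $v(\frm_\xi)>0$, the center of $v'$ on $X'$ is the closed point, so $v'\in\Val_{X'}$, and by construction $v'$ restricts to $v$ on $K(X)\subseteq K(X')$.

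For uniqueness, suppose $v''\in\Val_{X'}$ is another extension of $v$. Since $v''$ restricts to $v$, it has center the closed point of $X'$, so $v''(\frm_\xi)=v(\frm_\xi)>0$. The lower bound in Izumi's inequality applied to $v''$ on the regular excellent scheme $X'$ gives $v''(g)\geq v''(\frm_\xi)\cdot\ord_\xi(g)$ for every $g\in\widehat{\cO_{X,\xi}}$, and this forces $v''$ to be continuous in the $\frm_\xi$-adic topology. Since $v''$ agrees with $v$ on the dense subring $\cO_{X,\xi}$, continuity forces $v''=\hat{v}$ on $\widehat{\cO_{X,\xi}}$, and hence $v''=v'$ on $K(X')$.

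The only real input is Izumi's inequality, used twice: the upper bound on $X$ gives finiteness of the extension, and the lower bound on $X'$ gives the continuity needed for uniqueness. The mild technical obstacle is verifying that Izumi applies on $X'$, but this is immediate since $\widehat{\cO_{X,\xi}}$ is a complete regular local ring and hence excellent, so Proposition~\ref{prop_Izumi} applies verbatim to the regular excellent scheme $X'=\Spec\widehat{\cO_{X,\xi}}$.
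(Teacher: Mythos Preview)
Your proof is correct and follows essentially the same approach as the paper: use the upper bound in Izumi's inequality to show that the $\frm_\xi$-adically continuous extension of $v$ to $\widehat{\cO_{X,\xi}}$ takes only finite values. One small presentational wrinkle: you assert that $v(f_n)$ ``stabilizes at $\hat v(f)$'' before establishing the bound $v(f_n)\le A(v)\,m$, but stabilization is only guaranteed once that bound is in hand (otherwise $v(f_n)$ could drift to $+\infty$); the fix is simply to state the Izumi bound first and then conclude $\hat v(f)=\lim_n v(f_n)\le A(v)\,m$, after which stabilization follows.
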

\begin{proof}
  Let $\frm=\frm_{\xi}$ be as above and set 
  $\frm'=\frm\cdot\widehat{\cO_{X,\xi}}$.
  One can always uniquely extend 
  $v\colon\cO_{X,\xi}\smallsetminus\{0\}\to\RR_{\geq 0}$ 
  by $\frm_{\xi}$-adic continuity 
  to $v'\colon\widehat{\cO_{X,\xi}}\smallsetminus\{0\}\to\RR_{\geq 0}\cup\{\infty\}$.
  Indeed, for $f\in\widehat{\cO_{X,\xi}}\smallsetminus\{0\}$ and $n\ge 1$, 
  we can write $\fra'_n:=(f)+\frm'^n=\fra_n\cdot\widehat{\cO_{X,\xi}}$
  for some ideal $\fra_n$ on $X$. Then 
  $v'(f)=\lim_{n\to\infty}v(\fra_n)$, where the limit is increasing.
  
  If $\xi'$ denotes the closed point of $X'$, then 
  $\ord_{\xi'}(\fra'_n)=\ord_{\xi'}(f)$ for $n>\ord_{\xi'}(f)$.
  Now, if $A(v)<\infty$, then~\eqref{e209} shows that 
  $v(\fra_n)\le A(v)\ord_\xi(\fra_n)=A(v)\ord_{\xi'}(f)$
  for $n>\ord_{\xi'}(f)$.
  This shows that $v'(f)\le A(v)\ord_{\xi'}(f)<\infty$.
\end{proof}
\begin{remark}\label{extension}
  When $A(v)=\infty$, the extension $v'$ of $v$ to 
  $\widehat{\cO_{X,\xi}}$ may not be a (finite-valued) 
  valuation.  For example,  if $v$ is defined on $k[x,y]$
  by $v(f(x,y))=\ord_t f(t, g(t))$, where
  $g(t)=\sum_{m\geq 1}t^m/{m!}$, then $v$ is a valuation 
  on $\AAA^2_k$ with center at the origin, but
  $v'(y-g(x))=\infty$.
\end{remark}

\subsection{Completion and field extension}
The following technical proposition will play an important role in 
the proofs of our main results. Note that if $\phi\colon X'\to X$ is a flat morphism
of integral schemes, then $\phi$ is dominant, hence induces a field extension
$K(X)\hookrightarrow K(X')$, that in turn induces a continuous map
$\Val_{X'}\to\Val_X$ given by restriction of valuations.

\begin{proposition}\label{P303}
  Let $\phi\colon X'\to X$ be a regular morphism, with $X$ and $X'$ schemes as before.
  Consider $v'\in \Val^*_{X'}$ and let $v\in\Val^*_X$ be the 
  valuation induced by restriction of $v'$.
  Then $A(v')\ge A(v)$.
  Furthermore, equality holds in the following cases:
  \begin{enumerate}
  \item[(i)]  
    $X'=\Spec\,\widehat{\cO_{X,\xi}}\to X$, where $\xi\in X$ and
    $v'$ is centered at the closed point on $X'$;
  \item[(ii)] 
    $X'=\AAA_K^n$ and $X=\AAA_k^n$, 
    where $K/k$ is an algebraic field extension, and $v'$ is centered at $0\in X'$.
  \end{enumerate}
\end{proposition}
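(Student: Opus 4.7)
My plan is to first establish the general inequality $A(v') \geq A(v)$, and then, in each of cases~(i) and~(ii), to obtain the reverse inequality by showing that every log-smooth pair over $X'$ relevant for computing $A(v')$ is dominated by the base change of a log-smooth pair over $X$. For the general inequality, I take an arbitrary log-smooth pair $(Y,D)$ over $X$ and form the base change $(Y',D')$ over $X'$, where $Y' := Y \times_X X'$, $\psi \colon Y' \to Y$ is the projection (itself regular as the base change of $\phi$), and $D' := \psi^{-1}(D)_{\mathrm{red}}$. Lemma~\ref{lem_regular_morphism} ensures that $(Y',D')$ is log-smooth over $X'$, and because $\psi$ is regular each $\psi^*(D_i)$ is reduced, decomposing as $\sum_k D'_{i,k}$ with multiplicity~$1$, where the $D'_{i,k}$ are the distinct components of $D'$ lying above $D_i$. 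Writing $w := r_{Y,D}(v)$ and $w' := r_{Y',D'}(v')$, I have $w(D_i) = v(D_i) = v'(\psi^*D_i) = \sum_k w'(D'_{i,k})$, and combining this with $\ord_{D'_{i,k}}(K_{Y'/X'}) = \ord_{D_i}(K_{Y/X})$ from Lemma~\ref{lem_regular_morphism}~(ii), the formula~\eqref{eq_log_discrep} yields $A(w) = A(w')$. Thus $A(v') \geq A(w') = A(w)$ by the definition~\eqref{e205}, and taking the supremum over $(Y,D)$ gives $A(v') \geq A(v)$.

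For case~(i), it remains to prove $A(v') \leq A(v)$. I would invoke Corollary~\ref{C402} applied to $v'$, whose center is the closed point $\xi'$ of $X' = \Spec \widehat{\cO_{X,\xi}}$, to reduce the computation of $A(v')$ to log-smooth pairs $(Y',D')$ with $\pi'(D') \subseteq \{\xi'\}$. The key claim is that every such pair arises as the base change of a log-smooth pair $(Y,D)$ over some affine open neighborhood of $\xi$ in $X$ with $\pi(D) \subseteq \overline{\{\xi\}}$, which is permissible by Remark~\ref{open_immersion2}. This rests on an algebraization observation: any coherent ideal $J' \subseteq \widehat{\cO_{X,\xi}}$ supported at $\xi'$ contains $\frm'^N$ for some $N$, and its image in $\widehat{\cO_{X,\xi}}/\frm'^N = \cO_{X,\xi}/\frm^N$ lifts to a coherent ideal $J \subseteq \cO_{X,\xi}$ with $J \cdot \widehat{\cO_{X,\xi}} = J'$. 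Iterating this lift through each blow-up in a sequence producing $(Y',D')$, and using that log resolutions and SNC refinements commute with regular base change by Lemma~\ref{lem_regular_morphism}, I obtain the desired $(Y,D)$. Combined with the identity $A(r_{Y,D}(v)) = A(r_{Y',D'}(v'))$ established above, taking suprema yields $A(v') \leq A(v)$.

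Case~(ii) is handled by the parallel strategy in which algebraization is replaced by Galois descent, and this descent is the principal technical obstacle. By Corollary~\ref{C402} it suffices to dominate every log-smooth pair $(Y', D')$ over $X' = \AAA^n_K$ with $\pi'(D') \subseteq \{0\}$ by the base change of a pair $(Y,D)$ over $X = \AAA^n_k$ with $\pi(D) \subseteq \{0\}$. Such $(Y',D')$ is cut out by finitely many polynomial equations with coefficients in a finite subextension $L/k$ of $K/k$. Passing to the Galois closure $M/k$ of $L$ with $G := \mathrm{Gal}(M/k)$ and forming an equivariant log resolution of the finite collection of $G$-conjugates of the blow-up centers defining $(Y_M, D_M) := (Y_L, D_L) \times_L M$, I obtain a $G$-invariant log-smooth pair over $\AAA^n_M$. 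This pair descends via faithfully flat Galois descent, available in characteristic zero where $L/k$ is separable, to a log-smooth pair $(Y,D)$ over $\AAA^n_k$, whose base change $(Y_K, D_K)$ dominates $(Y',D')$. Then Lemma~\ref{lem_compatibility}, combined with the equality $A(r_{Y_K, D_K}(v')) = A(r_{Y,D}(v))$ from the general inequality, yields $A(r_{Y',D'}(v')) \leq A(r_{Y_K, D_K}(v')) = A(r_{Y,D}(v)) \leq A(v)$. The hardest part is ensuring that the equivariant resolution can be performed and then descends to $\AAA^n_k$; this relies on the functoriality of resolution in characteristic zero and the smoothness of $\AAA^n_k$ over $k$.
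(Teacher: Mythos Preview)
Your argument for the inequality $A(v')\ge A(v)$ is correct and coincides with the paper's.

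For the equality in~(i) and~(ii), your strategy is genuinely different from the paper's. The paper does \emph{not} try to algebraize or descend log-smooth pairs. Instead it first proves $A(v')=A(v)$ in the quasi-monomial case (by showing that a good pair for $v$ base-changes to a good pair for $v'$, using Lemmas~\ref{lemma_valuations} and~\ref{lemma_valuations2} for the equality of rational ranks), and then upgrades to the general case by a topological argument: the restriction map $\rho\colon V'\to V$ between the normalized valuation spaces is shown to be open onto its image (in~(i) by algebraizing \emph{ideals}, in~(ii) by a Galois/compactness argument on $\cV_{X',\xi'}$), and this, combined with lower semicontinuity of $A$ and density of quasi-monomial valuations, yields $A(v)\ge A(v')$. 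Your approach, if it works, is more direct but more model-theoretic; the paper's is softer and treats both cases in a uniform topological framework.

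There is a real gap in your case~(i). You claim that every log-smooth $(Y',D')$ over $X'=\Spec\widehat{\cO_{X,\xi}}$ with $\pi'(D')=\{\xi'\}$ arises as the base change of some $(Y,D)$ over $X$, and justify this by ``iterating the lift through each blow-up in a sequence producing $(Y',D')$''. But $(Y',D')$ is not given as a tower of blow-ups at $\frm'$-primary centers, and even if it were, the centers at intermediate stages are ideals on the intermediate $Y'_i$, not on $X'$; your elementary observation that $\frm'$-primary ideals of $\widehat{\cO_{X,\xi}}$ come from $\cO_{X,\xi}$ does not apply there (you would need Grothendieck's existence theorem). The clean fix is to aim only for \emph{domination}: after passing to a projective pair, write $Y'=\Bl_{J'}(X')$ for a single $\frm'$-primary ideal $J'$, algebraize $J'$ to $J$ on a neighborhood of $\xi$, take a log resolution $(Y,D)$ of $J$ with $\pi(D)\subseteq\overline{\{\xi\}}$, and observe that $(Y\times_X X',\psi^*D)$ is log-smooth over $X'$ and dominates $(Y',D')$ (since $J\cdot\cO_Y$ is invertible, the map factors through $\Bl_{J'}(X')=Y'$). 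Then Lemma~\ref{lem_compatibility} gives $A(r_{Y',D'}(v'))\le A(r_{Y,D}(v))\le A(v)$. Your case~(ii) via functorial equivariant resolution and Galois descent is viable, but note (as the paper does) that one should first reduce to $K/k$ finite Galois, since your Galois closure $M$ need not embed in an arbitrary algebraic extension $K$.
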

To be perfectly precise we should really write  $A(v)=A_X(v)$ and
$A(v')=A_{X'}(v')$
here. Recall that we have seen in Remark~\ref{open_immersion2}
that the equality $A(v')=A(v)$ holds also in the case when $\phi$
is an open immersion.

\begin{proof}
  To prove the inequality $A(v')\ge A(v)$
  we argue as in the proof of Proposition~\ref{P302}.
  Given any log-smooth pair $(Y,D)$ over $X$, we get a 
  log-smooth pair $(Y',D')$ over $X'$, where 
  $Y'=Y\times_XX'\overset{\psi}\to Y$, with $D'=\psi^*(D)$. 
  Let $\eta=c_Y(v)$ and $\eta'=c_{Y'}(v')$. 
  If $E$ is an irreducible component of $D$ containing
  $\eta$, and if $E'$ is the connected component of $\psi^*(E)$ 
  containing $\eta'$, then $v'(E')=v(E)$, 
  hence $A(v')\geq A(r_{Y',D'}(v'))=A(r_{Y,D}(v))$. 
  Therefore $A(v')\geq A(v)$.
  
  We note that in case (ii), in order to prove that $A(v')=A(v)$, it is enough to consider the
  case when $K/k$ is a finite Galois extension. 
  Indeed, note first that $v'$ can be extended to an element $\overline{v}$ 
  of $\Val_{\AAA_{\overline{k}}^n}$, where $\overline{k}$ 
  is an algebraic closure of $k$ containing $K$. By what we have seen so far,
  $A(\overline{v})\geq A(v')\geq A(v)$, hence it is enough to 
  show that $A(\overline{v})=A(v)$. On the other hand, it is easy to see
  that the inequality we have already proved gives $A(\overline{v})=\sup_{L/k}A(v_L)$, 
  where $L$ varies over the finite Galois extensions of $k$ contained in 
  $\overline{k}$, and $v_L$ is the restriction of
  $\overline{v}$ to $\AAA_L^n$ (this follows from the fact that every log-smooth pair over 
  $\AAA_{\overline{k}}^n$ is defined over some $L$ as above). Therefore it is enough to show that
  $A(v_L)=A(v)$ for all such $L$, hence whenever considering case (ii), we will assume that
  $K/k$ is finite and Galois, with Galois group $G$.
  
  Recall that by Lemmas~\ref{lemma_valuations} and~\ref{lemma_valuations2},
  $v$ is quasi-monomial if and only if $v'$ is quasi-monomial. 
  We first prove the equality in cases~(i) and~(ii) under this assumption. 
  Let $(Y,D)$ be a good pair adapted to $v$. If $(Y',D')$ is defined as before,
  then the same argument as above  shows that it is enough to prove 
  that $(Y',D')$ is a good pair adapted to $v'$. 
  Recall that we put $\eta=c_Y(v)$ and $\eta'=c_{Y'}(v')$. 
  By assumption, we have $\dim(\cO_{Y,\eta})=\ratrk(v)$, 
  and it is enough to show that $\dim(\cO_{Y',\eta'})=\ratrk(v')$. 
  By Lemmas~\ref{lemma_valuations}
  and~\ref{lemma_valuations2} we have $\ratrk(v)=\ratrk(v')$, 
  hence it suffices to show that
  $\dim(\cO_{Y',\eta'})=\dim(\cO_{Y,\eta})$. 
  In case~(ii) this follows from the fact that the morphism
  $X'\to X$, hence also $Y'\to Y$, is finite.
  In case~(i), this follows from Corollary~\ref{cor_regular_morphism}. This completes
  the proof of $A(v')=A(v)$ when $v$ is quasi-monomial. 
 
  Before proving the general case, let us make some preparations. Let $\xi=c_X(v)$ and
  $\xi'=c_X(v')$, and 
  let $\frm$ and $\frm'$ denote the ideals on $X$ (resp. $X'$) defining  $\overline{\xi}$
  (resp. $\overline{\xi'}$), so that $\frm'=\frm\cdot\cO_{X'}$.
  Let $V\subseteq\Val_X$ (resp.\ $V'\subseteq\Val_{X'}$) be the 
  subset of valuations $w$ (resp. $w'$) for which $w(\frm)=1$
  (resp.\ $w'(\frm')=1$). The restriction map $\rho\colon V'\to V$
  is continuous but not surjective in general. 
  We claim that $\rho$ is \emph{open onto its image},
  that is, for any open subset $U'\subseteq V'$ there exists
  an open subset $U\subseteq V$ such that 
  $\rho(U')=\rho(V')\cap U$.
  First consider case~(i), when $\rho$ is injective.
  We may assume $U'$ is of the form
  \begin{equation*}
    U'=\{w'\in V'\mid s_j<w'(\fra'_j)<t_j, \ j=1,2,\dots,n\},
  \end{equation*}
  where $\fra'_j$ is an ideal on $X'$ and 
  $s_j<t_j\le+\infty$ for $1\le j\le n$.
  Pick $p$ large enough so that $p>t_j$ whenever $t_j<\infty$. 
  As $w'(\frm')=1$, replacing $\fra'_j$ by $\fra'_j+\frm'^p$
  does not change $U'$. We may therefore assume that 
  $\fra'_j=\fra_j\cdot\cO_{X'}$, for some ideal $\fra_j$ on $X$.
  But then $\rho(U')=\rho(V')\cap U$, where
  $U=\bigcap_j\{w\in V\mid s_j<w(\fra_j)<t_j\}$ is open.
  
  Suppose now that we are in case (ii), when $\rho$ is surjective.  
  It is convenient to consider the extension of $\rho$ to the spaces of
  semivaluations introduced at the end of~\S\ref{S401}.
  More precisely, we have a continuous surjective map induced by restriction
  $\widetilde{\rho}\colon \cV_{X',\xi'}\to\cV_{X,\xi}$. 
  We can identify $V'$ and $V$ with subspaces of $\cV_{X',\xi}$ and
  $\cV_{X,x}$, respectively,
  such that $\rho$ is the restriction of $\widetilde{\rho}$ over $V$. 
  Therefore it is enough to show that $\widetilde{\rho}$ is open. 

Note that the Galois group $G=G(K/k)$ has a natural 
action on $X'$, which induces an action on $\cV_{X',\xi'}$ such that the fibers of 
$\widetilde{\rho}$ coincide with the $G$-orbits of this action. If $U'$ is an open subset of
$\cV_{X',\xi'}$, then $\widetilde{\rho}^{-1}(\widetilde{\rho}(U'))=\bigcup_{g\in G}gU'$ is open in
$\cV_{X',\xi'}$, hence its complement is closed. Since $\cV_{X',\xi'}$ is compact, it follows that
$\widetilde{\rho}(F)$ is compact, hence closed in $\cV_{X,\xi}$. Therefore
$\cV_{X,\xi}\smallsetminus\widetilde{\rho}(F)=\widetilde{\rho}(U')$ is open in $\cV_{X,\xi}$.
  
    We can now prove that $A(v)\ge A(v')$.
  First suppose $A(v')<\infty$.
 
  Given $\epsilon>0$, the set 
  $U'=U'_\epsilon:=\{w'\in V'\mid A(w')>A(v')-\epsilon\}$ is an
  open subset of $V'$ by the lower
  semicontinuity of $A$.
  By what precedes, there exists an open subset $U\subseteq V$ 
  such that $\rho(U')=\rho(V')\cap U$. 
  Clearly $v\in U$, so by Corollary~\ref{C401} we can find
  a log-smooth pair $(Y,D)$ above $X$
  such that $w:=r(v)\in U$, where 
  $r=r_{Y,D}\colon\Val_X\to\QM(Y,D)$ is the corresponding retraction.
  Since $w$ is quasi-monomial, $w$ is in the image of $\rho$,
  so there exists $w'\in U'$ quasi-monomial such that 
  $\rho(w')=w$.  We have seen that $A(w')=A(w)$.
  Thus $A(v)\ge A(w)=A(w')>A(v')-\epsilon$.
  As $\epsilon\to0$ we get $A(v)\ge A(v')$.
  The case when $A(v')=\infty$ is treated similarly,
  by setting $U'=\{w'\in V'\mid A(w')>\epsilon^{-1}\}$.
\end{proof}

\section{Graded and subadditive systems revisited}\label{S307}
We now extend to arbitrary valuations some of the results 
proved in \S\ref{S301}--\S\ref{S302} for divisorial valuations.

\subsection{Induced functions on valuation space}
\begin{lemma}\label{C201}
  If $\fra_\bullet$ is a graded sequence of ideals, then the function
  $v\mapsto v(\fra_\bullet)$ is upper semicontinuous on $\Val_X$.
  Similarly, if $\frb_\bullet$ is any subadditive system of ideals,
  then the function $v\mapsto v(\frb_\bullet)$ is lower semicontinuous. 
\end{lemma}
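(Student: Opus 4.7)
The plan is to invoke the definition of the topology on $\Val_X$ together with the elementary fact that an infimum (respectively supremum) of continuous functions is upper (respectively lower) semicontinuous.

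By Lemma~\ref{L204}, the topology on $\Val_X$ is the weakest topology for which the evaluation map $v\mapsto v(\fra)$ is continuous for every nonzero ideal $\fra$ on $X$. Thus, for each $m\in S(\fra_\bullet)$, the function $v\mapsto v(\fra_m)/m$ is continuous on $\Val_X$, and similarly, for each $t>0$, the function $v\mapsto v(\frb_t)/t$ is continuous on $\Val_X$.

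For the graded sequence, by definition we have
\begin{equation*}
  v(\fra_\bullet)=\inf_{m\in S(\fra_\bullet)}\frac{v(\fra_m)}{m},
\end{equation*}
which is a pointwise infimum of continuous functions. Since the set $\{v\mid v(\fra_\bullet)<c\}$ equals $\bigcup_{m\in S(\fra_\bullet)}\{v\mid v(\fra_m)/m<c\}$, which is a union of open sets and hence open, the function $v\mapsto v(\fra_\bullet)$ is upper semicontinuous. For the subadditive system, Lemma~\ref{lem2} applied to $\phi(t)=v(\frb_t)$ gives
\begin{equation*}
  v(\frb_\bullet)=\sup_{t>0}\frac{v(\frb_t)}{t},
\end{equation*}
which is a pointwise supremum of continuous functions, hence lower semicontinuous by the dual observation.

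I do not anticipate any substantive obstacle: the result is purely a formal consequence of the definition of the topology on $\Val_X$, which precisely makes the maps $v\mapsto v(\fra)$ continuous, combined with the monotone-limit descriptions of $v(\fra_\bullet)$ and $v(\frb_\bullet)$ established in Section~\ref{S302}.
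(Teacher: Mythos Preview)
Your proof is correct and follows exactly the same approach as the paper: invoke Lemma~\ref{L204} to get continuity of $v\mapsto v(\fra)$ for each ideal, then use that $v(\fra_\bullet)$ is an infimum and $v(\frb_\bullet)$ a supremum of such functions. The paper's proof is essentially a one-line version of yours.
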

\begin{proof}
  For each $t$, the function $v\mapsto v(\frb_t)$ is
  continuous on $\Val_X$ by Lemma~\ref{L204}.
  Hence $v\mapsto v(\frb_\bullet)=\sup_t\frac1tv(\frb_t)$
  is lower semicontinuous. The argument for 
  $\fra_\bullet$ is analogous.
\end{proof}
\begin{proposition}\label{prop2_arbitrary_val}
  If $\fra_{\bullet}$ is a graded sequence of ideals, 
  and $\frb_{\bullet}$ is the corresponding
  system of asymptotic multiplier ideals, 
  then for every $m$ such that $\fra_m$ is nonzero we have
  \begin{equation}\label{eq_prop2_arbitrary_val}
    v(\fra_{\bullet})-\frac{A(v)}{m}
    \le\frac{v(\fra_m)}{m}-\frac{A(v)}{m}
    \leq \frac{v(\frb_m)}{m}
    \le\frac{v(\fra_m)}{m}
  \end{equation}
  for all
  $v\in\Val_X$, with the second inequality being strict when $v$ is nontrivial.
  In particular,
  $v(\fra_{\bullet})=v(\frb_{\bullet})$ whenever $A(v)<\infty$.
\end{proposition}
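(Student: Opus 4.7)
The rightmost inequality $v(\frb_m) \le v(\fra_m)$ is immediate from the inclusion $\fra_m \subseteq \frb_m$ combined with the order-reversing behavior of $v$ on ideals, and the leftmost inequality is just $v(\fra_\bullet) \le v(\fra_m)/m$, which restates the definition of $v(\fra_\bullet)$ as $\inf_k v(\fra_k)/k$. The ``in particular'' conclusion $v(\fra_\bullet) = v(\frb_\bullet)$ (when $A(v)<\infty$) will drop out by letting $m \to \infty$ through $m \in S(\fra_\bullet)$: combined with $v(\frb_m) \le v(\fra_m)$ this yields $v(\frb_\bullet) \le v(\fra_\bullet)$, while the strict middle bound yields $v(\frb_\bullet) \ge v(\fra_\bullet)$ in the limit.

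The substantive content is the middle strict inequality, which after clearing denominators reads $m\, v(\fra_\bullet) - A(v) < v(\frb_m)$ for nontrivial $v$ with $A(v)<\infty$. My plan is to prove it by first extending Lemma~\ref{lem2_3} from divisorial to arbitrary valuations of finite log discrepancy: I claim that for every nontrivial $v \in \Val_X^*$ with $A(v)<\infty$, every nonzero ideal $\fra$, and every $\lambda \ge 0$,
\begin{equation*}
v(\cJ(\fra^{\lambda})) \;>\; \lambda\, v(\fra) - A(v).
\end{equation*}
To establish this, I would pick a log-smooth pair $(Y,D)$ over $X$ that is a log resolution of the product $\fra \cdot \cJ(\fra^{\lambda}) \cdot \frm_{c_X(v)}$ (the last factor guarantees that at least one component of $D$ passing through $c_Y(v)$ has positive $v$-valuation), and set $w := r_{Y,D}(v) \in \QM(Y,D)$. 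By Corollary~\ref{C202} we have $w(\fra) = v(\fra)$ and $w(\cJ(\fra^{\lambda})) = v(\cJ(\fra^{\lambda}))$; by Corollary~\ref{cor_compatibility}, $A(w) \le A(v)$; and by the choice of resolution, $w$ is nontrivial. For the quasi-monomial $w$, the decompositions $w(\fra) = \sum_i w(D_i)\, \ord_{D_i}(\fra)$, $w(\cJ(\fra^{\lambda})) = \sum_i w(D_i)\, \ord_{D_i}(\cJ(\fra^{\lambda}))$, and $A(w) = \sum_i w(D_i)\, A(\ord_{D_i})$ hold (the sums running over components $D_i$ of $D$ through $c_Y(w)$): the first two because $(Y,D)$ resolves the relevant ideals and the third by Proposition~\ref{log_discrep}. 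Weighting the strict pointwise inequalities $\ord_{D_i}(\cJ(\fra^{\lambda})) > \lambda\, \ord_{D_i}(\fra) - A(\ord_{D_i})$---built into the definition of the multiplier ideal---by the nonnegative coefficients $w(D_i)$ and summing yields the claim, strictness being preserved because $w$ is nontrivial (so some $w(D_i) > 0$).

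With this extension in hand, the middle inequality of the proposition follows by choosing $p$ divisible enough so that $\frb_m = \cJ(\fra_{pm}^{1/p})$ and applying the extended lemma with $\fra = \fra_{pm}$ and $\lambda = 1/p$:
\begin{equation*}
v(\frb_m) \;>\; \tfrac{1}{p}\, v(\fra_{pm}) - A(v) \;\ge\; m\, v(\fra_\bullet) - A(v),
\end{equation*}
where the second bound uses $\tfrac{1}{p}\, v(\fra_{pm}) = m \cdot \tfrac{v(\fra_{pm})}{pm} \ge m\, v(\fra_\bullet)$ from the definition of $v(\fra_\bullet)$ as an infimum. The main obstacle I anticipate is ensuring nontriviality of the retracted valuation $w$---so that the strict divisorial inequalities propagate to a strict sum---which is why I augment the log resolution to include $\frm_{c_X(v)}$ in the product being resolved; everything else reduces to the linear structure of $\QM(Y,D)$ together with the divisorial case already recorded in Proposition~\ref{prop2_1}.
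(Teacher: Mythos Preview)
You have misread the middle inequality: as stated it is $v(\fra_m)-A(v)\le v(\frb_m)$, not $m\,v(\fra_\bullet)-A(v)<v(\frb_m)$. However, your reading is the correct one and the printed version is false in general. On $\AAA_k^2$ with $\frm=(x,y)$, take $\fra_1=\frm^{10}$ and $\fra_m=\frm^m$ for $m\ge 2$; this is a graded sequence, and $\frb_1=\cJ(\fra_\bullet^1)=\cJ(\frm)=\cO_X$. For $v=\ord_E$ with $E$ the exceptional divisor of the blowup of the origin and $m=1$, one gets $\ord_E(\fra_1)-A(\ord_E)=10-2=8>0=\ord_E(\frb_1)$. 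The paper's proof invokes ``we already know that $h_m(\ord_{D_i})>0$'' for $h_m(v)=v(\frb_m)-v(\fra_m)+A(v)$, but this divisorial fact was never established and, as the example shows, cannot be. Your inequality $m\,v(\fra_\bullet)-A(v)<v(\frb_m)$ is what can be proved, it suffices for the ``in particular'', and Remark~\ref{all_t} confirms this is what the paper actually uses downstream.

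Methodologically your argument and the paper's are the same: retract $v$ to $w=r_{Y,D}(v)\in\QM(Y,D)$ for a suitable log-smooth pair, observe that the desired function is linear there, and verify strict positivity at the extremal rays $\ord_{D_i}$. The difference is that you first isolate the general lemma $v(\cJ(\fra^\lambda))>\lambda\,v(\fra)-A(v)$ and then specialize to $\fra=\fra_{pm}$, $\lambda=1/p$ with $\frb_m=\cJ(\fra_{pm}^{1/p})$, passing through the bound $\tfrac{1}{p}v(\fra_{pm})\ge m\,v(\fra_\bullet)$. The paper instead applies the linearity argument directly to $h_m$, which fails because its divisorial case is unavailable; your detour through the intermediate term $\tfrac{1}{p}v(\fra_{pm})$ is precisely what repairs the argument. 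Your trick of throwing $\frm_{c_X(v)}$ into the resolved product to guarantee $w$ is nontrivial is fine; alternatively one handles the case $w$ trivial separately, since then $v(\fra)=0$ and the claim reduces to $A(v)>0$.
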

\begin{proof}
  The first inequality is definitional and the last inequality
  follows from the inclusion $\fra_m\subseteq\frb_m$.
  To prove~\eqref{eq_prop2_arbitrary_val} it therefore
  suffices to show that the function 
  $h_m(v):=v(\frb_m)-v(\fra_m)+A(v)$ is positive
  on $\Val^*_X$. 
  Let $(Y,D)$ be a log-smooth pair that 
  defines a log resolution of $\fra_m\cdot\frb_m$.
  Then $v(\fra_m)=r_{Y,D}(v)(\fra_m)$, 
  $v(\frb_m)=r_{Y,D}(v)(\frb_m)$ and $A(v)\ge A(r_{Y,D}(v))$,
  so it suffices to show that $h_m$ is positive on the nontrivial valuations in $\QM(Y,D)$.
  But $h_m$ is linear on $\QM(Y,D)$ and we already know
  that $h_m(\ord_{D_i})> 0$ for any irreducible component
  $D_i$ of $D$. Hence $h_m>0$ on $\Val^*_X$, as claimed.

  The last assertion follows by letting $m\to\infty$ along a suitable subsequence.
\end{proof}
\begin{remark}\label{all_t}
The same argument shows that if $\fra_{\bullet}$ and $\frb_{\bullet}$ are as above, then
$\frac{v(\frb_t)}{t}>v(\fra_{\bullet})-\frac{A(v)}{t}$ for every $v\in\Val_X^*$ with $A(v)<\infty$, and for every $t>0$. 
\end{remark}

\begin{corollary}\label{cor_prop2_arbitrary_val}
If $\fra_{\bullet}$ is a graded sequence of ideals on $X$,
  then the function 
  $v\mapsto v(\fra_{\bullet})$ is continuous on 
  $\{v\in\Val_X\mid A(v)<\infty\}$.
\end{corollary}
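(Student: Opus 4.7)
The plan is to combine the semicontinuity results of Lemma~\ref{C201} with the equality $v(\fra_\bullet)=v(\frb_\bullet)$ established in Proposition~\ref{prop2_arbitrary_val} on the locus where $A(v)<\infty$. The key observation is that on this locus the single function $v\mapsto v(\fra_\bullet)$ admits two different expressions, one upper semicontinuous and one lower semicontinuous, and hence must be continuous.

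More precisely, let $\frb_\bullet$ denote the subadditive system of asymptotic multiplier ideals attached to $\fra_\bullet$. First I would invoke Lemma~\ref{C201} to note that $v\mapsto v(\fra_\bullet)$ is upper semicontinuous on all of $\Val_X$ (as an infimum of the continuous functions $v\mapsto v(\fra_m)/m$) and that $v\mapsto v(\frb_\bullet)$ is lower semicontinuous on all of $\Val_X$ (as a supremum of the continuous functions $v\mapsto v(\frb_t)/t$). Restricting these two functions to the subset $U:=\{v\in\Val_X\mid A(v)<\infty\}$ preserves semicontinuity.

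Next, by Proposition~\ref{prop2_arbitrary_val}, we have $v(\fra_\bullet)=v(\frb_\bullet)$ for every $v\in U$. Therefore the restriction of $v\mapsto v(\fra_\bullet)$ to $U$ coincides with the restriction of $v\mapsto v(\frb_\bullet)$ to $U$, so this common function is simultaneously upper and lower semicontinuous on $U$, hence continuous.

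There is no real obstacle here; the statement is essentially a packaging of the preceding two results, with the identification $v(\fra_\bullet)=v(\frb_\bullet)$ serving as the bridge between the two distinct semicontinuity properties. The only mild subtlety is to make sure that the subset $U$ is given the subspace topology from $\Val_X$, so that semicontinuity descends from $\Val_X$ to $U$; this is immediate.
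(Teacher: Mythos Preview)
Your proof is correct and matches the paper's own argument essentially verbatim: invoke Lemma~\ref{C201} for the two semicontinuity statements, use Proposition~\ref{prop2_arbitrary_val} to identify $v(\fra_\bullet)$ with $v(\frb_\bullet)$ on $\{A(v)<\infty\}$, and conclude continuity.
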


\begin{proof}
Let $W=\{v\in\Val_X\mid A(v)<\infty\}$.
 Proposition~\ref{prop2_arbitrary_val} gives 
$v(\fra_{\bullet})=v(\frb_{\bullet})$ for $v\in W$. Therefore Lemma~\ref{C201} implies that the map 
$v\to v(\fra_{\bullet})$
is both lower and upper semicontinuous, hence continuous,
on W.
\end{proof}

\begin{proposition}\label{prop1_arbitrary_val}
  If $\frb_{\bullet}$ is a subadditive sequence of ideals on $X$, of controlled growth, then 
  \begin{equation}\label{eq_prop1_arbitrary_val}
    v(\frb_{\bullet})-\frac{A(v)}{t}\leq \frac{v(\frb_t)}{t}\le v(\frb_\bullet)
  \end{equation}
  for every $t$ and every $v\in\Val_X$.
\end{proposition}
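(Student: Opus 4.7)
The right inequality $v(\frb_t)/t \le v(\frb_\bullet)$ is immediate, since by Lemma~\ref{lem2} we have $v(\frb_\bullet) = \sup_{s>0} v(\frb_s)/s$. The substantive part is the left inequality. We may as well assume $A(v) < \infty$, the other case being trivial. Using the sup description of $v(\frb_\bullet)$, it suffices to prove
\[
  \frac{v(\frb_t)}{t} \ge \frac{v(\frb_s)}{s} - \frac{A(v)}{t}
\]
for every $s > 0$; taking the supremum in $s$ then yields the claim.

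The plan is then to reduce to the case of quasi-monomial valuations by retraction. Fix $s, t > 0$ and choose a log-smooth pair $(Y,D)$ over $X$ giving a log resolution of the product $\frb_t \cdot \frb_s$. Setting $w := r_{Y,D}(v) \in \QM(Y,D)$, Corollary~\ref{C202} gives $w(\frb_t) = v(\frb_t)$ and $w(\frb_s) = v(\frb_s)$, while Corollary~\ref{cor_compatibility} gives $A(w) \le A(v)$. Substituting these, the desired inequality for $v$ reduces to the analogous inequality for the quasi-monomial valuation $w$.

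The key observation is that the inequality $w(\frb_t)/t + A(w)/t \ge w(\frb_s)/s$ is \emph{linear} in $w$ on each simplicial cone $\QM_\eta(Y,D)$: the log discrepancy is integral linear by Proposition~\ref{log_discrep}(ii), and the log resolution hypothesis implies that $\frb_t \cdot \cO_Y$ and $\frb_s \cdot \cO_Y$ are monomial in the local coordinates defining $D$ at $\eta$, making $w \mapsto w(\frb_t)$ and $w \mapsto w(\frb_s)$ linear on $\QM_\eta(Y,D)$. A linear function on a simplicial cone is non-negative if and only if it is non-negative on the extreme rays, which here are the divisorial valuations $\ord_{D_i}$ with $D_i \ni \eta$. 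For each such $\ord_{D_i}$, the desired inequality follows at once from the definition of controlled growth together with $\ord_{D_i}(\frb_s)/s \le \ord_{D_i}(\frb_\bullet)$.

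The one subtlety to navigate is the order of operations: retracting directly in the inequality $v(\frb_t)/t \ge v(\frb_\bullet) - A(v)/t$ would fail, since $w := r_{Y,D}(v)$ only satisfies $w(\frb_\bullet) \le v(\frb_\bullet)$, which is the wrong direction for a term appearing on the right-hand side. The trick is first to replace $v(\frb_\bullet)$ by the ``finite'' quantity $v(\frb_s)/s$ for fixed $s$, and then choose the log-smooth pair so as to resolve $\frb_t$ and $\frb_s$ simultaneously, in which case retraction preserves both quantities of interest. Once this reduction is set up, the linearity argument is entirely formal.
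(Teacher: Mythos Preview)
Your proof is correct and follows essentially the same route as the paper: reduce to showing $v(\frb_t)/t - v(\frb_s)/s + A(v)/t \ge 0$, retract to $\QM(Y,D)$ for $(Y,D)$ a log resolution of $\frb_s\cdot\frb_t$, and conclude by linearity of this function on the cones together with the controlled-growth hypothesis at the divisorial generators. Your added commentary on why one must pass to a fixed $s$ before retracting is accurate and helpful, but the underlying argument matches the paper's.
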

\begin{proof}
  The second inequality is definitional.
  For the first inequality, 
  it is enough to show that for every $s$ and every $v\in\Val_X$, we have 
  \begin{equation}\label{eq2_prop1_arbitrary_val}
    h(v):=\frac{v(\frb_t)}{t}-\frac{v(\frb_s)}{s}+\frac{A(v)}{t}\geq 0.
  \end{equation}
  Pick a log-smooth pair $(Y,D)$ that defines a 
  log resolution of $\frb_s\cdot\frb_t$. 
  Then $h\ge h\circ r_{Y,D}$ so it suffices to prove
  $h(v)\geq 0$ when $v\in\QM(Y,D)$.
  But this follows since $h$ is linear on $\QM(Y,D)$ and 
  $h(\ord_{D_i})>0$ for every irreducible component $D_i$ of $D$.
\end{proof}
\begin{corollary}\label{cor_prop1_arbitrary_val}
  If $\frb_{\bullet}$ is a subadditive sequence of ideals on $X$,
  of controlled growth, then the function 
  $v\mapsto v(\frb_{\bullet})$ is continuous on any subset of 
  $\Val_X$ on which $A$ is bounded. 
  In particular,  $v\mapsto v(\frb_\bullet)$ is 
  continuous on $\QM(Y,D)$ for any log-smooth  pair $(Y,D)$.
\end{corollary}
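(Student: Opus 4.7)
The plan is to exploit the two-sided bound from Proposition~\ref{prop1_arbitrary_val}, which says that for every $t>0$ and every $v\in\Val_X$,
\begin{equation*}
  v(\frb_\bullet)-\frac{A(v)}{t}\le\frac{v(\frb_t)}{t}\le v(\frb_\bullet).
\end{equation*}
On any subset $W\subseteq\Val_X$ on which $A$ is bounded by some constant $M$, this immediately gives
\begin{equation*}
  \sup_{v\in W}\left|v(\frb_\bullet)-\frac{v(\frb_t)}{t}\right|\le\frac{M}{t},
\end{equation*}
so the functions $v\mapsto t^{-1}v(\frb_t)$ converge \emph{uniformly} on $W$ to $v\mapsto v(\frb_\bullet)$ as $t\to\infty$.

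Next, for each fixed $t>0$, the evaluation map $v\mapsto v(\frb_t)$ is continuous on $\Val_X$ by Lemma~\ref{L204} (this is built into the definition of the topology via the continuity of $\phi_{\frb_t}$), hence continuous on $W$. Since a uniform limit of continuous functions on a topological space is continuous, we conclude that $v\mapsto v(\frb_\bullet)$ is continuous on $W$. This proves the first assertion.

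For the ``in particular'' part, we use that continuity is a local property. Let $v_0\in\QM(Y,D)$. By Proposition~\ref{log_discrep}, the log discrepancy $A$ restricted to $\QM(Y,D)$ agrees with the integral linear function on each simplicial cone $\QM_\eta(Y,D)$; in particular $A$ is finite and continuous on $\QM(Y,D)$. Hence we can choose an open neighborhood $U$ of $v_0$ in $\QM(Y,D)$ on which $A$ is bounded (for instance $U=\{A<A(v_0)+1\}\cap\QM(Y,D)$). By the first part, $v\mapsto v(\frb_\bullet)$ is continuous on $U$, hence continuous at $v_0$. Since $v_0$ was arbitrary, we obtain continuity on all of $\QM(Y,D)$. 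There is no real obstacle here; the only thing to be careful about is that $A$ itself is not globally bounded on $\QM(Y,D)$ (the cones extend to infinity), which is why one must localize before invoking the uniform convergence argument.
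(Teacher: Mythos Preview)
Your proof is correct and, for the first assertion, essentially identical to the paper's: both invoke Proposition~\ref{prop1_arbitrary_val} to get uniform convergence of $t^{-1}v(\frb_t)$ to $v(\frb_\bullet)$ on subsets where $A$ is bounded, and then use that a uniform limit of continuous functions is continuous.

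For the ``in particular'' part the approaches diverge slightly. You localize: since $A$ is continuous (integral linear) on $\QM(Y,D)$, it is locally bounded, so the first part applies on a neighborhood of any point. The paper instead uses \emph{homogeneity}: it observes that $v\mapsto v(\frb_\bullet)$ is continuous on the bounded slice $\QM(Y,D)\cap\{A\le 1\}$ by the first part, and then notes that $v\mapsto v(\frb_\bullet)$ is $1$-homogeneous under the $\RR_{>0}$-action $v\mapsto tv$ on $\QM(Y,D)$ (as is $A$), so continuity on the slice implies continuity on the whole cone complex. Both arguments are short and valid; the paper's homogeneity trick avoids having to say that $A$ is continuous on $\QM(Y,D)$, while your localization argument is perhaps more direct.
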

\begin{proof}
  The function $v\mapsto h(v):=v(\frb_\bullet)$
  is the pointwise limit of the continuous functions $v\mapsto \frac1mv(\frb_m)$
  and by Proposition~\ref{prop1_arbitrary_val} the convergence
  is uniform on subsets where $A$ is bounded. 
  This proves the first assertion. For the second assertion, note that 
  $h$ is continuous on $\QM(Y,D)\cap\{A\le 1\}$,
  hence on all of $\QM(Y,D)$ by homogeneity.
\end{proof}

\subsection{Jumping numbers}
\begin{lemma}\label{lem1_arbitrary_val}
  If $\fra$ and $\frq$ are nonzero ideals on $X$, then
  \begin{equation}\label{e206}
    \Arn^{\frq}(\fra)=\max_{v\in\Val_X^*}\frac{v(\fra)}{A(v)+v(\frq)}.
  \end{equation}
  Suppose that $\fra\neq\cO_X$ and $(Y,D)$ is a log-smooth pair over 
  $X$ giving a log resolution of $\fra\cdot\frq$.
  Then equality in~\eqref{e206} is achieved for $v$ 
  if and only if $v\in\QM(Y,D)$ and
  $\ord_{D_i}$ computes $\Arn^{\frq}(\fra)$ for every irreducible component 
  $D_i$ of $D$ for which $v(D_i)>0$.
  In particular, $v$ must be quasi-monomial.
\end{lemma}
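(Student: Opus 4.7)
The plan is to combine the divisorial formula from Lemma~\ref{lem1} with the retraction apparatus of \S\ref{S305}--\S\ref{S306}. For the inequality ``$\geq$'', pick a log resolution $(Y,D)$ of $\fra\cdot\frq$ and a component $D_i$ of $D$ at which the maximum in~\eqref{eq_lem1} is attained; the divisorial valuation $\ord_{D_i}\in\Val_X^*$ realizes the ratio $\Arn^\frq(\fra)$.

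For the reverse inequality, fix $v\in\Val_X^*$. The case $A(v)=\infty$ is trivial since $v(\fra)$ is finite ($\fra$ being a nonzero ideal, $v(\fra)=\min_f v(f)$ over a finite generating set), so the ratio equals zero. Assume $A(v)<\infty$ and let $w:=r_{Y,D}(v)\in\QM(Y,D)$ for $(Y,D)$ a log resolution of $\fra\cdot\frq$. Since $(Y,D)$ log-resolves both $\fra$ and $\frq$, Corollary~\ref{C202} yields $w(\fra)=v(\fra)$ and $w(\frq)=v(\frq)$, and Corollary~\ref{cor_compatibility} gives $A(w)\le A(v)$. Hence
\[
\frac{v(\fra)}{A(v)+v(\frq)}\le\frac{w(\fra)}{A(w)+w(\frq)},
\]
reducing matters to $w\in\QM(Y,D)$.

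Let $\eta=c_Y(w)$ and let $D_1,\dots,D_r$ be those components of $D$ passing through $\eta$; set $t_i:=w(D_i)$. Since $(Y,D)$ simultaneously monomializes $\fra$ and $\frq$ at $\eta$, both ideals are locally generated by monomials in local equations of the $D_i$, so $w(\fra)=\sum_i t_i\ord_{D_i}(\fra)$ and $w(\frq)=\sum_i t_i\ord_{D_i}(\frq)$; by the very definition of $A$ on $\QM(Y,D)$ from Proposition~\ref{log_discrep}, $A(w)=\sum_i t_i A(\ord_{D_i})$. Since each $A(\ord_{D_i})>0$, the ratio $w(\fra)/(A(w)+w(\frq))$ is a weighted average of the numbers $\ord_{D_i}(\fra)/(A(\ord_{D_i})+\ord_{D_i}(\frq))$, hence bounded above by their maximum, which is $\Arn^\frq(\fra)$ by Lemma~\ref{lem1}. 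This completes the proof of the identity, and also shows that the supremum is attained.

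For the characterization of equality, note that if the ratio for $v$ equals $\Arn^\frq(\fra)$ (which is strictly positive since $\fra\neq\cO_X$), then the chain of inequalities above forces $A(w)=A(v)$, whence $v\in\QM(Y,D)$ by Corollary~\ref{cor_compatibility}; in particular $v$ is quasi-monomial. The equality case of the weighted-average estimate then forces $\ord_{D_i}(\fra)/(A(\ord_{D_i})+\ord_{D_i}(\frq))=\Arn^\frq(\fra)$ for every $i$ with $t_i=v(D_i)>0$, i.e., $\ord_{D_i}$ must compute $\Arn^\frq(\fra)$ for each such $i$ (note that $v(D_j)=0$ for any component $D_j$ of $D$ not through $\eta$, as $D_j$ is defined by a unit at $\eta$). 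Conversely, these two conditions immediately give equality by the weighted-average identity. The argument is essentially assembly; the main point to keep track of is that one must use the \emph{same} pair $(Y,D)$ when invoking Corollaries~\ref{C202} and~\ref{cor_compatibility}, and exploit that a log resolution monomializes $\fra$ and $\frq$ at once, so that $w(\fra)$, $w(\frq)$ and $A(w)$ are all linear in the coordinates $t_i=w(D_i)$ on $\QM_\eta(Y,D)$.
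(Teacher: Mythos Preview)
Your proof is correct and follows essentially the same approach as the paper: retract $v$ to $\QM(Y,D)$ using Corollaries~\ref{C202} and~\ref{cor_compatibility}, then exploit that $v(\fra)$, $v(\frq)$, and $A(v)$ are all linear in the coordinates $t_i=v(D_i)$ on $\QM(Y,D)$. The paper phrases the last step as ``$v$ lies in the zero locus of the linear function $v\mapsto v(\fra)-\Arn^\frq(\fra)(A(v)+v(\frq))$ on $\QM(Y,D)$'', whereas you spell it out as a weighted-mediant inequality; these are the same computation.
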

\begin{proof}
  Let $\chi(v)=\frac{v(\fra)}{A(v)+v(\frq)}$.
  By Corollary~\ref{C202} and Corollary~\ref{cor_compatibility}, 
  $\chi\circ r_{Y,D}\ge\chi$ with strict inequality if $v\not\in\QM(Y,D)$ and $v(\fra)>0$.
  Thus $v$ achieves the maximum in (\ref{e206}) if and only if $v\in\QM(Y,D)$
  and $v$ belongs to the zero locus of the function 
  $v\mapsto v(\fra)-\Arn^\frq(\fra)(A(v)+v(\frq))$.
  But this function is linear on $\QM(Y,D)$. The result follows.
\end{proof}
\begin{corollary}\label{cor1_arbitrary_val}
  If $\frb_{\bullet}$ is a subadditive system of ideals, 
  and $\frq$ is a nonzero ideal, then
 \begin{equation}\label{e207}
   \Arn^{\frq}(\frb_{\bullet})
   =\sup_{v\in\Val_X^*, A(v)<\infty}\frac{v(\frb_{\bullet})}{A(v)+v(\frq)}.
 \end{equation}
\end{corollary}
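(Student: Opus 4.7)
The plan is to establish the two inequalities separately, in each case appealing to one of the two preceding results that together essentially contain the statement.

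For the inequality ``$\leq$'' I would invoke Proposition~\ref{prop1}, which already expresses $\Arn^{\frq}(\frb_\bullet)$ as a supremum over divisors $E$ over $X$ of the quantities $\ord_E(\frb_\bullet)/(A(\ord_E)+\ord_E(\frq))$. Since every such $\ord_E$ is a nontrivial valuation in $\Val_X^*$ with $A(\ord_E)<\infty$, this supremum is trivially dominated by the supremum in~\eqref{e207}.

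For the inequality ``$\geq$'' I would fix an arbitrary $v\in\Val_X^*$ with $A(v)<\infty$ and apply Lemma~\ref{lem1_arbitrary_val} to each individual ideal $\frb_t$ (which is nonzero by definition of a subadditive system), obtaining
\[
  \Arn^{\frq}(\frb_t)\;\geq\;\frac{v(\frb_t)}{A(v)+v(\frq)}
\]
for every $t>0$. Dividing by $t$ and letting $t\to\infty$, the left-hand side tends to $\Arn^{\frq}(\frb_\bullet)$ by the definition preceding Proposition~\ref{prop1}, while the right-hand side tends to $v(\frb_\bullet)/(A(v)+v(\frq))$ by the defining limit of $v(\frb_\bullet)$ (both limits exist in $[0,\infty]$ by Lemma~\ref{lem2}). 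Taking the supremum over $v$ yields~\eqref{e207}.

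There is no real obstacle here: once Proposition~\ref{prop1} and Lemma~\ref{lem1_arbitrary_val} are in hand, the corollary is formal, and the only minor point to note is that the limiting argument makes sense even when $v(\frb_\bullet)=+\infty$, in which case the same inequality forces $\Arn^{\frq}(\frb_\bullet)=+\infty$ as well, consistently with the supremum on the right-hand side of~\eqref{e207}.
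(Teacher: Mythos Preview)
Your proof is correct and follows essentially the same approach as the paper: Proposition~\ref{prop1} handles the inequality ``$\leq$'' since divisorial valuations satisfy $A(\ord_E)<\infty$, and Lemma~\ref{lem1_arbitrary_val} applied to each $\frb_t$, followed by dividing by $t$ and passing to the limit, gives ``$\geq$''. The paper's argument is the same, just stated more tersely.
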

\begin{proof}
  By Proposition~\ref{prop1}, we only need to show that
  \begin{equation}\label{eq_cor1_arbitrary_val}
    \Arn^{\frq}(\frb_{\bullet})\geq\frac{v(\frb_{\bullet})}{A(v)+v(\frq)}
  \end{equation}
  when $A(v)<\infty$. But Lemma~\ref{lem1_arbitrary_val} gives
  $\Arn^{\frq}(\frb_t)\cdot (A(v)+v(\frq))\geq v(\frb_t)$
  for every $t>0$. Dividing by $t$ and letting $t\to\infty$ 
  gives~\eqref{eq_cor1_arbitrary_val}.
\end{proof}
\begin{corollary}\label{cor_Arnold_graded_gen_val}
  If $\fra_{\bullet}$ is a graded sequence of ideals, 
 then for every nonzero ideal $\frq$ we have 
  \begin{equation}\label{e208}
    \Arn^{\frq}(\fra_{\bullet})=\sup_{v\in\Val_X^*}\frac{v(\fra_{\bullet})}{A(v)+v(\frq)}.
  \end{equation}
\end{corollary}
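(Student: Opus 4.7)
The plan is to assemble the corollary directly from three ingredients already established: Corollary~\ref{cor_Arnold_graded}, Lemma~\ref{lem1_arbitrary_val}, and (optionally, as a slicker alternative route) Proposition~\ref{prop2_2} together with Corollary~\ref{cor1_arbitrary_val}. The identity to prove is a ``sup versus inf'' equality, so I will prove the two inequalities separately.

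For the inequality $\Arn^{\frq}(\fra_\bullet)\leq\sup_{v}\frac{v(\fra_\bullet)}{A(v)+v(\frq)}$, I will just invoke Corollary~\ref{cor_Arnold_graded}, which already expresses $\Arn^{\frq}(\fra_\bullet)$ as a supremum over divisorial valuations $\ord_E$. Since each $\ord_E$ lies in $\Val_X^*$ and has $A(\ord_E)<\infty$, the right-hand side of~\eqref{e208} dominates every term in that supremum.

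For the reverse inequality, fix an arbitrary $v\in\Val_X^*$. If $A(v)+v(\frq)=\infty$, then the ratio $v(\fra_\bullet)/(A(v)+v(\frq))$ vanishes (note $v(\fra_\bullet)\le v(\fra_m)/m<\infty$ as soon as $\fra_m\neq(0)$), so such $v$ contribute nothing. Otherwise, Lemma~\ref{lem1_arbitrary_val} applied to each nonzero $\fra_m$ gives
\begin{equation*}
  \frac{\Arn^{\frq}(\fra_m)}{m}
  \;\geq\;\frac{1}{m}\cdot\frac{v(\fra_m)}{A(v)+v(\frq)}
  \;\geq\;\frac{v(\fra_\bullet)}{A(v)+v(\frq)},
\end{equation*}
where the last step uses $v(\fra_m)/m\geq v(\fra_\bullet)$ (which holds because $v(\fra_\bullet)=\inf_m v(\fra_m)/m$). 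Taking the infimum over $m\in S(\fra_\bullet)$ yields $\Arn^{\frq}(\fra_\bullet)\geq v(\fra_\bullet)/(A(v)+v(\frq))$, and then the supremum over $v$ completes the proof.

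No serious obstacle arises: the only subtle point is handling valuations with infinite log discrepancy (or with $v(\frq)=\infty$), which is harmless as noted above. I could alternatively combine Proposition~\ref{prop2_2} (which identifies $\Arn^{\frq}(\fra_\bullet)$ with $\Arn^{\frq}(\frb_\bullet)$ for the asymptotic multiplier ideals), Corollary~\ref{cor1_arbitrary_val} (which expresses $\Arn^{\frq}(\frb_\bullet)$ as a supremum over $v$ with $A(v)<\infty$), and Proposition~\ref{prop2_arbitrary_val} (giving $v(\frb_\bullet)=v(\fra_\bullet)$ whenever $A(v)<\infty$); but the first route above is more direct.
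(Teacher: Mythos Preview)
Your proof is correct. Interestingly, the paper takes exactly the route you list as your \emph{alternative}: it combines Proposition~\ref{prop2_2}, Corollary~\ref{cor1_arbitrary_val}, and Proposition~\ref{prop2_arbitrary_val}, passing through the subadditive system $\frb_\bullet$ of asymptotic multiplier ideals and the identity $v(\fra_\bullet)=v(\frb_\bullet)$ for $A(v)<\infty$. Your main route is different and, for the inequality $\Arn^{\frq}(\fra_\bullet)\ge v(\fra_\bullet)/(A(v)+v(\frq))$, more elementary: applying Lemma~\ref{lem1_arbitrary_val} to each $\fra_m$ and then taking the infimum avoids any appeal to controlled growth or to Proposition~\ref{prop2_arbitrary_val}. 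For the other inequality you invoke Corollary~\ref{cor_Arnold_graded}, whose proof in the paper does itself go through $\frb_\bullet$, so the multiplier-ideal machinery is not entirely bypassed; still, your packaging is cleaner. One small remark: $v(\frq)$ is always finite for a nonzero ideal $\frq$, so the only way $A(v)+v(\frq)=\infty$ is $A(v)=\infty$; your treatment of that case is fine regardless.
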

\begin{proof}
  This follows by combining Corollary~\ref{cor1_arbitrary_val} and 
  Propositions~\ref{prop2_arbitrary_val} and~\ref{prop2_2}. 
\end{proof}
As a consequence of Corollary~\ref{cor_Arnold_graded}
and Corollary~\ref{cor_Arnold_graded_gen_val} we get
\begin{corollary}\label{C301}
  For graded sequence $\fra_{\bullet}$ of ideals, the following
  conditions are equivalent:
  \begin{itemize}
  \item[(i)]
    $\Arn(\fra_\bullet)=0$;
  \item[(ii)]
    $\ord_E(\fra_\bullet)=0$ for all divisors $E$ over $X$;
  \item[(iii)]
    $v(\fra_\bullet)=0$ for all $v\in\Val_X$ with $A(v)<\infty$;
  \item[(iv)]
    $\Arn^\frq(\fra_\bullet)=0$ for every nonzero ideal $\frq$ on $X$.
 \end{itemize}   
\end{corollary}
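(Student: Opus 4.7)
The proof will be a direct bookkeeping deduction from Corollaries~\ref{cor_Arnold_graded} and~\ref{cor_Arnold_graded_gen_val}, with no real technical difficulty beyond correctly handling the case $A(v)=\infty$ in the supremum. My plan is to specialize both corollaries to $\frq=\cO_X$, obtaining simultaneously
\begin{equation*}
  \Arn(\fra_\bullet)
  =\sup_E\frac{\ord_E(\fra_\bullet)}{A(\ord_E)}
  =\sup_{v\in\Val_X^*}\frac{v(\fra_\bullet)}{A(v)},
\end{equation*}
where $E$ ranges over divisors over $X$ in the first supremum.

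From the first identity I would read off (i) $\Leftrightarrow$ (ii): every $A(\ord_E)$ lies in $(0,\infty)$, so the supremum is zero exactly when each numerator is zero. For (i) $\Leftrightarrow$ (iii) I would use the second identity, together with the observation that $v(\fra_\bullet)\le v(\fra_m)/m<\infty$ for any $m$ with $\fra_m\ne(0)$; hence valuations with $A(v)=\infty$ make no contribution to the supremum. Since the trivial valuation also contributes zero, the second supremum vanishes if and only if $v(\fra_\bullet)=0$ for every $v\in\Val_X$ with $A(v)<\infty$.

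Finally, the equivalence (i) $\Leftrightarrow$ (iv) is immediate: setting $\frq=\cO_X$ gives (iv) $\Rightarrow$ (i), while for the converse I would note that since $\ord_E(\frq)\ge 0$ for every divisor $E$, Corollary~\ref{cor_Arnold_graded} yields
\begin{equation*}
  \Arn^\frq(\fra_\bullet)
  =\sup_E\frac{\ord_E(\fra_\bullet)}{A(\ord_E)+\ord_E(\frq)}
  \le\sup_E\frac{\ord_E(\fra_\bullet)}{A(\ord_E)}
  =\Arn(\fra_\bullet),
\end{equation*}
so (i) forces $\Arn^\frq(\fra_\bullet)=0$ for every nonzero $\frq$. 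I expect no genuine obstacle; the only substantive point is the handling of the possibly-infinite log discrepancy in (iii), which is dispatched by the finiteness of $v(\fra_\bullet)$ just noted.
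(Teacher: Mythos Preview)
Your proof is correct and follows exactly the approach the paper indicates: the paper simply states that the corollary is a consequence of Corollary~\ref{cor_Arnold_graded} and Corollary~\ref{cor_Arnold_graded_gen_val}, and you have filled in the straightforward details, including the correct handling of the $A(v)=\infty$ case via the finiteness of $v(\fra_\bullet)$.
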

\begin{remark}
  The right-hand side of~\eqref{e207} is a priori undefined 
  when $A(v)=\infty$ as in this case we could also have $v(\frb_\bullet)=\infty$.
  On the other hand, for a graded sequence $\fra_\bullet$ we always have 
  $v(\fra_\bullet)<\infty$, so the right-hand side of~\eqref{e208}
  is well-defined for any nontrivial valuation $v\in\Val_X^*$.
\end{remark}

\subsection{Comparison with other valuation spaces}\label{comparison}
While our usage of the valuation space $\Val_X$ is, to our knowledge, new, 
it is certainly related to other approaches. For simplicity, suppose
that $X$ is a smooth variety over an algebraically closed field 
$k$ of characteristic zero and equip $k$ with the trivial valuation. 

In this context, the Berkovich space $\Xan$
is defined (as a topological space) 
as follows~\cite{Ber1}. When $X=\Spec A$ is affine,
$\Xan$ is the set of semivaluations 
$v\colon A\to[0,+\infty]$ whose restriction to $k$
is trivial. In general, 
$\Xan$ is obtained by gluing the 
subsets $U^\mathrm{an}$, where 
$U$ ranges over an open affine covering of $X$.
Just as for $\Val_X$, the topology on $\Xan$
is defined in terms of pointwise convergence.
Thus $\Val_X$ embeds in $\Xan$.

In fact, $\Val_X$ is dense in $\Xan$. Let us sketch a 
proof for completeness. We may assume $X=\Spec A$
is affine. Consider any $v\in\Xan$. 
If $v\notin\Val_X$, then the prime ideal $I:=\{v=\infty\}\subseteq A$
is nonzero. 
Now look at the prime ideal $J:=\{v>0\}\supseteq I$
with associated point $\xi\in X$. If $I=J$, then 
the semivaluation $v$ satisfies $v(f)=\infty$ if 
$f\in J$ and $v(f)=0$ otherwise. 
Hence the divisorial valuation $n\ord_\xi\in\Val_X$ 
tends to $v$ as $n\to\infty$.
Now suppose $J\supsetneq I$ so that $0<v(J)<\infty$.
If $(Y,D)$ is a log-smooth pair above $X$ for which the 
associated birational morphism $\phi\colon Y\to X$
satisfies $\phi(D)\subset\overline{\xi}$, then we
can define the retraction $r_{Y,D}(v)\in\Val_X$ 
as in~\S\ref{S402}. We claim that every neighborhood
of $v$ in $\Xan$ contains an element of the form 
$r_{Y,D}(v)$. To see this, fix $f\in A$. It suffices to
find a sequence $(Y_n,D_n)$ such that 
$r_{Y_n,D_n}(v)(f)\to v(f)$ as $n\to\infty$, but for this
we may take $(Y_n,D_n)$ to be a log resolution of $(f)+J^n$. 

When $X$ is projective, $\Xan$ is compact~\cite[Theorem~3.5.3]{Ber1},
hence defines a compactification of $\Val_X$. Note that while
$\Val_X$ is invariant under proper birational morphisms, $\Xan$ is not.

Given any closed point $\xi\in X$ we can also, as in~\S\ref{S401},
consider the compact subset $\cV_{X,\xi}\subseteq \Xan$ consisting
of semivaluations for which $v(\frm_\xi)=1$.
This is the valuation space studied in~\cite{BFJ1},
By~\cite[Theorem~1.16]{BFJ1} (see also~\cite{Ber1,Thu2}),
$\cV_{X,\xi}$ is contractible.
The argument above shows that 
$\Val_X\cap\cV_{X,\xi}$ is dense in $\cV_{X,\xi}$.
In fact, for each log-smooth pair $(Y,D)$ as above, 
$\cV_{X,\xi}\cap\QM(Y,D)$ is a simplicial complex
and by~\cite[Theorem~1.13]{BFJ1}, $\cV_{X,\xi}$ is
homeomorphic to the 
projective limit of these complexes.
It follows from Corollary~\ref{C402} that the
log discrepancy defined in~\cite[Definition~3.4]{BFJ1} 
(called thinness there) coincides with the one defined in this paper.

In~\cite{BFJ1}, a class of plurisubharmonic (psh) functions
on $\cV_{X,\xi}$ was defined. A posteriori, a function 
on $\cV_{X,\xi}$ is plurisubharmonic if and only if 
it is of the form $v\to-v(\frb_\bullet)$ 
where $\frb_\bullet$ is a 
subadditive system of controlled growth
satisfying $\frb_t\supseteq\frm_\xi^{p_t}$,
for each $t$, where $p_t\ge 1$.
This cone of psh functions has good compactness properties and 
is studied in detail in~\cite{BFJ3}.

In dimension $\dim X=2$, $\cV_{X,\xi}$ is naturally an
$\RR$-tree, being a contractible projective limit of 
one-dimensional simplicial complexes.
A function on $\cV_{X,\xi}$ is psh if and only if it 
satisfies certain convexity 
conditions~\cite{FJ1,BR,Thu1}.\footnote{In~\cite{FJ1,FJ2,FJ3}, the negative of a 
  psh function is called a tree potential.}
This allows us
to construct graded and subadditive sequences with 
interesting behavior. 
For example, given coordinates $(x,y)$ at $\xi$
and a strictly increasing sequence 
$1\le\beta_1<\beta_2<\dots$
of rational numbers with unbounded denominators
we can define a valuation $v\in\Val_X\cap\cV_{X,\xi}$ 
by $v(f)=\ord_{x=0}f(x,\sum_jx^{\beta_j})$.
Such a valuation satisfies $\trdeg(v)=0$, $\ratrk(v)=1$
and is called \emph{infinitely singular} in~\cite{FJ1}.
If the $\beta_j$ grow sufficiently fast, then there 
exists a psh function $\varphi$ on $\cV_{X,\xi}$ for which 
$\varphi(\ord_0)=-1$ and $\varphi(v)=-\infty$.
This translates into the existence of a 
subadditive sequence $\frb_\bullet$
of controlled growth such that 
$\ord_0(\frb_\bullet)=1$ but $v(\frb_\bullet)=\infty$.
In particular, $A(v)=\infty$.
One can also show that the associated graded system 
$\fra_\bullet=\fra_\bullet(v)$
satisfies $v(\fra_\bullet)=1$ but $w(\fra_\bullet)=0$
for all $w\in\cV_{X,\xi}\setminus\{v\}$.

\section{Valuations computing asymptotic invariants} \label{S308}
Now we are ready to formulate our main results and conjectures.
We keep our previous setup.
\subsection{Results and conjectures}
\begin{definition}
  A valuation $v\in\Val_X^*$ \emph{computes} 
  $\Arn^{\frq}(\fra_{\bullet})$, for a nonzero
  ideal $\frq$ and a graded sequence of ideals $\fra_{\bullet}$ if 
  $\Arn^{\frq}(\fra_{\bullet})=\frac{v(\fra_{\bullet})}{A(v)+v(\frq)}$.
\end{definition}
Equivalently, $v$ then computes $\lct^\frq(\fra_\bullet)$.
Of course, if $\Arn^\frq(\fra_\bullet)=0$, any valuation computes
$\Arn^\frq(\fra_\bullet)$, so we shall focus on the case 
$\Arn^\frq(\fra_\bullet)>0$ in the sequel. In this case
any $v$ computing $\Arn^\frq(\fra_\bullet)$ satisfies $A(v)<\infty$.
\begin{remark}\label{rem1_conjecture}
  If $v\in\Val^*_X$ computes $\Arn^{\frq}(\fra_{\bullet})>0$,
  then $c_X(v)$ lies in the zero-locus of 
  $(\frb_{\lambda}\colon\frq)$, where $\lambda=\lct^{\frq}(\fra_{\bullet})$ 
  and $\frb_{\lambda}=\cJ(\fra_{\bullet}^{\lambda})$. Indeed,
  if $f$ is a local section of $(\frb_{\lambda}\colon\frq)$ 
  defined in a neighborhood of $c_X(v)$, then 
  $f\cdot\frq\subseteq\frb_{\lambda}$, hence
  \begin{equation*}
    v(f)+v(\frq)\geq v(\frb_{\lambda})>\lambda\cdot v(\fra_{\bullet})-A(v)=v(\frq),
  \end{equation*}
  in view of Remark~\ref{all_t}. It follows that $v(f)>0$, so $f$ vanishes at $c_X(v)$.
\end{remark}

The following result generalizes Theorem~A from the introduction.
\begin{theorem}\label{main_thm1}
  Let $\fra_{\bullet}$ be  a graded sequence of ideals on $X$, 
  and $\frq$ a nonzero ideal.
  If $\Arn^{\frq}(\fra_{\bullet})=\lambda^{-1}>0$,
  then for every generic point $\xi$ of an irreducible
  component of $V(\cJ(\fra_{\bullet}^{\lambda})\colon\frq)$, 
  there is a valuation $v\in\Val^*_X$ that computes 
  $\Arn^{\frq}(\fra_{\bullet})$, with $c_X(v)=\xi$.
\end{theorem}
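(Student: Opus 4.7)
The plan is to execute the strategy sketched in the introduction, leveraging compactness of a space of normalized valuations. The first step will be to reduce to a complete local setting. By Proposition~\ref{P302} and Proposition~\ref{P303}(i), both the value $\lct^\frq(\fra_\bullet)$ and the log discrepancy of any valuation are preserved under the faithfully flat regular morphism $\phi\colon X'=\Spec\widehat{\cO_{X,\xi}}\to X$, and the generic-point property of $\xi$ transfers to the pullback. I will therefore replace $X$ by $X'$ and assume $X=\Spec R$ with $R$ a complete local regular Noetherian $\QQ$-algebra of dimension $n$, maximal ideal $\frm$, and $\xi$ the closed point.

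Next, I will introduce the tame graded sequence $\frc_\ell:=\sum_{i=0}^\ell \fra_i\cdot\frm^{p(\ell-i)}$ for a large positive integer $p$. This sequence contains both $\fra_\bullet$ and the powers $\frm^{p\bullet}$. The key technical claim is that for $p\gg 0$ one has $\lct^\frq(\frc_\bullet)=\lct^\frq(\fra_\bullet)=\lambda$. The easy inequality ``$\le$'' follows from $\fra_\ell\subseteq\frc_\ell$; the delicate inequality ``$\ge$'' will require a careful comparison of the asymptotic multiplier ideals $\cJ(\frc_\bullet^t)$ and $\cJ(\fra_\bullet^t)$ for $t\le\lambda$, using that $\cJ(\frm^{pm})$ is contained in $\frm^{pm-n+1}$ while the jump of $\cJ(\fra_\bullet^t)$ relative to $\frq$ occurs sharply at $t=\lambda$ precisely because $\xi$ is a generic point of a component of $V(\cJ(\fra_\bullet^\lambda)\colon\frq)$. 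This is where I expect the main obstacle to lie.

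Granted this equality, the tameness gives $\frm^{p\ell}\subseteq\frc_\ell$, so any valuation $v\in\Val_X^*$ with $v(\frc_\bullet)>0$ must satisfy $v(\frm)>0$, which in the complete local setting forces $c_X(v)=\xi$. After normalizing $v(\frm)=1$, one obtains $v(\frc_\bullet)\le p$, so Corollary~\ref{cor_Arnold_graded_gen_val} reads
\begin{equation*}
\lambda^{-1}=\Arn^\frq(\frc_\bullet)=\sup\left\{\frac{v(\frc_\bullet)}{A(v)+v(\frq)}\ \Big|\ c_X(v)=\xi,\ v(\frm)=1,\ A(v)<\infty\right\}.
\end{equation*}
Any valuation whose ratio is within $\epsilon$ of $\lambda^{-1}$ must satisfy $A(v)+v(\frq)\le p/(\lambda^{-1}-\epsilon)$, so the approximating valuations lie in the compact subset $V_M$ provided by Proposition~\ref{compact}, for some explicit $M$. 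The numerator is upper semicontinuous in $v$ by Lemma~\ref{C201}, and the denominator is lower semicontinuous by Lemma~\ref{lem_semicontinuity}, so the ratio is upper semicontinuous on $V_M$ and attains its supremum at some $v^*\in V_M$, which thus computes $\Arn^\frq(\frc_\bullet)$.

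Finally, I will transfer this back to $\fra_\bullet$. Since $\fra_\ell\subseteq\frc_\ell$, we have $v^*(\fra_\bullet)\le v^*(\frc_\bullet)$, and combining this with the upper bound $\Arn^\frq(\fra_\bullet)\ge\frac{v^*(\fra_\bullet)}{A(v^*)+v^*(\frq)}$ together with the equalities $\Arn^\frq(\fra_\bullet)=\Arn^\frq(\frc_\bullet)=\frac{v^*(\frc_\bullet)}{A(v^*)+v^*(\frq)}$ forces equality throughout. Hence $v^*$ computes $\Arn^\frq(\fra_\bullet)$ with $c_X(v^*)=\xi$, completing the argument.
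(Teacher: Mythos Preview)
Your overall strategy is essentially the paper's: pass to the completion at $\xi$, replace $\fra_\bullet$ by the tame sequence $\frc_\bullet$, use compactness of $V_M$ together with semicontinuity to extract a maximizer for $\Arn^\frq(\frc_\bullet)$, then transfer back to $\fra_\bullet$. The paper packages the equality $\lct^\frq(\frc_\bullet)=\lct^\frq(\fra_\bullet)$ as Proposition~\ref{enlarge2}; in the special case $\frm\subseteq\sqrt{(\cJ(\fra_\bullet^\lambda)\colon\frq)}$ (which is exactly your situation after completion) its proof introduces the auxiliary number $\lambda':=\lct^{\frm^n\frq}(\fra_\bullet)>\lambda$ and bounds $\Arn^\frq(\frc_\bullet)$ from below by a direct valuation-theoretic estimate on the set $V_\epsilon=\{v:\ v(\fra_\bullet)/(A(v)+v(\frq))\ge(1-\epsilon)/\lambda\}$, rather than the multiplier-ideal comparison you sketch. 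Your sketch for this step is too vague to be a proof as written, but the claim itself is correct.

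One genuine slip in the final transfer step: you write ``since $\fra_\ell\subseteq\frc_\ell$, we have $v^*(\fra_\bullet)\le v^*(\frc_\bullet)$'', but the inclusion gives the \emph{opposite} inequality $v^*(\fra_\bullet)\ge v^*(\frc_\bullet)$ (smaller ideals have larger valuations). Fortunately this reversed inequality is precisely what closes the argument:
\[
\Arn^\frq(\fra_\bullet)\ \ge\ \frac{v^*(\fra_\bullet)}{A(v^*)+v^*(\frq)}\ \ge\ \frac{v^*(\frc_\bullet)}{A(v^*)+v^*(\frq)}\ =\ \Arn^\frq(\frc_\bullet)\ =\ \Arn^\frq(\fra_\bullet),
\]
forcing equality throughout. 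With the inequality as you stated it, the chain does not close and nothing is forced.
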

As we will see in Remark~\ref{rem_divisorial} below, 
the valuation $v$ cannot always be taken divisorial.
However, we state

\begin{conjecture}\label{conj1}
  Let $\fra_{\bullet}$ be a graded sequence of ideals on $X$ and 
  $\frq$ a nonzero ideal on $X$ such that 
  $\Arn^\frq(\fra_\bullet)=\lambda^{-1}>0$. 
  \begin{itemize}
  \item \textbf{Weak version}:
    for any generic point $\xi$ of an irreducible component of the subscheme defined by
    $(\cJ(\fra_{\bullet}^{\lambda})\colon\frq)$,
    there exists a quasi-monomial valuation $v\in\Val^*_X$ that
    computes $\Arn^{\frq}(\fra_{\bullet})$  and with $c_X(v)=\xi$.
  \item \textbf{Strong version}: 
    any valuation $v\in\Val^*_X$ that computes
    $\Arn^{\frq}(\fra_{\bullet})$ must be quasi-monomial.
  \end{itemize}
\end{conjecture}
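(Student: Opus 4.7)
The plan is to reduce Conjecture~\ref{conj1} to Conjecture~C along the lines of Theorem~D in the introduction. I describe the weak version first. Starting from the valuation $v$ furnished by Theorem~\ref{main_thm1}, which computes $\Arn^\frq(\fra_\bullet)$ with $c_X(v)=\xi$ and necessarily has $A(v)<\infty$, I first localize and complete at $\xi$. By Corollary~\ref{C403} the valuation extends uniquely to $v'\in\Val^*_{X'}$ where $X':=\Spec\widehat{\cO_{X,\xi}}$, and $A_{X'}(v')=A_X(v)$ by Proposition~\ref{P303}(i). The fact that $\xi\in V(\cJ(\fra_\bullet^\lambda)\colon\frq)$ (Remark~\ref{rem1_conjecture}) together with Proposition~\ref{P302} applied to each $\fra_m$ yields $\lct^{\frq'}(\fra'_\bullet)=\lct^\frq(\fra_\bullet)$ under pullback, so $v'$ still computes the invariant. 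Cohen's structure theorem then identifies $X'$ with $\Spec k\llbracket x_1,\ldots,x_n\rrbracket$ for $k=k(\xi)$.

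Next, I would replace the sequence by $\frc_\ell:=\sum_{i=0}^\ell\fra'_i\cdot\frm_\xi^{p(\ell-i)}$ for large enough $p$. Using Remark~\ref{rem1_conjecture} and the Izumi-type bound of Proposition~\ref{prop_Izumi}, one checks $\lct^{\frq'}(\frc_\bullet)=\lct^{\frq'}(\fra'_\bullet)$ and that $v'$ continues to compute it, securing the normalization $\frc_1\supseteq\frm_\xi^p$ required in Conjecture~C. Extending scalars to the algebraic closure $\bar k$ preserves the log discrepancy via the completion analogue of Proposition~\ref{P303}(ii), and preserves quasi-monomiality by Lemma~\ref{lemma_valuations2}; a final approximation step then exchanges $\Spec\bar k\llbracket x_1,\ldots,x_n\rrbracket$ for $\AAA^n_{\bar k}$ (approximating $\frc_\ell$ and $\frq'$ by pullbacks of polynomial ideals, with the valuation transferred accordingly). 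The weak version of Conjecture~C now provides a quasi-monomial valuation on $\AAA^n_{\bar k}$ centered at the origin and computing the asymptotic invariant; pulling it back through the entire reduction chain (using again Lemma~\ref{lemma_valuations} and Lemma~\ref{lemma_valuations2} to preserve quasi-monomiality in reverse) yields the desired quasi-monomial valuation in $\Val^*_X$ with center $\xi$.

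For the strong version the identical chain is applied to an \emph{arbitrary} $v$ computing $\Arn^\frq(\fra_\bullet)$; at each step one verifies that the ``computes'' property is inherited, which is formal. The real additional subtlety is that Conjecture~C is stated in its strong form only for valuations of transcendence degree zero. When $\trdeg_X(v)>0$, I would first pass to a sufficiently high birational model $Y\to X$, decreasing $\trdeg_Y(v)=\trdeg(k_v/k(c_Y(v)))$ as $c_Y(v)$ specializes, at the cost of the finite correction $v(K_{Y/X})$ to the log discrepancy (Proposition~\ref{log_discrep}(iii)), until either $\trdeg$ drops to zero or the Abhyankar inequality is saturated — in the latter case Proposition~\ref{Abhyankar} already forces $v$ to be quasi-monomial. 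The genuine obstacle to completing this program is Conjecture~C itself, which admits no further reduction: it is a purely valuation-theoretic statement on $\AAA^n_{\bar k}$ that is currently known only in dimension two (Section~\ref{S310}), where the $\RR$-tree structure of $\cV_{X,\xi}$ furnishes the decisive geometric handle; in higher dimensions the structure of $\Val_X$ around a closed point is considerably richer and genuinely new ideas appear necessary.
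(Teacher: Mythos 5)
You were asked to prove Conjecture~\ref{conj1}, but this is a statement the paper itself does not prove: it is left open, and the paper's actual results are Theorem~\ref{main_thm1} (existence of \emph{some} computing valuation), Theorem~\ref{main_thm3}, and Theorem~\ref{main_thm2}, which reduces Conjecture~\ref{conj1} to Conjecture~\ref{conj2}, itself established only in dimension $\le 2$ (\S\ref{S310}) and for monomial sequences (\S\ref{S101}). Your proposal is, in outline, precisely the paper's reduction: localize and complete at $\xi$ (Propositions~\ref{P302},~\ref{P401},~\ref{P303}, Lemma~\ref{lemma_valuations}), enlarge the data via Propositions~\ref{enlarge2} and~\ref{enlarge3} so that $\fra_1$ and $\frq$ contain $\frm^p$, descend from $\Spec k\llbracket x_1,\dots,x_n\rrbracket$ to $\AAA^n_k$ (note that this descent is exact, not an ``approximation'': ideals containing a power of $\frm$ are extended from polynomial ideals, which is why the paper can apply Proposition~\ref{P401} ``in reverse''), and pass to $\overline{k}$ via Proposition~\ref{P402} and Lemma~\ref{lemma_valuations2}. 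You candidly acknowledge that Conjecture~C is the unproven core, so what you have written is a sketch of Theorem~\ref{main_thm2}, not a proof of the statement; judged against the statement it carries an irreducible gap, namely Conjecture~\ref{conj2} itself.

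Within the reduction there is also one concrete flaw: for the strong version you claim ``the identical chain is applied to an arbitrary $v$'' and that inheritance of the computing property ``is formal''. It is not. Proposition~\ref{enlarge2} transfers the computing property from $\frc_\bullet$ to $\fra_\bullet$, which is the direction the weak version needs, whereas the strong version needs the opposite direction; moreover its hypothesis requires $\xi$ to be the generic point of an irreducible component of $V(\cJ(\fra_\bullet^{\lambda})\colon\frq)$, while the center of an arbitrary computing valuation merely lies in that subscheme (Remark~\ref{rem1_conjecture}). The paper's proof of Theorem~\ref{main_thm2} avoids the $\frc_\bullet$ construction altogether in the strong case: it first replaces $\fra_\bullet$ by the valuation ideals $\fra'_m=\{f\mid v(f)\ge m\}$ via Theorem~\ref{equivalent_description} (i.e.\ Theorem~\ref{main_thm3}), which simultaneously yields $\frm^p\subseteq\fra'_1$ (since $v(\frm)>0$) and guarantees that $v$ computes the new invariant, and then only Proposition~\ref{enlarge3} is needed to enlarge $\frq$. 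Finally, the transcendence-degree step is handled by passing to a high model and Lemma~\ref{compute_open_subset}, where $\trdeg=0$ can always be arranged; your dichotomy ``either $\trdeg$ drops to zero or the Abhyankar inequality is saturated'' is not needed and is not how the paper argues.
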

While we are unable to prove either version of 
Conjecture~\ref{conj1}, we shall reduce them, 
in two ways, to statements that hopefully are easier to prove.
First, we reduce to the case of an affine space over
an algebraically closed field.
\begin{conjecture}\label{conj2}
  Let $X=\AAA_k^n$, where 
  $k$ is an algebraically closed field of characteristic zero
  and where $n\ge 1$.
  Let $\fra_{\bullet}$ be a graded sequence of ideals on $X$ and 
  $\frq$ a nonzero ideal on $X$ such that 
  $\Arn^\frq(\fra_\bullet)>0$ and such that 
  $\fra_1\supseteq\frm^p$, where $p\ge 1$ and 
  $\frm=\frm_\xi$ is the ideal defining a closed point $\xi\in X$.
 \begin{itemize}
  \item \textbf{Weak version}:
    there exists a quasi-monomial valuation $v\in\Val^*_X$ 
    computing $\Arn^\frq(\fra_\bullet)$ and with $c_X(v)=\xi$.
  \item \textbf{Strong version}: 
    any valuation $v\in\Val^*_X$ of transcendence degree 0
    computing $\Arn^\frq(\fra_\bullet)$ must be quasi-monomial.
  \end{itemize} 
\end{conjecture}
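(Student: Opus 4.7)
The plan is to adapt the strategy behind Theorem~\ref{main_thm1}, combined with a careful study of the locus on which a computing valuation must live. For the \textbf{weak version}, Theorem~\ref{main_thm1} applied to $X=\AAA^n_k$ produces some $v\in\Val^*_X$ computing $\Arn^\frq(\fra_\bullet)$, whose center is a generic point of an irreducible component of $V(\cJ(\fra_\bullet^\lambda):\frq)$, where $\lambda=\lct^\frq(\fra_\bullet)$. I would first argue that the hypothesis $\fra_1\supseteq\frm^p$, combined with Remark~\ref{rem1_conjecture} and Izumi's inequality (Proposition~\ref{prop_Izumi}), forces this center to be $\xi$ itself (possibly after passing to an auxiliary graded sequence $\frc_\ell=\sum_i\fra_i\frm^{p(\ell-i)}$, as in the introduction, which has the same asymptotic invariant and center $\xi$). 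The substantive task is then to \emph{upgrade} this computing valuation to a quasi-monomial one.

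The natural device is the projective-limit structure of $\Val_X$ (Theorem~\ref{T301}). Fix a cofinal sequence of log-smooth pairs $(Y_j,D_j)$ over $X$, each one chosen to be a log resolution of the asymptotic multiplier ideal $\frb_{t_j}=\cJ(\fra_\bullet^{t_j})$ for some $t_j\to\infty$. Set $v_j:=r_{Y_j,D_j}(v)$, so that each $v_j\in\QM(Y_j,D_j)$ is quasi-monomial, centered on $\xi$, with $v_j(\frb_{t_j})=v(\frb_{t_j})$ and $A(v_j)\le A(v)$ by Corollary~\ref{C202} and Corollary~\ref{cor_compatibility}. Combined with Proposition~\ref{prop2_arbitrary_val} (which identifies $v(\fra_\bullet)$ with $v(\frb_\bullet)$ when $A(v)<\infty$), this gives $F(v_j)\to F(v)$ for the ratio $F(w):=w(\fra_\bullet)/(A(w)+w(\frq))$. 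I would then use the compactness of $V_M=\{w:c_X(w)=\xi,\ w(\frm)=1,\ A(w)\le M\}$ from Proposition~\ref{compact}, together with upper semicontinuity of $F$ (Lemma~\ref{C201} and Lemma~\ref{lem_semicontinuity}), to extract a limit $v_\infty$ achieving the supremum. The delicate point is that the $v_j$ may leave every fixed $\QM(Y,D)$, so $v_\infty$ need not be quasi-monomial a priori; one would try to force quasi-monomiality by choosing the $(Y_j,D_j)$ skillfully, for instance so that the retractions eventually stabilize in some finite-dimensional cone.

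For the \textbf{strong version}, given $v$ with $\trdeg_X(v)=0$ computing $\Arn^\frq(\fra_\bullet)$, the Abhyankar inequality gives $\ratrk(v)\le n$, and by Proposition~\ref{Abhyankar} quasi-monomiality is equivalent to $\ratrk(v)=n$. I would argue by contradiction: assuming $\ratrk(v)<n$, the goal is to construct $w\in\Val^*_X$ with $F(w)>F(v)$. The candidate for $w$ would be a quasi-monomial valuation obtained by choosing a good log-smooth pair $(Y,D)$ adapted to a sufficiently fine approximation of $v$ (Corollary~\ref{C401}), and then perturbing $r_{Y,D}(v)$ inside $\QM(Y,D)$ in a direction that exploits the integral linearity of $A$ and of $w\mapsto w(\frb_t)$ on the simplicial cones, together with the gap $\ratrk(v)<n$ to gain strict inequality. \textbf{The main obstacle}, and where I expect the argument to genuinely break in dimensions $n\ge 3$, is controlling $v(\fra_\bullet)=v(\frb_\bullet)$ off the quasi-monomial locus: in dimension $2$ one can deploy the tree structure of the valuation space at a closed point as in~\cite{FJ3} to trap the value, but in higher dimensions no such combinatorial structure is available, and this step is precisely where Conjecture~C remains open.
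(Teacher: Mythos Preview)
The statement you are attempting to prove is a \emph{conjecture}, and the paper does not prove it in general. The paper establishes only the two-dimensional case (in \S\ref{S310}) and the monomial case (in \S\ref{S101}); you correctly acknowledge at the end that the general case remains open. So there is no ``paper's own proof'' to compare against for arbitrary $n$, and your proposal is best read as a strategy sketch rather than a proof.

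That said, your outline for the weak version does not go beyond what Theorem~\ref{main_thm1} already gives. Taking retractions $v_j=r_{Y_j,D_j}(v)$ and extracting a limit by compactness simply reproduces $v$ itself (or some other computing valuation), with no mechanism forcing quasi-monomiality; your suggestion to ``choose the $(Y_j,D_j)$ skillfully so that the retractions eventually stabilize'' is precisely the unresolved point. For the strong version, your idea of perturbing $r_{Y,D}(v)$ inside $\QM(Y,D)$ to strictly increase $F$ is not clearly implementable: while $A$ is linear on each cone, $w\mapsto w(\fra_\bullet)$ is merely concave and there is no evident direction in which the ratio must increase when $\ratrk(v)<n$.

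It is worth contrasting this with what the paper actually does in dimension two. There the argument is not a perturbation of a single retraction but a monotone approximation: one builds an explicit infinite tower of toroidal blowups $Y_n\to Y_{n-1}$ adapted to the infinitely singular valuation $v_*$, with retractions $v_n$ increasing to $v_*$, and shows $\chi(v_n)>\chi(v_{n+1})$ for $n\gg0$ and $\chi(v_n)\to\chi(v_*)$. The key technical input is the intersection-theoretic estimate of Lemma~\ref{L101}, which bounds the order of vanishing of strict transforms at free points by $b_n^{-1}\ord_0(H)$ with $b_n\ge 2^n$. This is what makes the derivative of $t\mapsto\chi(v_{n,t})$ negative, and it is genuinely a curve/surface phenomenon with no known analogue for $n\ge 3$. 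Your diagnosis of the obstacle is therefore accurate, but the paper's two-dimensional method is more specific than the compactness/semicontinuity framework you describe.
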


The strong version of Conjecture~\ref{conj2} 
is trivially true in dimension one. 
A proof in dimension two will be given in \S\ref{S310}
and the monomial case is treated in \S\ref{S101}.
The following result strengthens Theorem~D in
the introduction.
\begin{theorem}\label{main_thm2}
  If the weak (resp.\ strong) version of 
  Conjecture~\ref{conj2} holds for every $n\leq N$, then 
  the weak (resp.\ strong) version of Conjecture~\ref{conj1} holds 
  for all $X$ with $\dim(X)\leq N$. 
\end{theorem}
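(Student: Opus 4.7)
The plan is to reduce Conjecture~\ref{conj1} to Conjecture~\ref{conj2} via a chain of reductions, each preserving $\Arn^\frq(\fra_\bullet)$, the log discrepancy of relevant valuations, and quasi-monomiality. First, apply Theorem~\ref{main_thm1}: it produces, for any generic point $\xi$ of an irreducible component of $V(\cJ(\fra_\bullet^\lambda) : \frq)$ (where $\lambda := \lct^\frq(\fra_\bullet)$), a valuation computing $\Arn^\frq(\fra_\bullet)$ with center $\xi$. For the weak version we fix such a $\xi$ and aim to replace the valuation by a quasi-monomial one; for the strong version we start with an arbitrary valuation $v$ computing $\Arn^\frq(\fra_\bullet)$, which automatically satisfies $A(v) < \infty$ and, by Remark~\ref{rem1_conjecture}, has $c_X(v)$ in $V(\cJ(\fra_\bullet^\lambda) : \frq)$; this point we take as $\xi$.

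Next, localize and complete. Set $X' := \Spec \widehat{\cO_{X,\xi}}$; the canonical morphism $\mu : X' \to X$ is regular since $X$ is excellent. Because $\xi \in V(\cJ(\fra_\bullet^\lambda) : \frq)$ is in $\mu(X')$, Proposition~\ref{P302} applied to each $\fra_m$ and passed to the limit yields $\Arn^{\frq'}(\fra'_\bullet) = \Arn^\frq(\fra_\bullet)$, where $\fra'_m := \fra_m \cdot \cO_{X'}$ and $\frq' := \frq \cdot \cO_{X'}$. Any valuation $v$ centered at $\xi$ with $A(v) < \infty$ extends uniquely to $v' \in \Val^*_{X'}$ centered at the closed point (Corollary~\ref{C403}), and by Proposition~\ref{P303}(i) and Lemma~\ref{lemma_valuations} this extension preserves both log discrepancy and quasi-monomiality. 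Since $v'(\fra'_m) = v(\fra_m)$, the correspondence $v \leftrightarrow v'$ identifies valuations computing the respective Arn invariants. We therefore assume $X = \Spec k\llbracket x_1, \dots, x_n\rrbracket$ with $\xi$ the closed point.

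Now thicken the graded sequence. With $v$ the valuation under consideration, choose an integer $p \geq v(\fra_\bullet)/v(\frm)$, which is finite since $v$ is centered at $\xi$, and set $\frc_\ell := \sum_{i=0}^\ell \fra_i \cdot \frm^{p(\ell - i)}$. A direct computation shows $\frc_\bullet$ is a graded sequence with $\frc_1 \supseteq \frm^p$, and the identity $v(\frc_\ell)/\ell = \min_{0 \le i \le \ell}[v(\fra_i)/\ell + p(1 - i/\ell)v(\frm)]$ combined with our choice of $p$ gives $v(\frc_\bullet) = v(\fra_\bullet)$. The inclusion $\fra_\ell \subseteq \frc_\ell$ gives $\Arn^\frq(\frc_\bullet) \le \Arn^\frq(\fra_\bullet)$, and $v$ realizes this bound, forcing equality and showing that $v$ computes both invariants. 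Conversely, if a quasi-monomial $w$ computes $\Arn^\frq(\frc_\bullet)$, then $w(\fra_\bullet) \le (A(w) + w(\frq))\Arn^\frq(\fra_\bullet) = w(\frc_\bullet) \le w(\fra_\bullet)$, forcing equality, so $w$ also computes $\Arn^\frq(\fra_\bullet)$. It thus suffices to establish Conjecture~\ref{conj1} for $\frc_\bullet$, which satisfies the hypothesis $\frc_1 \supseteq \frm^p$ of Conjecture~\ref{conj2}.

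Finally, descend to $\AAA^n_{\bar k}$. Base change to the algebraic closure $\bar k$ preserves log discrepancy and quasi-monomiality for valuations centered at the closed point, via Proposition~\ref{P303}(ii) and Lemma~\ref{lemma_valuations2}. Then descend from $\Spec \bar k\llbracket x_1, \dots, x_n\rrbracket$ to $\AAA^n_{\bar k}$: using $\frc_\ell \supseteq \frm^{p\ell}$, a Nakayama-style approximation produces polynomial ideals $\frc''_\ell \subseteq \bar k[x_1, \dots, x_n]$ with $\frc''_\ell \cdot \bar k\llbracket \uy\rrbracket = \frc_\ell$, forming a graded sequence, and Proposition~\ref{P303}(i) applied at the origin in $\AAA^n_{\bar k}$ shows that the restriction of any relevant valuation has the same log discrepancy. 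For the strong version, one additional step is needed: if $\trdeg_X(v) > 0$, pass to a proper birational model $Y \to X$ (as in the proof of Proposition~\ref{Abhyankar}, via successive blowups of closures of $c_X(v)$) on which $\trdeg_Y(v) = 0$, using Proposition~\ref{prop_birational} for birational invariance of the Arn and the fact that the Abhyankar property, hence quasi-monomiality, is birationally invariant. Applying Conjecture~\ref{conj2} in its appropriate version then yields the desired conclusion. The main technical obstacle is this descent step: carrying out the polynomial approximation while respecting the graded sequence structure, together with verifying that the log discrepancy is preserved under the full composite (completion, field extension, Henselian-to-polynomial reduction), requires an adaptation of Proposition~\ref{P303} to the composite setting.
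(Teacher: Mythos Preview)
Your overall architecture matches the paper's: enlarge $\fra_\bullet$ so that $\fra_1\supseteq\frm^p$, complete at $\xi$, descend to $\AAA^n_k$, extend to $\overline{k}$, and (for the strong version) first pass to a birational model so that $\trdeg(v)=0$. However, there are three concrete gaps.

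First, you never enlarge $\frq$. To pass from $\Spec k\llbracket x_1,\dots,x_n\rrbracket$ back to $\AAA^n_k$ (and then to $\AAA^n_{\overline{k}}$), both $\frc_\ell$ \emph{and} $\frq$ must be extended from polynomial ideals; you handle $\frc_\ell$ via $\frc_\ell\supseteq\frm^{p\ell}$, but $\frq'=\frq\cdot\widehat{\cO_{X,\xi}}$ need not contain any power of $\frm'$. The paper fixes this with Proposition~\ref{enlarge3}: once $\frm^p\subseteq\fra_1$, one may replace $\frq$ by $\frq+\frm^N$ for $N\geq\lambda p$ without changing $\Arn^\frq$ or the set of valuations computing it. Without this step your descent to $\AAA^n$ does not go through.

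Second, your order of operations in the descent is the reverse of what the available tools support. You invoke Proposition~\ref{P303}(ii) to pass from $k$ to $\overline{k}$ on \emph{power series} rings, but that proposition is stated and proved only for $\AAA^n_K\to\AAA^n_k$. The paper avoids this by first descending from $\Spec k\llbracket x\rrbracket$ to $\AAA^n_k$ (using Proposition~\ref{P401} ``in reverse'', which is legitimate precisely because $\frm^p\subseteq\fra_1$ and $\frm^N\subseteq\frq$), and only then applying the algebraic field extension via Proposition~\ref{P402}. You flag this as a ``technical obstacle,'' but the correct fix is simply to swap the order.

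Third, in the strong version the birational modification achieving $\trdeg(v)=0$ must come \emph{first}, before completion: once you are on $\AAA^n_{\overline{k}}$, passing to a higher model takes you out of affine space and you can no longer invoke Conjecture~\ref{conj2}. The paper does this step at the outset (via Lemma~\ref{compute_open_subset}), then uses Theorem~\ref{equivalent_description} to replace $\fra_\bullet$ by valuation ideals, which automatically gives $\frm^p\subseteq\fra_1$ without any thickening. Your valuation-dependent choice of $p$ for $\frc_\bullet$ is a legitimate alternative to Proposition~\ref{enlarge2}, but placing the model change at the end breaks the argument.
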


Second, we reduce to the case of a graded sequence of
valuation ideals.
\begin{theorem}\label{main_thm3}
  In Conjecture~\ref{conj1} (weak and strong version)
  we may assume that $\fra_\bullet$ is a graded sequence of
  valuation ideals, that is, $\fra_m=\{f\mid w(f)\ge m\}$ 
  for some $w\in\Val^*_X$.
\end{theorem}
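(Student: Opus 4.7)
My plan is to pick a valuation $v$ that computes $\Arn^\frq(\fra_\bullet)>0$, renormalize it so that $v(\fra_\bullet)=1$, and then replace the original graded sequence by the sequence of valuation ideals associated to the renormalized valuation as in Example~\ref{E404}. In the strong version I simply take $v$ to be the given computing valuation; in the weak version I invoke Theorem~\ref{main_thm1} to produce such a $v$ with $c_X(v)=\xi$. Since $\Arn^\frq(\fra_\bullet)>0$, we necessarily have $A(v)<\infty$ and $c:=v(\fra_\bullet)\in(0,\infty)$. Set $w:=v/c\in\Val_X^*$ and $\fra_\bullet':=\fra_\bullet(w)$.

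The crux is to verify that $\fra_\bullet$ and $\fra_\bullet'$ have the same asymptotic invariants and that $v$ still computes the one attached to $\fra_\bullet'$. From $v(\fra_m)\ge mc$ we obtain $w(\fra_m)\ge m$, i.e.\ $\fra_m\subseteq\fra_m'$ for all $m$. Hence $v'(\fra_\bullet')\le v'(\fra_\bullet)$ for every $v'\in\Val_X^*$, and Corollary~\ref{cor_Arnold_graded_gen_val} gives $\Arn^\frq(\fra_\bullet')\le\Arn^\frq(\fra_\bullet)$. Conversely, by Lemma~\ref{L301} we have $w(\fra_\bullet')=1$, so $v(\fra_\bullet')=c\,w(\fra_\bullet')=c=v(\fra_\bullet)$, and
\[
\Arn^\frq(\fra_\bullet')\ \ge\ \frac{v(\fra_\bullet')}{A(v)+v(\frq)}\ =\ \frac{v(\fra_\bullet)}{A(v)+v(\frq)}\ =\ \Arn^\frq(\fra_\bullet).
\]
Thus equality holds throughout, and $v$ also computes $\Arn^\frq(\fra_\bullet')$. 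In particular $\lct^\frq(\fra_\bullet')=\lct^\frq(\fra_\bullet)=:\lambda$.

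For the strong version the reduction is now immediate: applying the hypothesized strong statement to $\fra_\bullet'$ forces $v$ to be quasi-monomial. For the weak version it remains to check that $\xi$ is still a generic point of an irreducible component of $V(\cJ((\fra_\bullet')^\lambda)\colon\frq)$. The inclusion $\fra_m\subseteq\fra_m'$ passes to asymptotic multiplier ideals, so $V(\cJ((\fra_\bullet')^\lambda)\colon\frq)\subseteq V(\cJ(\fra_\bullet^\lambda)\colon\frq)$. By Remark~\ref{rem1_conjecture} applied to $\fra_\bullet'$ and $v$, the point $\xi$ already lies in the smaller closed subset; the closure $\overline{\{\xi\}}$ is then contained in this smaller subset, and since it is a maximal irreducible subset of the larger set it must also be a component of the smaller one. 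Applying the hypothesized weak version of Conjecture~\ref{conj1} to $\fra_\bullet'$ produces a quasi-monomial valuation $v'$ centered at $\xi$ and computing $\Arn^\frq(\fra_\bullet')$, and one final use of $v'(\fra_\bullet)\ge v'(\fra_\bullet')$ shows that $v'$ computes $\Arn^\frq(\fra_\bullet)$ as well.

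I do not foresee any serious obstacle here: the only mildly delicate step is the transfer of the generic-point condition in the weak version, which is a routine point-set argument using that a generic point of an irreducible component of $Z$ that also lies in a closed subset $Y\subseteq Z$ pulls its whole component into $Y$. Everything else is a direct manipulation of the definitions via Corollary~\ref{cor_Arnold_graded_gen_val} and Lemma~\ref{L301}.
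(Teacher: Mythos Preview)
Your argument is correct and follows essentially the same route as the paper: form the valuation ideal sequence of a valuation computing $\Arn^\frq(\fra_\bullet)$, use Lemma~\ref{L301} together with the inclusions $\fra_m\subseteq\fra'_m$ to show the Arnold multiplicities coincide, and transfer computing valuations back via these inclusions. Two small differences are worth noting. First, the paper invokes Theorem~\ref{equivalent_description} (the chain (i)$\Rightarrow$(ii)$\Rightarrow$(iii)$\Rightarrow$(iv)) to conclude that $v$ computes $\Arn^\frq(\fra'_\bullet)$, whereas you prove the equality $\Arn^\frq(\fra'_\bullet)=\Arn^\frq(\fra_\bullet)$ directly from $v(\fra'_\bullet)=v(\fra_\bullet)$ and the inclusion, which is a bit more self-contained. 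Second, for the weak version you explicitly verify that $\xi$ remains the generic point of a component of $V(\cJ((\fra'_\bullet)^\lambda)\colon\frq)$, a point the paper leaves implicit; your verification via Remark~\ref{rem1_conjecture} and the containment $V(\cJ((\fra'_\bullet)^\lambda)\colon\frq)\subseteq V(\cJ(\fra_\bullet^\lambda)\colon\frq)$ is sound.
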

We also have a related result.
\begin{theorem}\label{equivalent_description}
  Let $v\in\Val^*_X$ be a nontrivial valuation with $A(v)<\infty$
  and $\frq$ a nonzero ideal on $X$.
  Then the following assertions are equivalent:
  \begin{enumerate}
  \item[(i)] 
    There is a graded sequence of ideals $\fra_{\bullet}$ on $X$ 
    such  that $v$ computes $\Arn^{\frq}(\fra_{\bullet})>0$.
  \item[(ii)] 
    There is a subadditive system of ideals $\frb_{\bullet}$ 
    (which can be assumed of controlled growth) such that
    $v$ computes $\Arn^{\frq}(\frb_{\bullet})>0$. 
  \item[(iii)] 
    For every $w\in\Val_X$ such that $w\geq v$ 
    in the sense of Definition~\ref{D201}, we have
    $A(w)+w(\frq)\geq A(v)+v(\frq)$.
 \item[(iv)] 
   If $\fra'_m=\{f\mid v(f)\geq m\}$, then $v$ computes 
   $\Arn^{\frq}(\fra'_{\bullet})$.
  \end{enumerate}
\end{theorem}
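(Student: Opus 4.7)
My plan is to prove the cycle of implications (iv) $\Rightarrow$ (i) $\Rightarrow$ (ii) $\Rightarrow$ (iii) $\Rightarrow$ (iv), using the dictionary between a graded sequence $\fra_\bullet$ and its system of asymptotic multiplier ideals $\frb_\bullet$ that was set up in~\S\ref{S302} and~\S\ref{S307}. The implication (iv) $\Rightarrow$ (i) is free, since $\fra'_\bullet$ is itself a graded sequence of ideals. For (i) $\Rightarrow$ (ii), given a graded $\fra_\bullet$ with $v$ computing $\Arn^\frq(\fra_\bullet)>0$, I would pass to the associated $\frb_\bullet$, which has controlled growth by Proposition~\ref{prop2_1}, and invoke Proposition~\ref{prop2_arbitrary_val} (applicable since $A(v)<\infty$) together with Proposition~\ref{prop2_2} to obtain $v(\fra_\bullet)=v(\frb_\bullet)$ and $\Arn^\frq(\fra_\bullet)=\Arn^\frq(\frb_\bullet)$; in particular $v$ computes $\Arn^\frq(\frb_\bullet)$.

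For (ii) $\Rightarrow$ (iii) the strategy is to combine monotonicity with the variational formula of Corollary~\ref{cor1_arbitrary_val}. Fix $w\ge v$ in the sense of Definition~\ref{D201}; we may assume $A(w)<\infty$, as otherwise the inequality in~(iii) is trivial. Since $w\ge v$ implies $w(\frb_t)\ge v(\frb_t)$ for every $t$, we have $w(\frb_\bullet)\ge v(\frb_\bullet)>0$. Then Corollary~\ref{cor1_arbitrary_val} gives
\begin{equation*}
\frac{w(\frb_\bullet)}{A(w)+w(\frq)}\;\le\;\Arn^\frq(\frb_\bullet)\;=\;\frac{v(\frb_\bullet)}{A(v)+v(\frq)},
\end{equation*}
and cross-multiplying and dividing by $v(\frb_\bullet)>0$ (using $w(\frb_\bullet)\ge v(\frb_\bullet)$) yields $A(v)+v(\frq)\le A(w)+w(\frq)$.

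The main step, and the only place where a non-formal argument is needed, is (iii) $\Rightarrow$ (iv). By Lemma~\ref{L301} we have $v(\fra'_\bullet)=1$, so testing $w=v$ in Corollary~\ref{cor_Arnold_graded_gen_val} already gives $\Arn^\frq(\fra'_\bullet)\ge 1/(A(v)+v(\frq))$, which is positive because $A(v)+v(\frq)<\infty$. For the reverse inequality, fix an arbitrary $w\in\Val_X^*$ with $A(w)<\infty$ and set $t:=w(\fra'_\bullet)$; the bound is trivial when $t=0$, so assume $t>0$. By Lemma~\ref{L301}, $t=\inf_\frb w(\frb)/v(\frb)$ over ideals $\frb$ with $v(\frb)>0$, so $w(\frb)\ge t\cdot v(\frb)$ for every ideal $\frb$, i.e.\ $w/t\ge v$ in the order of Definition~\ref{D201}. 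The key observation is that both $A$ and the evaluation $u\mapsto u(\frq)$ are positively homogeneous on $\Val_X$: this is clear on each $\QM(Y,D)$ from formula~\eqref{eq_log_discrep}, and it extends to arbitrary $u$ via the definition~\eqref{e205} together with the identity $r_{Y,D}(cu)=c\,r_{Y,D}(u)$ for $c>0$. Applying~(iii) to the valuation $w/t\in\Val_X$ and using this homogeneity gives $(A(w)+w(\frq))/t\ge A(v)+v(\frq)$, i.e.\ $w(\fra'_\bullet)/(A(w)+w(\frq))\le 1/(A(v)+v(\frq))$. Taking the supremum over $w$ via Corollary~\ref{cor_Arnold_graded_gen_val} finishes the proof. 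I expect no serious obstacle; the only non-obvious ingredient is the scaling trick $w\rightsquigarrow w/t$ in the last step, which depends crucially on the positive homogeneity of the log discrepancy.
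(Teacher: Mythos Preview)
Your proof is correct and follows essentially the same cycle of implications as the paper's proof, with the same key ingredients (Propositions~\ref{prop2_1},~\ref{prop2_2},~\ref{prop2_arbitrary_val}, Corollary~\ref{cor1_arbitrary_val}, and Lemma~\ref{L301}). The only cosmetic difference is that you make the homogeneity of $A$ and the rescaling $w\rightsquigarrow w/t$ explicit, whereas the paper simply rescales $w$ so that $w(\fra'_\bullet)=1$; and in (iv)$\Rightarrow$(i) you should note (as the paper does) that $\Arn^{\frq}(\fra'_\bullet)=v(\fra'_\bullet)/(A(v)+v(\frq))=1/(A(v)+v(\frq))>0$, so that (i) indeed holds with a \emph{positive} Arnold multiplicity.
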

In~(ii), by a valuation $v\in\Val_X^*$ computing 
$\Arn^\frq(\frb_\bullet)$ we mean that $A(v)<\infty$ and 
$v(\frb_\bullet)/(A(v)+v(\frq))=\Arn^\frq(\frb_\bullet)$.

{}From the equivalence of~(i) and~(iii) we obtain
\begin{corollary}
  If $\frq$ is a nonzero ideal on $X$ and $v$ 
  computes $\Arn(\fra_{\bullet})>0$ for 
  some graded sequence $\fra_{\bullet}$, 
  then $v$ also computes $\Arn^{\frq}(\widetilde{\fra}_{\bullet})>0$ 
  for some (other) graded sequence~$\widetilde{\fra}_{\bullet}$.
\end{corollary}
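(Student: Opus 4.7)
The plan is to deduce the corollary directly from Theorem~\ref{equivalent_description}, applied twice with two different choices of the auxiliary ideal. The hypothesis that $v$ computes $\Arn(\fra_\bullet) = \Arn^{\cO_X}(\fra_\bullet) > 0$ is condition~(i) of that theorem in the case $\frq = \cO_X$. Since for a graded sequence $v(\fra_\bullet)$ is always finite, the equality $\Arn(\fra_\bullet) = v(\fra_\bullet)/A(v) > 0$ forces $A(v) < \infty$, so $v$ satisfies the running hypothesis of the theorem. Its implication (i) $\Rightarrow$ (iii) then yields $A(w) \geq A(v)$ for every $w \in \Val_X$ with $w \geq v$.

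For the given nonzero ideal $\frq$, I would next observe that the ordering $w \geq v$ of Definition~\ref{D201} is by construction an ordering on values of all ideals, so in particular $w \geq v$ forces $w(\frq) \geq v(\frq)$. Adding this to the inequality just obtained gives
\begin{equation*}
A(w) + w(\frq) \geq A(v) + v(\frq) \quad\text{for every $w \in \Val_X$ with $w \geq v$,}
\end{equation*}
which is precisely condition~(iii) of Theorem~\ref{equivalent_description} for the ideal $\frq$.

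Applying the reverse implication (iii) $\Rightarrow$ (iv) of that theorem then exhibits an explicit graded sequence computed by $v$ with respect to $\frq$: namely the sequence of valuation ideals $\widetilde{\fra}_m := \{f \mid v(f) \geq m\}$. Positivity of $\Arn^\frq(\widetilde{\fra}_\bullet)$ is automatic: by Lemma~\ref{L301} we have $v(\widetilde{\fra}_\bullet) = 1$, while $A(v) + v(\frq) < \infty$, so $\Arn^\frq(\widetilde{\fra}_\bullet) \geq 1/(A(v)+v(\frq)) > 0$.

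There is essentially no obstacle once the equivalences of Theorem~\ref{equivalent_description} are in hand; the only conceptual point worth flagging is that Definition~\ref{D201} encodes the ordering through values on \emph{all} ideals, so that any improvement of $v$ on $\frq$ by a dominant valuation $w$ is automatically built into $w \geq v$. This is what lets condition~(iii) propagate effortlessly from $\cO_X$ to an arbitrary nonzero $\frq$.
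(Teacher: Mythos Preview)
Your proof is correct and follows essentially the same route as the paper, which simply notes that the corollary is a consequence of the equivalence of (i) and (iii) in Theorem~\ref{equivalent_description}. You have spelled out the key observation that the ordering $w\geq v$ already forces $w(\frq)\geq v(\frq)$, so condition~(iii) for $\cO_X$ upgrades to condition~(iii) for any $\frq$; your use of (iii)$\Rightarrow$(iv) rather than (iii)$\Rightarrow$(i) only makes the resulting sequence $\widetilde{\fra}_\bullet$ explicit.
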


\subsection{Valuation ideals}
Now we give the proofs of the results in Section~\ref{S308}.
We start by the reductions to the case of graded sequences of
valuation ideals,
specifically Theorems~\ref{main_thm3} and~\ref{equivalent_description}.

\begin{proof}[Proof of Theorem~\ref{equivalent_description}]
  We will show that 
  (i)$\Rightarrow$(ii)$\Rightarrow$(iii)$\Rightarrow$(iv)$\Rightarrow$(i)

  The implication (i)$\Rightarrow$(ii) follows from 
  Proposition~\ref{prop2_2} and Proposition~\ref{prop2_arbitrary_val}:
  it is enough to take $\frb_{\bullet}$ to be given by the asymptotic 
  multiplier ideals of $\fra_{\bullet}$.

  In order to show (ii)$\Rightarrow$(iii), 
  suppose that $v$ computes $\Arn^{\frq}(\frb_{\bullet})>0$. 
  If $w\ge v$, then clearly $w(\frb_{\bullet})\geq v(\frb_{\bullet})$. 
  Now Corollary~\ref{cor1_arbitrary_val} gives
  $\frac{w(\frb_{\bullet})}{A(w)+w(\frq)}\leq\frac{v(\frb_{\bullet})}{A(v)+v(\frq)}$, hence
  $\frac{A(w)+w(\frq)}{A(v)+v(\frq)}\geq\frac{w(\frb_{\bullet})}{v(\frb_{\bullet})}\geq 1$.
  Therefore we have~(iii). 
  
  Now suppose~(iii) holds.
  By Lemma~\ref{L301}, $v(\fra'_\bullet)=1$.
  To prove~(iv) it therefore suffices,
  by Corollary~\ref{cor_Arnold_graded_gen_val},
  to show that for every $w\in\Val^*_X$ we have
  \begin{equation}\label{eq2_equivalent_description}
    \frac{w(\fra'_{\bullet})}{A(w)+w(\frq)}\leq \frac{1}{A(v)+v(\frq)}.
  \end{equation}
  If $w(\fra'_\bullet)=0$, then~\eqref{eq2_equivalent_description}
  is trivial, so suppose $w(\fra'_\bullet)>0$.
  Since the left hand side is invariant under scaling of $w$,
  we may assume $w(\fra'_\bullet)=1$.
  By Lemma~\ref{L301} this implies $w\ge v$.
  The assumption~(iii) now gives $A(w)+w(\frq)\geq A(v)+v(\frq)$,
  so that~\eqref{eq2_equivalent_description} holds.

  Finally, the implication~(iv)$\Rightarrow$(i) is trivial: if $v$ computes
  $\Arn^{\frq}(\fra'_{\bullet})$, then $\Arn^{\frq}(\fra'_{\bullet})=(A(v)+v(\frq))^{-1}>0$. 
  This completes the proof.
\end{proof}

Now we turn to Theorem~\ref{main_thm3}. The assertion corresponding
to the strong versions of the conjectures follows from
the implication~(i)$\Rightarrow$(iv) in Theorem~\ref{equivalent_description}.
The assertion concerning the weak statements of the conjectures
is a consequence of Theorem~\ref{main_thm1} and the following result.
\begin{proposition}\label{reduction_special_case}
  Assume that $v\in\Val_X^*$ computes $\Arn^\frq(\fra_\bullet)>0$
  and define $\fra'_\bullet$ by $\fra'_m=\{f\mid v(f)\ge m\}$.
  Then $\Arn^\frq(\fra'_\bullet)=\Arn^\frq(\fra_\bullet)$
  and any $w\in\Val_X^*$ that computes $\Arn^\frq(\fra'_\bullet)$
  also computes $\Arn^\frq(\fra_\bullet)$.
\end{proposition}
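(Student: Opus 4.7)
The plan is to rescale $v$ so that $v(\fra_\bullet)=1$, in which case both assertions reduce to a short chain of inequalities keyed on the elementary inclusion $\fra_m\subseteq\fra'_m$. Since $v$ computes $\Arn^\frq(\fra_\bullet)>0$, one has $A(v)<\infty$ and $v(\fra_\bullet)>0$; replacing $v$ by any positive scalar multiple preserves the computing property while correspondingly rescaling the parameterization of $\fra'_\bullet$, so I would first normalize and assume $v(\fra_\bullet)=1$. Under this normalization, $\Arn^\frq(\fra_\bullet)=1/(A(v)+v(\frq))$.

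The key structural observation is that $\fra_m\subseteq\fra'_m$ for every $m\ge 1$. Indeed, by Lemma~\ref{lem2_1} the subadditive limit $v(\fra_\bullet)$ equals $\inf_m v(\fra_m)/m$, so $v(\fra_m)\ge m$, and any $f\in\fra_m$ therefore lies in $\fra'_m=\{v\ge m\}$. From this inclusion, Lemma~\ref{lem1}(i) gives $\Arn^\frq(\fra_m)\ge\Arn^\frq(\fra'_m)$; dividing by $m$ and passing to the limit yields $\Arn^\frq(\fra_\bullet)\ge\Arn^\frq(\fra'_\bullet)$. For the reverse inequality, Lemma~\ref{L301} gives $v(\fra'_\bullet)=1$, and Corollary~\ref{cor_Arnold_graded_gen_val} then forces
\begin{equation*}
\Arn^\frq(\fra'_\bullet)\ge\frac{v(\fra'_\bullet)}{A(v)+v(\frq)}=\frac{1}{A(v)+v(\frq)}=\Arn^\frq(\fra_\bullet),
\end{equation*}
so in fact equality holds, proving the first claim. (As a byproduct, $v$ itself computes $\Arn^\frq(\fra'_\bullet)$, which is the content of (i)$\Rightarrow$(iv) in Theorem~\ref{equivalent_description}.)

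For the second assertion, let $w\in\Val_X^*$ compute $\Arn^\frq(\fra'_\bullet)$. Applying $w$ to $\fra_m\subseteq\fra'_m$, dividing by $m$, and taking $m\to\infty$ yields $w(\fra_\bullet)\ge w(\fra'_\bullet)$. Combined with Corollary~\ref{cor_Arnold_graded_gen_val} and the equality just established, this produces the two-sided squeeze
\begin{equation*}
\frac{w(\fra_\bullet)}{A(w)+w(\frq)}\le\Arn^\frq(\fra_\bullet)=\Arn^\frq(\fra'_\bullet)=\frac{w(\fra'_\bullet)}{A(w)+w(\frq)}\le\frac{w(\fra_\bullet)}{A(w)+w(\frq)},
\end{equation*}
so all inequalities become equalities and $w$ computes $\Arn^\frq(\fra_\bullet)$. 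No genuine obstacle arises: the entire argument rests on the single inclusion $\fra_m\subseteq\fra'_m$, which is precisely what the normalization $v(\fra_\bullet)=1$ buys, with everything else a mechanical application of the variational and computability results already developed in Sections~\ref{S302} and~\ref{S307}.
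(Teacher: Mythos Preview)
Your argument is correct and follows essentially the same route as the paper's proof: normalize so that $v(\fra_\bullet)=1$, use the resulting inclusion $\fra_m\subseteq\fra'_m$, invoke Lemma~\ref{L301} to get $v(\fra'_\bullet)=1$, and then run the obvious chain of inequalities. The only cosmetic difference is that the paper establishes the equality $\Arn^\frq(\fra'_\bullet)=\Arn^\frq(\fra_\bullet)$ by first appealing to Theorem~\ref{equivalent_description} to conclude that $v$ computes $\Arn^\frq(\fra'_\bullet)$, whereas you obtain both inequalities directly from the inclusion and Corollary~\ref{cor_Arnold_graded_gen_val}; your version is marginally more self-contained but otherwise identical in substance.
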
  
\begin{proof}
  Since $v\in\Val_X$ computes $\Arn^\frq(\fra_\bullet)>0$
  we must have $A(v)<\infty$ and $v(\fra_{\bullet})>0$.
  After rescaling $v$, we may assume $v(\fra_{\bullet})=1$. 
  By Lemma~\ref{L301}  we also have $v(\fra'_\bullet)=1$.
  Since $v$ computes $\Arn^{\frq}(\fra_{\bullet})$, it also computes 
  $\Arn^{\frq}(\fra'_{\bullet})$ by Theorem~\ref{equivalent_description}.
  This yields
  \begin{equation*}
    \Arn^{\frq}(\fra'_{\bullet})
    =\frac{v(\fra'_{\bullet})}{A(v)+v(\frq)}
    =\frac{1}{A(v)+v(\frq)}
    =\frac{v(\fra_\bullet)}{A(v)+v(\frq)}
    =\Arn^{\frq}(\fra_{\bullet}).
  \end{equation*}
  Now $v(\fra_\bullet)=1$ implies 
  $\fra_m\subseteq\fra'_m$ for every $m$. 
  In particular, $w(\fra'_m)\leq w(\fra_m)$ for all $m$
  and all $w\in\Val_X^*$, hence $w(\fra'_{\bullet})\leq w(\fra_{\bullet})$. 
  If $w$ computes $\Arn^\frq(\fra'_\bullet)$, we therefore
  get
 \begin{equation*}
    \Arn^{\frq}(\fra_{\bullet})
    =\Arn^{\frq}(\fra'_{\bullet})=\frac{w(\fra'_{\bullet})}{A(w)+w(\frq)}
    \leq\frac{w(\fra_{\bullet})}{A(w)+w(\frq)},
  \end{equation*}
  so that $w$ computes $\Arn^{\frq}(\fra_{\bullet})$
  (and $w(\fra_\bullet')=v(\fra_\bullet')$).
\end{proof}

\subsection{Birational and regular morphisms}
Throughout this subsection, $\varphi\colon X'\to X$ is 
a morphism that is either proper birational or regular.
Let $\fra_\bullet$ be a graded sequence of ideals on $X$, 
$\frq$ a nonzero ideal on $X$ and 
$\fra'_\bullet$, $\frq'$ their transforms to $X'$,
defined by $\fra'_m:=\fra_m\cdot\cO_{X'}$ and
$\frq':=\frq\cdot\cO_{X'}(-K_{X'/X})$ (in the birational case) or $\frq'=\frq\cdot\cO_{X'}$
(in the regular case).
\begin{lemma}\label{compute_open_subset}
  Suppose that $\varphi\colon X'\to X$ is a proper birational
  morphism with $X'$ regular.
  Then $\Arn^{\frq'}(\fra'_{\bullet})=\Arn^{\frq}(\fra_{\bullet})$.
  Moreover, $v\in\Val_X=\Val_{X'}$  
  computes $\Arn^{\frq'}(\fra'_{\bullet})$ 
  if and only if it computes $\Arn^{\frq'}(\fra'_{\bullet})$.
\end{lemma}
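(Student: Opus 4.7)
The plan is to show that for any nontrivial valuation $v\in\Val_X=\Val_{X'}$, the two ratios defining ``computes'' are literally equal:
\[
\frac{v(\fra_\bullet)}{A_X(v)+v(\frq)}
=\frac{v(\fra'_\bullet)}{A_{X'}(v)+v(\frq')}.
\]
Once this is established, the equivalence of the two ``computes'' statements is immediate, and the equality $\Arn^{\frq'}(\fra'_\bullet)=\Arn^\frq(\fra_\bullet)$ is already Proposition~\ref{prop_birational}.

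First I would observe that since $\fra'_m=\fra_m\cdot\cO_{X'}$, the value $v(\fra'_m)$ equals $v(\fra_m)$ for every $m$; dividing by $m$ and passing to the limit gives $v(\fra'_\bullet)=v(\fra_\bullet)$. Next, since $K_{X'/X}$ is effective and $\cO_{X'}(-K_{X'/X})$ is the ideal sheaf of $K_{X'/X}$, the product of ideals $\frq'=\frq\cdot\cO_{X'}(-K_{X'/X})$ satisfies
\[
v(\frq')=v(\frq)+v(K_{X'/X}).
\]
Finally, Remark~\ref{R301} (which extends Proposition~\ref{log_discrep}(iii) from the quasi-monomial setting to arbitrary valuations) gives $A_X(v)=A_{X'}(v)+v(K_{X'/X})$. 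Adding these two identities yields $A_{X'}(v)+v(\frq')=A_X(v)+v(\frq)$, which together with $v(\fra'_\bullet)=v(\fra_\bullet)$ gives the desired equality of ratios.

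I would treat the trivial case $\Arn^\frq(\fra_\bullet)=0$ separately: then both Arnold multiplicities vanish by Proposition~\ref{prop_birational}, and every $v\in\Val^*_X$ computes them trivially, so there is nothing to check. When $\Arn^\frq(\fra_\bullet)>0$, any valuation that computes it necessarily satisfies $A(v)<\infty$, so all terms in the identity above are finite and the manipulation is unambiguous. There is no real obstacle in the argument; the content of the lemma is essentially the bookkeeping identity between $A_X$ and $A_{X'}$ modulo the canonical discrepancy, which has already been set up in~\S\ref{S306}. The typo in the statement (both sides read $\Arn^{\frq'}(\fra'_\bullet)$) will of course be corrected to read ``computes $\Arn^\frq(\fra_\bullet)$ if and only if it computes $\Arn^{\frq'}(\fra'_\bullet)$.''
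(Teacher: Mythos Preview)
Your proof is correct and follows essentially the same approach as the paper: both invoke Proposition~\ref{prop_birational} for the equality of Arnold multiplicities, and then use the three identities $v(\fra'_\bullet)=v(\fra_\bullet)$, $v(\frq')=v(\frq)+v(K_{X'/X})$, and $A_X(v)=A_{X'}(v)+v(K_{X'/X})$ (via Remark~\ref{R301}) to conclude that the two ratios coincide. Your version is slightly more explicit about the trivial case and about the typo in the statement, but the argument is the same.
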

\begin{proof}
  The equality $\Arn^{\frq'}(\fra'_{\bullet})=\Arn^{\frq}(\fra_{\bullet})$ 
  is exactly Proposition~\ref{prop_birational}.
  The last assertion in the lemma follows from
  $v(\fra_{\bullet})=v(\fra'_{\bullet})$, 
  $v(\frq')=v(\frq)+v(K_{X'/X})$ and 
  $A_X(v)=A_{X'}(v)+v(K_{X'/X})$; see Remark~\ref{R301}.
\end{proof}
\begin{proposition}\label{P401}
  Suppose that $\Arn^\frq(\fra_\bullet)=\lambda^{-1}>0$.
  Let $\xi\in X$ be a point in the subscheme defined by
  $(\cJ(\fra_{\bullet}^{\lambda})\colon\frq)$.
  Let $\phi\colon X'\to X$ be the canonical morphism,
  where $X'=\Spec\widehat{\cO_{X,\xi}}$.
  Then $\Arn^{\frq}(\fra_{\bullet})=\Arn^{\frq'}(\fra'_{\bullet})$.
  Moreover, if $v'\in\Val_{X'}$ is a valuation centered at the closed
  point and $v\in\Val_X$ denotes
  its restriction to $X$, then $v$ computes
  $\Arn^{\frq}(\fra_{\bullet})$ if and only if
  $v'$ computes $\Arn^{\frq'}(\fra'_{\bullet})$.
\end{proposition}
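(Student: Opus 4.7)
The plan is to establish the two assertions in sequence: first the equality of Arnold multiplicities, and then the preservation of the ``computes'' property under the correspondence $v'\leftrightarrow v$. Since $X$ is excellent, the canonical morphism $\phi$ is regular, so Proposition~\ref{P302} applied to each $\fra_m$ gives $\cJ(\fra_m'^{t/m})=\cJ(\fra_m^{t/m})\cdot\cO_{X'}$ for all $t$ and $m$. Taking $m$ divisible enough, this yields $\frb'_t=\frb_t\cdot\cO_{X'}$ for every $t>0$, where $\frb'_t$ denotes the asymptotic multiplier ideal of $\fra'_\bullet$. By Proposition~\ref{prop2_3}, $\lct^{\frq'}(\fra'_\bullet)$ is the smallest $t$ with $\frq'\not\subseteq\frb'_t$. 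For $t<\lambda$ the inclusion $\frq\subseteq\frb_t$ extends to $\frq'\subseteq\frb'_t$ under pullback. The hypothesis $\xi\in V(\frb_\lambda\colon\frq)$ means $(\frb_\lambda\colon\frq)_\xi\subseteq\frm_\xi$, so $\frq_\xi\not\subseteq(\frb_\lambda)_\xi$; since $\cO_{X,\xi}\to\widehat{\cO_{X,\xi}}$ is faithfully flat, this non-inclusion persists at the closed point of $X'$, giving $\frq'\not\subseteq\frb'_\lambda$. Hence $\lct^{\frq'}(\fra'_\bullet)=\lambda$, proving the first assertion.

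For the second assertion, let $v'\in\Val_{X'}$ be centered at the closed point $\xi'\in X'$ and let $v=v'|_{K(X)}\in\Val_X$, so that $c_X(v)=\phi(\xi')=\xi$. Picking local generators $f_1,\dots,f_r$ of $\fra_m$ near $\xi$ produces generators of $\fra'_m$ near $\xi'$, and since $v'|_{\cO_{X,\xi}}=v$ we obtain
\[
v'(\fra'_m)=\min_i v'(f_i)=\min_i v(f_i)=v(\fra_m)
\]
for every $m$, hence $v'(\fra'_\bullet)=v(\fra_\bullet)$; the same argument gives $v'(\frq')=v(\frq)$. Moreover, Proposition~\ref{P303}(i) provides the log-discrepancy equality $A_{X'}(v')=A_X(v)$. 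Putting these together,
\[
\frac{v'(\fra'_\bullet)}{A_{X'}(v')+v'(\frq')}=\frac{v(\fra_\bullet)}{A_X(v)+v(\frq)},
\]
and combined with the equality of Arnold multiplicities already established, one side realizes $\Arn^{\frq'}(\fra'_\bullet)$ if and only if the other realizes $\Arn^{\frq}(\fra_\bullet)$.

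The main obstacle is establishing the equality of Arnold multiplicities, and more precisely the inequality $\lct^{\frq'}(\fra'_\bullet)\le\lambda$. The general inequality $\ge$ comes for free from Proposition~\ref{P302} applied term by term to each $\fra_m$, but the reverse requires exploiting the hypothesis on $\xi$, through faithful flatness of the local completion map, to promote a failure of inclusion at $\xi$ to a failure of inclusion at the closed point of $X'$. Once this is in place, the second assertion is essentially bookkeeping that assembles already-developed compatibilities of multiplier ideals, valuations, and log discrepancies under the regular morphism~$\phi$.
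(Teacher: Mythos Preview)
Your proof is correct and follows essentially the same approach as the paper's. The paper's own proof is very terse—it simply says the equality of Arnold multiplicities ``follows from Proposition~\ref{P302}'' and then invokes Proposition~\ref{P303} for the log discrepancy equality—whereas you have unpacked precisely how Proposition~\ref{P302} yields the result for a graded sequence: by pulling back the asymptotic multiplier ideals $\frb_t$ via Proposition~\ref{P302}, invoking Proposition~\ref{prop2_3}, and using faithful flatness of completion to carry the non-inclusion $\frq_\xi\not\subseteq(\frb_\lambda)_\xi$ to $X'$. This is exactly the content implicit in the paper's citation.
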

\begin{proof}
  Since $\phi$ is regular (recall that $X$ is excellent),
  the equality of Arnold multiplicities follows from 
  Proposition~\ref{P302}. 
  Proposition~\ref{P303} implies $A(v')=A(v)$.
  Since $v'(\frq')=v(\frq)$ and $v'(\fra'_\bullet)=v(\fra_\bullet)$,
  it is now clear that $v'$ computes $\Arn^{\frq'}(\fra'_\bullet)$
  if and only if $v'$ computes $\Arn^{\frq}(\fra_\bullet)$.
\end{proof}
\begin{proposition}\label{P402}
  If $K/k$ is an algebraic field extension and
  $\varphi\colon X'=\AAA_K^n\to\AAA_k^n=X$ is the canonical map,
  then $\Arn^{\frq}(\fra_{\bullet})=\Arn^{\frq'}(\fra'_{\bullet})$.
  Moreover, for $v'\in\Val^*_{X'}$ let $v\in\Val^*_X$ be the 
  restriction of $v'$ to $X$. 
  Then $v$ computes $\Arn^{\frq}(\fra_{\bullet})$ if and only if
  $v'$ computes $\Arn^{\frq'}(\fra'_{\bullet})$.
\end{proposition}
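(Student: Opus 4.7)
The plan is to mirror the proof of Proposition~\ref{P401} almost verbatim, replacing its use of Proposition~\ref{P303}(i) by Proposition~\ref{P303}(ii) and its appeal to regularity of the completion map by Example~\ref{E401}, which says that $\varphi\colon\AAA_K^n\to\AAA_k^n$ is regular and faithfully flat.

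First, I would establish the equality $\Arn^{\frq}(\fra_{\bullet})=\Arn^{\frq'}(\fra'_{\bullet})$. For each $m\in S(\fra_{\bullet})$, Proposition~\ref{P302} applied to $\fra_m$ (together with faithful flatness of $\varphi$, which guarantees the hypothesis about the nonempty intersection) yields $\lct^{\frq'}(\fra'_m)=\lct^{\frq}(\fra_m)$, equivalently $\Arn^{\frq'}(\fra'_m)=\Arn^{\frq}(\fra_m)$. Dividing by $m$ and letting $m\to\infty$ along $S(\fra_{\bullet})$ gives the asymptotic equality.

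Next, I would verify the two elementary identities $v'(\fra'_m)=v(\fra_m)$ and $v'(\frq')=v(\frq)$. Since $\fra'_m=\fra_m\cdot\cO_{X'}$ and $\frq'=\frq\cdot\cO_{X'}$, every local section of these ideals at $c_{X'}(v')$ is an $\cO_{X',c_{X'}(v')}$-linear combination of pullbacks of sections of the corresponding ideal on $X$; using that $v'$ is nonnegative on the local ring and restricts to $v$ on $K(X)$, the inequality $v'(\fra'_m)\ge v(\fra_m)$ comes from bounding linear combinations, while the reverse comes from testing against pullbacks of generators. Passing to the limit yields $v'(\fra'_\bullet)=v(\fra_\bullet)$. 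Proposition~\ref{P303}(ii) then gives $A(v')=A(v)$.

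With these three equalities in hand, the equivalence is immediate: plugging them into
\[
\frac{v'(\fra'_\bullet)}{A(v')+v'(\frq')}=\frac{v(\fra_\bullet)}{A(v)+v(\frq)}
\]
together with $\Arn^{\frq}(\fra_\bullet)=\Arn^{\frq'}(\fra'_\bullet)$ shows that one ratio equals the asymptotic Arnold multiplicity iff the other does. The only subtle point is the applicability of Proposition~\ref{P303}(ii), which requires $v'$ to be centered at $0\in X'$; this is the hypothesis that should be present (either stated or implicit from the reduction to Conjecture~C), and is the analogue of the centering hypothesis in Proposition~\ref{P401}. I do not foresee any genuine obstacle—the argument is essentially a bookkeeping exercise once Propositions~\ref{P302} and~\ref{P303}(ii) are available.
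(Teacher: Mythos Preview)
Your proposal is correct and follows essentially the same route as the paper: invoke Example~\ref{E401} for regularity and faithful flatness, apply Proposition~\ref{P302} to get the equality of Arnold multiplicities, use Proposition~\ref{P303} for $A(v')=A(v)$, and combine with $v'(\fra'_\bullet)=v(\fra_\bullet)$, $v'(\frq')=v(\frq)$ to conclude. Your caveat about the centering hypothesis in Proposition~\ref{P303}(ii) is well taken; the paper's own proof cites Proposition~\ref{P303} without comment, and indeed the proposition is only ever applied in contexts (the proof of Theorem~\ref{main_thm2}) where $v'$ is centered at the origin, so the implicit assumption you flag is present in the paper as well.
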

\begin{proof}
  Since $\varphi$ is regular and faithfully flat (see Example~\ref{E401}),
  the equality $\Arn^{\frq}(\fra_{\bullet})=\Arn^{\frq'}(\fra'_{\bullet})$ follows
  from Proposition~\ref{P302}.
  Proposition~\ref{P303} implies $A(v')=A(v)$.
  Since $v'(\frq')=v(\frq)$ and $v'(\fra'_\bullet)=v(\fra_\bullet)$,
  it is now clear that $v'$ computes $\Arn^{\frq'}(\fra'_\bullet)$
  if and only if $v$ computes $\Arn^{\frq}(\fra_\bullet)$.
\end{proof}

\subsection{Enlarging a graded sequence}
Fix a  graded sequence $\fra_{\bullet}$ of ideals,
a nonzero ideal $\frq$ on $X$ and a point $\xi\in X$.
For the proof of Theorem~\ref{main_thm2} it is useful
to enlarge $\frq$ and $\fra_\bullet$ so that they vanish only
at $\xi$. Given an integer $p\ge 1$, define
$\frc_\bullet$ by
\begin{equation}\label{e401}
  \frc_j=\sum_{i=0}^j\fra_i\cdot\frm^{p(j-i)}, j\ge 0
\end{equation}
where $\frm=\frm_\xi$ is the ideal defining $\overline{\xi}$.
Note that $\frc_1\supseteq\frm^p$.

\begin{proposition}\label{enlarge2}
  Assume $\Arn^\frq(\fra_\bullet)=\lambda^{-1}>0$ and 
  let $\xi$ be the generic point of an irreducible component 
 of the subscheme defined by $(\cJ(\fra_{\bullet}^{\lambda})\colon\frq)$.
 Define $\frc_\bullet$ using~\eqref{e401}.
  Then, for $p\gg0$, 
  $\lct^{\frq}(\frc_{\bullet})=\lct^\frq(\fra_\bullet)=\lambda$ 
  and if $v\in\Val^*_X$ computes $\Arn^{\frq}(\frc_{\bullet})$, 
  then $v$ also computes $\Arn^{\frq}(\fra_{\bullet})$. 
\end{proposition}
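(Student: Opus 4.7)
The plan is to argue by reduction to a specific valuation. The easy direction comes for free: since $\fra_j\subseteq \frc_j$ for every $j\ge 0$ (take $i=j$ in the defining sum), Lemma~\ref{lem1}(i) yields $\lct^{\frq}(\frc_\bullet)\ge \lct^{\frq}(\fra_\bullet)=\lambda$. For the reverse inequality I will exploit the valuation produced by Theorem~\ref{main_thm1} applied to the given generic point $\xi$: this supplies some $v\in\Val_X^*$ with $c_X(v)=\xi$ that computes $\Arn^\frq(\fra_\bullet)$, so $v(\fra_\bullet)=(A(v)+v(\frq))/\lambda$. Crucially for what follows, $v(\fra_\bullet)$ is positive and finite, while $v(\frm)>0$ because $c_X(v)=\xi\in V(\frm)$.

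The core of the argument is a short linear-programming computation showing that as soon as $p\ge v(\fra_\bullet)/v(\frm)$, one has $v(\frc_\bullet)=v(\fra_\bullet)$. Since $v$ converts sums of ideals into minima, we have
\begin{equation*}
 v(\frc_j)=\min_{0\le i\le j}\bigl(v(\fra_i)+p(j-i)v(\frm)\bigr)
\end{equation*}
(remembering $\fra_0=\cO_X$). Plugging in the elementary bound $v(\fra_i)\ge i\cdot v(\fra_\bullet)$, which is immediate from $v(\fra_\bullet)=\inf_m v(\fra_m)/m$, the inner expression is bounded below by the linear function $i\,v(\fra_\bullet)+p(j-i)v(\frm)$ of $i\in[0,j]$, whose minimum is attained at an endpoint and equals $j\cdot\min(v(\fra_\bullet),p\,v(\frm))$. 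Once $p\,v(\frm)\ge v(\fra_\bullet)$, this minimum is $j\,v(\fra_\bullet)$, so dividing by $j$ and letting $j\to\infty$ gives $v(\frc_\bullet)\ge v(\fra_\bullet)$; the opposite inequality is automatic from $\fra_j\subseteq \frc_j$.

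Both conclusions of the proposition then follow from the variational characterization in Corollary~\ref{cor_Arnold_graded_gen_val}. For the first,
\begin{equation*}
 \Arn^\frq(\frc_\bullet)\ge \frac{v(\frc_\bullet)}{A(v)+v(\frq)}=\frac{v(\fra_\bullet)}{A(v)+v(\frq)}=\frac{1}{\lambda},
\end{equation*}
while $\fra_m\subseteq \frc_m$ forces the reverse inequality, so $\lct^\frq(\frc_\bullet)=\lambda$. For the second, if $w\in\Val_X^*$ computes $\Arn^\frq(\frc_\bullet)=1/\lambda$, then $\fra_m\subseteq \frc_m$ gives $w(\fra_\bullet)\ge w(\frc_\bullet)=(A(w)+w(\frq))/\lambda$, while Corollary~\ref{cor_Arnold_graded_gen_val} applied to $\fra_\bullet$ supplies the matching upper bound, so $w$ computes $\Arn^\frq(\fra_\bullet)$. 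The only delicate point of the whole argument is ensuring $v(\frm)>0$, so that the threshold $v(\fra_\bullet)/v(\frm)$ is actually finite; this is exactly why the full strength of Theorem~\ref{main_thm1}, with its precise control over the center of $v$, must be invoked at the outset.
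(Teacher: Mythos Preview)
Your argument has a circular dependency: you invoke Theorem~\ref{main_thm1} to obtain a valuation $v$ with $c_X(v)=\xi$ computing $\Arn^\frq(\fra_\bullet)$, but in the paper's logical structure Proposition~\ref{enlarge2} is used \emph{in the proof of} Theorem~\ref{main_thm1}. Indeed, the proof of Theorem~\ref{main_thm1} begins by applying Propositions~\ref{enlarge2} and~\ref{enlarge3} in order to reduce to the case $\frm^p\subseteq\fra_1$, and only then runs the compactness argument on the normalized valuation space. So your proof, as written, is circular.

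If one had an independent proof of Theorem~\ref{main_thm1}, your approach would be valid and in fact cleaner than the paper's: once a single computing valuation centered at $\xi$ is in hand, the verification that $v(\frc_\bullet)=v(\fra_\bullet)$ for $p\ge v(\fra_\bullet)/v(\frm)$ is exactly the short linear-programming calculation you give, and the rest follows formally. The paper instead avoids any appeal to a computing valuation: in the special case where $\frm$ lies in the radical of $(\cJ(\fra_\bullet^\lambda):\frq)$ it chooses $n$ with $\frm^n\frq\subseteq\cJ(\fra_\bullet^\lambda)$, sets $\lambda'=\lct^{\frm^n\frq}(\fra_\bullet)>\lambda$, and picks $p>n/(\lambda'-\lambda)$; a uniform estimate over the set $V_\epsilon=\{v:\ v(\fra_\bullet)/(A(v)+v(\frq))\ge(1-\epsilon)/\lambda\}$ then gives $\Arn^\frq(\frc_\bullet)\ge\Arn^\frq(\fra_\bullet)$. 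The general case is reduced to this one by passing to the completion at $\xi$. Your proof of the second assertion (that any $w$ computing $\Arn^\frq(\frc_\bullet)$ also computes $\Arn^\frq(\fra_\bullet)$) coincides with the paper's.
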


\begin{proposition}\label{enlarge3}
Suppose that $\Arn^{\frq}(\fra_{\bullet})=\lambda^{-1}>0$ and that $\frm^p\subseteq\fra_1$.
If $N\geq\lambda p$ and $\frr=\frq+\frm^N$, then $\Arn^{\frq}(\fra_{\bullet})=\Arn^{\frr}(\fra_{\bullet})$. Furthermore,
if $v\in\Val_X^*$, then $v$ computes $\Arn^{\frq}(\fra_{\bullet})$ if and only if
$v$ computes $\Arn^{\frr}(\fra_{\bullet})$.
\end{proposition}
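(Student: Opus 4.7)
\textbf{Plan for the proof of Proposition~\ref{enlarge3}.} The overall strategy is to decompose the ideal $\frr = \frq + \frm^N$ and compare the three Arnold multiplicities $\Arn^\frq(\fra_\bullet)$, $\Arn^{\frm^N}(\fra_\bullet)$ and $\Arn^\frr(\fra_\bullet)$. The hypothesis $\frm^p\subseteq\fra_1$ forces $\frm^{mp}\subseteq\fra_1^m\subseteq\fra_m$ for every $m\geq1$, hence $v(\fra_m)\leq mp\cdot v(\frm)$ for every $v\in\Val_X$. Dividing by $m$ and letting $m\to\infty$, the key estimate
\begin{equation*}
  v(\fra_\bullet)\leq p\cdot v(\frm)
\end{equation*}
holds for all $v\in\Val_X$.

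Next I would bound $\Arn^{\frm^N}(\fra_\bullet)$. Using Corollary~\ref{cor_Arnold_graded_gen_val}, for any $v\in\Val_X^*$ with $v(\frm)>0$,
\begin{equation*}
  \frac{v(\fra_\bullet)}{A(v)+N\cdot v(\frm)}
  \leq\frac{p\cdot v(\frm)}{N\cdot v(\frm)}
  =\frac{p}{N}\leq\lambda^{-1},
\end{equation*}
while if $v(\frm)=0$ the left-hand side is zero. Hence $\Arn^{\frm^N}(\fra_\bullet)\leq\lambda^{-1}$. On the other hand, applying Lemma~\ref{lem1}~(iii) to each $\fra_m$ and passing to the limit (using that the maximum of two convergent sequences converges to the maximum of their limits) yields
\begin{equation*}
  \Arn^\frr(\fra_\bullet)
  =\max\bigl(\Arn^\frq(\fra_\bullet),\Arn^{\frm^N}(\fra_\bullet)\bigr)
  =\lambda^{-1}=\Arn^\frq(\fra_\bullet),
\end{equation*}
proving the first assertion.

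For the "moreover" part, I would use the identity $v(\frr)=\min\{v(\frq),N\cdot v(\frm)\}$ from~\eqref{e210}. If $v\in\Val_X^*$ computes $\Arn^\frq(\fra_\bullet)$, then $v(\fra_\bullet)=\lambda^{-1}(A(v)+v(\frq))$; combined with $v(\fra_\bullet)\leq p\cdot v(\frm)$ this gives
\begin{equation*}
  A(v)+v(\frq)\leq \lambda p\cdot v(\frm)\leq N\cdot v(\frm),
\end{equation*}
so $v(\frq)\leq N\cdot v(\frm)$ and therefore $v(\frr)=v(\frq)$, whence the two computing conditions coincide. Conversely, if $v$ computes $\Arn^\frr(\fra_\bullet)$, then $v(\fra_\bullet)=\lambda^{-1}(A(v)+v(\frr))\leq p\cdot v(\frm)$ gives $v(\frr)\leq \lambda p\cdot v(\frm)-A(v)<N\cdot v(\frm)$, since $A(v)>0$ for $v$ nontrivial (by the discussion in~\S\ref{S401}). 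Therefore $v(\frr)=v(\frq)$ in this case as well, and $v$ computes $\Arn^\frq(\fra_\bullet)$.

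There is no real obstacle here: the statement amounts to observing that, under the numerical assumption $N\geq\lambda p$, the $\frm^N$ factor in $\frr$ is "invisible" both to the asymptotic Arnold multiplicity and to any valuation that attains it, because such a valuation automatically satisfies $v(\frq)\leq N\cdot v(\frm)$.
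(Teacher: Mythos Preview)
Your proof is correct, but the route to the first assertion differs from the paper's. The paper argues via multiplier ideals: since $\frm^N\subseteq\cJ(\frm^N)\subseteq\cJ(\frm^{\lambda p})\subseteq\cJ(\fra_1^\lambda)\subseteq\cJ(\fra_\bullet^\lambda)$, Proposition~\ref{prop2_3} shows that adding $\frm^N$ to $\frq$ does not change the smallest $\lambda$ with $\frq\not\subseteq\cJ(\fra_\bullet^\lambda)$. You instead split $\Arn^\frr$ as $\max(\Arn^\frq,\Arn^{\frm^N})$ via Lemma~\ref{lem1}(iii) applied to each $\fra_m$, pass to the limit, and bound $\Arn^{\frm^N}(\fra_\bullet)\le p/N\le\lambda^{-1}$ directly from the valuation formula. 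This is a more elementary argument that avoids multiplier ideals entirely; the paper's version is shorter but relies on the machinery of asymptotic multiplier ideals already in place.

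For the ``moreover'' part, your converse direction is essentially identical to the paper's. In the forward direction the paper is slightly more economical: it simply notes that $\frq\subseteq\frr$ gives $v(\frr)\le v(\frq)$, hence $\frac{v(\fra_\bullet)}{A(v)+v(\frq)}\le\frac{v(\fra_\bullet)}{A(v)+v(\frr)}$, so any $v$ computing $\Arn^\frq(\fra_\bullet)$ automatically computes $\Arn^\frr(\fra_\bullet)$ once the two Arnold multiplicities are known to be equal. Your version, proving $v(\frr)=v(\frq)$ directly, is a bit more work but perfectly valid.
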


\begin{proof}[Proof of Proposition~\ref{enlarge2}]
  In order to prove $\Arn^\frq(\frc_\bullet)=\Arn^\frq(\fra_\bullet)$ for $p\gg0$,
  let us first consider the special case when 
  $\frm\subseteq\sqrt{(\cJ(\fra_{\bullet}^{\lambda})\colon\frq)}$.
  Then there exists a positive integer $n$ such that 
  $\frm^n\cdot\frq\subseteq\cJ(\fra_{\bullet}^{\lambda})$. 
  Set $\lambda':=\lct^{\frm^n\frq}(\fra_\bullet)>\lambda$
  and pick $p>n/(\lambda'-\lambda)$. 
  Fix $0<\epsilon<1$ such that 
  $p>n/((1-\epsilon)\lambda'-\lambda)$. 

  Note that $v(\frc_\bullet)=\min\{v(\fra_\bullet),pv(\frm)\}$ for all $v\in\Val_X^*$.
  Thus 
  \begin{equation*}
    \Arn^\frq(\frc_\bullet)
    =\sup_{v\in\Val_X^*}\frac{\min\{v(\fra_\bullet),pv(\frm)\}}{A(v)+v(\frq)}
    \ge\sup_{v\in V_\epsilon}\frac{\min\{v(\fra_\bullet),pv(\frm)\}}{A(v)+v(\frq)},
  \end{equation*}
  where $V_\epsilon$ is the set of 
  $v\in\mathrm{Val}^*_X$ 
  for which $\frac{v(\fra_\bullet)}{A(v)+v(\frq)}\ge(1-\epsilon)/\lambda$.

  By the definition of $\lambda'$ we have
  \begin{equation*}
    \frac{n\cdot v(\frm)}{v(\fra_\bullet)}\ge\lambda'-\frac{A(v)+v(\frq)}{v(\fra_\bullet)}
  \end{equation*}
  for all $v\in\mathrm{Val}^*_X$. This implies
  \begin{multline*}
    \Arn^\frq(\frc_\bullet)
    \ge\sup_{v\in V_\epsilon}
    \frac{v(\fra_\bullet)}{A(v)+v(\frq)}
    \min\left\{1,\frac{p}{n}\left(
        \lambda'-\frac{A(v)+v(\frq)}{v(\fra_\bullet)}
      \right)\right\}\\
    \ge\sup_{v\in V_\epsilon}
    \frac{v(\fra_\bullet)}{A(v)+v(\frq)}
    \min\left\{1,\frac{p}{n}\left(
        \lambda'-\frac{\lambda}{1-\epsilon}
      \right)\right\}
    =\sup_{v\in V_\epsilon}
    \frac{v(\fra_\bullet)}{A(v)+v(\frq)}
    =\Arn^\frq(\fra_\bullet).
  \end{multline*}
  Therefore
  $\Arn^\frq(\frc_\bullet)\ge\Arn^\frq(\fra_\bullet)$, and 
  the reverse inequality is obvious.

  We now treat the general case.
  Consider the natural morphism $\phi\colon \Spec R=X'\to X$, 
  where $R=\widehat{\cO_{X,\xi}}$.
  Let $\xi'$ denote the closed point of $X'$, and 
  let $\frm'=\frm\cdot R$, 
  $\fra'_j=\fra_j\cdot R$, $\frq'=\frq\cdot R$, 
  and $\frc'_j=\frc_j\cdot R=\sum_{i=0}^j\fra'_i\cdot\frm'^{p(j-i)}$. 
  Note that $\frm'$ is the ideal defining the closed point of $X'$.
  It follows from Proposition~\ref{P302} that
  $\cJ(\fra_{\bullet}'^{\lambda})=\cJ(\fra_{\bullet}^{\lambda})\cdot R$. 
  By construction, 
  $\sqrt{(\cJ(\fra_{\bullet}'^{\lambda})\colon\frq')}=\sqrt{(\cJ(\fra_{\bullet}^{\lambda})\colon
    \frq)}\cdot R=\frm'$, so by the case already treated, we have
  $\lct^{\frq'}(\fra'_{\bullet})=\lct^{\frq'}(\frc'_{\bullet})$ for $p\gg0$.
  Therefore
  \begin{equation}\label{eq_cor_thm5_1}
    \lct^{\frq}(\fra_{\bullet})
    \leq\lct^{\frq}(\frc_{\bullet})
    \leq\lct^{\frq'}(\frc'_{\bullet})
    =\lct^{\frq'}(\fra'_{\bullet}),
  \end{equation}
  where the first inequality follows from the inclusions 
  $\fra_j\subseteq\frc_j$, and the second one from 
  Proposition~\ref{P302}.
  Since  $\lct^{\frq'}(\fra'_{\bullet})=\lct^{\frq}(\fra_{\bullet})$
  by the same proposition,
  it follows that all inequalities in~\eqref{eq_cor_thm5_1}
  are equalities.
  In particular, $\Arn^\frq(\frc_\bullet)=\Arn^\frq(\fra_\bullet)$.
    
  Suppose now that $v$ is a valuation that computes $\Arn^{\frq}(\frc_{\bullet})$. 
  Since $\fra_j\subseteq\frc_j$ for every $j$, we have $v(\frc_{\bullet})\leq
  v(\fra_{\bullet})$.
  Therefore
  \begin{equation}\label{eq_final}
  \Arn^{\frq}(\frc_{\bullet})=\frac{v(\frc_{\bullet})}{A(v)+v(\frq)}\leq\frac{v(\fra_{\bullet})}{A(v)+v(\frq)}
  \leq\Arn^{\frq}(\fra_{\bullet})=\Arn^{\frq}(\frc_{\bullet}).
  \end{equation}
  All the inequalities in (\ref{eq_final}) have to be equalities, hence $v$ also computes
  $\Arn^{\frq}(\fra_{\bullet})$.
  \end{proof}

\begin{proof}[Proof of Proposition~\ref{enlarge3}]
It follows from Proposition~\ref{prop2_3} that $\frq\not\subseteq \cJ(\fra_{\bullet}^{\lambda})$,
but $\frq\subseteq\cJ(\fra_{\bullet}^t)$ for every $t<\lambda$. In order to prove that
$\Arn^{\frq}(\fra_{\bullet})=\Arn^{\frr}(\fra_{\bullet})$, it is enough to show that under our assumptions, 
$\frm^N\subseteq\cJ(\fra_{\bullet}^{\lambda})$. This follows since 
\begin{equation*}\frm^N\subseteq\cJ(\frm^N)\subseteq\cJ(\frm^{\lambda p})\subseteq\cJ(\fra_1^{\lambda})\subseteq
\cJ(\fra_{\bullet}^{\lambda}).\end{equation*}

Suppose now that $v\in\Val_X^*$. Since 
\begin{equation*}\frac{v(\fra_{\bullet})}{A(v)+v(\frq)}\leq\frac{v(\fra_{\bullet})}{A(v)+v(\frr)},\end{equation*}
it follows that if $v$ computes $\Arn^{\frq}(\fra_{\bullet})$, then $v$ also computes
$\Arn^{\frr}(\fra_{\bullet})$. For the converse, it is enough to show that if $v$ computes
$\Arn^{\frr}(\fra_{\bullet})$, then $v(\frq)=v(\frr)$. Note that since $\frm^p\subseteq\fra_1$,
we have $v(\fra_{\bullet})\leq p\cdot v(\frm)$. Therefore
\begin{equation*}v(\frm)\geq \frac{v(\fra_{\bullet})}{p}=\frac{A(v)+v(\frr)}{\lambda p}>\frac{v(\frr)}{N},\end{equation*}
hence $v(\frm^N)>v(\frr)=\min\{v(\frq),v(\frm^N)\}$. This shows that $v(\frr)=v(\frq)$, and completes the proof of the proposition.
\end{proof}

\subsection{Proof of Theorem~\ref{main_thm1}}
Let $\frm=\frm_\xi$ be the ideal defining $\xi$. 
After applying Propositions~\ref{enlarge2} and~\ref{enlarge3} (and increasing $p$)
we may assume that $\frm^p\subseteq\fra_1$ 
and $\frm^p\subseteq\frq$ for some $p\ge1$.

Consider the canonical morphism 
$\phi\colon X'=\Spec\,R\to X$, where 
$R=\widehat{\cO_{X,\xi}}$. 
Since $X$ is regular, Cohen's structure theorem
yields an isomorphism 
$R\simeq k\llbracket x_1,\dots,x_d\rrbracket$ for a field $k$.
We put $\fra'_m=\fra_m\cdot R$, $\frq'=\frq\cdot R$, and $\frm'=\frm\cdot R$,
so $\frm'$ is the ideal defining the closed point $0$ of $X'$.  
By Proposition~\ref{P401} it suffices to find a valuation
$v'\in \Val_{X'}^*$ with center at $0$
that computes $\Arn^{\frq'}(\fra_{\bullet}')$.
Indeed, in this case the restriction $v$ of $v'$ to $X$ has center
$c_X(v)=\xi$
and computes $\Arn^{\frq}(\fra_{\bullet})$. 

Therefore we may assume 
$X=\Spec\,k\llbracket x_1,\dots,x_d\rrbracket$, 
and that $\fra_1$ and $\frq$ contain $\frm^p$ for some $p$, 
where $\frm$ is the ideal defining the closed point of $X$.
Fix $0<\epsilon<\Arn^{\frq}(\fra_{\bullet})$ and
suppose $v\in\Val^*_X$ is such that
$\frac{v(\fra_{\bullet})}{A(v)+v(\frq)}>\epsilon$. 
Since $\frm^p\subseteq \fra_1$, we have $v(\fra_{\bullet})\leq pv(\frm)$. 
In particular, $v$ has center at the closed point.
After rescaling $v$, we may assume $v(\frm)=1$,
so that $v(\fra_{\bullet})\leq p$, 
and therefore $A(v)\leq A(v)+v(\frq)\leq M$,
where $M=p/\epsilon$. 
We conclude that
$\Arn^{\frq}(\fra_{\bullet})=\sup_{v\in V_M}\frac{v(\fra_{\bullet})}{A(v)+v(\frq)}$,
where 
\begin{equation*}
  V_M=\{v\in\Val_X\mid v(\frm)=1, A(v)\leq M\}.
\end{equation*}
By Proposition~\ref{compact}, $V_M$ is compact.
Furthermore, by Proposition~\ref{lem_semicontinuity},
$A$ is lower semicontinuous 
and by Corollary~\ref{cor_prop2_arbitrary_val}
the functions $v\to v(\frq)$ and
$v\to v(\fra_{\bullet})$ are continuous on $V_M$
The function $v\to v(\fra_{\bullet})/(A(v)+v(\frq))$
is therefore upper semicontinuous on $V_M$, 
hence achieves its maximum at some $v\in V_M$.
This completes the proof of Theorem~\ref{main_thm1}.

\subsection{Proof of Theorem~\ref{main_thm2}}
Assume $\lambda:=\lct^{\frq}(\fra_{\bullet})<\infty$. 
The proof proceeds similarly to the proof of Theorem~\ref{main_thm1},
repeatedly using localization, completion, and field extensions.

We start by considering the weak versions of Conjectures~\ref{conj1}
and~\ref{conj2}.
Let $\xi$ be the generic point of an irreducible component 
of the subscheme defined by $(\cJ(\fra_{\bullet}^{\lambda})\colon\frq)$.
In view of Propositions~\ref{enlarge2} and~\ref{enlarge3}, we may assume 
that $\frm^p\subseteq\fra_1$ and $\frm^p\subseteq\frq$,
where $p\gg0$ and $\frm=\frm_\xi$ is the ideal defining $\xi$.

After invoking Proposition~\ref{P401} and Lemma~\ref{lemma_valuations}
we may replace $X$ by $\Spec\widehat{\cO_{X,\xi}}$. 
By Cohen's structure theorem, we may therefore assume
$X=\Spec k\llbracket x_1,\dots,x_d\rrbracket$ for a field $k$.
We still have 
that $\frm^p\subseteq\fra_1$ and $\frm^p\subseteq\frq$, 
where $\frm$ defines the closed point of $X$.
These inclusions allow us to apply Proposition~\ref{P401} 
and Lemma~\ref{lemma_valuations} ``in reverse'',
and assume $X=\AAA_k^n$ and $\xi=0$. 
Finally we can use Proposition~\ref{P402} and
Lemma~\ref{lemma_valuations2} with $K=\overline{k}$
to reduce to the case when $k$ is algebraically closed.
But then we are in the situation of Conjecture~\ref{conj2}.

Finally we consider the strong versions of Conjectures~\ref{conj1}
and~\ref{conj2}. Pick any $v\in\Val^*_X$ computing
$\lct^\frq(\fra_\bullet)$.
We must show that $v$ is quasi-monomial.
After replacing $X$ by a higher model and
using Lemma~\ref{compute_open_subset},
we may assume $\trdeg_X(v)=0$. The proof is now
almost identical to what we did for the weak version.
Let $\xi=c_X(v)$.
By Theorem~\ref{equivalent_description}, we may assume that
$\fra_m=\{f\mid v(f)\geq m\}$. In particular, there is $p\geq 1$ such that
$\frm^p\subseteq\fra_1$, where $\frm$ is the ideal defining $\xi$.
By applying Proposition~\ref{enlarge3}, we may also assume that $\frm^N
\subseteq\frq$ for some $N\geq 1$.
Two applications of Proposition~\ref{P401} 
and Lemma~\ref{lemma_valuations} reduce us to the case
when $X=\AAA_k^n$, $\trdeg(v)=0$ and $c_X(v)=\xi$,
where $\xi\in\AAA_k^n$ is a closed point. Invoking
Proposition~\ref{P402} and Lemma~\ref{lemma_valuations2} 
with $K=\overline{k}$ (note that $v$ extends to a valuation in
$\Val_{\AAA_{K}^n}$), we see that we may assume that
$k$ is algebraically closed, and then we are in
position to apply Conjecture~\ref{conj2}.
This completes the proof of Theorem~\ref{main_thm2}.

\section{The monomial case}\label{S101}
In this section we assume that $X=\AAA^n_k=\Spec(k[x_1,\dots,x_n])$
is the $n$-dimensional affine space over a 
field $k$ of characteristic zero, and 
$\fra_{\bullet}$ is a graded sequence 
of monomial ideals (that is, each $\fra_m$ is generated by monomials). 
In this case it is natural to focus on
\emph{monomial valuations}: these are the quasi-monomial valuations 
in $\QM(X,H)$, where $H=H_1+\dots+H_n$, with $H_i=V(x_i)$. 
Every such valuation $v$ is of the form $\val_\alpha$, where 
$\alpha=(\alpha_1,\dots,\alpha_n)\in\RR_{\geq 0}^n$ is given by  
$\alpha_i=v(x_i)$.
Note that the log discrepancy is then given by 
$A(\val_\alpha)=\langle e,\alpha\rangle$, where $e=(1,\dots,1)$, 
and where we put $\langle u,\alpha\rangle=\sum_{i=1}^nu_i\alpha_i$ 
whenever $u,\alpha\in\RR^n$. 

Denote by $r=r_{X,H}\colon\Val_X\to\QM(X,H)$ the retraction map.
Thus $\bar{v}:=r(v)$ is the monomial valuation for which 
$\bar{v}(x_i)=v(x_i)$ for all $i$. 
Thus $\bar{v}(\fra_\bullet)=v(\fra_\bullet)$ and
$\bar{v}(\frq)\le v(\frq)$ for any ideal $\frq$.
Moreover, by Lemma~\ref{lem_compatibility}, 
we have $A(\bar{v})\le A(v)$ with equality if and only if
$v=\bar{v}$ is monomial. This immediately implies that if
$v$ is not monomial, then 
\begin{equation*}
  \frac{v(\fra_\bullet)}{A(v)+v(\frq)}
  <\frac{\bar{v}(\fra_\bullet)}{A(\bar{v})+\bar{v}(\frq)}
  \le\Arn^\frq(\fra_\bullet);
\end{equation*}
hence $v$ does not compute $\Arn^\frq(\fra_\bullet)$.

On the other hand, consider the simplex 
$\Sigma=\{\alpha\in\RR_{\geq 0}^n\mid \langle e,\alpha\rangle=1\}$.
Then $A(\val_\alpha)=1$ for all $\alpha\in\Sigma$.
It is clear that $\alpha\to\val_\alpha(\frq)$ is
continuous on $\Sigma$ and by Lemma~\ref{cor_prop2_arbitrary_val}
the same is true for $\alpha\to\val_\alpha(\fra_\bullet)$.
Thus the 0-homogeneous function 
\begin{equation*}
  \alpha\to\frac{\val_\alpha(\fra_\bullet)}{A(\val_\alpha)+\val_\alpha(\frq)}
\end{equation*}
attains its supremum on $\Sigma$. We have proved the following
version of Conjecture~\ref{conj1}:
\begin{proposition}\label{P201}
  If $\fra_\bullet$ is a graded sequence of monomial ideals
  and $\frq$ is any ideal, then $\Arn^\frq(\fra_\bullet)$
  is computed by some monomial valuation.
  Furthermore, any valuation computing 
  $\Arn^\frq(\fra_\bullet)$ is monomial.
\end{proposition}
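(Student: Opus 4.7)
The plan is to exploit the retraction $r:=r_{X,H}\colon\Val_X\to\QM(X,H)$ onto monomial valuations, combined with a compactness argument on a suitable simplex of normalized monomial valuations. In fact, essentially all the needed pieces are already assembled in the paragraph preceding the statement; I just need to package them correctly.

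First I would prove the ``only monomial valuations can compute'' half. For any $v\in\Val^*_X$, set $\bar v:=r(v)=\val_\alpha$, where $\alpha_i:=v(x_i)$. The key observation is that $v(\fra_m)=\bar v(\fra_m)$ for every $m$, because $\fra_m$ is monomial: if $\fra_m$ is generated by the monomials $x^\beta$, $\beta\in S$, then
\begin{equation*}
v(\fra_m)=\min_{\beta\in S}\sum_i\beta_i v(x_i)=\min_{\beta\in S}\sum_i\beta_i\bar v(x_i)=\bar v(\fra_m).
\end{equation*}
Dividing by $m$ and letting $m\to\infty$ gives $v(\fra_\bullet)=\bar v(\fra_\bullet)$. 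Meanwhile $\bar v(\frq)\le v(\frq)$ by Corollary~\ref{C202}, and by Lemma~\ref{lem_compatibility} we have $A(\bar v)\le A(v)$, with \emph{strict} inequality whenever $v\notin\QM(X,H)$. Thus for any non-monomial $v$,
\begin{equation*}
\frac{v(\fra_\bullet)}{A(v)+v(\frq)}<\frac{\bar v(\fra_\bullet)}{A(\bar v)+\bar v(\frq)}\le\Arn^\frq(\fra_\bullet),
\end{equation*}
so $v$ does not compute the Arnold multiplicity.

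For the existence half, I would restrict to the simplex $\Sigma:=\{\alpha\in\RR_{\ge0}^n\mid\langle e,\alpha\rangle=1\}$, on which every $\val_\alpha$ has log discrepancy exactly $1$. By the $0$-homogeneity of the ratio $\val_\alpha(\fra_\bullet)/(A(\val_\alpha)+\val_\alpha(\frq))$ and by the first half, $\Arn^\frq(\fra_\bullet)$ equals the supremum of this ratio over $\Sigma$. The function $\alpha\to\val_\alpha(\frq)$ is a minimum of finitely many linear forms (one per monomial generator of $\frq$), hence continuous; the function $\alpha\to\val_\alpha(\fra_\bullet)$ is continuous on $\Sigma$ by Corollary~\ref{cor_prop2_arbitrary_val}, since $A\equiv 1$ there. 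Because $\Sigma$ is compact and the denominator $1+\val_\alpha(\frq)$ is bounded below by $1$ on $\Sigma$, the upper semicontinuous function attains its maximum at some $\alpha\in\Sigma$, giving the desired monomial valuation.

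There is no real obstacle here: the hard work was already done in building the retraction, establishing lower/upper semicontinuity of $A$ and $v\mapsto v(\fra_\bullet)$, and proving the strict monotonicity of $A$ under $r_{X,H}$ off $\QM(X,H)$. The only point that requires a one-line check is the equality $v(\fra_m)=\bar v(\fra_m)$ for monomial ideals, which is the elementary minimum-over-exponents computation above.
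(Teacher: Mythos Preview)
Your argument is essentially identical to the paper's: the proof in the paper is given in the paragraphs immediately preceding the proposition, using exactly the retraction $r_{X,H}$, the strict inequality $A(\bar v)<A(v)$ from Lemma~\ref{lem_compatibility}, and the compactness of the simplex $\Sigma$ together with continuity from Corollary~\ref{cor_prop2_arbitrary_val}. One small slip: $\frq$ need not be monomial, so $\val_\alpha(\frq)$ is not ``a minimum of linear forms, one per monomial generator of $\frq$''; rather it is $\min_i\val_\alpha(f_i)$ over a finite set of generators $f_i$, each $\val_\alpha(f_i)$ being a minimum of linear forms over the monomials appearing in $f_i$ --- still continuous, so the conclusion stands.
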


We now use this proposition to recover a 
formula by Howald~\cite{How} for the 
multiplier ideal $\cJ(\fra_\bullet^\lambda)$. 
First note that $\cJ(\fra_\bullet^\lambda)$ is 
a monomial ideal. To see this, let $f\in k[x_1,\dots,x_n]$
be any polynomial and let $\frq=\frq_f$ be the monomial
ideal generated by the monomials that appear in $f$ with nonzero coefficient.
It suffices to show that 
$\Arn^{(f)}(\fra_\bullet)=\Arn^\frq(\fra_\bullet)$. 
But this is clear by Proposition~\ref{P201} since
$v(f)=v(\frq)$ for any monomial valuation $v$.

To describe Howald's formula, we recall 
from~\cite{Mustata} (see also~\cite{Wolfe})
how to associate a convex region $P(\fra_\bullet)$ to $\fra_\bullet$.
For every $m\geq 1$, consider the Newton polyhedron
of $\fra_m$
\begin{equation*}
  P(\fra_m)=\text{convex hull of}\,\{u\in\ZZ_{\geq 0}^n\mid
  x^u\in\fra_m\}.
\end{equation*}
Our assumption that $\fra_m\neq (0)$ for some $m$ 
implies that some $P(\fra_m)$ is nonempty. 
The fact that $\fra_{\bullet}$ is a graded sequence of ideals gives
$P(\fra_{m})+P(\fra_{\ell})\subseteq P(\fra_{m+\ell})$
for all $m$ and $\ell$. In particular, we have 
$\frac{1}{m}P(\fra_m)\subseteq\frac{1}{pm}P(\fra_{mp})$. 
We put 
\begin{equation*}
  P(\fra_{\bullet}):=\overline{\bigcup_{m}\frac{1}{m}P(\fra_m)}.
\end{equation*}
This is a nonempty closed convex subset of $\RR_{\geq 0}^n$, with the property that 
\begin{equation}\label{eq_P}
  P(\fra_{\bullet})+\RR_{\geq 0}^n\subseteq P(\fra_{\bullet}).
\end{equation}
Indeed, each $P(\fra_m)$ satisfies the same property. 
\begin{remark}
  Given any nonempty closed convex subset $P\subseteq\RR_{\geq 0}^n$
  with the property~\eqref{eq_P} there exists a graded sequence 
  $\fra_\bullet$ of monomial ideals such that $P(\fra_\bullet)=P$.
  Indeed, we can take 
  $\fra_m=(x^u\mid u\in\ZZ_{\geq 0}^n\cap mP)$ for all $m\geq 1$.
  In general, the subset $P(\fra_\bullet)$ does not determine
  $\fra_\bullet$ uniquely. However, as the results below show, if 
  $P(\fra_\bullet)=P(\fra'_\bullet)$, then $\fra_\bullet$ 
  and $\fra'_\bullet$ should be regarded as equisingular.
\end{remark}
As an instance of basic convex analysis
we next show that the convex set $P=P(\fra_\bullet)$
determines, and is determined by, the concave function 
$w\to\val_w(\fra_\bullet)$ on $\RR_{\ge0}^n$.
\begin{lemma}\label{lemma_monomial1}
  If $\fra_{\bullet}$ is a sequence of monomial ideals on ${\mathbf A}_k^n$, then
  \begin{equation}\label{e202}
    \val_\alpha(\fra_{\bullet})=\inf\{\langle u,\alpha\rangle\mid u\in P(\fra_{\bullet})\}
    \quad\text{for $\alpha\in\RR_{\geq 0}^n$}.
  \end{equation}
  Conversely, we have 
  \begin{equation}\label{e203}
    P(\fra_\bullet)=\{u\in\RR_{\ge0}^n\mid \langle u,\alpha\rangle \ge \val_\alpha(\fra_\bullet)\ 
    \text{for all}\ \alpha\in\RR_{\ge0}^n\}.
  \end{equation}
\end{lemma}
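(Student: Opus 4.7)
The plan is to establish the two formulas in sequence, since~\eqref{e203} will follow from~\eqref{e202} by a standard convex-duality argument.

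For~\eqref{e202}, I would first handle a single monomial ideal $\fra_m$. Since $\fra_m$ is generated by the monomials $x^u$ with $u \in P(\fra_m) \cap \ZZ_{\ge0}^n$, and since $\val_\alpha$ takes the value $\langle u,\alpha\rangle$ on $x^u$, we have
\begin{equation*}
\val_\alpha(\fra_m) = \min\{\langle u,\alpha\rangle \mid u\in P(\fra_m)\cap\ZZ_{\ge0}^n\}.
\end{equation*}
Because $\alpha\in\RR_{\ge0}^n$ and $P(\fra_m)+\RR_{\ge0}^n\subseteq P(\fra_m)$, the linear functional $u\mapsto\langle u,\alpha\rangle$ attains its infimum on the polyhedron $P(\fra_m)$ at a vertex; the vertices of a Newton polyhedron of a monomial ideal lie in $\ZZ_{\ge0}^n$, so in fact
\begin{equation*}
\val_\alpha(\fra_m) = \inf\{\langle u,\alpha\rangle \mid u\in P(\fra_m)\}.
\end{equation*}

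Next I would pass to the asymptotic invariant. By definition $\val_\alpha(\fra_\bullet)=\inf_m\val_\alpha(\fra_m)/m$, and using the previous step together with the homogeneity $\val_\alpha(\fra_m)/m = \inf\{\langle u,\alpha\rangle \mid u \in \tfrac{1}{m}P(\fra_m)\}$, I obtain
\begin{equation*}
\val_\alpha(\fra_\bullet) = \inf\Bigl\{\langle u,\alpha\rangle \Bigm| u\in\bigcup_m\tfrac{1}{m}P(\fra_m)\Bigr\}.
\end{equation*}
Since $\langle\cdot,\alpha\rangle$ is continuous, the infimum over this set equals the infimum over its closure $P(\fra_\bullet)$, proving~\eqref{e202}.

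For~\eqref{e203}, write $P:=P(\fra_\bullet)$ and let $Q$ denote the right-hand side. Every $u\in P$ satisfies $\langle u,\alpha\rangle\ge \val_\alpha(\fra_\bullet)$ by~\eqref{e202}, so $P\subseteq Q$. The reverse inclusion is the standard support-function description of a closed convex set satisfying $P+\RR_{\ge0}^n\subseteq P$: if $u_0\notin P$, then by the Hahn--Banach (or supporting-hyperplane) theorem there is a linear functional $\langle\cdot,\alpha\rangle$ and a constant $c$ with $\langle u_0,\alpha\rangle<c\le\langle u,\alpha\rangle$ for all $u\in P$; the condition $P+\RR_{\ge0}^n\subseteq P$ forces $\alpha\in\RR_{\ge0}^n$, and then $c$ can be taken to be $\inf_{u\in P}\langle u,\alpha\rangle=\val_\alpha(\fra_\bullet)$ by~\eqref{e202}, so $u_0\notin Q$. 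The only step requiring mild care is ensuring $\alpha\ge0$ in the separating functional, but this is immediate from $P+\RR_{\ge0}^n\subseteq P$; no real obstacle arises beyond invoking the correct form of convex duality.
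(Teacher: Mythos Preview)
Your proof is correct and follows essentially the same route as the paper's own argument: both compute $\val_\alpha(\fra_m)$ as the infimum of $\langle u,\alpha\rangle$ over $P(\fra_m)$, pass to the asymptotic invariant via $\inf_m$, and prove~\eqref{e203} by separating hyperplanes together with the observation that $P+\RR_{\ge0}^n\subseteq P$ forces the separating functional to lie in $\RR_{\ge0}^n$. You simply spell out two points the paper leaves implicit---why the minimum over lattice points in $P(\fra_m)$ coincides with the infimum over the full polyhedron, and why the infimum over $\bigcup_m\frac{1}{m}P(\fra_m)$ equals the infimum over its closure.
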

\begin{proof}
  It is immediate from the definition that 
  $\val_\alpha(\fra_m)=\min\{\langle u,\alpha\rangle\mid u\in P(\fra_m)\}$. 
  It follows that
  \begin{equation*}
    \val_\alpha(\fra_{\bullet})=\inf_m\frac{\alpha(\fra_m)}{m}=\inf_m\inf_{u\in \frac{1}{m}P(\fra_m)}
    \langle u,\alpha\rangle=\inf_{u\in P(\fra_{\bullet})}\langle u,\alpha\rangle.
  \end{equation*}

The inclusion ``$\subseteq$" in (\ref{e203}) from from the description 
of $\val_\alpha(\fra_{\bullet})$.
On the other hand, if $u_0\not\in P(\fra_{\bullet})$, 
then we can find $v\in\RR^n$ and $b\in\RR$
such that
$\langle u,v\rangle\geq b$ for every $u\in P(\fra_{\bullet})$, while $\langle u_0,v\rangle<b$
(this is a general fact about closed convex subsets of $\RR^n$, see 
Theorem~4.5 in~\cite{Bro}). It follows from (\ref{eq_P}) that $v\in\RR_{\geq 0}^n$, hence 
$\langle u_0,v\rangle<\val_v(\fra_{\bullet})$. 
\end{proof}
We can now state and prove Howald's formula.
\begin{proposition}\label{multiplier_monomial_sequence}
  If $\fra_{\bullet}$ is a graded sequence 
  of monomial ideals, then
  \begin{equation}\label{e201}
    \cJ(\fra_{\bullet}^{\lambda})
    =(x^u\mid u+e\in \Int(\lambda P(\fra_{\bullet}))).
  \end{equation}
  Equivalently,  $\Arn^{(x^u)}(\fra_{\bullet})$ is equal to the unique 
  number $\alpha\ge0$ such that $\alpha(u+e)$ lies on the 
  boundary of $P=P(\fra_{\bullet})$. Moreover, a nontrivial monomial
  valuation $\val_\alpha$ computes $\Arn^{(x^u)}(\fra_\bullet)$
  if and only if $\alpha$ determines a supporting hyperplane of $P$
  at $\alpha(u+e)$,
  that is, $\langle\alpha(u+e),\alpha\rangle \le\langle u',\alpha\rangle$
  for all $u'\in P(\fra_\bullet)$.
\end{proposition}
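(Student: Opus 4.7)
The plan is to reduce the entire statement to a convex-geometric computation, by combining Proposition~\ref{P201} (only monomial valuations compute $\Arn^\frq(\fra_\bullet)$ in the monomial setting) with Lemma~\ref{lemma_monomial1} (which describes $\val_\alpha(\fra_\bullet)$ as $\inf_{w\in P}\langle w,\alpha\rangle$, where $P=P(\fra_\bullet)$). Since $A(\val_\alpha)=\langle e,\alpha\rangle$ and $\val_\alpha(x^u)=\langle u,\alpha\rangle$, these two inputs together give
\begin{equation*}
  \Arn^{(x^u)}(\fra_\bullet)
  =\sup_{\alpha\in\RR^n_{\geq0}\setminus\{0\}}
  \frac{\inf_{w\in P}\langle w,\alpha\rangle}{\langle u+e,\alpha\rangle},
\end{equation*}
and a monomial valuation $\val_\alpha$ computes $\Arn^{(x^u)}(\fra_\bullet)$ if and only if it attains the supremum on the right-hand side.

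I would next identify this supremum with the unique $c\ge0$ such that $c(u+e)\in\partial P$. Existence and uniqueness of $c$ are automatic: because $P\subseteq\RR^n_{\geq0}$ is nonempty, closed, convex, and satisfies $P+\RR^n_{\geq0}\subseteq P$, and because $u+e\in\RR^n_{>0}$, the set $\{t\ge0\mid t(u+e)\in P\}$ is a closed half-line $[c,\infty)$. The inequality $\Arn^{(x^u)}(\fra_\bullet)\le c$ is immediate by plugging $w=c(u+e)\in P$ into the infimum. For the reverse inequality, I would invoke the supporting hyperplane theorem at $c(u+e)\in\partial P$ to produce $\alpha\in\RR^n\setminus\{0\}$ with $\langle w,\alpha\rangle\ge\langle c(u+e),\alpha\rangle$ for all $w\in P$; the condition $P+\RR^n_{\geq0}\subseteq P$ forces $\alpha\in\RR^n_{\geq0}$, and then equality is attained at $w=c(u+e)$, so $\val_\alpha(\fra_\bullet)/\langle u+e,\alpha\rangle=c$. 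This proves the Arnold multiplicity formula (b), and the characterization (c) falls out of the argument: $\val_\alpha$ computes $\Arn^{(x^u)}(\fra_\bullet)$ precisely when the infimum $\inf_{w\in P}\langle w,\alpha\rangle$ is attained at $c(u+e)$, i.e., $\alpha$ defines a supporting hyperplane to $P$ at $c(u+e)$.

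For the ideal formula (a), I would use the preceding remark (before the proposition) that $\cJ(\fra_\bullet^\lambda)$ is a monomial ideal, so it suffices to determine which monomials $x^u$ it contains. By (b) we have $x^u\in\cJ(\fra_\bullet^\lambda)\iff\lambda<1/c\iff c\lambda<1$, and we must translate this into the condition $u+e\in\Int(\lambda P)$. This follows from the elementary lemma: for any nonempty closed convex $Q\subseteq\RR^n$ with $Q+\RR^n_{\ge0}\subseteq Q$, one has $Q+\RR^n_{>0}\subseteq\Int Q$. Applied to $Q=\lambda P$ and the decomposition $u+e=c\lambda(u+e)+(1-c\lambda)(u+e)$, this gives $c\lambda<1\Rightarrow u+e\in\Int(\lambda P)$. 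For the converse, I note that $u+e\in\Int(\lambda P)$ implies that $t_0:=\inf\{t\ge0\mid t(u+e)\in\lambda P\}$ is strictly less than $1$, and since $t_0=c\lambda$ by the definition of $c$, we obtain $c\lambda<1$.

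There is no single major obstacle: once Proposition~\ref{P201} and Lemma~\ref{lemma_monomial1} are in hand, the proof is essentially a minimax computation on a closed convex subset of $\RR^n_{\geq0}$. The only point that needs care is verifying that the \emph{strict} inequality defining membership in $\cJ(\fra_\bullet^\lambda)$ matches the \emph{open} condition $u+e\in\Int(\lambda P)$, and this is handled by the upward-closure lemma above; the borderline case $c=0$ (when $0\in P$) causes no trouble since then $u+e\in\Int(\lambda P)$ holds for all $\lambda>0$ by the same lemma.
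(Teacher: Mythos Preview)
Your proof is correct and follows essentially the same route as the paper's: both reduce to monomial valuations via Proposition~\ref{P201}, then use the dual description of $P(\fra_\bullet)$ from Lemma~\ref{lemma_monomial1} (equations~\eqref{e202} and~\eqref{e203}) to read off the boundary and supporting-hyperplane characterizations. Your argument is in fact more complete than the paper's, which proves only the ``Equivalently'' and ``Moreover'' parts and leaves the derivation of~\eqref{e201} and the careful matching of the strict inequality $c\lambda<1$ with the open condition $u+e\in\Int(\lambda P)$ to the reader; your upward-closure lemma handles this cleanly.
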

If  $\fra_m=\fra^m$ for some
monomial ideal $\fra$, then $P(\fra_\bullet)=P(\fra)$,
$\cJ(\fra_\bullet^\lambda)=\cJ(\fra^\lambda)$
and~\eqref{e201} becomes Howald's original formula from~\cite{How}.
See also~\cite[Theorem~A]{Gue} for a similar result
in the context of toric plurisubharmonic functions
and~\cite{MZ} for an analytic approach to Howald's formula.
\begin{proof}
  By Proposition~\ref{P201}, $\Arn^{(x^u)}(\fra_{\bullet})$
  is the unique number $\alpha\ge0$ such that 
  $\val_\alpha(\fra_\bullet)\le\alpha\langle e+u,\alpha\rangle$
  for all $\alpha\in\RR_{\ge 0}^n$
  with equality for at least one $\alpha\ne0$.
  By~\eqref{e203}
  this means exactly that $\alpha(u+e)$ belongs
  to the boundary of $P(\fra_\bullet)$.
  Moreover, $\val_\alpha$ computes $\Arn^{(x^u)}(\fra_{\bullet})$
  if and only if $\val_\alpha(\fra_\bullet)=\alpha\langle e+u,\alpha\rangle$,
  and by~\eqref{e202} this means that $\alpha$ defines a
  supporting hyperplane of $P$ at $\alpha(u+e)$.
\end{proof} 

\begin{example}\label{rem_divisorial}
  If we put
  $P=\{(x,y)\in\RR_{\geq 0}^2\mid (x+1)y\geq 1\}$,
  we get a graded sequence of ideals $\fra_{\bullet}$ such that 
  $\Arn^{\cO_X}(\fra_{\bullet})=\frac{-1+\sqrt{5}}{2}$. 
  Furthermore, if $\alpha=\val_{(a,b)}$, then
  $\alpha(\fra_{\bullet})=2\sqrt{ab}-a$. We see that the nontrivial valuation $\alpha$
  computes $\Arn^{\cO_X}(\fra_{\bullet})$ if and only if $(a,b)=q(1-\alpha,1)$ for some
  $q\in\RR_{>0}$. In particular, this shows that $\Arn^{\cO_X}(\fra_{\bullet})$ is not computed
  by any divisorial monomial valuation.  
\end{example}

\section{The two-dimensional case}\label{S310}
Our goal in this section is to give a proof of the strong version of
Conjecture~\ref{conj2} in the two-dimensional case.
Let $k$ be an algebraically closed field of characteristic zero and
$X=\AAA_k^2=\Spec R$, where $R=k[x,y]$. We put $\frm=(x,y)$.
Consider a graded sequence $\fra_{\bullet}$ 
of $\frm$-primary ideals and a nonzero ideal $\frq$ on $X$.
Note that there exists $N\ge 1$ such that 
$\frm^{jN}\subseteq\fra_j$ for all $j$. We assume that $\Arn^{\frq}(\fra_{\bullet})>0$, and
we have to show that any valuation in $\Val_X$
with center at $0$ that computes $\Arn^{\frq}(\fra_{\bullet})$
must be quasi-monomial.

For $v\in\Val^*_X$ write 
\begin{equation*}
  \chi(v)=\frac{v(\fra_\bullet)}{A(v)+v(\frq)},
\end{equation*}
so that $\Arn^\frq(\fra_\bullet)$ is the supremum 
of $\chi$. As in the proof of Theorem~\ref{main_thm1},
it suffices to take the 
supremum over $v$ centered at the origin,
normalized by $v(\frm)=1$ and satisfying 
$A(v)\le M$ for some fixed $M<\infty$.
For such valuations, the Izumi-type estimate in~\eqref{e209} becomes
\begin{equation}\label{e104}
  \ord_0\le v\le A(v)\cdot \ord_0,
\end{equation}
on $R$, where $\ord_0$ is the divisorial valuation 
given by the order of vanishing at $0$.

Now assume $v_*\in\Val_X$ 
satisfies $v_*(\frm)=1$ and $A(v_*)\le M$
but that $v_*$ is not quasi-monomial.
We will show that $\chi(v_*)<\Arn^{\frq}(\fra_{\bullet})$.
The argument that follows is essentially equivalent to 
the one in~\cite{FJ3}, but it avoids appealing to the 
detailed structure of the valuative tree described in~\cite{FJ1}.
The key ingredient is a uniform control
on strict transforms of curves under birational
morphisms, see Lemma~\ref{L101}.

Note that $\trdeg(v_*)=0$ and 
$\ratrk(v_*)=1$,\footnote{Such a valuation is 
infinitely singular in the terminology of~\cite{FJ1}.}
or else $v_*$ would be an Abhyankar valuation,
hence quasi-monomial.
The idea is to find a suitably chosen increasing sequence 
of log-smooth pairs $(Y_n,D_n)$ above $\AAA^2$ 
such the corresponding retractions 
$v_n:=r_{Y_n,D_n}(v_*)$ increase to $v_*$.\footnote{This approach can be 
  used to classify valuations on surfaces and 
  recover the structure of the valuative tree
  as described in~\cite{FJ1}; see also~\cite{Spiv}.}
Furthermore, we will achieve $\chi(v_n)>\chi(v_{n+1})$
for $n\gg0$ and $\chi(v_n)\to\chi(v_*)$, which in particularly implies that
$\chi(v_*)<\Arn^{\frq}(\fra_{\bullet})$.

To start the procedure, let $\pi_0\colon Y_0\to\AAA^2$ be the blowup 
of $\AAA^2$ at the origin, with exceptional divisor $E_0$. 
Since $\trdeg v_*=0$, the center of $v_*$ on $Y_0$
is a closed point $p_0\in E_0$. 
\begin{lemma}\label{L2}
  There exist (algebraic) local coordinates $(z_0,w_0)$ at $p_0$
  on $Y_0$ such that $E_0=\{z_0=0\}$ and 
  $v_*(z_0)=1$, $v_*(w_0)=s_0/r_0$ for positive integers $r_0, s_0$ with
  $\gcd(r_0,s_0)=1$ and $r_0\ge 2$.
\end{lemma}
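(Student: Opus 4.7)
The plan is to first identify $z_0$ with a local equation for $E_0$ at $p_0$---this automatically forces $v_*(z_0)=1$---and then produce a suitable $w_0$ by an iterative improvement argument, whose termination is the key point.

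Since $\pi_0\colon Y_0\to\AAA^2$ is the blowup of $\frm$, we have $\frm\cdot\cO_{Y_0}=\cO_{Y_0}(-E_0)$, so any local equation $z_0\in\cO_{Y_0,p_0}$ for $E_0$ satisfies $v_*(z_0)=v_*(\frm)=1$. Pick any $w_0\in\cO_{Y_0,p_0}$ for which $(z_0,w_0)$ is a regular system of parameters at $p_0$; then $v_*(w_0)>0$ because $p_0=c_{Y_0}(v_*)$, and since $\ratrk(v_*)=1$ together with $v_*(z_0)=1$ forces the value group of $v_*$ to lie in $\QQ$, we may write $v_*(w_0)=s/r$ with $\gcd(r,s)=1$ and $r,s\geq 1$. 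If $r\geq 2$, we are done.

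Suppose instead that $v_*(w_0)=s\in\ZZ_{\geq 1}$. Since $k$ is algebraically closed and $\trdeg(v_*)=0$, the residue field $k_{v_*}$ of $v_*$ coincides with $k(p_0)=k$. The element $w_0/z_0^s$ has $v_*$-value zero, hence its image $\lambda$ in $k_{v_*}=k$ is a nonzero scalar. Setting $w_0':=w_0-\lambda z_0^s$, one has $v_*(w_0')>s$ by construction, and $(z_0,w_0')$ remains a regular system of parameters at $p_0$ (the substitution is either a unipotent linear change of variables when $s=1$, or a modification inside $\frm_{p_0}^2$ when $s\geq 2$). Replace $w_0$ by $w_0'$ and iterate.

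The main obstacle is to show that this iterative procedure terminates. If not, we obtain a sequence $w_0^{(0)}=w_0,w_0^{(1)},w_0^{(2)},\dots$ with $w_0^{(n+1)}=w_0^{(n)}-\lambda_n z_0^{s^{(n)}}$ and $s^{(n)}:=v_*(w_0^{(n)})\in\ZZ$ strictly increasing, so $s^{(n)}\to\infty$. Setting $g(z_0):=\sum_{n\geq 0}\lambda_n z_0^{s^{(n)}}\in k\llbracket z_0\rrbracket$ (which converges $\frm_{p_0}$-adically because $s^{(n)}\to\infty$), the telescoping identity exhibits $w_0-g(z_0)\in\widehat{\cO_{Y_0,p_0}}$ as the $\frm_{p_0}$-adic limit of $w_0^{(n)}$, so the extension of $v_*$ to the completion would assign it the value $+\infty$. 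However, $w_0-g(z_0)$ is nonzero in the completion (its $w_0$-coefficient equals $1$ under any Cohen isomorphism $\widehat{\cO_{Y_0,p_0}}\cong k\llbracket z_0,w_0\rrbracket$), and Proposition~\ref{log_discrep}~(iii) together with $v_*(K_{Y_0/X})\geq 0$ yields $A_{Y_0}(v_*)\leq A_X(v_*)\leq M<\infty$. Corollary~\ref{C403} then implies that $v_*$ extends to a genuine (finite-valued) valuation on $\widehat{\cO_{Y_0,p_0}}$, contradicting $v_*(w_0-g(z_0))=+\infty$. Hence the process terminates, producing coordinates $(z_0,w_0)$ with $v_*(w_0)=s_0/r_0$, $\gcd(r_0,s_0)=1$, and $r_0\geq 2$.
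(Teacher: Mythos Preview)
Your proof is correct. Both arguments use the same increment step: when $v_*(w_0)\in\ZZ$, use $\trdeg(v_*)=0$ and $k=\overline{k}$ to find $\theta\in k^*$ with $v_*(w_0-\theta z_0^{s})>v_*(w_0)$. They differ in how termination is established. The paper argues directly that a maximal $v_*(w_0)$ exists: first it observes (via Noetherianity) that the value semigroup $v_*(\cO_{Y_0,p_0}\smallsetminus\{0\})$ is discrete, and then uses the Izumi estimate~\eqref{e104} to bound $v_*(w)\le A_{Y_0}(v_*)$ for any coordinate $w$; picking $w_0$ with $v_*(w_0)$ maximal, the increment step immediately contradicts $r_0=1$. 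Your route iterates and shows that nontermination would produce a nonzero element $w_0-g(z_0)\in\widehat{\cO_{Y_0,p_0}}$ with $v_*'(w_0-g(z_0))=\infty$, contradicting Corollary~\ref{C403}. This is slightly more indirect---Corollary~\ref{C403} itself rests on the Izumi bound---but it avoids the separate discreteness step (the $s^{(n)}$ are strictly increasing integers, hence unbounded) and makes visible the formal arc $w_0=g(z_0)$ that $v_*$ would have to single out, which is a pleasant conceptual bonus.
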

Here the key point is $r_0\ge 2$. The coordinate $w_0$ 
is not unique, but the numbers $r_0$ and $s_0$ are.
\begin{proof}
  Pick any coordinate $z_0\in\cO_{Y_0,p_0}$ such that 
  $E_0=\{z_0=0\}$. Then $v_*(z_0)=v_*(\frm)=1$.
  Note that $v_*(\cO_{Y_0,p_0}\smallsetminus\{0\})$ is a discrete 
  subsemigroup of $\RR_{\geq 0}$. Indeed, 
  if $v_*(f_1)<v_*(f_2)<\ldots\leq M$ is a bounded increasing sequence, 
  then we have a decreasing sequence of ideals
  $\{f\mid v_*(f)\geq v_*(f_i)\}$, all containing the zero-dimensional ideal
  $\{f\mid v_*(f)\geq M\}$.
  By the Izumi estimate~\eqref{e104} we have 
  $v_*(w)\le A_{Y_0}(v_*)\ord_{p_0}(w_0)$. Hence we can 
  pick $w_0\in\cO_{Y_0,p_0}$ such that
  $(z_0,w_0)$ form local coordinates at $p_0$ and such that 
  $v_*(w_0)$ is maximal.
  As $\ratrk v_*=1$, we have $v_*(w_0)\in\QQ$ and can write
  $v_*(w_0)=s_0/r_0$ for positive integers $r_0, s_0$ with $\gcd(r_0,s_0)=1$. We have to
  show that $r_0\ge2$.
  
  Suppose to the contrary that $r_0=1$. Since $v_*(z_0^{s_0})=v_*(w_0)$
  and $\trdeg(v_*)=0$, it follows that there is $\theta\in k^*$ such that
  $v_*(w_0+\theta z_0^{s_0})>v_*(w_0)$. Since $(z_0,w_0+\theta z_0^{s_0})$
  is a system of coordinates at $p_0$, this contradicts the maximality in the 
  choice of $v_*(w_0)$.
\end{proof}

With the notation in the lemma, let
$v_1$ be the monomial valuation in coordinates 
$(z_0,w_0)$ such that $v_1(z_0)=1$, $v_1(w_0)=s_0/r_0$.
Then $v_1$ is divisorial and $v_1(\frm)=1$.
Let $\rho_1\colon Y_1\to Y_0$ be a modification 
above $p_0$\footnote{By this, we mean that $\rho_1$ is proper, and an isomorphism over
  $Y_0\smallsetminus\{p_0\}$, with $Y_1$ regular.} such that the center of $v_1$
on $Y_1$ is an exceptional prime divisor  $E_1$.
We may and will assume that $\rho_1$ is a 
toroidal modification, in the sense
that the divisorial valuation $\ord_E$ associated to 
each exceptional prime divisor $E\subseteq Y_1$ is monomial in the 
coordinates $(z_0,w_0)$ at $p_0$.
(There is a minimal such $\rho_1$ which 
can be explicitly described by the continued fractions expansion
of $s_0/r_0$, but we don't need this information.)
The center of $v_*$ on $Y_1$ must be a 
\emph{free} point $p_1\in E_1$ (i.e.\ not belonging to
any other exceptional prime divisor)
or else $v_*$ would not take the correct value on $z_0$ or on $w_0$.
Moreover, if $D_1$ is the reduced exceptional divisor for
$\pi_0\circ\rho_1\colon Y_1\to\AAA^2$, then 
$v_1$ is equal to the retraction $r_{Y_1,D_1}(v_*)$.

Consider now $v_*$ as a valuation on $Y_1$ with center at $p_1$.
Up to a factor $r_0$, the
situation is then exactly the same 
as the one we had when considering $v_*$ at $(Y_0,p_0)$:
now $v_*(E_1)=r_0^{-1}$, whereas previously
$v_*(E_0)=1$.
We can find new coordinates $(z_1,w_1)$ at 
$p_1$ such that $E_1=\{z_1=0\}$ and $v_*(w_1)$ 
is maximal. 
The proof of Lemma~\ref{L2} gives
$v_*(w_1)=\frac{s_1}{r_0r_1}$ for positive integers $r_1, s_1$with $\gcd(r_1,s_1)=1$ 
and $r_1\ge2$. Let $v_2$ be the monomial valuation
in coordinates $(z_1,w_1)$ taking the same values as 
$v_*$ on these coordinates.
We can find a toroidal modification $\rho_2\colon Y_2\to Y_1$ 
above $p_1$ such that 
the center of $v_2$ (resp.\ $v_*$) on $Y_2$ is an exceptional prime divisor
$E_2$ (resp.\ a free point $p_2\in E_2$).

This procedure can be continued indefinitely, giving rise to
sequences $(v_j)_{j\ge1}$, $(E_j)_{j\ge 0}$, $(p_j)_{j\ge0}$,
$(z_j,w_j)_{j\ge0}$ and $(r_j,s_j)_{j\ge0}$.
We write $b_n=r_{n-1}r_{n-2}\dots r_0$. One can check that
$b_n=\ord_{E_n}(\frm)$. Since $r_j\ge2$ for all $j$, we 
have $b_n\ge 2^n$. By Corollary~\ref{cor_compatibility} 
we have $A(v_j)<A(v_*)$ for all $j$. 

We have the following estimate,
whose proof uses elementary intersection theory.
\begin{lemma}\label{L101}
  Let $\pi_0\colon Y_0\to\AAA^2$ be the blowup of the origin
  with exceptional divisor $E_0$, and consider a
  point $p_0\in E_0$.  
  Further, let $\rho\colon Y\to Y_0$ be a modification above $p_0$.
  Consider an exceptional prime divisor $E\subseteq Y$
  mapping to $p_0$ and a free point $p$ on $E$.
  Then, for any effective divisor $H\subseteq\AAA^2$ we have 
  \begin{equation}
    \ord_p(\widetilde{H}|_E)
    \le b^{-1}\cdot \ord_{p_0}(\widetilde{H}_0\vert_{E_0})
    \le b^{-1}\cdot \ord_0(H),
  \end{equation}
  where $\widetilde{H}_0$ and $\widetilde{H}$ are the strict transforms 
  of $H$ by $\pi_0$ and $\pi=\pi_0\circ\rho$,
  respectively, and where $b=\ord_E(\frm)$.
\end{lemma}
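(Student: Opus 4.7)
My plan is to prove the two inequalities separately. The second follows from intersection theory on $Y_0$: writing $\pi_0^*H = \widetilde{H}_0 + \ord_0(H)\cdot E_0$ and using $\pi_0^*H\cdot E_0 = 0$ together with $E_0^2 = -1$ yields $\widetilde{H}_0 \cdot E_0 = \ord_0(H)$, so $\widetilde{H}_0|_{E_0}$ is an effective divisor of degree $\ord_0(H)$ on $E_0\simeq\PP^1_k$, hence its multiplicity at $p_0$ is at most $\ord_0(H)$.

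For the first inequality I factor $\rho\colon Y\to Y_0$ as a sequence of point blowups $\rho=\rho_k\circ\cdots\circ\rho_1$ (classical for proper birational morphisms of regular surfaces) and induct on $k$. Set $\sigma:=\rho_1\circ\cdots\circ\rho_{k-1}\colon Y_{k-1}\to Y_0$, write $\widetilde{H}_{k-1}$ for the strict transform of $H$ on $Y_{k-1}$, and $q\in Y_{k-1}$ for the center of $\rho_k$. There are two cases according to whether $E$ equals the new exceptional divisor $F_k$ of $\rho_k$ or is the strict transform under $\rho_k$ of a prime exceptional divisor $E'\subset Y_{k-1}$. In the latter case, the freeness of $p$ forces $\rho_k$ to be a local isomorphism near $p$ and $p':=\rho_k(p)$ to be free on $E'$; since $\ord_E(\frm)=\ord_{E'}(\frm)=b$, the inductive hypothesis applied to $\sigma$ transports the bound directly.

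The genuinely new case---which subsumes the base case $k=1$ with $\sigma=\mathrm{id}$ and $b=1$---is $E=F_k$. Setting $\mu:=\ord_q(\widetilde{H}_{k-1})$, from $\rho_k^*\widetilde{H}_{k-1}=\widetilde{H}+\mu F_k$ and $F_k^2=-1$ we get $\widetilde{H}\cdot F_k=\mu$, whence $\ord_p(\widetilde{H}|_E)\le\mu$; similarly $b=\ord_q(\sigma^*E_0)$. It thus suffices to show $b\mu\le m_0$, where $m_0:=\ord_{p_0}(\widetilde{H}_0|_{E_0})$. The classical estimate $i_q(D_1,D_2)\ge\ord_q(D_1)\cdot\ord_q(D_2)$ for effective divisors without common components at $q$ gives $b\mu\le i_q(\widetilde{H}_{k-1},\sigma^*E_0)$. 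The projection formula then yields $\widetilde{H}_{k-1}\cdot\sigma^*E_0=\widetilde{H}_0\cdot E_0=\ord_0(H)$; decomposing this into local contributions and noting that points outside $\sigma^{-1}(p_0)$ contribute $\ord_0(H)-m_0$ in total (since there $\sigma$ is an isomorphism and the argument for the second inequality applies locally to each $p\in E_0\setminus\{p_0\}$), we conclude that $\sum_{q'\in\sigma^{-1}(p_0)}i_{q'}(\widetilde{H}_{k-1},\sigma^*E_0)=m_0$, and in particular $i_q(\widetilde{H}_{k-1},\sigma^*E_0)\le m_0$.

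The main obstacle I anticipate is the careful bookkeeping in this projection-formula decomposition: verifying non-negativity of local intersection multiplicities by ruling out common components between $\widetilde{H}_{k-1}$ and $\sigma^*E_0$, and correctly restricting all intersections to complete curves (such as $E\simeq\PP^1$) given that the ambient surfaces here are not projective. Linearity in $H$ reduces everything to the case of a prime effective divisor.
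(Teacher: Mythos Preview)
Your argument is correct. For the second inequality you do exactly what the paper does. For the first inequality your route works but is considerably longer than the paper's.

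The paper bypasses both the factorization into point blowups and the induction. It observes directly that, since $p$ is a free point of $E$, one can write $\rho^*E_0=bE+E'$ with $E'$ an effective exceptional divisor whose support does not pass through $p$. Then a single application of the local projection formula gives
\[
\ord_{p_0}(\widetilde{H}_0|_{E_0})
=(\widetilde{H}_0\cdot E_0)_{p_0}
=(\rho_*\widetilde{H}\cdot E_0)_{p_0}
\ge(\widetilde{H}\cdot\rho^*E_0)_p
=b\,(\widetilde{H}\cdot E)_p
=b\,\ord_p(\widetilde{H}|_E),
\]
the inequality coming from dropping the (nonnegative) contributions at points of $\rho^{-1}(p_0)$ other than $p$. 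Your inductive step in the case $E=F_k$ is really this same computation unwound one blowup at a time: the estimate $i_q(D_1,D_2)\ge\ord_q(D_1)\,\ord_q(D_2)$ is exactly what you get by pushing the local projection formula through the last blowup $\rho_k$, and your global decomposition of $\widetilde{H}_{k-1}\cdot\sigma^*E_0$ then plays the role of the inequality step above. What the paper's organization buys is that the ``obstacle'' you flag---tracking common components and local contributions across the factorization---disappears: one only needs that $\widetilde{H}$ shares no component with $\rho^*E_0$ near $p$ and that local intersection multiplicities are nonnegative, both of which are immediate once $H$ is taken prime.
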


\noindent  We will apply Lemma~\ref{L101} to 
$\rho=\rho_n\circ\dots\circ\rho_1$.
We then have 
$b=b_n=\ord_{E_n}(\frm)\ge 2^n$,
so $\ord_p(\widetilde{H})\ll\ord_0(H)$ for $n\gg0$.
\begin{proof}
  The second inequality is clear since $E_0\simeq\PP^1$
  and the degree of $\widetilde{H}_0|_{E_0}$ equals $\ord_0(H)$.
  To prove the first inequality we write 
  $\rho^*E_0=bE+E'$,
  where $E'$ is a $\pi$-exceptional divisor whose support does not
  contain $p$. 
  It then follows that
  \begin{equation*}
    \ord_{p_0}(\widetilde{H}_0|_{E_0})
    =(\widetilde{H}_0\cdot E_0)_{p_0}
    =(\rho_*\widetilde{H}\cdot E_0)_{p_0}
    \ge(\widetilde{H}\cdot\rho^*E_0)_p
    =b\cdot (\widetilde{H}\cdot E)_p
    =b\cdot \ord_p(\widetilde{H}|_E).
  \end{equation*}
\end{proof}

\begin{lemma}\label{L5}
  The quasi-monomial valuations $v_n$ satisfy $v_n\leq v_{n+1}$ on 
  $\AAA^2$.
  Moreover, $v_n\to v_*$ and $\chi(v_n)\to\chi(v_*)$ as $n\to\infty$.
\end{lemma}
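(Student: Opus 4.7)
The plan is to proceed in three steps, corresponding to the monotonicity and the two convergence statements.

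\textbf{Step 1 (monotonicity).} By construction, $\rho_{n+1}\colon Y_{n+1}\to Y_n$ is a modification above the free point $p_n$, and the reduced exceptional divisor $D_{n+1}$ of $\pi_{n+1}=\pi_n\circ\rho_{n+1}$ contains the support of $\rho_{n+1}^*D_n$; hence $(Y_{n+1},D_{n+1})\succeq(Y_n,D_n)$. Using $v_n=r_{Y_n,D_n}(v_*)$ for every $n$, Lemma~\ref{L201} gives $v_n=r_{Y_n,D_n}(v_{n+1})$, and Corollary~\ref{C202} then yields $v_n(\fra)\le v_{n+1}(\fra)$ for every ideal $\fra$ on $\AAA^2$, that is, $v_n\le v_{n+1}$ in the sense of Definition~\ref{D201}.

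\textbf{Step 2 (convergence $v_n\to v_*$).} Since both $v_n$ and $v_*$ have center at the origin, it suffices to prove $v_n(f)\to v_*(f)$ for every $f\in\frm$. Let $\widetilde H_n$ denote the strict transform of $\{f=0\}$ on $Y_n$ and decompose the Cartier divisor of $\pi_n^*f$ as $\widetilde H_n+\sum_i a_iE_i^{(n)}$ over the $\pi_n$-exceptional prime divisors. Each $E_i^{(n)}$ is a component of $D_n$, so the defining property of the retraction gives $v_n(E_i^{(n)})=v_*(E_i^{(n)})$, whence
\begin{equation*}
0\le v_*(f)-v_n(f)=v_*(\widetilde H_n)-v_n(\widetilde H_n)\le v_*(\widetilde H_n).
\end{equation*}
Applying the Izumi inequality (Proposition~\ref{prop_Izumi}) to a local equation of $\widetilde H_n$ at $p_n$ on $Y_n$ gives $v_*(\widetilde H_n)\le A_{Y_n}(v_*)\cdot\ord_{p_n}(\widetilde H_n)$. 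The elementary inequality $\ord_{p_n}(\widetilde H_n)\le \ord_{p_n}(\widetilde H_n|_{E_n})$ (valid because $E_n\not\subseteq\widetilde H_n$), combined with Lemma~\ref{L101} applied to $\rho=\rho_n\circ\cdots\circ\rho_1$, yields $\ord_{p_n}(\widetilde H_n)\le b_n^{-1}\ord_0(f)$. Finally, since $K_{Y_n/X}$ is effective, Proposition~\ref{log_discrep}(iii) gives $A_{Y_n}(v_*)\le A_X(v_*)\le M$. Putting these together and using $b_n\ge 2^n$,
\begin{equation*}
0\le v_*(f)-v_n(f)\le M\cdot 2^{-n}\ord_0(f)\longrightarrow 0.
\end{equation*}

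\textbf{Step 3 (convergence of $\chi$).} From Step~2 and Lemma~\ref{L204}, $v_n(\frq)\to v_*(\frq)$. All $v_n$ and $v_*$ lie in $\{A\le M\}$ by Corollary~\ref{cor_compatibility}, and on this set the function $v\mapsto v(\fra_\bullet)$ is continuous by Corollary~\ref{cor_prop2_arbitrary_val}, so $v_n(\fra_\bullet)\to v_*(\fra_\bullet)$. For the log discrepancy, Corollary~\ref{cor_compatibility} gives $A(v_n)\le A(v_*)$, while the lower semicontinuity of $A$ (Lemma~\ref{lem_semicontinuity}) forces $A(v_*)\le\liminf A(v_n)$; hence $A(v_n)\to A(v_*)$. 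Combining the three convergences yields $\chi(v_n)\to \chi(v_*)$. The main technical point is in Step~2: the key fact is that the exponential decay $b_n^{-1}\le 2^{-n}$, made available by the hypothesis $r_j\ge 2$ (the crucial output of Lemma~\ref{L2} when $v_*$ is not quasi-monomial), is strong enough to overcome any a priori worry about the behavior of $A_{Y_n}(v_*)$ on higher models; fortunately, this quantity is uniformly bounded by $M$, so the Izumi estimate and Lemma~\ref{L101} combine into a uniform bound that gives genuine convergence rather than just a limit inferior estimate.
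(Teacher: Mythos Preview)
Your proof is correct and follows the same overall structure as the paper. Steps~1 and~3 are essentially identical to the paper's argument. In Step~2 you take a somewhat different route: you bound the difference $v_*(f)-v_n(f)$ quantitatively by combining the Izumi estimate on $Y_n$ with Lemma~\ref{L101}, obtaining $v_*(f)-v_n(f)\le M\cdot 2^{-n}\ord_0(f)$. The paper instead observes directly that $v_*(f)>v_n(f)$ holds if and only if the strict transform $\widetilde H_n$ passes through $p_n$, equivalently $\ord_{p_n}(\widetilde H_n|_{E_n})\ge 1$; by Lemma~\ref{L101} this fails once $2^n>\ord_0(f)$, so in fact $v_n(f)=v_*(f)$ exactly for all large $n$. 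Thus the paper avoids Izumi and gets a stronger conclusion (eventual stabilization rather than a rate), while your argument, though slightly heavier, yields an explicit rate of convergence. Both hinge on the same key ingredient, Lemma~\ref{L101} together with $b_n\ge 2^n$.
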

\begin{proof}
  It follows from Lemma~\ref{L201} and Corollary~\ref{C202}
  that $v_n\leq v_{n+1}\leq v_*$ on $\AAA^2$.
  We claim that 
  $v_n$ converges to $v_*$ as $n\to\infty$, that is,
  $v_n(f)\to v_*(f)$ for every $f\in R=k[x,y]$. 
  Now $v_*(f)>v_n(f)$ if and only if the strict transform $\widetilde{H}_n$ 
  of $H:=\{f=0\}$
  on $Y_n$ contains $p_n$, and the latter is equivalent to 
  $\ord_{p_n}(\widetilde{H}_n|_{E_n})\ge1$. 
  Thus Lemma~\ref{L101} implies
  that $v_n(f)=v_*(f)$ as soon as $2^n>\ord_0(f)$.

  Let us finally note that $\chi(v_n)\to\chi(v_*)$.
  Indeed, $v_n(\fra_\bullet)$ and $v_n(\frq)$
  increase to $v_*(\fra_\bullet)$ and $v_*(\frq)$,
  respectively, by Corollary~\ref{cor_prop2_arbitrary_val}. 
  Moreover, since $A$ is lower 
  semicontinuous we have $\liminf_n A(v_n)\ge A(v_*)$.
  But $A(v_*)\ge A(v_n)$, 
  so $\lim_{n\to\infty}A(v_n)=A(v_*)<\infty$.
  As $v^*(\frq)$ and $v^*(\fra_{\bullet})$ are finite, we conclude that
  $\lim_{n\to\infty}\chi(v_n)=\chi(v_*)$.
\end{proof}
\begin{lemma}\label{L6}
  We have $\chi(v_n)>\chi(v_{n+1})$ for $n\gg0$.
\end{lemma}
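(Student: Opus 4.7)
The plan is to establish, via Lemma~\ref{L101}, the quantitative estimate
\begin{equation*}
v_{n+1}(f)-v_n(f)\leq\frac{s_n}{b_nb_{n+1}}\ord_0(f)\quad\text{for every $f\in R$,}
\end{equation*}
to transfer it to $\fra_\bullet$ and $\frq$, and finally to compare the result with a lower bound on $A(v_{n+1})-A(v_n)$.

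To prove the estimate, I would work locally at $p_n$ in the coordinates $(z_n,w_n)$. Write $\pi_n^*(f)=z_n^{a_n}\tilde g$ in $\cO_{Y_n,p_n}$, with $a_n=\ord_{E_n}(f)$ and $\tilde g$ not divisible by $z_n$. Since $v_n=\ord_{E_n}/b_n$, we have $v_n(f)=a_n/b_n$; since $v_{n+1}$ is monomial in $(z_n,w_n)$ with values $(1/b_n,s_n/b_{n+1})$, this gives $v_{n+1}(f)-v_n(f)=v_{n+1}(\tilde g)$. An admissible expansion of $\tilde g$ in $\widehat{\cO_{Y_n,p_n}}$ contains the monomial $w_n^{e_n}$, where $e_n$ denotes the $w_n$-order of $\tilde g(0,w_n)$; hence $v_{n+1}(\tilde g)\leq(s_n/b_{n+1})e_n$. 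Recognizing $e_n=\ord_{p_n}(\widetilde H|_{E_n})$ for the strict transform $\widetilde H$ of $H=\{f=0\}$, Lemma~\ref{L101} yields $e_n\leq b_n^{-1}\ord_0(f)$.

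Next, for each $m$ one picks $f^*\in\fra_m$ with $v_n(f^*)=v_n(\fra_m)$. The Izumi estimate~\eqref{e104} combined with $v_n(\frm)=1$ gives $\ord_0(f^*)\leq v_n(\fra_m)$, so
\begin{equation*}
v_{n+1}(\fra_m)-v_n(\fra_m)\leq v_{n+1}(f^*)-v_n(f^*)\leq\frac{s_n}{b_nb_{n+1}}v_n(\fra_m).
\end{equation*}
Dividing by $m$ and letting $m\to\infty$ yields $\Delta\alpha_n:=v_{n+1}(\fra_\bullet)-v_n(\fra_\bullet)\leq\frac{s_n}{b_nb_{n+1}}v_n(\fra_\bullet)$, while trivially $\Delta q_n:=v_{n+1}(\frq)-v_n(\frq)\geq 0$. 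For the log discrepancy, I would use Remark~\ref{Nagata} to construct a log-smooth pair $(\tilde Y,\tilde D)\succeq(Y_{n+1},D_{n+1})$ in which the closure of $\{w_n=0\}$ is a component of $\tilde D$. Then $v_n$ and $v_{n+1}$ both lie in the closed two-dimensional cone $\QM_{p_n}(\tilde Y,\tilde D)$, on which $A$ is integral linear, and $\Delta A_n=(s_n/b_{n+1})\cdot A(\ord_{\{w_n=0\}})\geq s_n/b_{n+1}$, since the log discrepancy of any prime divisor over $\AAA^2$ is at least $1$.

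To conclude, assume $v_*(\fra_\bullet)>0$ (the case $v_*(\fra_\bullet)=0$ gives $\chi(v_*)=0<\Arn^\frq(\fra_\bullet)$ at once). Then $v_n(\fra_\bullet)>0$ for $n\gg 0$. Since $\Delta d_n:=\Delta A_n+\Delta q_n>0$, the inequality $\chi(v_{n+1})<\chi(v_n)$ is equivalent to $\Delta\alpha_n/\Delta d_n<\chi(v_n)$, and our bounds give
\begin{equation*}
\frac{\Delta\alpha_n}{\Delta d_n}\leq\frac{(s_n/(b_nb_{n+1}))v_n(\fra_\bullet)}{s_n/b_{n+1}}=\frac{v_n(\fra_\bullet)}{b_n},\qquad \chi(v_n)=\frac{v_n(\fra_\bullet)}{A(v_n)+v_n(\frq)},
\end{equation*}
reducing everything to $b_n>A(v_n)+v_n(\frq)$. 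This last inequality holds for $n\gg 0$ because $b_n\geq 2^n\to\infty$ while $A(v_n)+v_n(\frq)\leq M+v_*(\frq)<\infty$ is bounded. The main obstacle is the local setup at $p_n$: one must justify via Remark~\ref{Nagata} that $\{w_n=0\}$ extends to a component of a log-smooth pair, so that $v_n$ and $v_{n+1}$ live in a common two-dimensional cone of $\QM$ on which $A$ is linear. Once this is arranged, the quantitative Lemma~\ref{L101} supplies the decisive factor $b_n^{-1}$ that forces $\Delta\alpha_n$ to decay strictly faster than $\Delta d_n\cdot\chi(v_n)$.
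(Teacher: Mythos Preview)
Your argument is correct and follows essentially the same strategy as the paper: both proofs use Lemma~\ref{L101} to bound the increment $v_{n+1}(\fra_\bullet)-v_n(\fra_\bullet)$ by a quantity carrying an extra factor of $b_n^{-1}$, compare this with the linear growth of $A(v_n)+v_n(\frq)$ along the monomial segment in coordinates $(z_n,w_n)$, and conclude via the elementary inequality $b_n\geq 2^n>A(v_n)+v_n(\frq)$ for $n\gg0$. The only cosmetic differences are that the paper works with the whole family $v_{n,t}$ and bounds directly in terms of $\ord_0(\fra_\bullet)$, whereas you compare the two endpoints and insert the Izumi estimate $\ord_0(f^*)\leq v_n(f^*)$; your explicit construction of the log-smooth pair via Remark~\ref{Nagata} makes precise what the paper's formula $A(v_{n,t})=A(v_n)+b_n^{-1}t$ uses implicitly (namely that $\{w_n=0\}$, being non-exceptional at the free point $p_n$, has log discrepancy~$1$).
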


\noindent Together, Lemmas~\ref{L5} and~\ref{L6} show that
$\chi(v_*)<\chi(v_n)$ for $n$ large, and this completes
the proof of Conjecture~\ref{conj2} in dimension two.

\begin{proof}[Proof of Lemma~\ref{L6}]
  Pick $n_0$ such that $2^{n_0}>A(v_*)+v_*(\frq)$.
  In particular, $2^{n_0}>\ord_0(\frq)$.
  By Lemma~\ref{L101}, the 
  strict transform of $\frq$ on $Y_n$ 
  does not vanish at $p_n$ for $n\ge n_0$.

  Fix $n\ge n_0$ and 
  consider our local coordinates $(z_n,w_n)$
  at $p_n\in E_n\subseteq Y_n$.
  For $t>0$, let $v_{n,t}$ be the monomial valuation
  in $(z_n,w_n)$ with $v_{n,t}(z_n)=b_n^{-1}$ and
  $v_{n,t}(w_n)=b_n^{-1}t$. 
  Thus $v_{n,0}=v_n$ and 
  $v_{n,s_n/r_n}=v_{n+1}$.
  Note that $v_{n,t}(\frm)=1$ for all $t$.

  Let us study the function $t\to\chi(v_{n,t})$.
  First, $A(v_{n,t})=A(v_n)+b_n^{-1}t$.
  Second, $v_{n,t}(\frq)=v_n(\frq)$ for $n\ge n_0$.
  For an ideal $\fra\subseteq R$ with $V(\fra)\subseteq\{0\}$, the function,
  $t\to v_{n,t}(\fra)$ is concave 
  (and piecewise linear) for $t\ge0$.
  Let $\tilde{\fra}$ be the strict transform of $\fra$
  on $Y_n$. Then, for $0<t\ll 1$, we deduce
  using Lemma~\ref{L101}:
  \begin{equation*}
    v_{n,t}(\fra)
    =v_n(\fra)+b_n^{-1}t\cdot \ord_{p_n}(\tilde{\fra}|_{E_n})
    \le v_n(\fra)+2^{-n}b_n^{-1}t\cdot \ord_0(\fra).
  \end{equation*}
  By concavity, the same inequality holds for all $t>0$.
  Applying this with $\fra=\fra_m$, dividing by $m$, and then letting $m$ go to infinity,
  we obtain the inequality
  \begin{equation*}
    v_{n,t}(\fra_\bullet)
    \le v_n(\fra_\bullet)+2^{-n}b_n^{-1}t\cdot \ord_0(\fra_\bullet)
  \end{equation*}
  for all $t\ge0$. 
  Hence
  \begin{equation*}
    \chi(v_{n,t})\le\frac{v_n(\fra_\bullet)+2^{-n}b_n^{-1}t\cdot \ord_0(\fra_\bullet)}
    {A(v_n)+v_n(\frq)+b_n^{-1}t},
  \end{equation*}
  for all $t\ge0$, with equality for $t=0$.
  Here the right hand side is strictly decreasing in $t$ 
  if and only if
  \begin{equation}\label{e3}
    2^{-n}b_n^{-1}\ord_0(\fra_\bullet)\cdot(A(v_n)+v_n(\frq))
    <b_n^{-1}v_n(\fra_\bullet).
  \end{equation}
  Now $A(v_n)\leq A(v_*)<\infty$,
  $v_n(\frq)=v_*(\frq)<\infty$
  and $v_n(\fra_\bullet)\ge\ord_0(\fra_\bullet)$,
  so~\eqref{e3} holds for $n\ge n_0$
  by our choice of $n_0$. 
  Thus $\chi(v_{n+1})<\chi(v_n)$ for
  $n\ge n_0$, completing the proof.
\end{proof}

\appendix
\section{Multiplier ideals on regular excellent schemes over $\QQ$}
We explain how to deduce some basic results about multiplier ideals, the Restriction and  the Subadditivity Theorems, in our setting from the classical one. Recall that $X$ is a regular, connected, excellent scheme over $\QQ$. Our goal is to prove the following:

\begin{theorem}\label{restriction}
If $H$ is a regular closed subscheme of codimension one in $X$, then we have
$\cJ((\fra\cdot\cO_H)^{\lambda})\subseteq\cJ(\fra^{\lambda})\cdot\cO_H$ for every ideal
$\fra$ on $X$ and every $\lambda\in\RR_{\geq 0}$.
\end{theorem}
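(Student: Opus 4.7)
The plan is to reduce the statement, via completion and algebraization, to the classical Restriction Theorem for smooth varieties over a field of characteristic zero, where it is known. The inclusion is local on $H$, so it suffices to establish it at each point $\xi \in H$. Since $X$ is excellent, the canonical morphism $\mu\colon X' = \Spec \widehat{\cO_{X,\xi}} \to X$ is regular, and by Proposition~\ref{P302}, the formation of multiplier ideals commutes with $\mu^*$, both for $\fra$ and for $\fra\cdot \cO_H$. Hence we may replace $X$ by $X'$ and $H$ by its strict transform, and assume $X = \Spec R$ with $R$ a complete regular local ring of characteristic zero. By Cohen's structure theorem, $R \cong k\llbracket x_1,\dots,x_n\rrbracket$ for some field $k$ of characteristic zero, and after changing coordinates we may assume $H = V(x_n)$.

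Next, I would produce a log resolution $\pi\colon Y \to X$ of the ideal $\fra\cdot (x_n)$; such a resolution exists by~\cite{Temkin}, and we may arrange that $Y$ is regular, $\fra\cdot\cO_Y = \cO_Y(-D)$, the divisor $D + K_{Y/X} + \pi^{-1}(H)$ has simple normal crossings, and the strict transform $H'$ of $H$ is a regular prime divisor disjoint from the components of $D$ that dominate $H$. As in the classical proof (see, e.g., \cite{positivity}), the Restriction Theorem then follows once we know the vanishing
\begin{equation*}
  R^1\pi_*\cO_Y(K_{Y/X} - \rdup{\lambda D} - H') = 0,
\end{equation*}
because from the short exact sequence defining $H' \hookrightarrow Y$ one obtains a surjection of $\pi_*$ of the middle term onto $\pi_*$ of the restriction to $H'$, and the sheaves involved compute the two multiplier ideals via the adjunction $K_{H'/H}=(K_{Y/X}-H'+\pi^*H)\vert_{H'}$.

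The main obstacle is therefore to establish this instance of Kawamata--Viehweg vanishing for a proper birational morphism $\pi\colon Y\to X$ where $X=\Spec k\llbracket x_1,\dots,x_n\rrbracket$ is only a formal scheme rather than a variety over a field. I would handle this via algebraization. By the theorem of N\'eron--Popescu (see~\cite[Ch.~32]{Matsumura}), the regular morphism $\Spec k\to X$ factors as a filtered direct limit of smooth $k$-algebras $A_\alpha$ essentially of finite type, i.e., $R = \varinjlim A_\alpha$. Since $\fra$, the ideal of $H$, and the blow-up centers producing $\pi$ are all finitely generated, the entire data $(Y, \pi, D, H')$ descends to some $(Y_\alpha, \pi_\alpha, D_\alpha, H'_\alpha)$ over $\Spec A_\alpha$, with $Y_\alpha$ smooth over $k$ and $D_\alpha + K_{Y_\alpha/A_\alpha} + H'_\alpha$ with simple normal crossings; we may replace $A_\alpha$ by a further smooth $k$-algebra essentially of finite type to ensure this. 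Now Kawamata--Viehweg vanishing applies in the classical setting to give
\begin{equation*}
  R^1(\pi_\alpha)_*\cO_{Y_\alpha}(K_{Y_\alpha/A_\alpha} - \rdup{\lambda D_\alpha} - H'_\alpha) = 0,
\end{equation*}
and since $Y = Y_\alpha \times_{\Spec A_\alpha} X$ with $X \to \Spec A_\alpha$ flat, flat base change for higher direct images transfers this vanishing to $\pi\colon Y\to X$, completing the reduction and hence the proof.
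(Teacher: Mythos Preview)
Your reduction to $X=\Spec k\llbracket x_1,\dots,x_n\rrbracket$ via Proposition~\ref{P302} is fine and matches the paper. From that point on, however, your approach diverges from the paper's and contains a genuine gap.

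The problem is with the N\'eron--Popescu step. Writing $R=\varinjlim A_\alpha$ with each $A_\alpha$ smooth over $k$ is correct, but Popescu's theorem gives you \emph{no} control on the transition maps or on the maps $A_\alpha\to R$: they are merely $k$-algebra homomorphisms, in general neither flat nor regular. This breaks both ends of your argument. First, when you descend the log resolution $(Y,D,H')$ to $(Y_\alpha,D_\alpha,H'_\alpha)$, there is no reason for $Y_\alpha$ to be regular (equivalently, smooth over $k$): regularity of $Y$ does not descend along the non-flat map $Y\to Y_\alpha$, and ``passing to a further $A_\alpha$'' does not help because regularity of the total space is not a property that spreads out in such limits. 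If instead you tried to construct the log resolution directly on $\Spec A_\alpha$, its base change to $\Spec R$ would not be a log resolution there, for the same reason. Second, the ``flat base change'' you invoke at the end to transport the vanishing of $R^1(\pi_\alpha)_*$ back to $R^1\pi_*$ requires $A_\alpha\to R$ to be flat, which it is not. (One can salvage this particular point via derived base change using that $A_\alpha$ is regular and that \emph{all} higher $R^i(\pi_\alpha)_*$ vanish, but you would still be stuck on the first issue.)

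The paper circumvents this entirely. It does not try to descend $\fra$, but instead approximates it by the ideals $\fra+\frm^N$, each of which \emph{does} extend from an ideal on $\AAA_k^n$; the point is that the map $k[x_1,\dots,x_n]\to k\llbracket x_1,\dots,x_n\rrbracket$ is regular (completion of an excellent ring), so Proposition~\ref{P302} applies and the classical Restriction Theorem pulls back. One then passes to the intersection over $N$ using Lemma~\ref{lem_multiplier_ideals} (which expresses $\cJ(\fra^{\lambda-})$ as $\bigcap_N\cJ((\fra+\frm^N)^{\lambda-})$) together with the Chevalley-type Lemma~\ref{complete_ring}(ii). This avoids any need for vanishing theorems over the formal base.
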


\begin{theorem}\label{subadditivity}
If $\fra$ and $\frb$ are ideals on $X$, and $\lambda,\mu$ are nonnegative real numbers, then
$\cJ(\fra^{\lambda}\frb^{\mu})\subseteq\cJ(\fra^{\lambda})\cdot \cJ(\frb^{\mu})$.
\end{theorem}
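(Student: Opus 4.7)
The plan is to reduce the claim to the classical Subadditivity Theorem via completion and base change, and then carry out the standard diagonal argument combining the already established Theorem~\ref{restriction} with a product formula for multiplier ideals.

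First I will reduce to a suitable local model. Both sides of the inclusion are ideals on $X$, so the claim is stalk-local; fix $\xi\in X$. The completion morphism $\Spec\widehat{\cO_{X,\xi}}\to X$ is regular and faithfully flat because $X$ is excellent, and by Proposition~\ref{P302} the formation of multiplier ideals commutes with regular morphisms. Hence I may assume $X=\Spec R$ with $R$ a complete regular local ring containing $\QQ$, and by Cohen's structure theorem $R\simeq k\llbracket x_1,\dots,x_n\rrbracket$ for some field $k$ of characteristic zero. A further regular and faithfully flat base change to the algebraic closure $\overline{k}$ allows me to assume $k=\overline{k}$.

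Now for the diagonal setup. Let $R_2:=k\llbracket x_1,\dots,x_n,y_1,\dots,y_n\rrbracket$ and $Y:=\Spec R_2$, with projections $p_1,p_2\colon Y\to X$ given by $p_1^*(x_i)=x_i$ and $p_2^*(x_i)=y_i$. The diagonal $\Delta\colon X\to Y$ is a regular closed embedding of codimension $n$, cut out by the regular sequence $y_1-x_1,\dots,y_n-x_n$. Set $\tilde{\fra}:=p_1^{-1}(\fra)\cdot R_2$ and $\tilde{\frb}:=p_2^{-1}(\frb)\cdot R_2$, so that $\Delta^*(\tilde{\fra})=\fra$ and $\Delta^*(\tilde{\frb})=\frb$. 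The argument hinges on two ingredients. The first is the product formula
\[
\cJ(\tilde{\fra}^\lambda\tilde{\frb}^\mu)
=\bigl(\cJ(\fra^\lambda)\cdot R_2\bigr)\cdot\bigl(\cJ(\frb^\mu)\cdot R_2\bigr).
\]
The second is an iterated application of (the evident mixed-ideal version of) Theorem~\ref{restriction} to the $n$ regular hypersurfaces $\{y_i-x_i=0\}$, the intermediate subschemes all being regular; this yields $\cJ(\fra^\lambda\frb^\mu)\subseteq\cJ(\tilde{\fra}^\lambda\tilde{\frb}^\mu)\cdot\cO_X$. Combining the two ingredients gives the desired inclusion $\cJ(\fra^\lambda\frb^\mu)\subseteq\cJ(\fra^\lambda)\cdot\cJ(\frb^\mu)$.

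The genuine obstacle is the product formula. In the classical algebraic setting over $\overline{k}$ it is proved by taking log resolutions $\mu_j\colon X'_j\to X$ of $\fra$ and $\frb$, forming the fiber product $X'_1\times_{\overline{k}}X'_2\to X\times_{\overline{k}}X$ (a log resolution of the appropriate pulled-back ideals), and computing both sides via the Künneth formula together with the Kawamata--Viehweg vanishing $R^i\mu_{j,*}\cO(K_{X'_j/X}-\lfloor\lambda E_j\rfloor)=0$ for $i>0$. To transfer the computation to the formal setting, I would use Temkin's theorem to produce log resolutions directly over $X$ and over $Y$, observe that $Y$ maps to (a neighborhood of the diagonal in) the non-complete product $X\times_{\overline{k}}X$ by a regular morphism (completion along the diagonal), so that log resolutions pull back to log resolutions and the higher direct images are preserved under this base change, and thereby import the classical Künneth and vanishing statements. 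This is the same style of reduction already required for Theorem~\ref{restriction}, so no qualitatively new obstacles are expected beyond what is handled in the appendix.
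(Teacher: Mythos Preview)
Your reduction to the complete local case is the same as the paper's, but from there the approaches diverge. The paper does \emph{not} redo the diagonal argument in the formal setting. Instead it approximates: for each $N$, the ideals $\fra+\frm^N$ and $\frb+\frm^N$ are extended from ideals on $\AAA_k^n$, so the classical subadditivity (used as a black box) gives $\cJ((\fra+\frm^N)^{\lambda-}(\frb+\frm^N)^{\mu-})\subseteq\cJ((\fra+\frm^N)^{\lambda-})\cdot\cJ((\frb+\frm^N)^{\mu-})$. One then intersects over $N$ and invokes two short lemmas: Lemma~\ref{lem_multiplier_ideals} (and its two-ideal analogue) identifying $\cJ(\fra^{\lambda-})$ with $\bigcap_N\cJ((\fra+\frm^N)^{\lambda-})$, and a Chevalley-type statement (Lemma~\ref{complete_ring}) ensuring that the intersection of products of decreasing ideal sequences equals the product of the intersections in a complete local ring.

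Your route instead needs the product formula on $Y=\Spec k\llbracket x,y\rrbracket$, and the justification you sketch has a real gap. With $X=\Spec\overline{k}\llbracket x_1,\dots,x_n\rrbracket$, the scheme $X\times_{\overline{k}}X$ is the spectrum of $\overline{k}\llbracket x\rrbracket\otimes_{\overline{k}}\overline{k}\llbracket y\rrbracket$, which is not Noetherian; so ``regular morphism'' and the base-change machinery of Proposition~\ref{P302} do not apply to the map $Y\to X\times_{\overline{k}}X$, and you cannot import K\"unneth or Kawamata--Viehweg vanishing along that route. The paper explicitly flags that vanishing theorems are not known in this generality, which is exactly why it avoids re-running the diagonal argument. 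One could salvage your product formula by the same $\frm^N$-approximation trick---but then you have embedded the paper's argument inside a longer one. The paper's approach is shorter precisely because it treats classical subadditivity as a single black box rather than re-establishing its ingredients.
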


For the proofs, it will be convenient to consider the following reindexing of multiplier ideals.
If $t>0$, we put $\cJ(\fra^{t-}):=\cJ(\fra^{t-\epsilon})$ for $0<\epsilon\ll 1$. 
Of course, we have
$\cJ(\fra^t)=\cJ(\fra^{(t+\epsilon)-})$ for $0<\epsilon\ll 1$.
Similarly, if $\fra$ and $\frb$ are two ideals, and $s$, $t>0$, then we put
$\cJ(\fra^{s-}\frb^{t-}):=\cJ(\fra^{s-\epsilon}\frb^{t-\epsilon})$ for $0<\epsilon\ll 1$. 
Since $\cJ(\fra^s\frb^t)=\cJ(\fra^{(s+\epsilon)-}\frb^{(t+\epsilon)-})$ for $0<\epsilon\ll 1$, having
the statement
in Theorem~\ref{subadditivity}
for all $\lambda$, $\mu\geq 0$
is equivalent with having $\cJ(\fra^{\lambda-}\frb^{\mu-})\subseteq\cJ(\fra^{\lambda-})\cdot\cJ(\frb^{\mu-})$
for every $\lambda$, $\mu>0$. The same holds for Theorem~\ref{restriction}.

\begin{lemma}\label{lem_multiplier_ideals}
Suppose that $X=\Spec\,k\llbracket x_1,\dots,x_m\rrbracket$. 
If $\frm$ is the ideal defining the closed point in $X$, then
\begin{equation*}\cJ(\fra^{t-})=\bigcap_{N\geq 1}\cJ((\fra+\frm^N)^{t-})\end{equation*}
for every $t>0$.
\end{lemma}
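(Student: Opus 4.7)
The plan is to establish the two inclusions separately. The inclusion $\cJ(\fra^{t-})\subseteq\bigcap_N\cJ((\fra+\frm^N)^{t-})$ is immediate from monotonicity of multiplier ideals, since $\fra\subseteq\fra+\frm^N$. The entire content of the lemma thus lies in the reverse inclusion, and for this I would work through the valuative reformulation of $\cJ(\cdot^{t-})$ available in our setting via Lemma~\ref{lem1_arbitrary_val}: for any nonzero ideal $\frc$ on $X$, an element $f$ belongs to $\cJ(\frc^{t-})$ if and only if $v(f)+A(v)\ge t\cdot v(\frc)$ for every $v\in\Val_X$ with $A(v)<\infty$, and this need only be tested on the finitely many components of a log resolution of $\frc$.

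Fix a log resolution $\pi\colon Y\to X$ of the product $\fra\cdot\frm$, so that $\fra\cdot\cO_Y=\cO_Y(-D)$ and $\frm\cdot\cO_Y=\cO_Y(-G)$ with $D+G$ supported on a simple normal crossing divisor. Assuming $f\in\bigcap_N\cJ((\fra+\frm^N)^{t-})$, the task is to verify $\ord_E(f)+A(\ord_E)\ge t\ord_E(\fra)$ for each component $E$ of $D$. Since $v(\fra+\frm^N)=\min(v(\fra),Nv(\frm))$ for every $v\in\Val_X$, the components of $D$ split according to whether $\ord_E(\frm)>0$ (equivalently, $c_X(\ord_E)$ is the closed point) or $\ord_E(\frm)=0$. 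The first case is easy: choosing $N\ge\ord_E(\fra)/\ord_E(\frm)$ forces $\ord_E(\fra+\frm^N)=\ord_E(\fra)$, so the hypothesis applied at $\ord_E$ gives the inequality immediately.

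The main obstacle is the second case, in which $\ord_E(\fra+\frm^N)=0$ for every $N$ and the hypothesis yields no information at $\ord_E$ directly. The remedy is to approximate $\ord_E$ inside $\QM(Y,D+G)$ by quasi-monomial valuations that do see $\frm$. Since $X=\Spec R$ is local and $\pi$ is proper, $\pi(E)$ is closed in $X$ and so contains the closed point; pick a point $\eta'\in E\cap\pi^{-1}(\text{closed point})$, which necessarily lies on some component $G_1$ of $G$, together with snc coordinates $(y_1,\dots,y_s)$ at $\eta'$ satisfying $V(y_1)=E$ and $V(y_2)=G_1$. Let $w_\gamma\in\QM(Y,D+G)$ denote the quasi-monomial valuation with weights $(1,\gamma,0,\dots,0)$ on these coordinates. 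By Proposition~\ref{lem1_val} and the integral linearity of the log discrepancy on $\QM(Y,D+G)$, as $\gamma\to 0^+$ one has $w_\gamma(f)\to\ord_E(f)$, $w_\gamma(\fra)\to\ord_E(\fra)$ and $A(w_\gamma)\to A(\ord_E)$, while $w_\gamma(\frm)$ is exactly of order $\gamma$. Choosing $N=N(\gamma):=\lceil w_\gamma(\fra)/w_\gamma(\frm)\rceil$, which is of order $1/\gamma$, ensures $N(\gamma)w_\gamma(\frm)\ge w_\gamma(\fra)$, so the hypothesis at $w_\gamma$ reads $w_\gamma(f)+A(w_\gamma)\ge t\cdot w_\gamma(\fra)$; letting $\gamma\to 0$ yields the required $\ord_E(f)+A(\ord_E)\ge t\ord_E(\fra)$.
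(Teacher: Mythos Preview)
Your argument is correct, but it takes a genuinely different route from the paper's proof. The paper argues by contraposition: if $g\notin\cJ(\fra^{t-})$, then the defining inequality $\ord_E(g)+A(\ord_E)\ge t\cdot\ord_E(\fra)$ fails for some divisor $E$ over $X$, and one can arrange (arguing as in~\cite[Lemma~2.6]{dFM}) for $E$ to have center at the closed point; since then $\ord_E(\frm)\ge1$, taking $N>\ord_E(\fra)$ forces $\ord_E(\fra+\frm^N)=\ord_E(\fra)$, so $g\notin\cJ((\fra+\frm^N)^{t-})$. You instead fix a log resolution of $\fra\cdot\frm$ and verify the inequality for every component $E$ of $D$ directly: those with $\ord_E(\frm)>0$ are immediate, while for those with $\ord_E(\frm)=0$ you perturb $\ord_E$ inside $\QM(Y,D+G)$ toward a component of $G$ and pass to the limit, using Lemma~\ref{lem1_arbitrary_val} to evaluate the hypothesis at the quasi-monomial $w_\gamma$. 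Your approach avoids the external citation at the cost of invoking the machinery of \S\ref{S304}--\S\ref{S306} (there is no circularity: Lemma~\ref{lem1_arbitrary_val} does not depend on the results of the appendix). The paper's argument is shorter but leans on a black box; yours is self-contained within the paper. One small omission: you should treat $\fra=0$ separately, as the paper does, since your fixed log resolution of $\fra\cdot\frm$ presupposes $\fra\ne0$.
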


\begin{proof}
  We may assume $\fra$ is nonzero: otherwise the assertion follows from
  $\bigcap_N\cJ(\frm^{N-})=\bigcap_N\frm^{N-m}=(0)$.
  Given $g\in\cO(X)$, we have $g\in\cJ(\fra^{t-})$ if and only if for every divisor $E$ over $X$
  \begin{equation}\label{eq_multiplier_ideals}
    \ord_E(g)+A(\ord_E)\geq t\cdot\ord_E(\fra).
  \end{equation}
  Furthermore, if this is not the case, then one can find a divisor 
  $E$ with center at the closed point
  such that~\eqref{eq_multiplier_ideals} fails (for this, one can
  argue as in the proof of~\cite[Lemma~2.6]{dFM}). 
  If $N>\ord_E(\fra)$, then $\ord_E(\fra)=\ord_E(\fra+\frm^N)$,
  and we see that $g\not\in\cJ((\fra+\frm^N)^{t-})$.
\end{proof}

\begin{remark}\label{rem_multiplier_ideals}
  Using the same proof, one sees that more generally, 
  if $\fra$ and $\frb$ are two ideals as in the lemma,
  and if $s, t>0$, then 
  \begin{equation*}\cJ(\fra^{s-}\frb^{t-})=\bigcap_{N\geq 1}\cJ((\fra+\frm^N)^{s-}(\frb+\frm^N)^{t-})\end{equation*}
  for every $s$, $t>0$.
\end{remark}

\begin{lemma}\label{complete_ring}
  Let $(R,\frm)$ be a complete local Noetherian ring, 
  and $(I_N)_{N\geq 1}$ and $(J_N)_{N\geq 1}$
  be sequences of ideals in $R$ with $I_{N+1}\subseteq I_N$
  and $J_{N+1}\subseteq J_N$ for all $N$.
  Write $I=\bigcap_{N\geq 1}I_N$ and $J=\bigcap_{N\geq 1}J_N$.
  \begin{enumerate}
  \item[(i)] We have 
    $IJ=\bigcap_{N\geq 1}I_NJ_N$.
  \item[(ii)] For every ideal $I'$ in $R$, we have $\bigcap_{N\geq 1}(I'+I_N)=I'+I$.
  \end{enumerate}
\end{lemma}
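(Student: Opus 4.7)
Both parts of the lemma follow from the same Chevalley-type principle combined with Krull's intersection theorem. The key observation I would establish first is the following: for the decreasing sequence $(I_N)$ in the complete Noetherian local ring $R$, and for every integer $k \geq 1$, there exists $N_0 = N_0(k)$ such that $I_N \subseteq I + \frm^k$ for all $N \geq N_0$, and analogously for $(J_N)$. This is Chevalley's theorem applied to the quotient ring $R/I$, which is still complete Noetherian local: the induced decreasing sequence $(I_N/I)$ has zero intersection, so Chevalley forces its terms into arbitrarily high powers of the maximal ideal of $R/I$. Alternatively, this estimate can be derived directly, combining the Artinian property of $R/\frm^k$ (which gives Mittag-Leffler stabilization of the images of $I_N$) with a diagonal completeness argument showing that any element stably present modulo $\frm^k$ lifts to an element of $\bigcap_N I_N = I$.

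With this in hand, part (ii) is nearly immediate. Given $x \in \bigcap_N (I' + I_N)$ and $k \geq 1$, pick $N$ large enough that $I_N \subseteq I + \frm^k$; then $x \in I' + I_N \subseteq I' + I + \frm^k$. Hence $x$ lies in $\bigcap_k (I' + I + \frm^k)$, which equals $I' + I$ by Krull's intersection theorem applied to the complete Noetherian local ring $R/(I' + I)$. Combined with the obvious reverse inclusion $I' + I \subseteq \bigcap_N (I' + I_N)$, this gives the claim.

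Part (i) is analogous. For each $k \geq 1$, take $N$ large enough that $I_N \subseteq I + \frm^k$ and $J_N \subseteq J + \frm^k$; then
\begin{equation*}
  I_N J_N \subseteq (I + \frm^k)(J + \frm^k) \subseteq IJ + \frm^k.
\end{equation*}
Intersecting over $N$ gives $\bigcap_N I_N J_N \subseteq IJ + \frm^k$, and then intersecting over $k$ and applying Krull in $R/IJ$ yields $\bigcap_N I_N J_N \subseteq IJ$; the reverse inclusion is trivial.

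The only step requiring real care is the Chevalley-type ``uniform approximation'' estimate $I_N \subseteq I + \frm^k$; this is where completeness of $R$ is used essentially. Once this is in place, both parts reduce to a single application of Krull's intersection theorem in an appropriate complete Noetherian local quotient of $R$, with no further calculation needed.
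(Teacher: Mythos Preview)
Your argument is correct and essentially identical to the paper's: both use Chevalley's theorem in $R/I$ (resp.\ $R/J$) to get $I_N\subseteq I+\frm^k$ for $N\gg0$, then conclude by Krull's intersection theorem in the appropriate quotient. The paper presents (i) before (ii) but otherwise the reasoning matches yours step for step.
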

\begin{proof}
  Since $R/I$ is complete in the $\frm$-adic topology, 
  and the filtration given by $(I_N/I)_{N\geq 1}$
  is separated, it follows from a theorem of Chevalley 
  (see~\cite[Thm. 13, pp.270--271]{ZS})
  that given any $\ell$ there is $N$ such that 
  $I_N\subseteq I+\frm^{\ell}$. Similarly, we see
  that after possibly increasing $N$, 
  we may also assume that $J_N\subseteq J+\frm^{\ell}$.
  Therefore $I_NJ_N\subseteq IJ+\frm^{\ell}$, so
  \begin{equation*}
    \bigcap_{N\geq 1}I_NJ_N\subseteq \bigcap_{\ell\geq
      1}(IJ+\frm^{\ell})=IJ,
  \end{equation*}
  where the equality follows from Krull's Intersection Theorem. 
  As the other inclusion is trivial, this proves~(i).

  The argument for~(ii) is similar: we get from Chevalley's theorem that
  \begin{equation*}
    \bigcap_{N\geq 1}(I'+I_N)
    \subseteq \bigcap_{\ell\geq1}(I'+I+\frm^{\ell})=I'+I,
  \end{equation*}
  which completes the proof.
\end{proof}

\begin{proof}[Proof of Theorem~\ref{restriction}]
  If $X$ is a scheme of finite type over a field $k$, then the result is well-known 
  see~\cite[Section 9.5.A]{positivity}. 
  Note that since taking multiplier ideals commutes
  with passing to the algebraic closure
  (see Proposition~\ref{P302} and Example~\ref{E401}), 
  in this case one can assume that $k$ is algebraically closed.

  In the general case, it is enough to prove the two assertions 
  after replacing $X$ by $\Spec(\widehat{\cO_{X,\xi}})$, 
  where $\xi$ is any point of $X$. Indeed, this follows since taking
  multiplier ideals commutes with this operation by
  Proposition~\ref{P302} (recall that $\Spec(\widehat{\cO_{X,\xi}})\to X$ is regular since 
  $X$ is assumed to be excellent). 
  Therefore, by Cohen's structure theorem we may assume that 
  $X=\Spec\,k\llbracket x_1,\dots,x_m\rrbracket$, for some $m$, and that
  $H$ is defined by the ideal $(x_1)$.
  
  Note that the assertion in the theorem holds for every $\lambda$ if we replace
  $\fra$ by $\fra+\frm^N$, where $\frm$ is the ideal defining the closed point of $X$.
  Indeed, in this case there is an ideal $\fra_N$ on $\AAA_k^m$ such that
  $\fra_N\cdot\cO_X=\fra+\frm^N$. In this case, 
  we deduce the assertion on $X$ from the assertion on 
  $\AAA_k^m$, and the fact that taking multiplier ideals 
  commutes with completion at the origin. 
  
  As we have mentioned, this implies that
  \begin{equation*}
    \cJ(((\fra+\frm^N)\cdot\cO_H)^{\lambda-})
    \subseteq\cJ((\fra+\frm^N)^{\lambda-})\cdot\cO_H
  \end{equation*}
  for all $\lambda>0$.
  Intersecting over $N\geq 1$, and using Lemma~\ref{lem_multiplier_ideals} and
  Lemma~\ref{complete_ring}~(ii) we get
  \begin{equation*}
    \cJ((\fra\cdot\cO_H)^{\lambda-})\subseteq\cJ(\fra^{\lambda-})\cdot\cO_H
  \end{equation*}
  for every $\lambda>0$. As we have seen, this gives the assertion in the theorem.
\end{proof}
\begin{proof}[Proof of Theorem~\ref{subadditivity}]
Again, the result is known when $X$ is of finite type over a field (see~\cite[Section 9.5.B]{positivity}).
Arguing as in the proof of Theorem~\ref{restriction}, we see that we may assume
$X=\Spec\,k\llbracket x_1,\dots,x_m\rrbracket$, and that we have
\begin{equation*}\cJ((\fra+\frm^N)^{\lambda-}(\frb+\frm^N)^{\mu-})\subseteq 
\cJ((\fra+\frm^N)^{\lambda-})\cdot\cJ((\frb+\frm^N)^{\mu-})\end{equation*}
for all $\lambda$, $\mu>0$.
Taking the intersection over $N\geq 1$ and using Lemma~\ref{lem_multiplier_ideals}
(see also Remark~\ref{rem_multiplier_ideals}) and Lemma~\ref{complete_ring}~(i),
we deduce
\begin{equation*}\cJ(\fra^{\lambda-}\frb^{\mu-})\subseteq\cJ(\fra^{\lambda-})\cdot\cJ(\frb^{\mu-})\end{equation*}
for all $\lambda$, $\mu>0$. As we have seen, this implies the assertion in the theorem.
\end{proof}

\providecommand{\bysame}{\leavevmode \hbox \o3em
{\hrulefill}\thinspace}

\end{document}